\newtheorem{theorem}{Theorem}[section]
\newtheorem{lemma}[theorem]{Lemma}
\newtheorem{proposition}[theorem]{Proposition}
\newtheorem{corollary}[theorem]{Corollary}
\newtheorem{definition}[theorem]{Definition}
\newcommand{\xiaorui}[1]{{{#1}}}
\newcommand{\E}{\mathrm{E}}
\newcommand{\cF}{{\mathcal F}}
\newcommand{\pr}{{\mathrm{Pr}}}
\newcommand{\cV}{{\mathcal V}}
\newcommand{\isd}{{\mathrm{Isd}}}
\newcommand{\child}{{\mathrm{child}}}
\newcommand{\dchild}{\mathrm{dchild}}
\newcommand{\T}{{\mathrm{Tr}}}
\newcommand{\cB}{{\mathcal B}}
\newcommand{\cG}{{\mathcal G}}
\newcommand{\cP}{{\mathcal P}}
\newcommand{\hell}{{\hat \ell}}
\newcommand{\bZ}{{\mathbf Z}}
\newcommand{\bC}{{\mathfrak C}}
\newcommand{\rt}{\mathbf r}
\newcommand{\insur}{\dot \partial} 
\newcommand{\exsur}{\partial} 
\newcommand{\mfa}{\mathfrak A}
\newcommand{\liusd}[1]{{{#1}}}
\newcommand{\zliu}[1]{{{#1}}}
\newcommand{\xiaoruin}[1]{{{#1}}}
\newcommand{\yajunfinal}[1]{{{#1}}}
\newcommand{\xiaoruiview}[1]{{{#1}}}
\newcommand{\yajun}[1]{{{#1}}}
\newcommand{\lam}[1]{{{#1}}}
\begin{document}

\title{Information Dissemination via Random Walks in $d$-Dimensional Space}


\author{Henry Lam
\thanks{Department of Mathematics and Statistics, Boston University. Email: {\tt khlam@math.bu.edu}.}
\quad
Zhenming Liu \thanks{Harvard School of Engineering and Applied Sciences. Supported by NSF grant CCF-0915922. Email: {\tt zliu@eecs.harvard.edu}.}
\quad
Michael Mitzenmacher \thanks{Harvard School of Engineering and Applied Sciences. Supported in part by NSF grants CCF-0915922 and IIS-0964473. Email: {\tt michaelm@eecs.harvard.edu}.}
\quad
Xiaorui Sun \thanks{Columbia University. Email: {\tt  xiaoruisun@cs.columbia.edu}.}
\quad
Yajun Wang \thanks{Microsoft Research Asia. Email: {\tt yajunw@microsoft.com}.}
}

\date{}
\setcounter{page}{0}

\maketitle

\begin{abstract}
  We study a natural information dissemination problem for multiple
  mobile agents in a bounded Euclidean space.  Agents are placed
  uniformly at random in the $d$-dimensional space $\{-n, ..., n\}^d$
  at time zero, and one of the agents holds a piece of information to
  be disseminated. All the agents then perform independent random
  walks over the space, and the information is transmitted from one
  agent to another if the two agents are sufficiently close.  We wish
  to bound the total time before all agents receive the information
  (with high probability).  Our work extends Pettarin et~al's
  work~\cite{PPPU11}, which solved the problem for $d \leq 2$.  We
  present tight bounds up to polylogarithmic factors for the case $d =
  3$.  (While our results extend to higher dimensions, for space and
  readability considerations we provide only the case $d=3$ here.)
  Our results show the behavior when $d \geq 3$ is qualitatively
  different from the case $d \leq 2$.  In particular, as the ratio
  between the volume of the space and the number of agents varies, we
  show an interesting phase transition for three dimensions that does
  not occur in one or two dimensions.
\end{abstract}

\newpage

\section{Introduction}
We study the following information diffusion problem: let $\mathrm
a_1, \mathrm a_2, ..., \mathrm a_m$ be $m$ agents initially starting
at locations chosen uniformly at random in $\mathcal V^d =
\{-n,,-(n-1),\ldots,n\}^d$ and performing independent random walks
over this space.  One of the agents initially has a message, and the
message is transmitted from one agent to another when they are
sufficiently close.  We are interested in the time needed to flood the
message, that is, the time when all agents obtain the message.  In
other settings, this problem has been described as a virus diffusion
problem, where the message is replaced by a virus that spreads
according to proximity. We use {\em information diffusion} and {\em
  virus spreading} interchangeably, depending on which is more useful
in context.  This is a natural model that has been extensively
studied. For example, Alves et~al. and Kesten et~al. coined the name
``frog model'' for this problem in the virus setting, and studied the
shape formed by the infected contour in the limiting case
\cite{AMPR01, AMP02, KS05}.  In the flooding time setting, early works
used a heuristic approximation based on simplifying assumptions to
characterize the dynamics of the spread of the
message~\cite{AYO93,KBTR02, WKB08}.  More recent works provide fully
rigorous treatments under this or similar random walk models
\cite{CMS10, CPS09, PPPU11, SS10, PSSS11}.

The most relevant recent works are those of Pettarin
et~al.~\cite{PPPU11} and Peres et~al.~\cite{PSSS11, SS10}.
The work of Pettarin et~al. examines the same model as ours, but their
analysis is only for one- and two-dimensional grids.  
The work of Sinclair and Stauffer~\cite{SS10} considers a similar model they call
mobile geometric graphs, and their work extends to higher dimensions.
However, their focus and model both have strong differences from ours.
For example, they assume a Poisson point process of constant
intensity, leading to a number of agents linear in the size of the
space. In contrast, our results allow a sublinear number of agents, a scenario not directly relevant to their model. Also, they focus on structural aspects on the mobile graphs, such as percolation, while we are primarily interested in the diffusion time. There
are additional smaller differences, but the main point is that for
our problem we require and introduce new techniques and analysis.

Our paper presents matching lower \xiaoruin{bounds} and upper bounds (up to
polylogarithmic factors) for the flooding problems in $d$-dimensional
space for an arbitrary constant $d$.  For ease of exposition, in this
paper we focus on the specific case where $d = 3$, which provides the
main ideas.  Two- and three- dimensional random walks have quite
different behaviors -- specifically, two-dimensional random walks are
recurrent while three-dimensional random walks are transient -- so it
is not surprising that previous results for two dimensions fail to
generalize immediately to three-dimensional space.  Our technical
contributions include new techniques and tools for tackling the
flooding problem by building sharper approximations on the effect of agent interactions. The techniques
developed in this paper are also robust enough that our results can be
extended to variations of the model, such as allowing probabilistic
infection rules, replacing discrete time random walks by continuous
time Brownian motions, or allowing the agents to make jumps
\cite{CPS09}. These extensions will be reported in future work.

Although the information diffusion problem in three or more dimensions
appears less practically relevant than the two-dimensional case, we
expect the model will still prove valuable. For instance, particles in
a high dimensional space may provide a latent-space representation of
the agents in a dynamic social network \cite{HRH02, SM05}, so
 understanding information
diffusion process may be helpful for designing appropriate latent space models in the future.
Also, the problem is mathematically interesting in its own right.

\subsection{Our models and results}
We follow the model developed in \cite{PPPU11}. Let $\mathcal V^d =
\{-n, -(n - 1), ..., 0, ..., (n - 1), n\}^d$ be a $d$-dimensional grid.  Let $A =
\{\mathrm a_1, \mathrm a_2, ..., \mathrm a_m\}$ be a set of moving
agents on $\mathcal V^d$. At $t = 0$, the agents
\yajun{spread} over the
space according to some distribution $\mathcal D$. Throughout this
paper, we focus on the case where $\mathcal D$ is uniform.
Agents move in discrete time steps. Every agent performs
a symmetric random walk defined in the natural way.  Specifically, at each time
step an agent not at a boundary moves to one of its $2d$ neighbors, each with
probability
\yajunfinal{$1/(2d)$}. If an agent is at a boundary, so there is no edge
in one or more directions, we treat each missing edge as a self-loop.
Let $\Xi_1(t), ..., \Xi_m(t) \in \{0, 1\}$ each be a random
variable, where $\Xi_i(t)$ represents whether the agent $\mathrm a_i$ is
infected at time step $t$. We assume $\Xi_1(0) = 1$ and $\Xi_i(0) = 0$ for all
$i \neq 1$.  The value $\Xi_i(t)$ will change from $0$ to $1$ if at time
$t$ it is within distance 1
\yajunfinal{to}
another infected agent $\mathrm a_j$.
(We use distance 1 instead of distance 0 to avoid parity issues.)  Once
a value $\Xi_j(t)$ becomes 1, it stays 1. 

\begin{definition}\label{def:diffusion} \emph{(Information diffusion problem)}. Let $A_1, A_2, \ldots, A_m \in \mathcal V^d$
be the initial positions of the agents $\mathrm a_1,
  \ldots, \mathrm a_m$ and let $S^1_t(A_1), S^2_t(A_2), \ldots,
  S^m_t(A_m)$ be $m$ independent random walks starting at
   $A_1, \ldots, A_m$ respectively, so that $S^i_t(P)$ is the position of
  agent $\mathrm a_i$ at time $t$ given \yajunfinal{that} at $t=0$ its position was
  $P \in \mathcal V^d$. The \emph{infectious state} of each agent at time step $t$ is a
  binary random variable $\Xi_i(t)$ such that
\begin{itemize*}
\item $\Xi_1(0) = 1$, $\Xi_i(0) = 0$ for all other $i$, and
\item for all $t > 0$, $\Xi_i(t) = 1$ if and only if
$$\left(\Xi_i(t - 1) = 1\right) \quad \mbox{ or } \quad \left( \exists j:
  \Xi_j(t - 1) = 1 \wedge \left\|S^i_t(A_i) - S^j_t(A_j)\right\|_1 \leq 1
\right).$$
\end{itemize*}
We define the \emph{finishing time} of the diffusion process, or the
\emph{diffusion time}, as $T=\inf\{t\geq0:|\{\Xi_i(t) = 1\}| =
\yajunfinal{m}
\}$.

\end{definition}

The following results for the diffusion time for $1$ and $2$ dimensional spaces are proved in \cite{PPPU11}.
\begin{theorem}
\label{thm:oldtheorem}
Consider the information diffusion problem for $d = 1, 2$ dimensions, and assume the agents are initially uniformly distributed over $\mathcal{V}^d$. Then, with high probability,
\begin{equation}\label{eqn:d12result}
 T = \tilde \Theta(n^2 \cdot m^{-1/d}).
 \end{equation}
\end{theorem}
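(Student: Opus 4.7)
The plan is to prove matching upper and lower bounds of $\tilde\Theta(n^2/m^{1/d})$ on $T$, organized around the characteristic length scale $\ell := n/m^{1/d}$, which corresponds to the typical inter-agent spacing under a uniform deployment of $m$ agents in $\mathcal V^d$.

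For the upper bound, I would partition $\mathcal V^d$ into cells of side length $\Theta(\ell)$. Each cell initially contains $\Theta(1)$ agents in expectation; since the uniform distribution is stationary under the simple random walk (modulo small boundary corrections), a Chernoff bound combined with a union bound over polynomially many time steps ensures that every cell remains populated with $\tilde\Omega(1)$ agents throughout the process whp. The next step is to show that once any infected agent enters a given cell, it meets one of that cell's residents within $\tilde O(\ell^2)$ time whp, which follows from the recurrence of random walks in $d \leq 2$ together with standard concentration bounds on meeting times inside bounded regions. Chaining this cell-to-cell propagation along a shortest path of $O(m^{1/d})$ cells from the initial infection cell to any other cell yields total infection time $\tilde O(m^{1/d} \cdot \ell^2) = \tilde O(n^2/m^{1/d})$, and a union bound over all $m$ agents completes the upper bound.

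For the lower bound, I would pick a target agent $a_j$ whose initial position $A_j$ is at distance $\Omega(n)$ from $A_1$ (which holds with constant probability). The idea is to control the rate at which the infected cluster can expand. Let $R(t) := \max\{\|S^i_t(A_i) - S^1_t(A_1)\|_1 : \Xi_i(t) = 1\}$ denote the radius of the infected cluster; I would argue $R(t)$ grows only modestly because (i) each infected agent's position moves by at most $\tilde O(\sqrt{t})$ from its initial location, and (ii) new infections at the boundary of the cluster occur at a rate limited by the local agent density $\Theta(m/n^d)$. Combining these observations with a potential-function argument that couples $R(t)$ to the number $N(t)$ of infected agents should yield that $R(T) \geq n/2$ requires $T \geq \tilde\Omega(n^2/m^{1/d})$.

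The main obstacle is the lower bound. A naive chain-based argument that directly lower-bounds the times between consecutive meetings (via Cauchy--Schwarz applied to a chain of $k \leq m$ pairwise meetings) yields only $T = \tilde\Omega(n^2/m^{2/d})$, off by a factor of $m^{1/d}$. To obtain the tight bound one must exploit the fact that the infected region cannot expand faster than the maximum random-walk displacement of its constituent agents in $t$ time units, which is $\tilde O(\sqrt t)$ irrespective of the cluster size; in other words, it is the geometric extent of the infected region, not merely the chain length, that drives the obstruction. For $d = 2$, additional care is required to absorb the $\log n$ factors arising from quasi-recurrence (meeting times in 2D scale as $\tilde\Theta(\ell^2)$ with a $\log n$ overhead rather than cleanly as $\Theta(\ell^2)$), which accounts for the polylogarithmic slack hidden inside $\tilde\Theta$.
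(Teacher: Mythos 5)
First, note that the paper does not supply its own proof of this theorem; it simply cites \cite{PPPU11}, and its Appendix~F sketches that work's strategy. So the comparison must be against \cite{PPPU11}'s approach as recalled there.

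Your upper-bound sketch is broadly sound and follows the same skeleton as \cite{PPPU11}: tile $\mathcal V^d$ with cells of side $\ell = n/m^{1/d}$, show each cell has $\tilde\Theta(1)$ agents at all polynomially many time steps, exploit recurrence for $d\le2$ to get cell-to-cell transmission in $\tilde O(\ell^2)$ time, and chain across $O(m^{1/d})$ cells. You would still need to be careful that after a cell's resident is infected it may wander away (one typically argues that at each step every cell contains $\tilde\Omega(1)$ infected agents once the wave has passed through, rather than tracking a single agent), but the outline is correct.

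The lower bound is where the real gap is. You rightly identify that a crude chain argument loses a factor of $m^{1/d}$ and that the geometric extent of the infected cluster is what must be controlled, but the proposed fix -- ``a potential-function argument that couples $R(t)$ to $N(t)$'' with ``new infections at the boundary limited by the local agent density'' -- is not a developed argument and I do not see how to make it work as stated. Bounding the \emph{rate} of new infections by the agent density does not bound the \emph{spatial} expansion of the cluster: each new infection is within distance $1$ of an already-infected agent, so it can extend $R(t)$ by $O(1)$, and through a locally dense arrangement of agents the boundary could advance much faster than $\sqrt{\delta t}$ per $\delta t$ steps without any argument ruling this out. The ingredient that makes \cite{PPPU11} work is the island structure at the critical percolation scale $\gamma = n\,m^{-1/d}$ (Definition~\ref{def:island} here): w.h.p.\ every island has only $\tilde O(1)$ agents, and within a time increment $\delta t = \tilde\Theta(\gamma^2)$ the virus cannot jump from one island to another (for $d\le2$ the probability of two walks at distance $\gamma$ meeting in $\gamma^2/\mathrm{polylog}\,n$ steps is super-polynomially small, so the union bound over all pairs survives). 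That simultaneously controls the number of agents infected per increment and the distance the cluster can advance -- $R$ grows by at most $\tilde O(\gamma)$ per $\tilde\Theta(\gamma^2)$ steps -- giving $T \ge \tilde\Omega\bigl((n/\gamma)\cdot\gamma^2\bigr) = \tilde\Omega(n\gamma) = \tilde\Omega(n^2 m^{-1/d})$. Without invoking the island/percolation structure (or something equivalent), your sketch does not yield a proof of the lower bound.
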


\liusd{It is natural to ask whether Equation~\ref{eqn:d12result}
also holds for $d \geq 3$. Our results show this is not the case.}

\begin{theorem}
\label{thm:ourtheorem}
\emph{(Diffusion time for $d \geq 3$)} Consider the information
diffusion problem for $d \geq 3$ with initially uniformly distributed agents over $\mathcal{V}^d$.
Then there exists a constant $c$ such that
\begin{equation}\label{eqn:d3result}
\begin{array}{lll}
\mbox{if }&  cn^{d - 2}\log^2 n < m < n^d: & T = \tilde \Theta(n^{d/2 + 1} \cdot
m^{-1/2}) \mbox{ with high probability;} \\
\mbox{if }& m <  cn^{d - 2}\log^{-2}n: & T \leq \tilde \Theta(n^d / m) \mbox{ with
  high prob. and } T \geq \tilde \Theta(n^d / m) \mbox{ almost surely.}
\end{array}
\end{equation}
\end{theorem}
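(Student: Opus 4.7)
My plan is to prove Theorem \ref{thm:ourtheorem} by treating the sparse and dense regimes separately, with matching upper and lower bounds in each; the two regimes require fundamentally different arguments because the qualitative behavior changes from isolated pairwise encounters to a propagating wavefront. This is exactly the transition that makes $d\geq 3$ different from the recurrent cases $d\leq 2$ treated in \cite{PPPU11}.

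For the sparse regime ($m<cn^{d-2}/\log^2 n$), I would exploit the transience of the $d$-dimensional walk: the range up to time $t$ has size $\Theta(t)$, so two independent walks with uniform initial positions on $\mathcal V^d$ come within distance $1$ by time $t$ with probability $\Theta(t/n^d)$. This suggests a birth-process description in which, with $k$ infected and $m-k$ healthy agents, new infections occur at rate $\Theta(k(m-k)/n^d)$, giving expected completion time $\sum_{k=1}^{m-1} n^d/[k(m-k)] = \tilde\Theta(n^d/m)$. To make this rigorous I would use that each agent's position is marginally uniform at every time (since the initial distribution is stationary), combined with second-moment bounds on the pairwise meeting indicators to handle the correlations between pairs sharing a common agent. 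The matching lower bound follows by showing that in time $o(n^d/m)$ there is, with high probability, at least one uninfected agent whose entire space-time trajectory is disjoint from every other agent's.

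For the dense regime ($m>cn^{d-2}\log^2 n$), I would interpret the infection as a Fisher-KPP type wavefront with speed $v=\Theta(\sqrt{m/n^d})$ and track the radius $R(t)$ of an infected region around the source. Time is partitioned into epochs of length $\tau$ during which every agent diffuses $\Theta(\sqrt\tau)$; the effective local reaction rate per infected agent is $\Theta(m/n^d)$, which is the transient per-step meeting probability with a specific uninfected agent. A discrete analogue of the Fisher wave analysis---in which an epoch of length $\tau$ advances the front by $\sqrt\tau$ provided the frontier annulus of width $\sqrt\tau$ at radius $R$, containing $\Theta(R^{d-1}\sqrt\tau\cdot m/n^d)$ agents, is populated by $\Omega(\log n)$ agents---yields $T=\Theta(n/v)=\tilde\Theta(n^{d/2+1}/\sqrt m)$. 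The threshold $m\sim n^{d-2}\log^2 n$ arises precisely as the place where this annulus-population condition becomes marginal. Rigor requires partitioning agents into disjoint ``fresh'' cohorts across successive epochs to decorrelate the epochs, plus a union bound over the $O(\log n)$ epochs needed to cross $\mathcal V^d$.

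For the dense-regime lower bound I would argue that infecting an agent initially at distance $\Omega(n)$ from the source requires a space-time chain of consecutive infection events whose total spatial displacement is $\Omega(n)$; a union bound over all such chains, parameterized by chain length and agent identities, then rules out $T=o(n^{d/2+1}/\sqrt m)$. The main obstacle I anticipate is the dense-regime analysis: one must work right at the critical threshold, and the polylogarithmic margin $\log^2 n$ in the hypothesis is what provides both the high-probability concentration of the frontier agent count in each epoch and the cushion needed to absorb the union-bound cost over the chain argument---which is exactly the point where transience makes the $d\geq 3$ picture qualitatively new.
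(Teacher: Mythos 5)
Your division into sparse and dense regimes matches the paper's, and the sparse-regime plan (birth-process heuristic giving $\sum_k n^d/[k(m-k)]$, aided by stationarity of the initial distribution) is in the right spirit---though to make it rigorous one needs mixing-time waits between rounds to decouple the infection history from current positions, as in Lemma~\ref{lem:mlessn}; "marginal uniformity plus second-moment bounds" does not by itself kill the conditional spatial clustering of infected agents. The dense regime, however, is where the proposal diverges substantially from the paper, and there are two genuine gaps.

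First, the Fisher--KPP wavefront upper bound runs into exactly the bootstrap obstruction the paper analyzes in Appendix~\ref{subsec:existupper}. For the front to advance by $\sqrt\tau$ in an epoch of length $\tau$, the annular cell just behind the front must contain roughly $\sqrt\tau \sim \hat\ell_2$ \emph{infected} agents, since the pairwise meeting probability at distance $\sqrt\tau$ over time $\tau$ is only $\Theta(1/\sqrt\tau)$ (Lemma~\ref{lem:couple}). At $t=0$ you have exactly one infected agent. After time $\tau=\hat\ell_2^2$ the expected number of direct infections by that agent is $\Theta(1)$, not $\Theta(\hat\ell_2)$; indeed, the lower-bound analysis (Lemma~\ref{lem:3dlocalvisit}) shows the local "branching factor" is \emph{sub}critical. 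So you cannot seed the frontier annulus with $\Theta(\hat\ell_2)$ infected agents in $\tilde O(\hat\ell_2^2)$ time without waiting $\tilde\omega(\hat\ell_2^2)$ steps, at which point the infected agents are no longer well clustered near the front. This tradeoff---well-clustered but too few, or enough but scattered---is precisely what the paper argues makes the front-propagation approach break down for $d\geq 3$, and why it instead proves the doubling/halving statement (Proposition~\ref{lem:double}) that tracks \emph{global counts} of infected versus uninfected agents and uses an isoperimetric bound on the set of "good" subcubes rather than any spatial front.

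Second, the dense lower bound by a direct union bound over infection chains does not balance combinatorially. A $k$-hop chain where successive agents start at distance $\sim\ell_2$ apart and each meeting occurs within $\ell_2^2$ time has per-hop probability of order $(\ell_2/n)^d \cdot \ell_2^{-(d-2)} = \ell_2^2/n^d = 1/m$. But the number of labeled $k$-chains is $\Theta(m^k)$, so the sum over chains is $\Theta(1)$, not $o(1)$---the union bound gives nothing. What rescues the argument is the conditional, branching-process structure: given the trajectory up to an agent's infection time, the \emph{expected number of new direct infections} it causes is $<\log^{-3}n$ (Lemma~\ref{lem:dchild}), and this conditional first-moment bound iterates to show the diffusion tree has height $O(\log n)$ with overwhelming probability (Lemma~\ref{lem:short_generation}). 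The "good behavior" conditioning (density, small islands, short travel distance) is essential for this bound and is what separates it from a naive chain count. Your intuition that the polylog margin absorbs a union-bound cost is correct, but the object over which the union bound is taken has to be the $(m \times n^{2.5})$ choices of $(i,t)$ in Proposition~\ref{lem:localbehavefinal}, not the space of raw agent chains.
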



Notice that Theorems~\ref{thm:ourtheorem} and~\ref{thm:oldtheorem} yield
the same result for $d = 2$, as well as when $d = 1$ and $m =
\Theta(n)$.
Here when we say with high probability, we mean the statement holds with probability $1 -
n^{-\gamma}$ for any constant $\gamma$ and suitably large $n$.
When we say almost surely, we mean with probability $1
- o(1)$.  When $m \geq n^d$, the result is implicit in
\cite{KS05} and the diffusion time in this case is $\tilde
\Theta(n)$. {Finally, there are some technical challenges
regarding the case $cn^{d -
  2}\log^{-2}n \leq m \leq cn^{d - 2}\log^2 n$
that we expect to address in a later version of this work.}
%

An interesting point of our result is that when the number of agents $m$
is greater than $n^{d-2}$, the finishing time is less than the mixing
time of each individual random walk, and therefore the analysis requires
techniques that do not directly utilize the mixing time. The rest of this paper focuses on deriving both the lower and upper bounds for this interesting case; the case where $m<cn^{d-2}\log^{-2}n$, which harnesses similar ideas and a mixing time argument, is only briefly described at the end. Finally, as previously mentioned,
for space reasons we provide only the analysis for the three
dimensional case, and note that the results can be generalized to higher dimensions.

\lam{Theorem~\ref{thm:ourtheorem} can also be expressed in the terms of the density of agents. Let $\lambda=m/n^d$ be the density. We can express the diffusion time as $T=\tilde{\Theta}(n/\sqrt{\lambda})$ w.h.p. for $cn^{-2}\log^2n<\lambda<1$, whereas for $\lambda<cn^{-2}\log^{-2}n$ we have $T\leq\tilde{\Theta}(1/\lambda)$ w.h.p. and $T\geq\tilde{\Theta}(1/\lambda)$ almost surely.}

We remark that all theorems/propositions/lemmas in this paper are assumed to hold for sufficiently large $n$, but for conciseness we may not restate this condition in every instance.

\section{Preliminary results for random walks}
In this section we lay out some preliminary results on random walks that will be useful in the subsequent sections. These results focus on probabilistic estimates for the meeting time/position of multiple random walks. Along the way, we will also illustrate the limitations of some of these estimates, hence leading to the need of more sophisticated techniques in our subsequent analysis. For conciseness, all proofs in this section are left to Appendix~\ref{sec:mrw}.

Let $\mathbf Z$ be the set of integers, and $\mathbf Z^3$
be the set of integral lattice points in $\mathbf R^3$.
For two points $A, B \in \mathbf Z^3$, we write
$A - B$ as the $3$-dimensional vector pointing from $B$ to
$A$. For a vector $\vec x \in \mathbf R^3$, denote the $i$th
coordinate of $\vec x$ as $x_i$. Define the $L_p$ norm of a
vector as $\|\vec x\|_p = \left(\sum_{i \leq 3}|x_i|^p\right)^{1/p}$, and also the infinite-norm
in the standard manner $\|\vec x\|_{\infty} = \max_{i \leq 3}|x_i|$. We moderately overload $x$ in this paper, i.e. $x$ is a scalar
and $\vec x$ is a vector.

Let $S^1$ and $S^2$ be two random walks in either $\mathcal V \yajun{^3}$ (bounded walks)
or $\mathbf Z^3$ (unbounded walks). We say two walks $S^1$ and $S^2$ \emph{meet}
at time $t$ if their $L_1$-distance is within $1$ at that time
and two walks $S^1$ and $S^2$ \emph{collide} at $t$ if they
are exactly at the same position at time $t$.

\begin{definition}[Passage probability] Let $S$ be a
random walk in $\mathbf Z^3$ starting at the origin $O$.
Let $B$ be a point with $B - O = \vec x$ (which is a three dimensional vector). Define the probability that $S$ is at $B$ at
time $t$ as $p(t, \vec x)$. Define the probability that $S$ visits
$B$ \emph{within} time $t$ as $q(t, \vec x)$.
\end{definition}

We want to characterize the chance that two or more random walks
in either $\mathcal V \yajun{^3}$ or $\mathbf Z^3$
meet. More specifically, consider the following question. Let $A_1$, ...,
$A_j$, and $B$ be $j + 1$ points over the 3-dimensional space
$\mathbf Z^3$ such that for all $i \in [j]$, the $L_1$ distance between $A_i$ and $B$ is $\|A_i - B\|_1 \geq x$. Let $S^1(A_1), ..., S^j(A_j)$, $S^{j + 1}(B)$ be independent random walks that start with these points respectively.
 Our goal is to understand
the probability that all the walks $S^1$, ..., $S^j$ will meet or collide with the random walk
$S^{j + 1}$ within $x^2$ time steps. We note that if the agents starting at $B$ was stationary instead of
following its own random walk then the analysis of
the situation would be straightforward. In particular, the probability that all
the walks would intersect $B$ is $\tilde \Theta(1/x^k)$. This follows from standard results, including Theorem~\ref{thm:lowervisit}
and Lemma~\ref{lem:tightvisit} provided in the appendices. We need to consider a more challenging situation when the agent starting at $B$ is also
moving.

To begin, we shall consider the case where $
 \yajun{j}
= 1$, so that we have just two moving agents.

\begin{definition} Let $A$ and $B$ be two points over $\mathbf Z^3$ such that $A - B = \vec x$. where $\|\vec x\|_1$ is an even number. Let $S^1$ and $S^2$ be two independent unbounded random walks that start
at $A$ and $B$ respectively. Define $Q(t, \vec x)$ as the probability that $S^1$ and $S^2$ collide before time $t$.
\end{definition}

We can use a simple coupling argument to relate $Q(t, \vec x)$ with $q(t, \vec x)$. The result is described as follows.

\begin{lemma}\label{lem:couple} Let $A$ and $B$ be two points over
   $\mathbf Z^3$ such that $A- B  =\vec x$, where $\|\vec x\|_1$ is an even number.
 Consider $Q(t, \vec x)$ and $q(t, \vec x)$ defined above.
 We have $Q(t, \vec x) = q(2t, \vec x)$.
 Furthermore, for $t \geq \|\vec x\|^2_2$, $Q(t, \vec x) = \Theta(1/\|\vec x\|_2)$.
\end{lemma}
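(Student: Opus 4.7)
The plan is to first establish the identity $Q(t, \vec x) = q(2t, \vec x)$ by coupling the two-walk system to a single simple random walk run for twice as many steps, and then to derive the $\Theta(1/\|\vec x\|_2)$ rate from the classical Green's function and local CLT estimates for transient simple random walk on $\mathbf Z^3$.

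For the coupling, let $X_1, X_2, \ldots$ and $Y_1, Y_2, \ldots$ be the i.i.d.\ step increments of $S^1$ and $S^2$, each uniform on $\{\pm e_1, \pm e_2, \pm e_3\}$. Because $-Y_i$ has the same distribution as $Y_i$ by symmetry of the support, interleaving $Z_{2i-1} := X_i$ and $Z_{2i} := -Y_i$ produces another i.i.d.\ sequence of uniform unit steps. Setting $\tilde D_r := \vec x + \sum_{j \leq r} Z_j$ then gives a genuine simple random walk started from $\vec x$, and telescoping the pairs yields $\tilde D_{2k} = \vec x + \sum_{i=1}^{k}(X_i - Y_i) = S^1_k - S^2_k$. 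In particular, $S^1$ and $S^2$ collide at time $k$ exactly when $\tilde D_{2k} = 0$.

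The next step is the event-level equivalence. Since $\|\vec x\|_1$ is even, $\vec x$ has even coordinate sum, and each $Z_j$ changes this sum by $\pm 1$, so $\tilde D_r$ can equal $0$ only at even $r$. The event $\{\exists k \leq t : D_k = 0\}$ therefore coincides with $\{\exists r \leq 2t : \tilde D_r = 0\}$, and by translation invariance the latter has probability exactly $q(2t, \vec x)$, giving $Q(t, \vec x) = q(2t, \vec x)$.

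For the asymptotic, let $r := \|\vec x\|_2$ and assume $t \geq r^2$. The upper bound $q(2t, \vec x) \leq q(\infty, \vec x) = O(1/r)$ follows from the standard Green's function estimate for 3D simple random walk combined with the identity relating the ever-hitting probability to ratios of Green's function values. For the matching lower bound I would apply a second-moment argument to the number of visits $N$ of $\tilde D$ to $0$ during the time window $[r^2, 2r^2]$: the local CLT estimate $p(s, \vec x) = \Theta(s^{-3/2})$ for $s$ in this range gives $\E[N] = \Theta(1/r)$, while the standard overlap computation bounds $\E[N^2] \leq 2\, \E[N] \sum_{u \leq 2r^2} p(u, 0) = O(1/r)$, so $\pr(N \geq 1) \geq \E[N]^2/\E[N^2] = \Omega(1/r)$.

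The main obstacle I anticipate is the clean verification of the coupling — in particular, checking that the interleaved sequence $(Z_j)$ really is i.i.d.\ uniform (this hinges on both $-Y_i \stackrel{d}{=} Y_i$ and the independence of the $X$'s and $Y$'s) and that the parity alignment is tight enough that no collision event is lost when passing between $D_k$ and $\tilde D_{2k}$. Beyond this, everything reduces to textbook facts about transient simple random walk on $\mathbf Z^3$ that the paper intends to cite from its appendix.
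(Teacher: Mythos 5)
Your proposal is correct and takes essentially the same approach as the paper: the paper's proof uses exactly this interleaving coupling (one step matching $S^1$, the next reversing $S^2$) to identify $Q(t,\vec x)$ with $q(2t,\vec x)$, including the same parity observation that collisions can only occur at even times of the difference walk. The only difference is that for the $\Theta(1/\|\vec x\|_2)$ rate the paper simply invokes its Corollary~\ref{lem:tightvisit} (a standard hitting-probability estimate for transient walks), whereas you rederive that estimate from scratch via a second-moment argument; your derivation is sound.
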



Next, let us move to the case of $j$ random walks in $\mathbf Z^3$, in which $j > 1$.

\begin{lemma}\label{lem:rwcatchall} Let $A_1$, $A_2$, \ldots, $A_j$, and $B$ be points
 in $\mathbf Z^3$ such that $\|A_i-B\|_1$ are even and
$\|A_i-B\|_1 \geq
x$ for all $i \leq j$. Let $S^1(A_1), \ldots, S^j(A_j)$, $S^{j + 1}(B)$ be $j + 1$ independent random walks that
start at $A_1, \ldots, A_j, B$ respectively. Let $t = x^2$.
Then the probability that all the walks $S^1, \ldots, S^j$ collide with
$S^{t + 1}$ within time  $t$
is at most $\left(\frac{\zeta j}{ x}\right)^j$, where $\zeta$ is a
sufficiently large constant.
\end{lemma}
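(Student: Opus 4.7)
The plan is to exploit conditional independence: given the trajectory $\omega$ of the ``central'' walk $S^{j+1}$, the other walks $S^1, \ldots, S^j$ are independent. Let $E_i$ denote the event that $S^i$ collides with $S^{j+1}$ within time $t = x^2$, and let $P_i(\omega) = \Pr[E_i \mid \omega]$. Then
\[
\Pr\Bigl[\bigcap_{i=1}^j E_i\Bigr] = \E_\omega\Bigl[\prod_{i=1}^j P_i(\omega)\Bigr].
\]
As a first bound on each $P_i(\omega)$, I would use the first-moment (union) inequality $P_i(\omega) \leq \sum_{s=0}^t p(s, \omega_s - A_i)$, namely the expected number of collisions of $S^i$ with the fixed path $\omega$. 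Substituting and expanding the product,
\[
\Pr\Bigl[\bigcap_i E_i\Bigr] \leq \sum_{s_1, \ldots, s_j \leq t} \E_\omega\Bigl[\prod_{i=1}^j p(s_i, \omega_{s_i} - A_i)\Bigr].
\]

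For each ordered tuple $s_1 \leq \cdots \leq s_j$, applying the Markov property of $\omega$ with $s_0 := 0$ and $y_0 := B$ gives
\[
\E_\omega\Bigl[\prod_{i=1}^j p(s_i, \omega_{s_i} - A_i)\Bigr] = \sum_{y_1, \ldots, y_j} \prod_{i=1}^j p(s_i - s_{i-1}, y_i - y_{i-1})\, p(s_i, y_i - A_i).
\]
The plan is to evaluate this iterated sum from $y_j$ inward using the Gaussian LCLT estimate $p(s, \vec y) \leq \frac{C}{s^{3/2}} \exp(-c\|\vec y\|^2/s)$. The distance hypothesis $\|A_i - B\|_1 \geq x$ forces $\|y_i - A_i\|$ to be large unless $\omega$ has wandered far from $B$ by time $s_i$, and the Gaussian factor then extracts a $1/x$ decay from each of the $j$ layers. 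Finally, summing over the $j!$ orderings of $(s_1,\ldots,s_j)$ (which amounts to all ways of pairing time indices with target points $A_i$), I expect a bound of the form $j!\cdot (C/x)^j \leq (\zeta j/x)^j$, matching the claimed estimate.

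The main obstacle lies in the iterated bound on the multi-sum. A purely polynomial LCLT bound $p(s, \vec y) \leq C/s^{3/2}$ does not use the distance constraint at all, and yields only the trivial estimate $O(1)$. The Gaussian tails must therefore be carried through the recursion, which requires tracking an ``effective distance'' from each intermediate position $y_i$ to $A_i$ across successive layers, and carefully balancing the polynomial time integration against the exponential spatial decay. An alternative via strong-Markov induction on $j$ (condition on the first collision of $S^1$ with $S^{j+1}$ and restart $S^{j+1}$ from the meeting position) is tempting, but the random starting positions and remaining times of the other walks at the restart make the inductive hypothesis awkward to state cleanly.
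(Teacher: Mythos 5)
Your setup is essentially the paper's. Both arguments reduce the problem to the same iterated sum over ordered collision times $s_1 \le \cdots \le s_j$ (pulling out a $j!$ ordering factor, as you anticipate) and over the positions $y_1,\dots,y_j$ of the central walk at those times, with a Gaussian LCLT factor per layer; your term $p(s_i, y_i - A_i)$ is exactly what you obtain from the paper's factorization by summing out the intermediate position $D_{i-1}$ of $S^i$ at time $s_{i-1}$ via Chapman--Kolmogorov. The conditioning-on-$\omega$ framing is a cleaner way to arrive at the paper's multi-sum, and the target shape $j!(C/x)^j \le (\zeta j/x)^j$ is correct.

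The gap is precisely the step you label ``the main obstacle,'' and it is not an obstacle to the proof---it \emph{is} the proof. The paper devotes essentially all of its effort to this iterated bound, and the bookkeeping is delicate: (i) rather than collapsing $S^i$'s transition to $p(s_i, y_i - A_i)$, the paper keeps the intermediate position $D_{i-1}$ explicit and groups the spatial sums into shells $\partial\mathbb D_r$ indexed by the diameter of the triple $\{C_{i-1}, C_i, D_{i-1}\}$, converting each layer into a one-dimensional Gaussian integral $\int_0^\infty r^5 e^{-r^2/(4\Delta t)}\,\mathrm{d}r = 64(\Delta t)^3$; (ii) the recursion carries a precise inductive hypothesis $\Upsilon_{j-\ell} \le x^2\,\zeta_0^{\ell+1}\,4^\ell\,(\ell!)^{-1}\, t_{j-\ell-1}^{-\ell/2}$, whose $(\ell!)^{-1}$ and residual time factor are both needed to eventually survive multiplication by the $j!$ ordering factor and the $\Gamma(j/2+1)$ that emerges in the last step; (iii) the hypothesis $\|A_i - B\|_1 \ge x$ enters only at that final layer, where a $\Gamma$-function integral over $t_1$ produces the $x^{-j}$ decay. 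None of this is carried out in your proposal; ``I expect a bound of the form $j!(C/x)^j$'' is a statement of hope, not an estimate. Moreover, your collapsed formulation is valid but less convenient for the recursion: tying the Gaussian tail in layer $i$ to the faraway fixed point $A_i$ means that factor is not locally small when $y_{i-1}$ drifts toward $A_i$, and the constraint that prevents such drift lives in factors from other layers---exactly the cross-layer coupling that the paper's $D_{i-1}$/shell device is engineered to sidestep. So the plan is on the right track, but the proof as written has a real gap where the core computation should be.
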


Lemma~\ref{lem:rwcatchall} also helps us to analyze the scenario in which agents need to meet rather than to collide. This is summarized by the following corollary:

\begin{corollary}\label{cor:rwcatchall} Let $A_1$, $A_2$, ..., $A_j$, and $B$ be points
 in $\mathbf Z^3$ such that
$\|A_i-B\|_1 \geq
x$ for all $i \leq j$. Let $S^1(A_1), ..., S^j(A_j)$, $S^{j + 1}(B)$ be $j + 1$ independent random walks that
start at $A_1, ..., A_j, B$ respectively. Let $t = x^2$.
Then the probability that all the walks $S^1, ..., S^j$ meet with
$S^{j + 1}$ within time $t$
is at most $\left(\frac{\zeta' j}{ x}\right)^j$, where $\zeta'$ is a sufficiently large constant.
\end{corollary}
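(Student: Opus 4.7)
The plan is to reduce Corollary~\ref{cor:rwcatchall} to Lemma~\ref{lem:rwcatchall} via a parity-based shift argument that converts ``meeting'' events into ``collision'' events.

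First I would observe that at each time step both $S^i$ and $S^{j+1}$ move, so the difference $S^i_s - S^{j+1}_s$ changes by a vector of $L_1$-norm exactly $0$ or $2$; hence the parity of $\|S^i_s - S^{j+1}_s\|_1$ is invariant and always equals $\|A_i-B\|_1 \bmod 2$. Consequently, if $S^i$ meets $S^{j+1}$ at time $s \leq t$, the realized offset $\vec v_i := S^i_s - S^{j+1}_s$ satisfies $\|\vec v_i\|_1 \leq 1$ and $\|\vec v_i\|_1 \equiv \|A_i-B\|_1 \pmod 2$. Thus the meeting event for walker $i$ decomposes into at most six ``shifted-collision'' events: exactly one ($\vec v_i = \vec 0$) when $\|A_i-B\|_1$ is even, and six ($\vec v_i \in \{\pm e_1, \pm e_2, \pm e_3\}$) when $\|A_i-B\|_1$ is odd.

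Next I would take a union bound over tuples $(\vec v_1, \ldots, \vec v_j)$ of admissible shifts, producing at most $6^j$ terms. For a fixed tuple, I absorb the shift into the walker by defining $\tilde S^i_s := S^i_s - \vec v_i$, which is an independent random walk starting at $\tilde A_i := A_i - \vec v_i$. The shifted-collision event becomes simply $\tilde S^i_s = S^{j+1}_s$ for some $s \leq t$. Using the identity $\|\vec a - \vec b\|_1 \equiv \|\vec a\|_1 + \|\vec b\|_1 \pmod 2$ for integer vectors (a coordinatewise application of $|n|\equiv n\pmod 2$), one checks that $\|\tilde A_i - B\|_1 \equiv \|A_i - B\|_1 + \|\vec v_i\|_1 \equiv 0 \pmod 2$, while the triangle inequality gives $\|\tilde A_i - B\|_1 \geq x - 1$. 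Hence the configuration $(\tilde A_1, \ldots, \tilde A_j, B)$ satisfies the hypotheses of Lemma~\ref{lem:rwcatchall} with parameter $x - 1$, yielding a bound of $(\zeta j/(x-1))^j$ on the joint collision probability. Combining with the $6^j$ union bound and absorbing into constants produces $(\zeta' j/x)^j$ for a sufficiently large constant $\zeta'$ (e.g.\ $\zeta' = 12\zeta$ for $x \geq 2$).

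The main technical wrinkle is that Lemma~\ref{lem:rwcatchall} is stated with time parameter $t = x^2$, whereas after the reduction we would naturally apply it with effective distance $x - 1$ and thus natural time $(x-1)^2 < x^2$. Since $x^2 / (x-1)^2 \to 1$, this gap costs only a constant factor which may be absorbed into $\zeta'$: the hitting-probability estimates underlying Lemma~\ref{lem:rwcatchall} (in particular Lemma~\ref{lem:couple}, which asserts $Q(t,\vec x) = \Theta(1/\|\vec x\|_2)$ for all $t \geq \|\vec x\|_2^2$) are robust under constant-factor rescaling of $t$. If one wishes to avoid even this mild extension, one may instead split $[0, x^2]$ into $[0,(x-1)^2]$ and $[(x-1)^2, x^2]$ and bound the thin additional slice by a conditioning argument, since its length is only $O(x)$ and contributes a negligible meeting probability compared to the bound on the first interval.
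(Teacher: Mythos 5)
Your proof is correct and follows essentially the same strategy as the paper: decompose a meeting into a shifted collision, union-bound over the small set of admissible shifts, and invoke Lemma~\ref{lem:rwcatchall}. You are in fact more careful than the published proof, which simply enumerates $7^j$ shifted start points and applies Lemma~\ref{lem:rwcatchall} directly, without verifying the even-parity hypothesis, the reduced distance $x-1$, or the $t=x^2$ versus $(x-1)^2$ mismatch that you explicitly address.
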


We note that both Lemma~\ref{lem:rwcatchall} and Corollary~\ref{cor:rwcatchall} are useful only when $x$ is large enough. This forms a barrier for analysis of close agents in our model. But as we will see, we can get around this issue by looking at a coupled diffusion process that possesses a different diffusion rule specifically designed for handling close agents.

Another important issue is the analysis on walks that are close to the boundary. For this, we show that the random walks
will not behave significantly different (in terms of
the desired bounds) when boundaries are added. We notice that similar results
are presented in~\cite{PPPU11}, but their results do not immediately
translate to the building blocks we need here. The following is the major building block
we need for our analysis:

\begin{lemma}
\label{lem:twowalkboundary}
Let $A$ and $B$
be two points in $\mathcal V^3$ such that $A-B = \vec x$ and
the distance between
$A$ and any boundary is at least $40\|  \yajunfinal{\vec x}\|_{1}$. Consider two random walks $ S^1(A)$
and $ S^2(B)$ that start at $A$ and $B$ respectively. Let $\tilde e_t$
be the event that $S^1(A)$ and $S^2(B)$ will meet before time
$t  \yajunfinal{=\|\vec x\|_1^2}$ and
before either of them visits a boundary. Then $\Pr[\tilde
e_{\|  \yajunfinal{\vec x}\|_1^2}] = \Omega(1/\|  \yajunfinal{\vec x}\|_1)$.
\end{lemma}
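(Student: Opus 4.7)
The plan is to couple the bounded walks $S^1(A)$ and $S^2(B)$ on $\mathcal V^3$ with independent unbounded walks $\tilde S^1(A), \tilde S^2(B)$ on $\mathbf Z^3$ starting at the same points, so that the pairs agree step by step until the first time either bounded walk would step outside $\mathcal V^3$. Under this coupling, the event $\tilde e_t$ is implied by the joint event $\tilde M_t \cap \overline{\tilde B_t}$, where $\tilde M_t$ denotes ``the unbounded walks meet within time $t$'' and $\tilde B_t$ denotes ``either unbounded walk reaches displacement at least $40\|\vec x\|_1$ from its starting point within $[0,t]$'', since the starting points sit at distance $\geq 40\|\vec x\|_1$ from every boundary.

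The next step is to lower bound $\Pr[\tilde M_t]$ via Lemma~\ref{lem:couple}. With $t = \|\vec x\|_1^2 \geq \|\vec x\|_2^2$, that lemma gives (in the even-parity case) $Q(t,\vec x) = \Theta(1/\|\vec x\|_2) = \Theta(1/\|\vec x\|_1)$, because $\|\vec x\|_2$ and $\|\vec x\|_1$ agree up to a factor of $\sqrt 3$ in $\mathbf Z^3$. Since the meeting event (distance $\leq 1$) is at least as likely as the collision event and since parity can be adjusted by prepending a single step of one of the walks toward a coordinate neighbor (losing only a constant factor), I obtain $\Pr[\tilde M_t] = \Omega(1/\|\vec x\|_1)$ in all cases.

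The third step is to upper bound $\Pr[\tilde B_t]$ with a coordinate-wise Azuma--Hoeffding argument. Each coordinate of an unbounded 3D walk is a martingale with increments in $\{-1,0,+1\}$, so over $t = \|\vec x\|_1^2$ steps the probability that any coordinate deviates by at least $40\|\vec x\|_1 = 40\sqrt t$ is at most $2e^{-\Omega(40^2)}$. Union bounding over the three coordinates and two walks gives $\Pr[\tilde B_t] \leq e^{-c_0}$ for some large absolute constant $c_0$, and combining via
\begin{equation*}
\Pr[\tilde e_t] \;\geq\; \Pr[\tilde M_t] - \Pr[\tilde B_t] \;=\; \Omega\bigl(1/\|\vec x\|_1\bigr) - O(e^{-c_0})
\end{equation*}
yields the desired $\Omega(1/\|\vec x\|_1)$ bound whenever the first term dominates.

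The main obstacle will be ensuring this bound is uniform in $\|\vec x\|_1$, since as $\|\vec x\|_1 \to \infty$ the meeting term $1/\|\vec x\|_1$ and the fixed-constant boundary term $e^{-c_0}$ become comparable. I expect to handle this by refining the estimate on $\Pr[\tilde M_t \cap \tilde B_t]$ directly rather than separately bounding $\Pr[\tilde B_t]$: by the strong Markov property applied at the first meeting time $\tau \leq t$, conditional on $\tau$ both walks' maximum displacement up to $\tau$ concentrates at $O(\sqrt{\tau}) = O(\|\vec x\|_1)$ by a reflection-principle tail bound, so the joint event that meeting occurs \emph{and} some walk escapes the $40\|\vec x\|_1$-ball has probability $o(1/\|\vec x\|_1)$, giving the desired bound for all $\|\vec x\|_1$.
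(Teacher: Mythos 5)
Your overall route --- couple with unbounded walks, lower-bound the unbounded meeting probability via Lemma~\ref{lem:couple}, and subtract the probability of escaping a $40\|\vec x\|_1$-neighbourhood --- is genuinely different from the paper's. The paper instead runs a second-moment argument on $X$, the number of collisions occurring before time $t$ and before either walk touches the boundary: it gets $\E[X]=\Omega(1/\|\vec x\|_1)$ by summing the pointwise estimate of Lemma~\ref{lem:bndonewalk} (which already folds the ``no boundary visit'' constraint into a heat-kernel lower bound for a single bounded walk) over times $s\le\|\vec x\|_1^2$ and collision sites $C$ with $\|C-A\|_1\le\|\vec x\|_1$, and it gets $\E[X\mid X\ge1]=O(1)$ from transience of the three-dimensional walk, whence $\Pr[X\ge1]=\E[X]/\E[X\mid X\ge1]=\Omega(1/\|\vec x\|_1)$. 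Restricting the collision sites to a ball of radius $\|\vec x\|_1$ around $A$ is precisely what lets the paper sidestep the meeting-versus-escaping correlation that your approach must confront head-on.

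That correlation is where your argument has a genuine gap. As you note, the naive bound $\Pr[\tilde e_t]\ge\Pr[\tilde M_t]-\Pr[\tilde B_t]$ compares a quantity of order $1/\|\vec x\|_1$ against a fixed constant and cannot close for large $\|\vec x\|_1$. Your proposed repair --- that conditional on the first meeting time $\tau$ the maximal displacement up to $\tau$ concentrates at $O(\sqrt\tau)$ ``by a reflection-principle tail bound'' --- is not justified as stated: the reflection principle controls unconditioned walks, whereas conditioning on a first-meeting event biases the paths up to $\tau$; and escapes occurring after $\tau$ but before $t$ must also be handled. Moreover the bound one can actually prove for $\Pr[\tilde M_t\cap\tilde B_t]$ is not $o(1/\|\vec x\|_1)$ but $\varepsilon/\|\vec x\|_1$ for a small constant $\varepsilon$ depending on the radius $40\|\vec x\|_1$, and one must check that $\varepsilon$ is smaller than the implied constant in $\Pr[\tilde M_t]=\Omega(1/\|\vec x\|_1)$. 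A rigorous version of your fix does exist --- for instance, bound the expected number of collisions that occur either at a site farther than $20\|\vec x\|_1$ from $A$ or after an escape, using the Gaussian form of $p(\cdot,\cdot)$ and the strong Markov property at the escape time --- but as written this crucial step rests on an invalid conditioning, so the proof is incomplete. (Two minor loose ends: $B$ is only guaranteed to be $39\|\vec x\|_1$ from the boundary, so the escape radius for $S^2$ should be adjusted accordingly; and the odd-parity reduction is cleanest when phrased for the difference walk.)
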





\newcommand{\ma}{{\mathrm a}}
\newcommand{\bbB}{{\mathbb B}}

\section{Lower bound}\label{sec:lb3}
Let us first state our lower bound result more precisely as follows.

\begin{theorem}\label{thm:lb3}
Let $\mathrm a_1, ..., \mathrm a_m$ be placed
uniformly at random on $\mathcal V^3$ such that $ 1600 n
\log^2n \leq m \leq n^3$. Let $\ell_2 = \sqrt{n^3 / m}$. For
sufficiently large $n$, the diffusion time $T$ satisfies the following
  \yajunfinal{inequality}
$$\Pr[T \leq  \frac 1 {81}\ell_2 n \log^{-29}n] \leq \exp\left(-\log
  n\log \log n\right).$$
\end{theorem}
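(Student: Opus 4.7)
The plan is to bound the spatial spread of the infection and show that, by time $\tau := \frac{1}{81}\ell_2 n \log^{-29} n$, it cannot reach agents initially far from $\mathrm{a}_1$. I partition $[0,\tau]$ into $K = \tau/\ell_2^2 = n/(81\ell_2\log^{29}n)$ phases of length $\ell_2^2$, and let $R_k$ denote the maximum $L_1$-distance from $\mathrm{a}_1$'s initial position to any infected agent at time $k\ell_2^2$. The target is $R_K \ll n$ with probability $\geq 1-\exp(-\log n\log\log n)$; since each agent's own displacement over $[0,\tau]$ is also $o(n)$ w.h.p., and the initial positions are uniform on $\mathcal{V}^3$, this will imply that many agents remain uninfected and so $T > \tau$.

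Two per-phase estimates drive the analysis. First, a displacement bound: standard Gaussian/Azuma tails for a random walk of length $\ell_2^2$ give $\|S^i_{k\ell_2^2}-S^i_{(k-1)\ell_2^2}\|_1 \leq \Delta := c_1\ell_2\sqrt{\log n\log\log n}$ with per-pair failure probability $\exp(-c_2\log n\log\log n)$; a union bound over the $mK\leq n^4$ agent-phase pairs keeps the total failure below $\exp(-\log n\log\log n)$ for $c_2$ large. Second, a meeting bound: by Corollary~\ref{cor:rwcatchall}, any agent at initial $L_1$-distance $\geq \ell_2$ from a fixed infected agent has meeting probability $O(1/\ell_2)$ within a phase, and integrating against the $O(\ell_2\log^{c_3}n)$ agents inside an $\ell_2$-ball (Chernoff on the uniform placement) shows each infected agent produces at most $O(\log^{c_3}n)$ \emph{direct} new infections per phase.

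The hard part will be controlling infection \emph{cascades} inside a single phase, since a chain of $l$ meetings can move the infection front by $\Theta(l\Delta)$ and a naive bound would allow $l$ to grow uncontrollably. My plan is a branching-process argument showing that, with failure probability $\exp(-c_4\log n\log\log n)$, no single source within one phase produces more than $O(\log n)$ descendants — using that the per-generation branching factor is at most $\log^{c_3}n$ and that each generation takes at least one time step. Pairs of agents initially closer than $\ell_2/\log^{c_5}n$ — the regime where Corollary~\ref{cor:rwcatchall} becomes too weak, as flagged in Section 2 — are handled by coupling with a faster process that instantly infects every member of a cluster of nearby agents once any member is infected; Chernoff on the uniform placement bounds the maximum cluster size by $O(\log^{c_5}n)$, so the coupling only accelerates the spread by a polylog factor, which preserves the lower bound. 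Boundary effects near $\partial\mathcal{V}^3$ are absorbed via Lemma~\ref{lem:twowalkboundary}.

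Combining these yields $R_k - R_{k-1} \leq O(\Delta\log n)$ per phase w.h.p., and summing gives $R_K \leq K\cdot O(\Delta\log n) = O(n/\log^{27}n) \ll n$. The maximum displacement of any single agent over $[0,\tau]$ is $O(\sqrt{\tau\log n\log\log n}) = O(n/\log^{13}n)$, so any agent initially at $L_1$-distance $\Omega(n/\log^{13}n)$ from $\mathrm{a}_1$ cannot be infected by time $\tau$. Since $m \geq 1600n\log^2 n$, the probability that \emph{all} agents start inside a ball of this radius around $\mathrm{a}_1$ is $\exp(-\Omega(m))$, far below the target. Summing all failure probabilities — each of order at most $\exp(-\log n\log\log n)$ — yields $\Pr[T \leq \tau] \leq \exp(-\log n\log\log n)$ as claimed.
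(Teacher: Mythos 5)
Your high-level skeleton --- phase-by-phase control of the infected radius, a branching/cascade argument inside each phase, an island-style coupling for very close agents, and a union bound to pass from local to global --- is essentially the plan the paper follows. But two of your parameter choices are off by more than polylogarithmic factors, and each one breaks a load-bearing step.

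\textbf{The phase length $\ell_2^2$ makes the cascade supercritical.} You compute, correctly, that within one phase a single infected agent produces in expectation $\Theta(\ell_2\cdot(1/\ell_2)\cdot\mathrm{polylog}\,n)=\Theta(\mathrm{polylog}\,n)$ direct new infections. That is a branching factor strictly greater than $1$. A supercritical Galton--Watson process does not die out, and ``each generation takes at least one time step'' yields no bound on the number of generations, since a phase contains $\ell_2^2$ time steps --- polynomial in $n$. The claim that no source within one phase produces more than $O(\log n)$ descendants with failure probability $\exp(-c_4\log n\log\log n)$ therefore has no support and is false under your parameters. The paper's Proposition~\ref{lem:localbehavefinal} works with a polylogarithmically \emph{shorter} increment $\Delta t=\ell_2^2\log^{-28}n$, and Lemma~\ref{lem:3dlocalvisit} (via an annulus decomposition at scale $\ell_1\log^{-1}n$, not a single $\ell_2$-ball) shows that this choice makes the expected number of direct children at most $\log^{-3}n<1$. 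Subcriticality is what makes the stopped diffusion tree have height $O(\log n)$ w.h.p.\ (Lemma~\ref{lem:short_generation}). The global arithmetic still closes because each such increment enlarges the radius by only $9\ell_2\log n$ and there are $\approx n/(81\ell_2\log n)$ of them.

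\textbf{The cluster radius $\ell_2/\log^{c_5}n$ is the wrong scale.} The expected number of agents in a ball of radius $r=\ell_2/\log^{c_5}n$ is $\Theta(mr^3/n^3)=\Theta\bigl(\ell_2/\log^{3c_5}n\bigr)$, which is polynomial in $n$ throughout the range $1600n\log^2 n\leq m\leq n^3$ (e.g.\ $\approx\sqrt n/\mathrm{polylog}\,n$ at $m=n^2$). The connected components at that scale are thus enormous, not $O(\log^{c_5}n)$, and instantly infecting such a cluster overshoots the true process by far more than a polylog factor --- the coupling is no longer merely an ``acceleration'' that preserves the lower bound. The scale that actually works, and the one the paper uses in its island diffusion rule (Definition~\ref{def:islanddiffusion}), is $\ell_1\log^{-1}n$ with $\ell_1=(n^3/m)^{1/3}$; the expected occupancy of such a ball is $\log^{-3}n$ and the island size is $\leq 3\log n$ w.h.p.\ (the small-islands property in Definition~\ref{def:goodbehavior}, Lemma~\ref{lem:3dgoodevent}). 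Note $\ell_1=\ell_2^{2/3}\leq\ell_2$; the two scales are polynomially far apart unless $m=\Theta(n^3)$, and the argument does not survive conflating them.

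With $\Delta t$ and the island scale set as above, the remainder of your outline --- per-phase displacement bounds, a union bound over agent--phase pairs, and the endgame that for $m\geq 1600n\log^2 n$ many agents start far from $\mathrm a_1$ --- does track the paper's Section~\ref{subsec:globallow}, though the paper phrases the gluing via the ``virus type'' variables $b_{i,t}$ and a fixed pair of nested balls $\mathfrak B\subset\overline{\mathfrak B}$ rather than a running scalar radius $R_k$.
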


We use a \emph{local analysis} to prove our lower bound. The key idea
is that under uniform distribution of agents, the extent any
particular infected agent can spread the virus within a small time
increment is confined to a small neighborhood with high
probability. By gluing together these local estimates, we can
approximate the total diffusion time.




To explain our local analysis,
assume we start with an arbitrary infected agent, say
$\mathrm{a}_1$. Let us also assume, for simplicity,
that all the other uniformly distributed agents are
uninfected. Consider the scenario within a small time increment, say
$\Delta t$. During this time  \yajun{increment} the agent $\mathrm{a}_1$ infects
whoever it meets in the small neighborhood that contains its
extent of movement. The newly infected agents then continue
to move and infect others. The size of the
final region that contains all the infected agents
at $\Delta t$ then depends on the rate of transmission
and the extent of movement of all of the infected agents. In particular, if
$\Delta t$ is small enough, the expected number of transmissions
performed by $\mathrm{a}_1$ is less than one; even if it infects another agent,
the number of infections it causes within the
\emph{same} $\Delta t$ is also less than one, and so on.
The net effect is an eventual dying-out of
this ``branching process" (which we later model by what we call a diffusion tree), which localizes the
 \yajun{positions}
of all infected
agents at time $\Delta t$ to a small neighborhood around the initial
position of $\mathrm{a}_1$.

\liusd{}
As it may not be clear as we go through our proofs, we briefly review the main methodologies in obtaining lower bound results in related work, and point out their relation to our analysis and difficulties in directly applying them to higher dimensions. (Some readers may wish to skip these next paragraphs all together;  for others, who would like a more thorough discussion that unavoidably requires more technical details, we devote Appendix~\ref{sec:existing} to more details.) Two potential existing methods arise in \cite{AMP02, KS05} and \cite{PPPU11}. The former analyzes the growth rate of the size of the \emph{total} infected region; an upper bound on this growth rate translates to a lower bound for the diffusion time. The latter work, focusing on $d=1,2$, uses an ``island diffusion rule", which essentially speeds up infection by allowing infections to occur immediately on connected components in an underlying graph where edges are based on the distance between agents. This approach avoids handling the issue of the meeting time of random walks when they are very close, a regime where asymptotic results such as Lemma~\ref{lem:couple} and \ref{lem:rwcatchall} may not apply, while still providing a way to bound the diffusion time by arguing about the low probability of interaction among different ``islands".

The results in \cite{AMP02, KS05} are not directly applicable in our setting because the growth rate they obtain is linear in time, as a result of their assumption of constant agent density in an infinite space, in contrast to our use of a size parameter $n$ that scales with the agent density. It is fairly simple to see that blindly applying a linear growth rate to our setting of $o(1)$ density is too crude.
On the other hand, analyzing how agent density affects the growth rate is a potentially feasible approach but certainly not straightforward.


Our approach more closely follows \cite{PPPU11}.  The main limitation of \cite{PPPU11}, when applied to higher dimension, is how to control the interaction among islands.  If islands interact too often, because they are too close together, the argument, which is based on a low probability of interaction, breaks down.  However, if one parametrizes islands to prevent such interaction, then the bound that can be obtained are too weak.  In Appendix~\ref{subsec:existlower} we provide further details arguing that for $d>2$ this constraint ultimately limits the analysis for the case of $o(1)$ density. We attempt to remedy the problem by using islands as an intermediate step to obtain local estimates of the influence of each initially infected agent over small periods of time. This analysis involves looking at a branching process representing the spread of the infection, significantly extending the approach of \cite{PPPU11}.


%

\subsection{Local diffusion problem}
This subsection focuses on the local analysis as discussed above. In Section~\ref{subsec:globallow}, we will proceed to discuss how to utilize this analysis to get the lower bound in Theorem~\ref{thm:lb3}. As discussed in the last section, the two main difficulties in our analysis are: 1) our probabilistic estimates for the meeting time/position of multiple random walks are only useful  \zliu{asymptotically}; 2) walks near the boundary introduce further analytical complication. To begin with, the following definition serves to handle the second issue:

\begin{definition}[Interior region] The interior region $\mathfrak V(r)$ parameterized by $r$
is the set of lattice points in $\mathcal V^3$ that have at least $L_{\infty}$-distance $r$ to the boundary.
\end{definition}

For any point $P\in \cV^3$,
define $\mathbb{B}(P,x)=\{Q\in\mathcal{V}^3:\|Q-P\|_\infty\leq x\}$ as
the $x$-ball of neighborhood of $P$ under $L_\infty$-norm. The following
proposition is our major result in this subsection.


\begin{proposition}
\label{lem:localbehavefinal}
Consider a diffusion following Definition~\ref{def:diffusion}. Let
$S_0$ be the initial position of the only infected agent $\mathrm a_1$
at time 0, and $\mathcal W$ be an arbitrary subset of lattice points
in $\mathfrak V(20\ell_2\log n)$,
  \yajunfinal{where $\ell_2=\sqrt{n^3/m}$. Denote}
$\Delta t= \ell^2_2\log^{-28}n$. Define the binary random variable $b(\mathcal W)$ as follows:
\begin{itemize*}
\item \emph{If $S_0\in \mathcal W$:} $b(\mathcal W)$ is set as 1 if and only if all the infected agents at time $\Delta t$ can be covered by the ball $\mathbb B(S_0, 9\ell_2\log n)$.
\item \emph{If $S_{0} \notin \mathcal W$:} $b(\mathcal W) = 1$.
\end{itemize*}
We have
\begin{equation}
\Pr[b(\mathcal W) = 1] \geq 1 - \exp(-5\log n \log \log n) \label{lower bound}
\end{equation}
\end{proposition}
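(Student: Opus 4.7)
The plan is to couple the infection to a ``diffusion tree'' and combine a depth bound for the tree with a single-walk displacement bound via the triangle inequality. I root the tree at $\mathrm a_1$ and declare the children of a node $v$ to be the agents whose first infection is a direct meeting with $v$ during $[0,\Delta t]$, recording for every edge $(u,v)$ the meeting time $\tau_{uv}\in[0,\Delta t]$ and meeting position $M_{uv}$. Any agent $a$ that is infected by time $\Delta t$ then corresponds to a chain of meeting points $S_0=M_0,M_1,\dots,M_L$, and the position of $a$ at time $\Delta t$ differs from $S_0$ by at most $L+1$ random-walk displacements over sub-intervals of $[0,\Delta t]$.

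First I would apply a standard Azuma bound to each of the $m\le n^3$ walks to conclude that every walk's $L_\infty$-displacement during $[0,\Delta t]$ is at most $D=O(\sqrt{\Delta t\log n})=O(\ell_2\log^{-13}n)$ with failure probability $n^{-\omega(1)}$, and a union bound handles all walks simultaneously. It then suffices to show that the diffusion tree has depth at most $L^\star=\Theta(\log^{14}n)$ with probability $1-\exp(-5\log n\log\log n)$, since on the intersection of these events every infected agent sits within $(L^\star+1)D\le 9\ell_2\log n$ of $S_0$, which is precisely the conclusion $b(\mathcal W)=1$.

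The heart of the argument is a subcritical branching-process comparison. By Lemma~\ref{lem:couple}, a walk that starts at distance $r\in[1,\sqrt{\Delta t}]$ from an infected walk meets it within $\Delta t$ steps with probability $O(1/r)$; integrating against the uniform initial density $1/\ell_2^2$ yields an expected number of direct children per node of $\mu=O(\Delta t/\ell_2^2)=O(\log^{-28}n)$. For the offspring-count tail at a fixed node I would partition potential children by dyadic annuli around the parent's starting point and apply Corollary~\ref{cor:rwcatchall} on each annulus, combining the estimates to bound $\Pr[\ge j\text{ children}]$ by a geometric tail of scale $O(\mu)$. Lemma~\ref{lem:twowalkboundary} together with the assumption $S_0\in\mathfrak V(20\ell_2\log n)$ and the a posteriori containment of the tree in a $9\ell_2\log n$-ball around $S_0$ justifies using these free-space estimates without boundary corrections. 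A conditional-independence argument across generations then stochastically dominates the diffusion tree by a Galton--Watson process with offspring mean $O(\mu)$, giving $\Pr[\text{depth}\ge L^\star]\le\mu^{\Omega(L^\star)}=\exp(-\Theta(L^\star\log\log n))$, which is far stronger than the required $\exp(-5\log n\log\log n)$.

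The main obstacle is the ``close agent'' regime, where $r<\sqrt{\Delta t}$ and Corollary~\ref{cor:rwcatchall} becomes vacuous. I plan to handle it following the ``island'' idea of \cite{PPPU11}: couple the real process to a dominating island process in which any initial cluster of mutually close agents is immediately collapsed into a single super-agent whose trajectory envelops those of its constituents, and then run the branching analysis on super-agents instead. A Poisson-type occupancy bound for the uniform initial distribution shows that every cluster has polylogarithmic size with failure probability at most $\exp(-\Omega(\log n\log\log n))$, after which Corollary~\ref{cor:rwcatchall} applies cleanly at every super-agent branching step and the argument above delivers the claimed failure probability in~(\ref{lower bound}).
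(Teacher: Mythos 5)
Your high-level architecture — couple to a branching ``diffusion tree,'' bound its depth via a subcritical offspring mean, and combine with a per-generation displacement bound — is essentially the same template the paper uses, but several of your steps have genuine gaps.

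\textbf{Boundary handling is circular and cites the wrong lemma.} You invoke Lemma~\ref{lem:twowalkboundary} together with ``a posteriori containment of the tree in a $9\ell_2\log n$-ball'' to justify free-space estimates. Lemma~\ref{lem:twowalkboundary} is a \emph{lower} bound on the meeting probability, designed for the upper-bound argument on the diffusion time; for bounding the offspring mean from above you need an \emph{upper} bound on the bounded-space meeting probability, which comes from coupling to unbounded walks (as in the proof of Lemma~\ref{lem:3dlocalvisit}, which conditions on being far from the boundary and pays an $\exp(-\Omega(\log^3 n))$ error term for ever hitting it). Moreover, the containment you invoke is precisely the event you are trying to prove, so you cannot assume it. The paper breaks this circularity by introducing the \emph{stopped} diffusion tree $\T'$ (Definition~\ref{def:stoptree}): the tree is truncated the moment the good-behavior indicator $G_t$ fails, so all structural bounds (generation distance, offspring count) hold unconditionally on $\T'$, and one separately bounds $\Pr[\T\neq\T']\leq\Pr[G=0]$. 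You need a mechanism of this kind.

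\textbf{Close agents must be handled dynamically, not just at $t=0$.} You collapse initial clusters into super-agents, but the close-agent obstruction recurs at every infection time: when $v$ is first infected at $T(v)>0$, its current island $\isd_{T(v)}(v,\ell_1\log^{-1}n)$ is exactly the set of agents for which Corollary~\ref{cor:rwcatchall} is vacuous and whose meeting probability with $v$ is $\Theta(1)$. A time-$0$ occupancy bound does not control these sets at $T(v)$. The paper's island diffusion rule (Definition~\ref{def:islanddiffusion}) absorbs these agents as ``island children'' of $v$ at the moment $v$ is directly infected, and the small-islands property $E_t$ (bounded island size for all $t\leq n^{2.5}$, not just $t=0$) controls how many are absorbed per step; the offspring-mean analysis then only ever counts agents initially outside $\mathcal B_1(S^j_0)$. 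Your proposal silently assumes that the offspring-count tail from Corollary~\ref{cor:rwcatchall} applies at $T(v)$ with respect to a fresh configuration, but the configuration at $T(v)$ is correlated with the history and must be controlled via a density/island condition holding at all times (the paper's $D_t$ and $E_t$).

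\textbf{The Galton--Watson domination is not a one-line conditional-independence argument.} Children of different nodes at the same level of the tree are created at different times $T(\ell)$, so there is no single filtration you can condition on per level to make the offspring variables independent — the paper explicitly notes this complication and resolves it by conditioning agent-by-agent on $\mathcal F_{T(\ell)}$ (see the proof of Lemma~\ref{lem:short_generation}). You need to lay out this argument or justify a genuine stochastic domination, not merely assert it.

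Two points in your favor: the displacement decomposition via the chain of meeting points is clean, and deliberately choosing a much larger depth bound $L^\star=\Theta(\log^{14}n)$ against a much smaller offspring mean $\mu=O(\log^{-28}n)$ is a legitimate alternative trade-off that sidesteps the island-diameter contribution to per-generation displacement (the paper's $4\ell_2$ term). If you incorporate dynamic islands, though, each generation may acquire an island-diameter displacement $\Theta(\ell_1)$ which, when $m\sim n^3$, is comparable to $\ell_2$; in that case $L^\star$ must shrink back to $O(\log n)$ to keep the total within $9\ell_2\log n$, and your slack in the probability bound evaporates to the same order as the paper's.
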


The proposition yields that with high probability, all the infected agents lie within a neighborhood of distance $\tilde{O}(\ell_2)$ at time $\tilde{O}(\ell_2^2)$. The variable $\ell_2$ is chosen such that the expected number of infections spanned by an initially infected agent \xiaoruin{$\mathrm{a}_1$} within $\tilde{O}(\ell_2^2)$ units of time and a neighborhood of $\tilde{O}(\ell_2)$ distance is $O(1)$. This can be seen by solving $m(\ell_2/n)^3\times(1/\ell_2)=\tilde{O}(1)$, where $m(\ell_2/n)^3$ is the expected number of agents in a cube of size $\ell_2\times\ell_2\times\ell_2$, and $\tilde{O}(1/\ell_2)$ is the meeting probability within time $\tilde{O}(\ell_2^2)$ between any pair of random walks with initial distance $\ell_2$ (see Lemma~\ref{lem:couple}). This choice of $\ell_2$ appears to be the right threshold for our analysis. Indeed, a larger scale than $\ell_2$
would induce a large number of
infections
made by $\mathrm a_1$, and also
subsequent infections made by newly infected agents, with an exploding
affected region as an end result. On the other hand, a smaller scale
than $\ell_2$ would degrade our lower bound. This is because the
diffusion time is approximately of order $n/\ell_2$, the number of
spatial steps to cover $\mathcal{V}^3$, times $\ell_2^2$, the time
taken for each step, equaling $n\ell_2$. Hence a decrease in $\ell_2$
weakens the bound\footnote{In the case of general $d$-dimensional space, $\ell_2$ is chosen such that $m(\ell_2/n)^d\times(1/\ell_2^{d-2})=\tilde{O}(1)$, giving $\ell_2=\sqrt{n^d/m}$. Throughout the paper such $d$-dimensional analog can be carried out in similar fashion, but for ease of exposition we shall not bring up these generalizations and will focus on the 3-dimensional case.}.

Secondly, we introduce $\mathcal{W}$ in Proposition~\ref{lem:localbehavefinal} to avoid the case when $S_0$ is close to the boundary. As we have mentioned, such boundary conditions often complicate random walk analysis. Although the impact of the boundary's presence has been addressed (e.g., \cite{CPS09, PPPU11}),
existing results are not fully satisfactory. For example, when two simple random walks $S^1$ and $S^2$ start near the boundary, only a lower bound for the probability that two walks meet within a specific number of time steps is available (\cite{PPPU11}); we do not
know of an upper bound counterpart. We arrange our proof so that it is sufficient to analyze the diffusion pattern of a virus when it starts far from the boundary. Finally, we note that no effort has been made to optimize the exponent 28 in $\Delta t$'s definition.


We briefly explain how our global lower bound can be readily obtained from Proposition~\ref{lem:localbehavefinal}, which
 is a strong characterization of the local growth rate of infection region size.  Imagine the following evolution. Starting with a single infected agent, with high probability the infection spreads to a ball of radius at most $9\ell_2\log n$ in $\Delta t$ time units. At this time point, the newly infected agents \emph{inside} the ball continue to spread the virus to neighborhoods of size at most $9\ell_2\log n$, again with high probability. This gives an enlarged area of infection with radius at most $18\ell_2\log n$. Continuing in this way, the lower bound in Theorem~\ref{thm:ourtheorem} is then the time for the infection to spread over $\mathcal{V}^3$. This observation will be made rigorous in the next subsection.

The rest of this subsection is devoted to the proof of Proposition \ref{lem:localbehavefinal}. It consists of two
main steps. First, we need to estimate the expected number of
infections done by a single initially infected agent within distance
$9\ell_2\log n$ and time increment $\Delta t$.
Second, we iterate to consider each newly infected agent.  The analysis requires the condition that the global configuration behaves ``normally", a scenario that occurs with suitably high probability, as we show. We call this condition ``good behavior", which is introduced through the several definitions below:

\begin{definition}\label{def:island} \emph{(Island, \cite{PPPU11})} Let $A = \{\mathrm
  a_1, ..., \mathrm a_m\}$ be the set of agents in $\mathcal V^3$.
  For any positive integer $\gamma > 0$, let $G_t(\gamma)$ be the
  graph with vertex set $A$ such that there is an edge between two
  vertices if and only if the corresponding agents are within distance
  $\gamma$ (under $L_1$-norm) at time $t$. The island with parameter
  $\gamma$ of an agent $\mathrm a_i \in A$ at time step $t$, denoted
  by $\mathrm {Isd}_t(\mathrm a_i, \gamma)$ is the connected component
  of $G_t(\gamma)$ containing $\mathrm a_i$.
\end{definition}



\begin{definition} [Good behavior]
Let $\ell_1 = n m^{-1/3}$. For $1 \leq i \leq (\ell_2/\ell_1)\log^{-3}n$, define
$\mathcal B_i(P) = \mathbb B\left(P, i\ell_1\log^{-1}n\right)$ and let
$\partial \mathcal B_i(P) = \mathcal B_i(P) - \mathcal B_{i - 1}(P)$. For any $P \in \mathcal V^3$,
define
  \yajunfinal{$m_i(P) = \frac{(\log^5n)|\partial \mathcal B_i(P)|m}{(2n+1)^3}$. }
Let us define the following binary random variables:
\begin{itemize*}
\item {\bf Good density.}
Let $\{D_t: t \geq 0\}$  be a sequence of $0,1$ random
variables such that $D_t = 1$ if and only if
for all $P \in \mathcal V^3$ and all $i \leq (\ell_2 /
\ell_1)\log^{-3}n$, the number of agents in $\partial \mathcal B_i(P)$ is at
most $m_i(P)$, for all time
steps
up to $t$. We say the diffusion process has the
\emph{good density property} at time $t$ if $D_t = 1$.
\item {\bf Small islands.}
Let $\{E_t: t \geq 0\}$  be a sequence of $0,1$ random variables such that $E_t = 1$ if and only if $|\isd_{s}(\mathrm{a}_j, \ell_1\log^{-1}n)| \leq 3\log n$ for all $\mathrm a_j \in A$ and $0\leq s\leq t$. We say that the diffusion process has the
\emph{small islands property} at time $t$ if $E_t = 1$.
\item {\bf Short travel distance.}
Let $\{L_t : t \geq 0\}$ be a sequence of $0$, $1$ random variables
such that $L_t = 1$ if and only if for all $i \in [m]$ and all $t_1 < t_2 \leq t$ with $t_2 - t_1 \leq \ell^2_2 \log^{-12}n$,
we have $\|S^i_{t_1} - S^i_{t_2}\|_1 \leq 3\ell_2\log^{-4}n$. We say
the process has the
  \yajunfinal{\emph{short travel distance property} }
at time $t$ if $L_t = 1$.
\end{itemize*}
Finally, let $G_t = D_t \times E_t \times L_t$, and say the diffusion process \emph{behaves well} at time $t$ if $G_t = 1$. We also focus on $t\leq n^{2.5}$ and define the random variable $G = G_{n^{2.5}}$. \label{def:goodbehavior}
\end{definition}

The value $n^{2.5}$ in the definition is chosen such that it
lies well beyond our lower bound for the case
$m<n^3$, but is small enough for our
forthcoming union bound.
By using properties of random walks and techniques derived in ~\cite{PPPU11}, we have

\begin{lemma}
\label{lem:3dgoodevent}
Let $A = \{\mathrm a_1, ..., \mathrm a_m\}$ be agents that are
distributed uniformly in $\mathcal V^3$ at $t = 0$.
For
sufficiently large $n$, we have $\Pr[
G = 1] \geq
1- \exp(-6\log n\log \log n)$.
\end{lemma}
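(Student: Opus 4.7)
The plan is to bound $\Pr[G = 0]$ via a union bound $\Pr[G=0] \leq \Pr[D_{n^{2.5}} = 0] + \Pr[E_{n^{2.5}} = 0] + \Pr[L_{n^{2.5}} = 0]$ and control each failure event separately. The critical enabling observation is that the uniform distribution is stationary for each agent's lazy random walk with the self-loop reflection rule, so at any fixed time $t$ the marginal position of each agent is uniform on $\mathcal V^3$ and the $m$ positions are mutually independent. This lets us apply standard concentration inequalities to each time snapshot, then union bound over the $O(n^{2.5})$ relevant time steps (and where needed, pairs of time steps).

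For the good density event $D_t$, fix $P$, $i$, and $t$. The number of agents in $\partial \mathcal B_i(P)$ is a sum of $m$ independent Bernoulli variables with mean $\mu = |\partial \mathcal B_i(P)| \cdot m / (2n+1)^3$, and the threshold $m_i(P) = \log^5 n \cdot \mu$ is a factor $\log^5 n$ above the mean. The Chernoff bound $\Pr[X \geq k] \leq (e\mu/k)^k$ with $k = m_i(P)$ yields a factor of at most $(e/\log^5 n)^{k}$; since $k \geq \Omega(\log^2 n)$ throughout the relevant range of $i$ (the innermost shell has volume at least $(\ell_1/\log n)^3$), this is at most $\exp(-\Omega(\log^2 n \cdot \log\log n))$. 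Union bounding over $O(n^3)$ centers $P$, $O(\log n)$ radii $i$, and $O(n^{2.5})$ times still leaves room to absorb the $\exp(6\log n \log\log n)$ target with plenty to spare.

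For the small islands event $E_t$, an island of size exceeding $3\log n$ must contain a spanning tree on $3\log n$ agents, consecutive pairs of which lie within $L_1$-distance $\ell_1/\log^{-1} n$. Enumerating trees as in Pettarin et al., the expected number of such trees rooted at a specified agent is at most $\bigl(C\cdot m \cdot (\ell_1/\log n)^3 / n^3\bigr)^{3\log n - 1} = (C/\log^3 n)^{3\log n -1}$, since the density of agents in any ball of radius $\ell_1/\log n$ is $1/\log^3 n$. Multiplying by $m \leq n^3$ rooting choices gives $\exp(3\log n - \Omega(\log n \log\log n))$, and a final union bound over $O(n^{2.5})$ times preserves the $\exp(-\Omega(\log n \log \log n))$ decay.

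For the short travel distance event $L_t$, Azuma-Hoeffding applied coordinate-wise to $S^i_{t_2} - S^i_{t_1}$ with $\Delta := t_2 - t_1 \leq \ell_2^2 \log^{-12} n$ gives deviation $3\ell_2 \log^{-4} n = \log^2 n \cdot \sqrt{\Delta} \cdot \text{const}$, hence a tail of $\exp(-\Omega(\log^4 n))$ per agent and per time pair. Reducing to a sufficiently fine grid of anchor times (so that any $(t_1,t_2)$ pair can be monotonically interpolated) and union bounding over $m \leq n^3$ agents still yields $\exp(-\Omega(\log^4 n))$. Combining the three bounds and taking $n$ large enough gives $\Pr[G=1] \geq 1 - \exp(-6\log n\log\log n)$. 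The main obstacle is the island bound, since it involves a connectivity condition rather than a single-sample tail; the resolution is the tree-enumeration argument inherited from \cite{PPPU11}, which transfers to three dimensions unchanged because our density parameter $\ell_1 = nm^{-1/3}$ is calibrated to make the per-ball agent density exactly $1/\log^3 n$.
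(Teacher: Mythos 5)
Your decomposition and the arguments for the first two events match the paper's closely. For good density you use the multiplicative Chernoff form $\Pr[X \geq k] \leq (e\mu/k)^k$; the paper uses the form $\Pr[X \geq R] \leq 2^{-R}$ for $R \geq 6\mu$ after noting $m_i(P) \geq 6\mu$ — morally equivalent, and both comfortably absorb the union bound over $O(n^{5.5}\log n)$ triples $(t,P,i)$. The small-islands bound is the same tree-enumeration argument as in the paper (and in Pettarin et~al.), and your calibration of the per-ball density to $\log^{-3}n$ is correct.

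The one real gap is in the short-travel-distance part. You apply Azuma--Hoeffding ``coordinate-wise to $S^i_{t_2} - S^i_{t_1}$,'' but the coordinates of the \emph{bounded} walk on $\mathcal{V}^3$ are not martingales: at a boundary the self-loop rule creates a drift toward the interior (for example, at $x=n$ a coordinate moves by $0,-1,0$ with probabilities $1/6,1/6,4/6$, so $\E[\Delta x] = -1/6 < 0$). Azuma--Hoeffding therefore does not directly apply. Nor can you simply dominate the bounded walk by the coupled unbounded walk position-by-position — it is easy to construct a step sequence in one dimension where the bounded walk, after a reflection, ends farther from its start than the unbounded walk does. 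The paper circumvents this in Lemma~\ref{lem:basicmove} by arguing instead that at the first time the bounded walk reaches distance $r$, the coupled unbounded walk must have visited at least $r$ distinct lattice points, hence has a running max deviation of order $r/2$; one then applies a first-passage (or Azuma-on-the-unbounded-walk) tail to that deviation. You need that intermediate coupling step; once inserted, your $\exp(-\Omega(\log^4 n))$ estimate goes through. Separately, the ``grid of anchor times'' refinement you suggest is unnecessary — a direct union bound over $m\cdot n^{2.5}\cdot \ell_2^2\log^{-12}n \le n^{8}$ triples $(i,t_1,t_2)$ already costs only $\exp(O(\log n))$ against a tail of $\exp(-\Omega(\log^3 n))$ or better, so it fits within the target bound with room to spare.
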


 \zliu{The proof of Lemma~\ref{lem:3dgoodevent} is presented in Appendix~\ref{sec:missingproofs}.}
With this global ``good behavior", we have the following estimate:

\def\lem3dlocalvisit{
Let $A = \{\mathrm a_1, \ldots, \mathrm a_m\}$ be
  agents that are  distributed in
  $\cV^3$ in such a way that $D_0 = 1$. Let $S^1, S^2, \ldots, S^m$
  be their corresponding random walks.
  Consider an
  arbitrary agent $\mathrm a_j$ with $S^j_0\in\mathfrak V(2\ell_2 \log^{-4}n)$.
   Let
  $\{\mathrm a_{i_1}, \ldots, \mathrm a_{i_k}\}$ be the set of agents outside
 $\mathcal B_1(S^j_{0})$ at time $0$. Define $X_{j, \ell}$ as
 the indicator random variable that represents whether
  the agents $\mathrm a_j$ and $\mathrm a_{i_{\ell}}$  meet within time
  $[0, \Delta t]$. We have
 $$\E\left[\sum_{\ell \leq k}X_{j, \ell} \Bigg| D_{0} = 1,
 S^j_0 \in \mathfrak V(2\ell_2\log^{-4}n)\right]
 < \log^{-3}n.$$
}
\begin{lemma}
\label{lem:3dlocalvisit}
{\lem3dlocalvisit}
\end{lemma}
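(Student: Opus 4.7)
The plan is to decompose the sum $\sum_\ell X_{j,\ell}$ by the concentric shells $\partial \mathcal B_i(S^j_0)$ with $r_i = i\ell_1\log^{-1} n$ (the shells already introduced in Definition~\ref{def:goodbehavior}), count the agents in each shell using the good density property, and multiply by a meeting-probability bound that depends only on the starting distance. Because the random walks after time $0$ are independent of the initial configuration given it, the conditional expectation reduces to $\sum_i N_i \cdot \pi_i$, where $N_i$ is the number of agents in shell $i$ at time $0$ and $\pi_i$ is an upper bound on the meeting probability for a pair of walks whose initial $L_\infty$-distance lies in $(r_{i-1}, r_i]$. On the event $D_0 = 1$ we have $N_i \leq m_i(S^j_0)$, and using $|\partial \mathcal B_i| = \Theta(i^2 \ell_1^3 \log^{-3} n)$ together with the key identity $\ell_1^3 = \ell_2^2 = n^3/m$ yields $m_i = O(i^2 \log^2 n)$ for every $i \leq I_0 := (\ell_2/\ell_1)\log^{-3} n$.

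To bound $\pi_i$, first couple each relevant pair of bounded walks to independent unbounded walks on $\mathbf Z^3$. Because $S^j_0 \in \mathfrak V(2\ell_2\log^{-4} n)$ and each walk moves at most $O(\sqrt{\Delta t \log n}) = O(\ell_2\log^{-13.5} n)$ during $[0,\Delta t]$ with probability $1 - \exp(-\Omega(\log^{20} n))$, neither walk touches the boundary and the coupling stays intact for the whole interval. Under the coupling, Lemma~\ref{lem:couple} gives $\pi_i = O(1/r_i)$ whenever $r_i^2 \leq \Delta t$. For $r_i^2 > \Delta t$ I would invoke the complementary Gaussian-tail estimate $\pi_i = O\!\left(\frac{\sqrt{\Delta t}}{r_i^2}\exp\!\left(-\frac{c\, r_i^2}{\Delta t}\right)\right)$, which follows from the standard local CLT for 3D simple random walks combined with a reflection-type bound on the hitting time applied to $q(2\Delta t, \vec x)$.

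Next, split the sum at the thresholds $I_1 := (\ell_2/\ell_1)\log^{-13} n$ (where $r_{I_1}^2 \approx \Delta t$) and $I_0$. For the near shells $i \leq I_1$, the contribution is $\sum_i m_i \cdot O(1/r_i) = O(I_1^2 \log^3 n/\ell_1)$, which collapses to $O(\log^{-23} n)$ after substituting $\ell_2^2/\ell_1^3 = 1$. For the far shells $I_1 < i \leq I_0$, the Gaussian factor $\exp(-c(i/I_1)^2)$ makes the series geometric-like, and after using $m_i \pi_i = O((\ell_2/\ell_1^2)\log^{-10} n)\exp(-c(i/I_1)^2)$ the total also comes out to $O(\log^{-23} n)$. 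Finally, for $i > I_0$ the density bound no longer applies, but $r_i > \ell_2\log^{-4} n$ forces $r_i^2/\Delta t > \log^{20} n$; hence even the trivial count of $m \leq n^3$ agents multiplied by $\exp(-\Omega(\log^{20} n))$ is super-polynomially small. Summing the three regimes gives $O(\log^{-23} n) \ll \log^{-3} n$ as required.

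The one non-routine obstacle is supplying the upper-bound counterpart of Lemma~\ref{lem:couple}: that lemma is stated as an asymptotic equality only when $t \geq \|\vec x\|_2^2$, and Lemma~\ref{lem:twowalkboundary} only gives a lower bound, so the far-shell regime forces us to derive (or cite) the Gaussian-tail upper bound on $q(t, \vec x)$ for $t < \|\vec x\|_2^2$ and then transfer it from $\mathbf Z^3$ to $\mathcal V^3$ via the boundary-avoidance coupling above. This is standard random-walk manipulation but deserves explicit recording alongside the other preliminary lemmas; everything else in the argument is bookkeeping driven by the identity $\ell_1^3 = \ell_2^2$.
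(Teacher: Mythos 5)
Your proposal follows essentially the same shell-decomposition strategy as the paper: split $\sum_\ell X_{j,\ell}$ over the shells $\partial\mathcal B_i(S^j_0)$, bound the shell counts by $m_i = O(i^2\log^2 n)$ via the good-density event and the identity $\ell_1^3 = \ell_2^2 = n^3/m$, and multiply by a per-pair meeting-probability bound obtained after coupling to unbounded walks. The paper also handles the boundary exactly as you do (via ``no-boundary-hit'' events bounded by Lemma~\ref{lem:basicmove}) and cuts off far agents ($i > q$) by an exponential tail times a union over $m$ agents. So the structure matches.

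The one genuine divergence is your treatment of the mid-range shells $I_1 < i \leq I_0$, where $r_i^2 > \Delta t$. You observe that Lemma~\ref{lem:couple} is stated only for $t \geq \|\vec x\|_2^2$ and therefore introduce a Gaussian-tail estimate $\pi_i = O\bigl(\sqrt{\Delta t}\, r_i^{-2}\exp(-c\,r_i^2/\Delta t)\bigr)$ which you correctly flag as not among the paper's recorded preliminaries. This is not actually needed: $Q(t,\vec x)$ is nondecreasing in $t$, so for $\Delta t < r_i^2$ one has $\Pr[\text{meet before }\Delta t]\le \Pr[\text{meet before }r_i^2]=O(1/r_i)$, which is exactly what Corollary~\ref{cor:rwcatchall} gives and what the paper uses uniformly over all $i\le q$. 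Your Gaussian tail does buy a larger margin — $O(\log^{-23}n)$ for the mid-range contribution versus the paper's $O(\log^{-4}n)$ — but at the cost of an extra lemma. There are also two minor bookkeeping slips in your write-up: applying Corollary~\ref{cor:rwcatchall} to a shell of $L_\infty$-radius $(i-1)\ell_1\log^{-1}n$ gives a bound $O(\log n/((i-1)\ell_1))$, i.e.\ your $\pi_i = O(1/r_i)$ already carries a $\log n$; and a displacement of $O(\sqrt{\Delta t\log n}) = O(\ell_2\log^{-13.5}n)$ only yields an escape probability $\exp(-\Omega(\log n))$, so to get $\exp(-\Omega(\log^{20}n))$ you need the displacement radius $O(\ell_2\log^{-4}n)$, which is still well inside $\mathfrak V(2\ell_2\log^{-4}n)$. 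Neither affects the final conclusion after tracking constants, and the overall proof is sound.
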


\begin{proof}
First, notice that the number of lattice points in $\partial \mathcal B_i(P)$ satisfies
$$|\partial \mathcal B_i|  =  |\mathcal B_i| -
|\mathcal B_{i - 1}| \leq  (2i\ell_1\log^{-1}n)^3 - (2(i - 1)\ell_1\log^{-1}n)^3
 \leq  24i^2\ell^3_1\log^{-3}n.$$
We may also similarly show that
$$|\partial \mathcal B_i| \geq i^2\ell_1^3\log^{-3}n.$$

Let $q = (\ell_2 / \ell_1)\log^{-3}n $. For each $i \in [q]$, write
$\mathcal B_i = \mathcal B(S^j_{0})$, $\partial \mathcal B_i
= \partial \mathcal B_i(S^j_{0})$, and $m_i = m_i(S^j_{0})$. We want to estimate the meeting probability and hence the expected number of infections for each $i\in[q]$.


First, let us consider the
agents outside the ball $\mathcal B_q$. The probability that any
specific agent initially outside $\mathcal B_q$ ever travels into the ball $\mathbb B(S^j_0,
\ell_2\log^{-4}n)$ within time $\ell^2_2\log^{-12}n$ is at most
$\exp(-\Omega(\log^3n))$ ( \zliu{by}, e.g.,  Lemma~\ref{lem:basicmove} in the section on probability review). On the other
hand, the probability that $S^j$ ever travels out of $\mathbb
B(S^j_0, \ell_2\log^{-4}n)$ is also $\exp(-\Omega(\log^3n))$. For these two agents to meet,  \zliu{at least} one of these events \zliu{has to} occur. Therefore, with
probability $\exp(-\Omega(\log^3n))$ $S^j$ will meet an agent initially
outside $\mathcal B_q$. This leads to
$$\E\Bigg[\underbrace{\sum_{i_{k'}:S^{i_{k'}}_0 \notin \mathcal B_q}X_{j, k'}}_{\substack{\mbox{the set of agents initially }\\ \mbox{outside $\mathcal B_q$}}} \Bigg| D_{0} = 1, S^j_0 \in \mathfrak V(2\ell_2\log^{-4}n)\Bigg]
 \leq m\exp(-\Omega(\log^{3}n)).$$

Let us next focus on agents inside $\mathcal B_q$.
  Fix an arbitrary $\mathrm
a_{i_\ell} \in \partial \mathcal B_i$. Let $e^j$ and $e^{\ell}$
represents the events that $S^j$ and $ \yajun{S^{{i}_\ell}}$ ever visit a boundary before time $\ell^2_2\log^{-12}n$ respectively. Again by Lemma~\ref{lem:basicmove},  $\Pr[e^j \vee e^{\ell}|D_{0} = 1, S^j_0 \in \mathfrak V(2\ell_2\log^{-4}n)] = \exp(-\Omega(\log^3n))$.
We now have
\begin{eqnarray*}
& & \E[X_{j, \ell}| D_0 = 1, S^j_0 \in \mathfrak V(2\ell_2\log^{-4}n)]\\
& = &
\Pr[X_{j, \ell}=1;\neg e^j\wedge\neg e^\ell| D_0 = 1, S^j_0 \in \mathfrak V(2\ell_2\log^{-4}n)]+\Pr[X_{j, \ell}=1;e^j\vee e^\ell| D_0 = 1, S^j_0 \in \mathfrak V(2\ell_2\log^{-4}n)]\\
& \leq & \Pr[X_{j, \ell}=1;\neg e^j\wedge\neg e^\ell| D_0 = 1, S^j_0 \in \mathfrak V(2\ell_2\log^{-4}n)] + \exp(-\Omega(\log^3n)).
\end{eqnarray*}
To compute {\small
\begin{eqnarray*}
& & \Pr[X_{j, \ell};\neg e^j\wedge\neg e^\ell| D_0 = 1, S^j_0 \in \mathfrak V(2\ell_2\log^{-4}n)] \\
& =&\Pr\left[\left\{\exists t_0 \leq \frac{\ell^2_2}{\log^{12}n}: \|S^j_{t_0} - S^{i_\ell}_{t_0} \|_1\leq1\right\}\bigwedge \left(\neg e^j \wedge \neg e^{\ell} \right)\Bigg| D_0 = 1, S^j_0 \in \mathfrak V(2\ell_2\log^{-4}n)\right],
\end{eqnarray*}}
we couple $S^j$ and $ \zliu{S^{i_\ell}}$ with unbounded walks $\mathbf S^j$ and $ \zliu{\mathbf S^{i_\ell}}$ starting at the same positions at $t = 0$ in the natural way. Before the pair of bounded walks visit the boundary, they coincide with their unbounded counterparts. Therefore,
we have
{
\small
\begin{eqnarray*}
& & \Pr\left[\left\{\exists t_0 \leq \frac{\ell^2_2}{\log^{12}n}: \|S^j_{t_0} - S^{i_\ell}_{t_0}\|_1\leq1 \right\}\bigwedge \left(\neg e^j \wedge \neg e^{\ell} \right)\Bigg| D_0 = 1, S^j_0 \in \mathfrak V(2\ell_2\log^{-4}n)\right] \\
&\leq& Pr\left[\exists t_0 \leq \frac{\ell^2_2}{\log^{12}n}: \|\mathbf S^j_{t_0} - \mathbf S^{i_\ell}_{t_0}\|_1\leq1\Bigg| D_0 = 1, S^j_0 \in \mathfrak V(2\ell_2\log^{-4}n)\right] = O(\frac 1{(i - 1)\ell_1}) \quad \mbox{(Corollary~\ref{cor:rwcatchall})}
\end{eqnarray*}
}

We thus have $\E[X_{j, \ell}| D_0 = 1, S^j_0 \in \mathfrak V(2\ell_2\log^{-4}n)] \leq \frac{C_0}{(i-i)\ell_1}$ for some constant
$C_0$. Next, we estimate $\E_i \equiv \E[\sum_{\ell: S^{i_\ell}_0 \in \partial \mathcal
  B_i}X_{j, \ell} \mid D_0 = 1, S^j_0 \in \mathfrak V(2\ell_2\log^{-4}n) ]$ as
$$\E_i  \leq \frac{C_0m_i}{(i - 1)\ell_1} =
\frac{C_0}{(i-1)\ell_1}\cdot \frac{
  |\partial \mathcal B_i|m\log^5n }{8n^3} =
\frac{C_0(3i^2\ell^3_1\log^{-3}n)m\log^5n}{(i - 1)\ell_1n^3}\leq
 \frac{6C_0im\ell^2_1\log^2 \yajun{n}}{n^3}.$$
The first inequality holds because $D_{0} = 1$ and $m_i$ is  an upper
bound for the number of agents in $\partial \mathcal B_i$ (for all
$i$).
\begin{eqnarray*}
\E[\sum_{\ell \leq k}X_{j, \ell}\mid D_{0} = 1, S^j_0 \in \mathfrak V(2\ell_2\log^{-4}n)] & \leq & \sum_{i \leq q}\E_i + \underbrace{m \times \exp(-\Omega(\log^3n))}_{\mbox{upper bound for those outside }\mathbb \mathcal B_q}\\
& \leq & \left(\sum_{2 \leq i \leq q}i\right)\frac{6C_0m
  \ell^2_1\log^2 n}{n^3} + \exp(-\Omega(\log^2n))\\
& < & \frac{6C_0q^2m\ell^2_1\log^2 n}{n^3} + \exp(-\Omega(\log^2n))\\
& = & 6C_0\log^{-4}n + \exp(-\Omega(\log^2n))\\
& < & \log^{-3}n
\end{eqnarray*}
for sufficiently large $n$ as $m<n^3$.
\end{proof}

Lemma~\ref{lem:3dlocalvisit} says that if the initial distribution of agents possesses good behavior, then one can ensure that the expected number of direct infections on far-away agents is small.
For agents close to the initially infected agents, we instead utilize the concept of islands, which is also deeply related to the subsequent virus spreading behavior. Now we formally introduce a new diffusion process with a modified ``island diffusion" rule. It is easy to see that this new diffusion process can be naturally coupled with the original diffusion process (evolving with Definition~\ref{def:diffusion}) by using the same random walks in the same probability space.

\begin{definition}[Diffusion process with island diffusion rule]
  Consider a diffusion process in which $m$ agents are performing
  random walks on $\mathcal V^3$. An uninfected agent $\mathrm a_j$
  becomes infected at time $t$ if one of the following conditions
  holds:
\begin{enumerate*}
\item it meets a previously infected agent at time $t$. For convenience, we say $\mathrm a_j$ is \emph{directly} infected if it is infected in this way.
\item it is inside $\isd_{t}(\mathrm a_i, \ell_1\log^{-1}n)$ where $\mathrm{a}_i$ is directly infected at time $t$. 
\end{enumerate*} \label{def:islanddiffusion}
\end{definition}

This coupled process is different from the diffusion
  models introduced in ~\cite{PPPU11,SS10,PSSS11}. In our formulation, an island is infected only if meeting occurs between one uninfected and one previously infected agent. In ~\cite{PPPU11,SS10,PSSS11} (using our
  notations), an island is infected once it contains a previously infected agent. As a result, infections occur less frequently in our model than the models in ~\cite{PPPU11,SS10,PSSS11}. This difference is the key to getting a tight lower bound for dimensions higher than 2. More precisely, our infection rule allows us to build a terminating branching process, or what we call ``diffusion tree" in the following definition, whose generations are defined via the infection paths from the source. The termination of this branching process constrains the region of infection to a small neighborhood around the source with a probability of larger order than obtained in~\cite{PPPU11}. This in turn leads to a tighter global lower bound.
%

\begin{definition}[Diffusion tree]\label{def:diffusetree}
Let $\mathcal W \subseteq \mathfrak V(2\ell_2\log n)$ be a subset of lattice points. Consider a diffusion, following the island diffusion rule, that starts with an initially infected island $\isd_{0}(\mathrm a_1, \ell_1\log^{-1}n)$.
 Recall that $S^1_0$ denotes $\mathrm a_1$'s position at $t = 0$.
 The diffusion tree $\T$ with respect to  $\mathcal W$ has the following components:
\begin{enumerate*}
\item If $S^1_{0} \notin \mathcal W$, $\T = \emptyset$.
\item If $S^1_{0} \in \mathcal W$,
\begin{itemize*}
\item The \emph{root} of $\T$ is a dummy node $r$.
\item The children of $r$ are all
the agents in $\isd_{0}(\mathrm a_1, \ell_1\log^{-1}n)$.
\item $\mathrm a_{\ell'}$ is a
{\em child} of $\mathrm a_\ell$ ($\mathrm a_{\ell'} \in \child(\mathrm a_\ell)$)
if $\mathrm a_{\ell'}$ is infected   \yajunfinal{by}
$\mathrm a_\ell$ before time $\Delta t$.
\item $\mathrm a_{\ell'}$ is a
{\em direct child} of $a_\ell$ ($\mathrm a_{\ell'} \in
\dchild(\mathrm a_\ell)$) if $\mathrm a_{\ell'} \in \child(\mathrm
a_{\ell})$ and it is {\em directly} infected by $\mathrm a_\ell$.
\end{itemize*}
\end{enumerate*}
For technical reasons, if $\mathrm a_{\ell'}$ is not in $\T$, we let $\child(\mathrm a_{\ell}) = \emptyset$ and $\dchild(\mathrm a_{\ell}) = \emptyset$.
\end{definition}

Figure~\ref{fig:branching} in the Appendix shows an example of the diffusion process and its corresponding
diffusion tree at $t = 0, 20, 40, 60$. Notice that the diffusion tree $\T$ stops growing after $\Delta t$ steps.

We refer the root of the tree as the $0$th level of the tree and  count levels in the standard way.
The height of the tree is the number of levels in the tree. Note that diffusion tree defined in this way can readily be
interpreted as a branching process (See, e.g., Chapter 0 in
\cite{Williams91}),
in which the $j$th generation of the process corresponds with the
$j$th level nodes in $\T$.

Next we incorporate the good behavior variable $G_t$ with diffusion tree.
The motivation is that, roughly speaking, consistently good behavior
guarantees a small number of infections, or creation of children, at each
level. This can be seen through Lemma~\ref{lem:3dlocalvisit}.

\begin{definition}[Stopped diffusion tree]\label{def:stoptree}Consider a diffusion process with island diffusion rule, and let $T(\ell)$ be the time that
$\mathrm a_{\ell}$ becomes infected in the process.
The stopped diffusion tree
 $\T'$ (with respect to $\mathrm{a}_i$ and $\mathcal W$) is a subtree of
 $\T$ induced by the set of vertices
 $\{\mathrm a_{\ell}:\mathrm a_{\ell} \in \T\ \wedge \ G_{T(\ell)} = 1\}$.
 We write $\mathrm a_{\ell} \in \child'(\mathrm a_{\ell'})$ if $\mathrm a_{\ell} \in \child(\mathrm a_{\ell'})$ and $\mathrm a_{\ell} \in \T'$. Similarly, $\mathrm a_{\ell} \in \dchild'(\mathrm a_{\ell'})$ if $\mathrm a_{\ell} \in \dchild(\mathrm a_{\ell'})$ and $\mathrm a_{\ell} \in \T'$.
\end{definition}

Note that the definition of stopped diffusion tree involves global
behavior of the whole diffusion process due to the introduction of
$G_t$.  On the other
  hand, $\T=\T'$ with overwhelming probability, so we can
  translate the properties of $\T'$ back to $\T$ easily.

We next show two properties of
the (stopped) diffusion trees, one on the physical propagation
of children relative to their parents and one on the tree
height. These are our main ingredients for proving Proposition~\ref{lem:localbehavefinal}.
The properties are in brief:
\begin{enumerate*}
\item If $\mathrm a_{\ell}$ is a child of $\mathrm a_{\ell'}$ in the
stopped diffusion tree $\T'$,
$\|S^{\ell}_{T(\ell)} - S^{\ell'}_{T(\ell')}\|_{\infty}$ is $\tilde
O(\ell_2)$.
%
\item The height of the stopped diffusion tree $\T'$ is $\tilde O(1)$ with high probability.
\end{enumerate*}


Proving the first item requires the following notion:

\begin{definition}[Generation distance] Consider the diffusion
tree $\T$ with respect to $\mathcal W$.
Let $\mathrm a_{\ell}$ be an arbitrary agent and let $\mathrm a_{\ell'}$ be its parent on $\T$.
The \emph{generation distance} of $\mathrm a_{\ell}$ with respect to $\T$ is
\begin{equation}
\mathfrak d_{\ell} =
\left\{
\begin{array}{ll}
\|S^{\ell}_{T(\ell)}
- S^{\ell'}_{T(\ell')}\|_1 & \mbox{ if $\mathrm a_{\ell}$ is in $\T$ and is at the 2nd or deeper level} \\
0& \mbox{ otherwise.}
\end{array}
\right.
\end{equation}
In other words, the generation distance between $\mathrm a_{\ell}$ and $\mathrm a_{\ell'}$ is
the distance between where $\mathrm a_{\ell}$ and $\mathrm a_{\ell'}$ were infected.
The generation distance of $\mathrm a_{\ell}$ with respect to $\T'$ is $\mathfrak d'_{\ell}$, which
is set to be $\mathfrak d_{\ell}$ if $\mathrm a_{\ell}$ is in $\T'$ and $0$ otherwise.
\end{definition}

With this notion, we can  \zliu{derive} the following lemma:

\begin{lemma}\label{lem:gendistance}
Consider the stopped diffusion tree with respect to $\mathcal W$ that starts with an infected island $\isd_0(\mathrm{a}_i,\ell_1\log^{-1}n)$.
For an agent $\mathrm a_\ell$ in $\T'$,
$\mathfrak d'_{\ell} \leq 4\ell_2 $.
\end{lemma}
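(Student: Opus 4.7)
The plan is a case analysis on which of the two infection mechanisms in Definition~\ref{def:islanddiffusion} connects $\mathrm{a}_\ell$ to its parent $\mathrm{a}_{\ell'}$ in $\T'$: either $\mathrm{a}_\ell$ is directly infected by $\mathrm{a}_{\ell'}$ (they meet at time $T(\ell)$), or $\mathrm{a}_\ell$ is pulled in by the island rule, in which case $\mathrm{a}_{\ell'}$ is directly infected at time $T(\ell)=T(\ell')$ and $\mathrm{a}_\ell$ lies in $\isd_{T(\ell)}(\mathrm{a}_{\ell'},\ell_1\log^{-1}n)$. Before splitting cases, I would record the preliminary observation that each of $D_t,E_t,L_t$ in Definition~\ref{def:goodbehavior} is monotone non-increasing in $t$, so $\mathrm{a}_\ell\in\T'$, i.e.\ $G_{T(\ell)}=1$, already entails $G_{T(\ell')}=1$. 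Hence the short-travel-distance and small-island guarantees may be invoked freely at every time up to $T(\ell)$. First-level agents (children of the dummy root) have $\mathfrak d'_\ell=0$ by convention and are dismissed immediately.

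For the direct-infection case, meeting at $T(\ell)$ gives $\|S^\ell_{T(\ell)}-S^{\ell'}_{T(\ell)}\|_1\le 1$, so by the triangle inequality
\[
\mathfrak d'_\ell \;\le\; 1 + \|S^{\ell'}_{T(\ell)}-S^{\ell'}_{T(\ell')}\|_1.
\]
The time gap $T(\ell)-T(\ell')$ is at most $\Delta t=\ell_2^2\log^{-28}n$, which is far below the threshold $\ell_2^2\log^{-12}n$ built into the short-travel-distance property; hence $L_{T(\ell)}=1$ bounds the second term by $3\ell_2\log^{-4}n$, and the total is at most $1+3\ell_2\log^{-4}n\le 4\ell_2$ for sufficiently large $n$.

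For the island-infection case, $T(\ell)=T(\ell')$ reduces the problem to bounding the $L_1$ diameter of $\isd_{T(\ell)}(\mathrm{a}_{\ell'},\ell_1\log^{-1}n)$. The small-island property caps the cardinality of this island at $3\log n$, so any spanning tree of it has at most $3\log n-1$ edges, each of $L_1$-length at most $\ell_1\log^{-1}n$ by the very definition of an island. Summing along a spanning path from $\mathrm{a}_{\ell'}$ to $\mathrm{a}_\ell$ and using the triangle inequality gives $\mathfrak d'_\ell\le 3\log n\cdot\ell_1\log^{-1}n=3\ell_1$. The hypothesis $m\le n^3$ of Theorem~\ref{thm:lb3} forces $\ell_1=nm^{-1/3}\le\sqrt{n^3/m}=\ell_2$, so this bound is at most $3\ell_2\le 4\ell_2$.

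I do not anticipate a real obstacle here; the only delicate issue is bookkeeping which good-behavior property applies at which time, which the monotonicity remark disposes of in one stroke. After that the argument is purely a triangle inequality combined with the comparison $\ell_1\le\ell_2$ that holds throughout the parameter range of the theorem.
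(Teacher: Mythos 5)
Your direct-infection case is fine, but your island-infection case rests on a misreading of the parent--child relation in Definition~\ref{def:diffusetree}, and that creates a genuine gap. You assert that when $\mathrm{a}_\ell$ is pulled in by the island rule, its parent $\mathrm{a}_{\ell'}$ is the directly-infected agent in the same island, so $T(\ell')=T(\ell)$ and the whole generation distance collapses to the island diameter. That is not how the paper's tree is built. As the figure in the appendix makes explicit ($\dchild(\mathrm a_1)=\{\mathrm a_5\}$, $\child(\mathrm a_1)=\{\mathrm a_5,\mathrm a_7,\mathrm a_{10}\}$), the island-infected agents are children of the \emph{previously infected} agent who caused the direct meeting, not of the directly-infected agent sitting in the island with them. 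Under the paper's construction there is a third agent $\mathrm a_{\ell''}\in\dchild'(\mathrm a_{\ell'})$ with $T(\ell'')=T(\ell)$, while $T(\ell')<T(\ell)$ in general. Consequently the quantity you bound, $\|S^\ell_{T(\ell)}-S^{\ell'}_{T(\ell)}\|_1$, is not $\mathfrak d'_\ell=\|S^\ell_{T(\ell)}-S^{\ell'}_{T(\ell')}\|_1$: you have dropped the term $\|S^{\ell'}_{T(\ell)}-S^{\ell'}_{T(\ell')}\|_1$ accounting for the parent's travel between its own infection time and the current infection event, which is controlled by the short-travel-distance property and is of size up to $3\ell_2\log^{-4}n$. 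Numerically the budget of $4\ell_2$ still absorbs this term (the paper's split is $\le (3\ell_2\log^{-4}n+1)+3\ell_1\le\ell_2+3\ell_2$), so your final number is right by coincidence, but your derivation of it is not.

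The paper's proof avoids the case split entirely by always routing the triangle inequality through the directly-infected intermediary $\mathrm a_{\ell''}$ (taking $\mathrm a_{\ell''}=\mathrm a_\ell$ when $\mathrm a_\ell$ itself is a direct child), which cleanly yields the travel term and the island-diameter term in one stroke. If you repair your island case by adding the travel contribution of $\mathrm a_{\ell'}$ over $[T(\ell'),T(\ell)]$ --- which your own monotonicity observation already licenses, since $G_{T(\ell)}=1$ implies $L_{T(\ell)}=1$ --- you recover the paper's bound. Your monotonicity remark, by the way, is harmless but unnecessary: you never actually need $G_{T(\ell')}$, only $L_{T(\ell)}$ and $E_{T(\ell)}$, both of which are components of $G_{T(\ell)}=1$ directly.
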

\begin{proof}We focus on the non-trivial case that $S_0^i\in\mathcal W$ and that $\mathrm a_\ell$ is at the 2nd level of $\T'$ or deeper. Suppose that the diffusion process behaves well up to time $T(\ell)$ i.e. $G_{T(\ell)}= 1$. Let $\mathrm a_{\ell'}$ be the parent of $\mathrm a_\ell$ on $\T'$. By the construction of $\T'$, there exists an $\mathrm a_{\ell''} \in \dchild'(\mathrm a_{\ell'})$ (possibly $\mathrm a_\ell$ itself) such that
\begin{itemize*}
\item $\mathrm a_{\ell} \in \isd_{T(\ell'')}(\mathrm a_{\ell''}, \ell_1\log^{-1}n)$.
\item $T(\ell) = T(\ell'')$ i.e., $\mathrm a_{\ell}$ and $\mathrm a_{\ell''}$ get infected at the same time due to the island diffusion rule.
\item $\|S^{\ell'}_{T(\ell)} - S^{\ell''}_{T(\ell)}\|_1 \leq 1$ i.e., $\mathrm a_{\ell''}$ gets infected because it meets an infected agent.
\end{itemize*}
By the triangle inequality,

$$\mathfrak d_{\ell}' = \|S^{\ell'}_{T(\ell')} - S^{\ell}_{T(\ell)}\|_1 \leq \|S^{\ell'}_{T(\ell')} - S^{\ell''}_{T(\ell'')}\|_1 + \|S^{\ell''}_{T(\ell'')} - S^{\ell}_{T(\ell)}\|_1.$$
Note that $\|S^{\ell'}_{T(\ell')} - S^{\ell''}_{T(\ell'')}\|_1 \leq
3\ell_2\log^{-4}n + 1 \leq \ell_2$  (short travel distance property)
and $\|S^{\ell''}_{T(\ell'')} - S^{\ell}_{T(\ell)}\|_1 \leq (\ell_1
\log^{-1}n) (3\log n) =3\ell_1 \leq 3\ell_2$ (small island property) since $G_{T(\ell)}=1$. Finally, the case when $G_{T(\ell)}=0$ is trivial, and the lemma follows.
\end{proof}

Next we show that with high probability the
height of the stopped diffusion tree is $\tilde O(1)$.
 Using standard notation, we let $\{\mathcal F_t\}_{t\geq0}$ be  \zliu{the} $\sigma$-algebra, or filtration, generated up to time $t$, i.e., $\mathcal F_t$ encodes
 all the information regarding the diffusion process up to $t$.
The special instance $\mathcal F_0$ is used to describe
 the initial positions of the agents.

The main
  property of stopped diffusion tree that we need is the following:

\def\lemdchild{
Consider a diffusion process
 with the island diffusion rule. Let $\mathrm a_{\ell}$ be an arbitrary
 agent with infection time $T(\ell)$. We have 
 \begin{equation}\label{eqn:meetcount}
 \E\left[|\dchild'(\mathrm a_{\ell})|\Big| \mathcal F_{T(\ell)}, S^{\ell}_{T(\ell)} \in \mathfrak V(2\ell_2\log^{-4}n)\right]\leq \log^{-3}n,
 \end{equation}
 where $\dchild'(\cdot)$ is defined for a stopped diffusion tree with respect to an arbitrary set $\mathcal W \subseteq \mathfrak V(20\ell_2\log n)$.
}
\begin{lemma}
\label{lem:dchild}
{\lemdchild}
\end{lemma}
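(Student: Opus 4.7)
The plan is to reduce the computation to a time-shifted application of Lemma~\ref{lem:3dlocalvisit}, combined with a short argument for agents close to $\mathrm a_\ell$ at time $T(\ell)$. First I would observe that $|\dchild'(\mathrm a_\ell)| = 0$ whenever $\mathrm a_\ell \notin \T'$, and that the event $\{\mathrm a_\ell\in\T'\}$ is $\mathcal F_{T(\ell)}$-measurable (it is determined by whether $T(\ell)\leq\Delta t$, whether $S^1_0\in\mathcal W$, and whether $G_{T(\ell)}=1$). Hence the conditional expectation vanishes outside this event, and it suffices to establish the bound on $\{\mathrm a_\ell\in\T'\}$, where in particular $G_{T(\ell)}=1$, so the good-density, small-islands, and short-travel-distance properties all hold at time $T(\ell)$.

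Next I would characterize which agents can lie in $\dchild'(\mathrm a_\ell)$. By the island diffusion rule, every agent in $\isd_{T(\ell)}(\mathrm a_\ell,\ell_1\log^{-1}n)$ becomes infected at time $T(\ell)$ simultaneously with $\mathrm a_\ell$ and therefore cannot subsequently be \emph{directly} infected by $\mathrm a_\ell$; such agents are excluded. Every direct child must hence (i) sit at $L_1$-distance strictly greater than $\ell_1\log^{-1}n$ from $\mathrm a_\ell$ at time $T(\ell)$, and (ii) meet $\mathrm a_\ell$ at some later time $t'\in(T(\ell),\Delta t]$. I would then split the remaining candidates by $L_\infty$-distance from $S^\ell_{T(\ell)}$ into those outside $\mathcal B_1(S^\ell_{T(\ell)})$ and those inside.

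For the outside group, shifting the time origin to $T(\ell)$ places us in exactly the setting of Lemma~\ref{lem:3dlocalvisit}: the hypothesis $D_{T(\ell)}=1$ (implied by $G_{T(\ell)}=1$) plays the role of $D_0=1$, the interior condition $S^\ell_{T(\ell)}\in\mathfrak V(2\ell_2\log^{-4}n)$ is given, and the time horizon is at most $\Delta t$. The lemma then bounds the expected number of such agents meeting $\mathrm a_\ell$ within $\Delta t$ steps by $\log^{-3}n$; in fact, inspection of the proof of Lemma~\ref{lem:3dlocalvisit} yields $O(\log^{-4}n)$, leaving slack for the second group.

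For the inside group---agents in $\mathcal B_1(S^\ell_{T(\ell)})$ at $L_1$-distance $>\ell_1\log^{-1}n$ from $\mathrm a_\ell$---the good-density bound caps the count by $m_1(S^\ell_{T(\ell)})=O(\log^2 n)$, and by Corollary~\ref{cor:rwcatchall} with $j=1$ (combined with the monotonicity of the meeting probability in $t$) each such agent meets $\mathrm a_\ell$ within $\Delta t$ steps with probability $O(\log n/\ell_1)$, contributing $O(\log^3 n/\ell_1)$ in total. This is absorbed by the slack from the first group, yielding the claimed bound $\log^{-3}n$. The main technical obstacle is exactly this gap between the $L_\infty$-ball used in the density bookkeeping and the $L_1$-ball underlying the island definition; in the densest regime where $\ell_1\log^{-1}n<1$ the gap collapses to a single lattice point so all of $\mathcal B_1$ lies within the island, while for larger $\ell_1$ the polylog headroom built into $m_i$ is what makes the estimate absorb uniformly in $m$.
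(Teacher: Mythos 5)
Your proof follows the same structure and key steps as the paper's: excluding island members at time $T(\ell)$ from being direct children, reducing to Lemma~\ref{lem:3dlocalvisit} for agents initially at distance $\geq \gamma := \ell_1\log^{-1}n$ from $\mathrm a_\ell$, and handling the $G_{T(\ell)}$ (equivalently, $D_{T(\ell)}$) case split via the definition of $\T'$. You additionally flag a subtlety that the paper handles with a single unjustified assertion, namely "an agent $\mathrm a_j$ is outside $\isd_{T(\ell)}(\cdot,\gamma)$ only if it is outside $\mathbb B(S^{\ell}_{T(\ell)},\gamma)$." This is indeed a real norm mismatch: the island uses $L_1$ and $\mathbb B$ uses $L_\infty$, so an agent with $L_1$-distance just above $\gamma$ can still sit inside the $L_\infty$-ball $\mathcal B_1$, and hence is \emph{not} captured by Lemma~\ref{lem:3dlocalvisit}. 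Recognizing that this needs a separate argument is a genuine improvement over the paper's presentation.

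However, your fix for the inside group does not close the gap uniformly over the admissible range of $m$. You bound the count by $m_1 = O(\log^2 n)$ and the per-agent meeting probability by $O(\log n/\ell_1)$ (via Lemma~\ref{lem:couple} or Corollary~\ref{cor:rwcatchall}), giving a contribution of order $\log^3 n/\ell_1$. This is $\le \log^{-3}n$ only when $\ell_1 \gtrsim \log^6 n$, i.e., when $m \lesssim n^3/\log^{18}n$; for $m$ within that polylog of $n^3$ (still inside $1600n\log^2 n \le m \le n^3$), the inside-group term dominates and can even exceed a constant, so the claimed absorption by the $O(\log^{-4}n)$ slack in Lemma~\ref{lem:3dlocalvisit} fails. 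Your observation that $\gamma < 1$ collapses $\mathcal B_1$ to $\{P\}$ handles $\ell_1 < \log n$, but leaves the range $\log n \le \ell_1 \lesssim \log^6 n$ uncovered. The clean way to dispose of the mismatch is not to estimate the shell separately but to enforce consistent norms from the outset: either define the $\mathcal B_i$ in the good-density condition via $L_1$-balls, or scale the island parameter by the dimensional factor so that $L_1$-distance $>3\gamma$ forces $L_\infty$-distance $>\gamma$. Either adjustment makes "outside island implies outside $\mathcal B_1$" literally true and eliminates the inside group entirely, only perturbing the constants in Lemma~\ref{lem:3dlocalvisit}. Your proof should either invoke such a normalization explicitly or restrict to the regime where the shell estimate is actually small.
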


We regard the conditional expectation in
 Equation~\ref{eqn:meetcount}
as a random variable. The interpretation is that the expected number
of $\mathrm{a}_\ell$'s direct children is less than
  \yajunfinal{$\log^{-3}n$,}
regardless of the global configuration at the infection time of $\mathrm{a}_\ell$, as long as it lies in $\mathfrak V(2\ell_2\log^{-4}n)$ at that time.

\begin{proof}We focus on the case when $S^1_0 \in \mathcal W$; otherwise $\T'$ is empty and the lemma trivially holds.
 First observe that all $\mathrm a_j \in \isd_{T(\ell)}(S^{\ell}_{T(\ell)}, \ell_1\log^{-1}n)$ are infected at or before the time $\mathrm a_{\ell}$ is infected. Therefore they cannot be direct children of $\mathrm a_{\ell}$ by Definition~\ref{def:diffusetree} and \ref{def:stoptree}. On the other hand, an agent $\mathrm a_j$ is outside $\isd_{T(\ell)}(S^{\ell}_{T(\ell)}, \ell_1\log^{-1}n)$ only if it is outside the ball $\mathbb B(S^{\ell}_{T(\ell)}, \ell_1\log^{-1}n)$. Hence $\dchild'(\mathrm a_{\ell})$ is bounded by the number of agents initially outside $\mathbb B(S^{\ell}_{T(\ell)}, \ell_1\log^{-1}n)$ that meet $\mathrm a_{\ell}$ before time $\Delta t$. We consider two cases:

{\noindent \emph{Case 1.}} $D_{T(\ell)} = 1$. By Lemma~\ref{lem:3dlocalvisit}, we have
$$\E\left[|\dchild'(\mathrm a_{\ell})|\Big|\mathcal
  F_{T(\ell)}, D_{T(\ell)} = 1, S^{\ell}_{T(\ell)} \in \mathfrak V(2\ell_2\log^{-4}n)
\right] \leq \log^{-3}n.$$

{\noindent \emph{Case 2.}} $D_{T(\ell)} = 0$. By Definition~\ref{def:stoptree}, we have
$$\E\left[|\dchild'(\mathrm a_{\ell})|\Big|\mathcal
  F_{T(\ell)}, D_{T(\ell)} = 0,  S^{\ell}_{T(\ell)} \in \mathfrak V(2\ell_2\log^{-4}n)
\right] = 0 \leq \log^{-3}n.$$

Therefore,
{\small
\begin{eqnarray*}
\E\left[|\dchild'(\mathrm a_{\ell})|\Big| \mathcal
  F_{T(\ell)}, S^{\ell}_{T(\ell)} \in \mathfrak
  V(2\ell_2\log^{-4}n)\right]  &= & \E_{\mathcal
  F_{T(\ell)}}\left[\E\left[|\dchild'(\mathrm
    a_{\ell})|\Big| \mathcal F_{T(\ell)}, S^{\ell}_{T(\ell)} \in
    \mathfrak V(2\ell_2\log^{-4}n), D_{T(\ell)}\right]\right]\\
 &\leq &\log^{-3}n.
\end{eqnarray*}}

\end{proof}

Recursive utilization of Lemma~\ref{lem:dchild} on successive tree levels leads to the following lemma:
%
\def\lemshortgeneration{
Consider a diffusion process with the island diffusion rule starting with an infected island $\isd \yajunfinal{_0}(\mathrm{a}_1,\ell_1\log^{-1}n)$. For the stopped diffusion tree $\T'$ with respect to any $\mathcal W \subseteq \mathfrak V(20\ell_2\log n)$, let $\mathrm{Height}(\T')$ be its height. Then we have
\begin{equation}
\Pr[\mathrm{Height}(\T') > 2\log n] \leq \exp(-3\log n\log \log n).\label{eqn:heightbound}
\end{equation}
}
\begin{lemma}
\label{lem:short_generation}
{\lemshortgeneration}
\end{lemma}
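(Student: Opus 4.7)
The plan is to treat $\T'$ as a subcritical branching process and bound its height via a first-moment argument. Let $|L_k|$ denote the number of agents at level $k$ of $\T'$. I will establish the geometric decay $\E[|L_{k+1}|]\le 3\log^{-2}n\cdot \E[|L_k|]$ uniformly for $k\le 2\log n$, starting from $\E[|L_1|]\le 3\log n$, and conclude via Markov's inequality applied to $|L_{2\log n}|$.

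The first step verifies the interior condition needed to invoke Lemma~\ref{lem:dchild}. We may assume $S^1_0\in\mathcal W$, since otherwise $\T'=\emptyset$ and the claim is trivial. For any $\mathrm a_\ell\in\T'$ at level $k$, trace the ancestor path $r\to\mathrm a_{\ell_1}\to\cdots\to\mathrm a_{\ell_k}=\mathrm a_\ell$. The level-$1$ agent $\mathrm a_{\ell_1}$ lies in $\isd_0(\mathrm a_1,\ell_1\log^{-1}n)$, and the small islands property (implicit in $\mathrm a_{\ell_1}\in\T'$, which forces $G_0=1$) gives $\|S^{\ell_1}_0 - S^1_0\|_1\le 3\ell_1\le 3\ell_2$. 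For $i\ge 2$, Lemma~\ref{lem:gendistance} yields $\mathfrak d'_{\ell_i}\le 4\ell_2$. By the triangle inequality, $\|S^\ell_{T(\ell)}-S^1_0\|_1\le 4k\ell_2+3\ell_2\le 9\ell_2\log n$ whenever $k\le 2\log n$. Since the $L_\infty$-norm is dominated by the $L_1$-norm and $S^1_0\in\mathcal W\subseteq\mathfrak V(20\ell_2\log n)$, we conclude $S^\ell_{T(\ell)}\in\mathfrak V(11\ell_2\log n)\subseteq \mathfrak V(2\ell_2\log^{-4}n)$ for sufficiently large $n$.

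Next, I bound the expected number of children. Every child of $\mathrm a_\ell$ in $\T'$ is either directly infected by $\mathrm a_\ell$ or gets pulled in by the island rule when $\mathrm a_\ell$ directly infects one of its island-mates. Since $\mathrm a_\ell\in\T'$ forces $G_{T(\ell)}=1$, the small islands property bounds each such island by $3\log n$, so $|\child'(\mathrm a_\ell)|\le 3\log n\cdot |\dchild'(\mathrm a_\ell)|$ almost surely. Combining this with Lemma~\ref{lem:dchild}---whose conditioning is validated by Step~1---and the tower property of conditional expectation gives
\begin{equation*}
\E[|L_{k+1}|]\;=\;\sum_\ell \E\bigl[|\child'(\mathrm a_\ell)|\cdot\mathbf 1_{\mathrm a_\ell\in L_k}\bigr]\;\le\; 3\log n\cdot \log^{-3}n\cdot \E[|L_k|]\;=\;3\log^{-2}n\cdot \E[|L_k|]
\end{equation*}
for every $k\le 2\log n$.

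The base case $\E[|L_1|]\le 3\log n$ follows from the small islands property applied to $\isd_0(\mathrm a_1,\ell_1\log^{-1}n)$ (or is trivial when $G_0=0$, which forces $L_1=\emptyset$). Iterating the recursion yields $\E[|L_{2\log n}|]\le 3\log n\cdot (3\log^{-2}n)^{2\log n-1}=\exp(-4\log n\log\log n+O(\log n))\le \exp(-3\log n\log\log n)$ for sufficiently large $n$, and Markov's inequality then delivers $\Pr[\mathrm{Height}(\T')>2\log n]=\Pr[|L_{2\log n}|\ge 1]\le \E[|L_{2\log n}|]$, as required. The principal obstacle is Step~1: the conditioning in Lemma~\ref{lem:dchild} becomes vacuous near the boundary, so we must show inductively that the interior-region condition persists through all $2\log n$ levels by controlling the accumulation of generation distances.
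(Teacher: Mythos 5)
Your proof is correct and follows essentially the same route as the paper: a first-moment/branching-process bound on the level sizes, using Lemma~\ref{lem:dchild} together with the small-islands property to obtain the geometric decay factor $3\log^{-2}n$ per level, verifying the interior-region condition via Lemma~\ref{lem:gendistance} so that the conditioning in Lemma~\ref{lem:dchild} applies at every level, and concluding via Markov's inequality. The per-agent conditioning on $\mathcal F_{T(\ell)}$ (rather than a single per-level filtration) that you invoke via the tower property is exactly the technical device the paper uses.
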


Let us denote the set of agents at the $k$th level as $\mathbb F_k$.
It is worth pointing out that, despite a similar analysis to that of standard branching process, there is a technical complication on the conditioning argument since the creation of each child within the \emph{same} level can be performed at \emph{different} times in the diffusion process. This implies that there is no single filtration that we can condition on each level to analyze the expected size of the next one. Nevertheless, conditioning can be tailored to \emph{each} agent at the same level.

\begin{proof} We focus on the case when $\T'$ is non-trivial i.e. $S_0^1\in \mathcal{W}$. Let $I(A)=1$ if $A$ occurs and 0 otherwise. We have, for any $k<2\log n$, 
\begin{eqnarray*}
& & \E[|\mathbb F'_{k+1}||\mathcal{F}_{0},S_{0}^1\in \mathcal{W}] \\
& = & \E\left[\sum_{\mathrm a_{\ell}
      \in \mathbb F'_k}\sum_{\mathrm a_{\ell'}\in \dchild'(\mathrm
      a_{\ell})}|\isd_{T(\ell')}(\mathrm a_{\ell'},
    \ell_1\log^{-1}n)|I(G_{T(\ell')}=1)\mathbf{\Bigg|}\mathcal F_{0}, S^1_{0} \in \mathcal W\right] \\
& \leq & (3\log n)\E\left[\sum_{\mathrm
      a_{\ell} \in \mathbb F'_k}|\dchild'(\mathrm
    a_{\ell})|\mathbf{\Bigg|}\mathcal F_{0}, S^1_{0} \in \mathcal W\right] \\
\end{eqnarray*}
The equality holds by the stopping rule and the inequality holds by the small islands property. Next we have
{\small
\begin{eqnarray*}
& & \E\left[\sum_{\mathrm a_{\ell} \in \mathbb F'_k}|\dchild'(\mathrm a_{\ell})|\mathbf{\Bigg|}\mathcal F_{0},S^1_{0} \in \mathcal W\right] \\
& = & \E\left[\sum_{\ell \in [m]}I(\mathrm a_{\ell} \in \mathbb F'_k)|\dchild'(\mathrm a_{\ell})|\mathbf{\Bigg|}\mathcal F_{0},S^1_{0} \in \mathcal W\right]\\
& = & \sum_{\ell \in [m]}\E\left[I(\mathrm a_{\ell} \in \mathbb F'_k)|\dchild'(\mathrm a_{\ell})|\mathbf{\Bigg|}\mathcal F_{0},S^1_{0} \in \mathcal W\right] \\
& = & \sum_{\ell \in [m]}\E\left[\E\left[I(\mathrm a_{\ell} \in \mathbb F'_k)|\dchild'(\mathrm a_{\ell})|\mathbf{\Bigg|}\mathcal F_{T(\ell)},S^1_{0} \in \mathcal{W}\right]\Bigg|\mathcal F_{1},S^1_{0} \in \mathcal W\right]\\
& = & \sum_{\ell \in [m]}\E\left[I(\mathrm a_{\ell} \in \mathbb F'_k)\E\left[|\dchild'(\mathrm a_{\ell})|\mathbf{\Bigg|}\mathcal F_{T(\ell)},S^1_{0} \in \mathcal{W}\right]\Bigg|\mathcal F_{0},S^1_{0} \in \mathcal W\right]
\mbox{(Because $I(\mathrm a_{\ell} \in \mathbb F'_k)$ is $\mathcal F_{T(\ell)}$-measurable)}
\end{eqnarray*}
}
Note that $S^1_{0} \in \mathcal W \subseteq \mathfrak V(20\ell_2\log
n)$ implies $S^{\ell}_{T(\ell)} \in \mathfrak (20\ell_2\log n -
4k\ell_2) \subset \mathfrak V(2\ell_2\log^{-4}n)$ if $G_{T(\ell)}=1$,
by using Lemma~\ref{lem:gendistance}.
Therefore, by Lemma~\ref{lem:dchild}
$$\E\left[|\dchild'(\mathrm a_{\ell})|\mathbf{\Bigg|}\mathcal F_{T(\ell)},S^1_{0} \in \mathcal{W},G_{T(\ell)}=1\right] = \E\left[|\dchild'(\mathrm a_{\ell})|\mathbf{\Bigg|}\mathcal F_{T(\ell)},S^{\ell}_{T(\ell)} \in \mathfrak V(2\ell_2\log^{-4}n),G_{T(\ell)}=1\right] \leq \log^{-3}n$$
On the other hand,
$$\E\left[|\dchild'(\mathrm a_{\ell})|\mathbf{\Bigg|}\mathcal F_{T(\ell)},S^1_{0} \in \mathcal{W},G_{T(\ell)}=0\right] =0$$
by the stopping rule. This leads to
$$\E\left[|\dchild'(\mathrm a_{\ell})|\mathbf{\Bigg|}\mathcal F_{T(\ell)},S^1_{0} \in \mathcal{W}\right]\leq\log^{-3}n$$
which implies
$$\sum_{\ell \in [m]}\E\left[I(\mathrm a_{\ell} \in \mathbb F'_k)\E\left[|\dchild'(\mathrm a_{\ell})|\mathbf{\Bigg|}\mathcal F_{T(\ell)},S^1_{0} \in \mathcal{W}\right]\Bigg|\mathcal F_{0},S^1_{0} \in \mathcal W\right]\leq\log^{-3}n\E[|\mathbb F'_k||\mathcal{F}_{0},S_{0}^1\in \mathcal{W}]$$
Therefore,
$$\E[|\mathbb F'_{k + 1}| \mid \mathcal F_0, S^1_0 \in \mathcal W] \leq 3\log^{-2}n \E[|\mathbb F'_k|\mid \mathcal F_0, S^1_0 \in \mathcal W] \leq (3\log^{-2}n)^k\E[|\mathbb F'_1|\mid \mathcal F_0, S^1_0 \in \mathcal W] \leq \log n(3\log^{-2}n)^k$$
and hence
 \zliu{$$\Pr[|\mathbb F'_{2\log n}| > 0] \leq \E[|\mathbb F'_{2\log n}| ] \leq \exp(-3\log n \log \log n).$$}
by combining with the case $\mathrm{a}_1\notin \mathcal W$.
\end{proof}

We now prove Proposition~\ref{lem:localbehavefinal}.
\begin{proof}[Proof of Proposition~\ref{lem:localbehavefinal}]
First note that the set of infected agents in a diffusion process with island diffusion rule, namely Definition~\ref{def:islanddiffusion}, is always a superset of the coupled original diffusion process using Definition \ref{def:diffusion}, at any time from 0 to $\Delta t$. Next we have
\begin{eqnarray*}
& & \Pr[\mathrm{Height}(\T) > 2\log n ] \\
& = &
 \Pr[(\mathrm{Height}(\T)> 2\log n) \wedge (\mathrm{Height}(\T) = \mathrm{Height}(\T'))] \\
  & & \quad +   \Pr[(\mathrm{Height}(\T)> 2\log n ) \wedge( \mathrm{Height}(\T) \neq \mathrm{Height}(\T'))] \\
  & \leq & \Pr[\mathrm{Height}(\T') > 2\log n] + \Pr[\T' \neq \T] \\
  & \leq & \exp(-3\log n \log \log n) + \Pr[G = 1] \\
  & \leq & 2\exp(-3\log n \log \log n)
\end{eqnarray*}
Therefore, we have
$$\Pr[(\mathrm{Height}(\T) \leq 2\log n)\wedge (G = 1)] \geq 1 - 3\exp(-3\log n \log \log n).$$
We will show that the viruses can be covered by the ball $\mathbb
B(S^1_{0}, \xiaorui{9} \ell_2\log n)$ when $$(\mathrm{Height}(\T) \leq 2\log
n)\wedge (G = 1) \wedge (S^1_0 \in \mathcal W).$$
Fix arbitrary infected $\mathrm a_{\ell} \in \mathbb F_k$ with $k \leq
2\log n$ .
By Lemma~\ref{lem:gendistance},
we have $\|S^1_{0} - S^{\ell}_{T(\ell)}\|_1 \leq 8 \ell_2\log n$.
Moreover,  $G = 1$ implies that for \emph{all}
$0 \leq t' \leq \Delta t$, $\|S^{\ell}_{T(\ell)} - S^{\ell}_{t'}\|_1 \leq 3\ell_2\log^{-4}n \leq \ell_2\log n$.
This suggests
$\|S^{\ell}_{t'} - S^{1}_{0}\|_{\infty} \leq 9\ell_2 \log n$ for all $t'\in[0,\Delta t]$.
Therefore, the virus does not escape the
ball $\mathbb B(S^1_{0}, 9\ell_2\log n)$ within time $[0,
\Delta t]$.
\end{proof}

%

\subsection{From local to global process}\label{subsec:globallow}
This section will be devoted to proving Theorem~\ref{thm:lb3} via Proposition~\ref{lem:localbehavefinal}, or in other words, to turn our local probabilistic bound into a global result on the diffusion time.

We note that Proposition~\ref{lem:localbehavefinal} deals with the case when there is only one initially infected agent. As discussed briefly in the discussion following the proposition, we want to iterate this estimate so that at every time increment $\Delta t$, the infected region is constrained within a certain radius from the initial positions of all the agents that are already infected at the start of the increment.
  \zliu{Our argument is aided by noting which agents infect other agents.}
 To ease the notation for this purpose, we introduce an artificial concept of virus type, denoted by $\nu_{i,t}$. We say an agent gets a virus of type $\nu_{i,t}$ if the meeting events of this agent can be traced upstream to the agent $\mathrm{a}_i$, where $\mathrm{a}_i$ is already infected at time $t$. In other words, assume that $\mathrm{a}_i$ is infected at time $t$, and imagine that we remove the viruses in all infected agents except $\mathrm{a}_i$ but we keep the same dynamics of all the random walks. We say a particular agent gets $\nu_{i,t}$ if it eventually gets infected under this imaginary scenario. Note that under this artificial framework of virus types it is obvious that an agent can get many different types of virus, in terms of both $i$ and $t$.

%
%

In parallel to Proposition~\ref{lem:localbehavefinal}, we introduce the family of binary random variables $b_{i, t}$ to represent whether a virus of type $\nu_{i, t}$ can be constrained in a ball with radius $9\ell_2\log n$:

\begin{definition} [$b_{i, t}$ and virus of type $\nu_{i, t}$]
 Let $\overline{\mathfrak B}=
\mathbb B(P, \frac n 4)$ where $P = (n/2,n/2,n/2)$.
Let $\mathrm a_1, ..., \mathrm a_m$ be agents that are uniformly distributed
on $\mathcal V^3$ at $t = 0$ and diffuse according to Definition~\ref{def:diffusion}.  Let $t$ be
an arbitrary time step and $i \in [m]$.
At time $t$, a virus of type $\nu_{i, t}$ emerges  \yajun{on agent
  $\ma_i$} and diffuses.
Define the binary random variable $b_{i, t}$ as follows:
\begin{itemize}
\item \emph{If $S^i_{t} \in \overline{\mathfrak B}$:} $b_{i, t}$ is
  set as $1$ if and only if all the agents infected by the virus of
  type
$\nu_{i, t}$ \liusd{at time $t + \Delta t$} can be covered by the ball $\mathbb B(S^i_{t}, 9\ell_2\log n)$.
\item \emph{If $S^i_{t} \notin \overline{\mathfrak B}$:} $b_{i, t} = 1$.
\end{itemize}
\end{definition}


Let us start with showing $b_{i, t} = 1$
for all $i$ and $t$ with high probability:

\begin{corollary}Consider the family of random variables $\{b_{i, t}: i \in [m], t \leq n^{2.5}\}$ defined above.
We have
$$\Pr\left[\bigwedge_{i \in [m], t \leq n^{2.5}}(b_{i, t}=1)\right] \geq 1 - \exp(-4\log n \log \log n).$$
\end{corollary}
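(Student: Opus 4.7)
The plan is a straightforward union bound: for each pair $(i,t)$ with $i\in[m]$ and $t\leq n^{2.5}$, I will reduce the event $b_{i,t}=1$ to an instance of Proposition~\ref{lem:localbehavefinal} shifted to begin at time $t$, and then combine the per-pair estimates over all $i$ and $t$.

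\textbf{Step 1: Time-shift via stationarity.} First I would establish that Proposition~\ref{lem:localbehavefinal}, stated for the initial time $0$, extends to any starting time $t$. The symmetric random walk on $\mathcal{V}^3$ (with self-loops at the boundary) has the uniform distribution as its stationary distribution, and the $m$ agents perform mutually independent walks. Consequently, the joint distribution of $(S^1_t,\ldots,S^m_t)$ is uniform product for every $t\geq0$. In the imaginary scenario that defines the virus of type $\nu_{i,t}$, only $\mathrm{a}_i$ is infected at time $t$, and since infection states do not influence motion, the Markov property gives that the future positions conditioned on $(S^1_t,\ldots,S^m_t)$ are distributionally identical to a fresh diffusion initialized uniformly, with $\mathrm{a}_i$ as the sole initially infected agent. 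This is exactly the setting of Proposition~\ref{lem:localbehavefinal}.

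\textbf{Step 2: Invoking the Proposition.} I would then apply Proposition~\ref{lem:localbehavefinal} with $\mathcal{W}=\overline{\mathfrak B}$. This requires $\overline{\mathfrak B}\subseteq\mathfrak V(20\ell_2\log n)$: since $m\geq 1600\,n\log^2 n$ forces $\ell_2=O(n/\log n)$, and $\overline{\mathfrak B}=\mathbb B((n/2,n/2,n/2),n/4)$ lies at $L_\infty$-distance at least $n/4$ from the boundary of $\mathcal{V}^3$, the containment holds for sufficiently large $n$ (any residual constant mismatch can be absorbed into the radius of $\overline{\mathfrak B}$ or into the constant $20$). The proposition then yields $\Pr[b_{i,t}=1]\geq 1-\exp(-5\log n\log\log n)$ for each fixed pair.

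\textbf{Step 3: Union bound.} Finally, taking a union bound over at most $m\cdot(n^{2.5}+1)\leq n^{5.5+o(1)}$ pairs gives
\begin{equation*}
\Pr\left[\bigvee_{i\in[m],\,t\leq n^{2.5}}(b_{i,t}=0)\right]\leq n^{5.5+o(1)}\exp(-5\log n\log\log n)\leq\exp(-4\log n\log\log n),
\end{equation*}
since $5.5\log n=o(\log n\log\log n)$ as $n\to\infty$. The main obstacle in executing this plan lies in Step 1, namely justifying rigorously that the multi-type virus-tracking framework leaves the hypotheses of Proposition~\ref{lem:localbehavefinal} intact when shifted to a general starting time $t$; once that is secured, Steps 2 and 3 are routine.
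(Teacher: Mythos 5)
Your proposal is correct and follows essentially the same route as the paper's own proof: stationarity gives that the agents are uniformly distributed at time $t$, so $\mathrm a_i$ can be relabeled as the initially infected agent and Proposition~\ref{lem:localbehavefinal} applied with $\mathcal W=\overline{\mathfrak B}$, after which a union bound over $m\cdot n^{2.5}$ pairs yields the claimed bound. Your Step~1 and Step~2 spell out details (the Markov/independence justification for the time shift, and the containment $\overline{\mathfrak B}\subseteq\mathfrak V(20\ell_2\log n)$) that the paper leaves implicit, but the underlying argument is the same.
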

\begin{proof} We first bound $\Pr[b_{i, t} = 1]$ for any specific $i$ and $t$.
Since the agents are placed according to stationary distribution at $t = 0$, each agent is still distributed
uniformly at time $t$. Next, at time $t$, we may relabel the agents so that $\mathrm a_i$ is regarded as the single initially infected agent in Proposition~\ref{lem:localbehavefinal}, where $\mathcal W$ is set as $\overline{\mathfrak B}$. We therefore have
$\Pr[b_{i, t} = 1] \geq 1 - \exp(5\log n \log \log n)$.

Next, we may apply a union bound across all $i$ and $t$ to get the desired result.
\end{proof}

\begin{lemma}\label{lem:nonempty} Let \xiaorui{$\mathfrak B = \mathbb B(P,
  n/8)$}. Let $B_t$ be the indicator variable that
  there is at least one agent in $\mathfrak B$ at time
  $t$. Let $B = \prod_{t \leq n^{2.5}}B_t$, the indicator variable that there is at least one agent in $\mathfrak B$ at all times in $[0,n^{2.5}]$. We have
$$\Pr[B = 0] \leq \exp(-\log^2n)$$
for sufficiently large $n$.
\end{lemma}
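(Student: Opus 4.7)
The plan is to exploit the fact that the uniform distribution on $\mathcal{V}^3$ is stationary for the symmetric random walk with self-loops at the boundary. Since at time $0$ the agents are placed independently and uniformly, and each agent performs an independent random walk, the positions $S^1_t, S^2_t, \ldots, S^m_t$ remain mutually independent with each marginally uniform on $\mathcal{V}^3$ at every time $t \geq 0$. This reduces the question at any single time step to a balls-into-bins calculation.

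Next I would compute the constant $c_0 = |\mathfrak B|/|\mathcal V^3|$. Since $\mathfrak B = \mathbb B(P, n/8)$ is an $L_\infty$-ball of side $2(n/8)+1$ centered inside $\mathcal V^3$, we have $c_0 = (n/4+1)^3/(2n+1)^3 \geq c$ for some absolute constant $c > 0$ (roughly $1/512$) and all sufficiently large $n$. By independence, for any fixed $t$,
\[
\Pr[B_t = 0] \;=\; (1-c_0)^m \;\leq\; \exp(-c_0 m) \;\leq\; \exp(-c_0 \cdot 1600\, n\log^2 n),
\]
using the hypothesis $m \geq 1600\, n \log^2 n$ from the setting of Theorem~\ref{thm:lb3}.

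To conclude, I apply a union bound over $t \in \{0,1,\ldots,\lfloor n^{2.5}\rfloor\}$:
\[
\Pr[B = 0] \;\leq\; \sum_{t \leq n^{2.5}} \Pr[B_t = 0] \;\leq\; n^{2.5}\exp(-c_0 \cdot 1600\, n\log^2 n).
\]
Since $c_0 \cdot 1600\, n\log^2 n$ dominates both $2.5 \log n$ and $\log^2 n$ for large $n$, the right-hand side is at most $\exp(-\log^2 n)$, which gives the claim.

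There is essentially no main obstacle here; the only point worth double-checking is the claim that independence and marginal uniformity of the $S^i_t$ are preserved under the boundary convention (self-loops at the walls). This holds because the uniform measure on $\mathcal V^3$ is stationary for this lazy symmetric walk, and the random walks of different agents use independent randomness, so the joint distribution at every $t$ is a product of uniforms. Once this is stated, the rest is a single Chernoff-style bound followed by a union bound.
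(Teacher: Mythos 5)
Your proof is correct and follows essentially the same route as the paper's: invoke stationarity of the uniform measure so that the positions stay uniform and independent at every time step, bound the probability that $\mathfrak B$ is empty at a fixed $t$ by something exponentially small in $m$, and then union bound over the $\leq n^{2.5}$ time steps. The only cosmetic difference is that you compute the probability $\Pr[B_t = 0] = (1-c_0)^m$ exactly (which is cleaner and avoids invoking Chernoff at all), whereas the paper bounds the number of agents in $\mathfrak B$ below by a Chernoff bound and notes that a positive count implies $B_t = 1$. Both reach the same $\exp(-\Theta(m))$ single-step bound, and since $m \geq 1600\, n \log^2 n$ this dominates the polynomial union-bound factor.
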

\begin{proof} First, notice that for any specific $t$, the expected
  number of agents in $\mathfrak B$ is $\Omega(m)$. Therefore, by
  Chernoff bound (using the version in Theorem~\ref{thm:chernoff})
  \liusd{$\Pr[B_t = 0] \leq \exp(-\Omega(m)) \leq \exp(-\log^3n)$}. Next, by a union bound, we have
  $\Pr[B = 0] \leq n^{2.5}\exp(-\log^3n) \leq \exp(-\log^2n)$.
\end{proof}

We next present our major lemma for this subsection.

\begin{lemma}\label{lem:combin}
Let $\mathrm a_1, ..., \mathrm a_m$ be placed uniformly at random
on $\mathcal V^3$ such that $m \geq 1600 n \log^2n$. Let $\ell_2 = \sqrt{n^3/m}$.
Let $\{b_{i, t}: i \in [m], t \leq n^{2.5}\}$ and $B$ be the random variables described
above. If $b_{i, t} = 1$ for all $i, t$ and $B = 1$, then the diffusion time is at least $T_c = \frac 1 {81}\ell_2 n \log^{-29}n$.
\end{lemma}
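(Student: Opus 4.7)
The plan is to iterate the local spread estimate provided by the hypothesis $b_{i,t}=1$ across $k_*:=\lceil T_c/\Delta t\rceil\approx n/(81\ell_2\log n)$ successive time blocks of length $\Delta t$, so that the spatial reach of infection from $X_0:=S^1_0$ is bounded by $9k_*\ell_2\log n\le n/9$. Specifically, I would prove by induction on $k\le k_*$ that every agent $\mathrm a_j$ infected at time $k\Delta t$ satisfies $S^j_{k\Delta t}\in\mathbb B(X_0,9k\ell_2\log n)$. The key observation for the inductive step is that every agent infected at time $(k+1)\Delta t$ carries a virus of type $\nu_{i,k\Delta t}$ for some $\mathrm a_i$ infected at time $k\Delta t$ --- any infection chain reaching the new agent by time $(k+1)\Delta t$ must pass through the infected set at time $k\Delta t$ --- and applying $b_{i,k\Delta t}=1$ places the new agent's position at $(k+1)\Delta t$ within $9\ell_2\log n$ of $S^i_{k\Delta t}$, hence within $9(k+1)\ell_2\log n$ of $X_0$ by the inductive hypothesis.

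The bound $b_{i,k\Delta t}=1$ is informative only when $S^i_{k\Delta t}\in\overline{\mathfrak B}=\mathbb B(P,n/4)$, so I must verify that the induction stays inside $\overline{\mathfrak B}$. Using $n/8+n/9<n/4$: if $X_0\in\mathfrak B=\mathbb B(P,n/8)$, then $\mathbb B(X_0,9k\ell_2\log n)\subseteq\mathbb B(P,n/8+n/9)\subset\overline{\mathfrak B}$ at every step, so the induction closes cleanly and all infected positions at time $T_c$ lie in $\mathbb B(X_0,n/9)$. For $X_0\notin\mathfrak B$ I would separately track the first time $t_0\le T_c$ (if any) at which some infected agent first enters $\overline{\mathfrak B}$, at a point $Q$ with $\|Q-P\|_\infty\ge n/4$; restarting the iterated bound from $Q$ with at most $T_c$ time remaining confines the subsequent infected positions to $\mathbb B(Q,n/9)$, which by the triangle inequality is disjoint from $\mathfrak B$ since the closest point of $\mathfrak B$ to $Q$ is at $L_\infty$-distance at least $n/4-n/8=n/8>n/9$.

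Combining with $B=1$ would then close the proof: at time $T_c$ there is at least one agent in $\mathfrak B$, which is uninfected in the case $X_0\notin\mathfrak B$ because no infected agent is ever in $\mathfrak B$, and (in the case $X_0\in\mathfrak B$) can be shown uninfected by upgrading $B=1$ via the Chernoff-type counting underlying Lemma~\ref{lem:nonempty}'s proof into the stronger statement that $\Omega(m)$ agents occupy $\mathfrak B$, far more than can be accommodated in the sub-region $\mathbb B(X_0,n/9)\cap\mathfrak B$, thereby forcing some agent in $\mathfrak B$ to lie at an uninfected position. The main obstacle is precisely this last case $X_0\in\mathfrak B$: the raw hypothesis $B=1$ is a bit too weak on its own, and one must either strengthen it or more carefully identify an uninfected lattice point in $\mathfrak B$ that is actually occupied by some agent in order to deterministically produce the required uninfected witness.
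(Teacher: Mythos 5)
The paper's proof opens with a move you have not used and that dissolves the case analysis that is giving you trouble: by symmetry of $\mathcal V^3$ and of the uniform initial distribution, one may reflect the space coordinatewise so that all three coordinates of $S^1_0$ are negative. After this rotation $\|S^1_0-P\|_\infty\geq n/2$, so $S^1_0\notin\overline{\mathfrak B}$, and the troublesome cases $X_0\in\mathfrak B$ and $X_0\in\overline{\mathfrak B}\setminus\mathfrak B$ simply never arise. Your paragraph on $X_0\in\mathfrak B$ is therefore not a gap in the theorem — it is a symptom of omitting this WLOG step, and the repair you sketch (strengthening $B=1$ to ``$\Omega(m)$ agents in $\mathfrak B$'') is not available from the stated hypotheses; moreover it would still not suffice, since nothing prevents every agent in $\mathfrak B$ from lying inside $\mathfrak B\cap\mathbb B(X_0,n/9)$.

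There is a second, subtler gap in your case $X_0\notin\mathfrak B$. You track the \emph{first} time an infected agent enters $\overline{\mathfrak B}$ at a point $Q$ and assume $\|Q-P\|_\infty\geq n/4$. This assumption fails outright when $X_0\in\overline{\mathfrak B}\setminus\mathfrak B$ (then $Q=X_0$ can have $\|Q-P\|_\infty$ as small as $n/8$). More importantly, even when $X_0\notin\overline{\mathfrak B}$, a first-entry forward iteration cannot close: after $t_0$ there are still infected agents \emph{outside} $\overline{\mathfrak B}$, whose virus spread is completely uncontrolled by the $b_{i,t}$ (they are vacuously $1$), and these agents can enter $\overline{\mathfrak B}$ at later times and at other boundary points, seeding infection chains that your iteration from $Q$ does not see. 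The paper avoids this by arguing by contradiction and tracing \emph{backward}: given an infected agent $\mathrm a_{i'}$ inside $\mathfrak B$ at time $T'\leq T_c$, one follows the infection chain back from $\mathrm a_{i'}$ and picks the \emph{last} time $T''$ that this particular chain enters $\overline{\mathfrak B}$ from outside, at position $S^{i''}_{T''}$ with $\|S^{i''}_{T''}-P\|_\infty\approx n/4$. By the choice of ``last,'' the chain from $T''$ to $T'$ stays inside $\overline{\mathfrak B}$, so $b_{i,t}=1$ applies non-vacuously at each $\Delta t$-block along that chain; the triangle inequality then gives $\|S^{i'}_{T'}-S^{i''}_{T''}\|_\infty<n/8-1$, contradicting the geometric lower bound $\|S^{i'}_{T'}-S^{i''}_{T''}\|_\infty\geq n/8-1$. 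Your arithmetic for the number of blocks and the resulting $n/9$ bound is correct and matches the paper; it is the rotation WLOG and the ``last-entry along the chain to $\mathrm a_{i'}$'' (rather than ``first entry over all agents'') that you need to make the argument go through.
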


Notice that by \xiaorui{Proposition}~\ref{lem:localbehavefinal} and Lemma~\ref{lem:nonempty},
$$
\begin{array}{l}
\Pr\left[\bigwedge_{i \leq m, t \leq n^{2.5}}\left(b_{i, t} = 1\right)\right] \geq 1 - \exp(-4\log n \log \log n). \\
\Pr[B = 1] \geq 1 - \exp(-\log^2n).
\end{array}
$$
Together with Lemma~\ref{lem:combin}, Theorem~\ref{thm:lb3} then follows.

\begin{proof}
Without loss of generality, we assume the $x$, $y$, and $z$ coordinates
of $S^1_0$ are all negative. We can always rotate the space
$\mathcal V^3$ at $t = 0$ correspondingly to ensure this assumption to hold.


We shall prove by contradiction. Consider two balls $\mathfrak B$ and $\overline{\mathfrak B}$ defined above.
Assume the diffusion time is less than $T_c$. First, because $B = 1$, a necessary
condition for the diffusion to complete is that an infected agent ever visits
the smaller ball $\mathfrak B$ at a time $T' \leq T_c$ (since otherwise the agents in $\mathfrak{B}$ would be uninfected all the time, including at $T_c$). We call this agent $\mathrm{a}_{i'}$. Next, for  \zliu{the} infection to get into $\mathfrak B$, it must happen that there is an infected agent that enters $\overline{\mathfrak B}$ from outside, whose infection trajectory eventually reaches $\mathrm{a}_{i'}$. We denote $T''$ to be the \emph{last} time that this happens, and the responsible agent to be $\mathrm{a}_{i''}$. We focus on the trajectory of infection that goes from $\mathrm{a}_{i''}$ to $\mathrm{a}_{i'}$ that lies completely inside $\overline{\mathfrak B}$ (which exists since $T''$ is the last time of entry).  \zliu{Note that we consider at most $\lceil T_c/\Delta t\rceil$ time increments of $\Delta t$. }
Now, since $b_{i,t}=1$ for all $i$ and $t$, by repeated use of triangle inequality, we get
$$\|S_{T'}^{i'}-S_{T''}^{i''}\|_\infty\leq9\ell_2\log n\left\lceil\frac{T_c}{\Delta t}\right\rceil\leq9\ell_2\log n\left(\frac{(1/81)\ell_2n\log^{-29}n}{\ell_2^2\log^{-28}n}+1\right)\leq\frac{n}{9}+9\ell_2\log n<\frac{n}{8}-1$$
On the other hand, the physical dimensions of $\mathfrak B$ and $\overline{\mathfrak B}$ give that
$$\|S_{T'}^{i'}-S_{T''}^{i''}\|_\infty\geq\frac{n}{8}-1$$
which gives a contradiction.

\end{proof}

\newcommand{\newxiaorui}[1]{ {{#1}}}
\section{Upper bound}\label{sec:ub3}

We now focus on an upper bound for the diffusion time. Our main
result is the following:

\begin{theorem}\label{thm:up3} Let $a_1$, \ldots, $a_m$ be placed
  uniformly at random on
 {$\cV^3$, where  \zliu{$n \leq m \leq n^3$}.}
Let $\hat \ell_2 = \sqrt{n^3 / m} \cdot \log n$. When $n$ is
sufficiently large, the diffusion time $T$ satisfies
$$\Pr[T \geq  \newxiaorui{128n\hat \ell_2\log^{47}n}] \leq \exp(-\frac 1 2 \log^2 n).$$
\end{theorem}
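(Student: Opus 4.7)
The plan is to prove Theorem~\ref{thm:up3} via a two-step scheme dual to the lower bound proof: first, a \emph{local spreading lemma} showing that in a suitable time increment the infection must advance by $\Omega(\hat\ell_2)$ in every direction with overwhelming probability; second, a \emph{global chaining} argument that iterates the local result across $\mathcal{V}^3$. I would set $\widehat{\Delta t} = \hat\ell_2^2 \log^{c} n$ for a constant $c$ around $46$, and partition $\mathcal{V}^3$ into a grid of cubes of side $\hat\ell_2$. An analog of Definition~\ref{def:goodbehavior}, giving a ``good density'' event for each such cube at every time $t \leq n^{2.5}$, would be established by Chernoff plus union bound: since $m \geq n$, the expected number of agents in each cube is $\Theta(\ell_2 \log^3 n)$, large enough to yield per-cube failure probability $\exp(-\Omega(\log^3 n))$.

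The core step is the local spreading lemma: conditioned on good density and on a cube $C$ containing at least one infected agent at time $t_0$, by time $t_0 + \widehat{\Delta t}$ every face-adjacent cube of $C$ contains an infected agent, with failure probability $\exp(-\Omega(\log^3 n))$. The reasoning has two parts. First, by Lemma~\ref{lem:couple}, an infected agent inside $C$ meets any fixed agent at $L_1$-distance $O(\hat\ell_2)$ within time $\hat\ell_2^2$ with probability $\Omega(1/\hat\ell_2)$; combined with $\Theta(\ell_2 \log^3 n)$ candidate targets in an $\hat\ell_2$-ball, the expected number of new infections per infected agent per sub-round of length $\hat\ell_2^2$ is $\Theta(\log^2 n)$, so the infected population inside $C$ grows supercritically. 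After $O(\log n / \log\log n)$ sub-rounds, the $\hat\ell_2$-ball around $C$ is saturated with $\Omega(\ell_2 \log^3 n)$ infected agents. Second, a standard amplification then bounds the probability that \emph{no} agent in any specific face-adjacent cube gets infected during a final sub-round by $(1 - \Omega(1/\hat\ell_2))^{\Omega(\ell_2 \log^3 n)} = \exp(-\Omega(\log^3 n))$. Cubes near the boundary of $\mathcal{V}^3$ use Lemma~\ref{lem:twowalkboundary} in place of Lemma~\ref{lem:couple}, losing only constants.

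For the global step, I iterate: after $k$ successive time increments of $\widehat{\Delta t}$, every cube within $L_\infty$-cube-distance $k$ of the initial infected cube contains an infected agent. Since $\mathcal{V}^3$ is covered by $O(n/\hat\ell_2)$ cubes along each axis, $O(n/\hat\ell_2)$ iterations suffice, and one extra iteration then ensures that every agent (not merely one per cube) is infected, by re-invoking the saturation portion of the local lemma inside each cube. The total diffusion time is $O((n/\hat\ell_2) \cdot \widehat{\Delta t}) = O(n \hat\ell_2 \log^c n) \leq 128 n \hat\ell_2 \log^{47} n$ for an appropriately chosen $c$, and a union bound over at most $O(n^3) \cdot O(n^{2.5}/\widehat{\Delta t})$ cube/time events preserves the overall failure probability $\exp(-\tfrac{1}{2}\log^2 n)$.

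The main obstacle I expect is the rigorous justification of the branching-process amplification inside the local spreading lemma. The meeting events for a single infected agent against many different targets are correlated through its shared trajectory, so a naive union bound loses too much; one needs either a second-moment inequality or an explicit coupling that decouples successive sub-rounds by ``restarting'' each infected agent at the beginning of each sub-round, so that one may invoke Lemma~\ref{lem:couple} with independent fresh trajectories. A secondary difficulty is tracking the spatial distribution, not just the count, of the $\Omega(\ell_2 \log^3 n)$ infected agents accumulated inside $C$, so that enough of them end up close to the boundary of $C$ to reliably infect agents in the adjacent cubes. This can be handled by maintaining an analog of the short-travel-distance event $L_t$ of Definition~\ref{def:goodbehavior} throughout $\widehat{\Delta t}$, which guarantees that no random walk drifts more than $O(\hat\ell_2)$ away from its starting position.
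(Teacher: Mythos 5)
Your proposal takes the ``percolation-style'' geometric-spreading route: prove a local lemma asserting that a cube with at least one infected agent infects all face-adjacent cubes within $\widehat{\Delta t}$ with overwhelming probability, then iterate to cover $\mathcal V^3$. This is genuinely different from the paper's approach, which never attempts a high-probability local saturation. Instead the paper's local result (Lemma~\ref{lem:dense}) only guarantees that $\tilde\Omega(\min\{m_1,m_2\})$ new infections occur \emph{with probability merely $\tau_0\log^{-6}n$}, and the heavy lifting is done globally: it classifies subcubes as good/bad, invokes an isoperimetric inequality (Theorem~\ref{thm:surface}) to lower-bound $|\partial\mathcal G_t|$, and shows via Lemma~\ref{lem:4events} and Proposition~\ref{lem:double} that over a window of $4\sqrt{m/n}\log^{45}n$ time increments of length $\Delta t$, the total infected count doubles or the uninfected count halves with probability $1-\exp(-\log^2 n)$. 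There is no step where a single cube must reach high infected density with high probability.

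The crux is that your local spreading lemma, as stated, has a genuine gap: the claimed failure probability $\exp(-\Omega(\log^3 n))$ is not supported, and the paper's own Appendix~\ref{subsec:existupper} explicitly explains why this kind of local argument is problematic in $d=3$. Starting from a single infected agent $a_1$ in $C$, the expected number of first-round infections in $\hat\ell_2^2$ steps is indeed $\Theta(\log^2 n)$ (you gain a $\log^3 n$ density boost from $\hat\ell_2 = \ell_2\log n$ against a $\Theta(1/\hat\ell_2)$ meeting probability). But these $\tilde\Theta(\hat\ell_2)$ meeting indicators are all driven by the \emph{shared} trajectory of $a_1$; conditioned on a trajectory that happens to wander into a locally sparser patch or not move far enough, the expected count collapses, and that conditioning event does not have probability $\exp(-\Omega(\log^3 n))$ --- at best one gets something on the order of $\exp(-O(\log^2 n))$ from three-coordinate confinement estimates, and the branching-process extinction probability from a size-one start is not controlled to the precision you claim. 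You acknowledge the difficulty and propose ``restarting'' and a second-moment argument; the paper does exactly the restarting idea (the $\hat\ell_2^2$ local-shuffle wait in the proof of Lemma~\ref{lem:dense}) and still only extracts success probability $\tau_0\log^{-6}n$, not $1-\exp(-\Omega(\log^3 n))$. Moreover, your saturation phase uses $\Theta(\log n/\log\log n)$ sub-rounds of $\hat\ell_2^2$ steps, during which agents diffuse to a radius $\hat\ell_2\sqrt{\log n/\log\log n}$, diluting the infected population over a polylogarithmically larger region than $C$; the short-travel-distance event $L_t$ cannot confine them to an $O(\hat\ell_2)$ ball over that total time span, so ``$C$ is saturated'' and ``every face-adjacent cube of $C$ is infected'' are not directly reached. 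The paper's doubling/halving strategy is designed precisely to sidestep this: one never needs a single ball to hit a target density with high probability, only that \emph{some} cube somewhere makes $\tilde\Omega(1)$ progress with $\tilde\Omega(1)$ probability, and a martingale concentration over $\tilde\Theta(\sqrt{m/n})$ independent trials boosts this to the required $1-\exp(-\log^2 n)$.
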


Note that this theorem shows that an upper bound of
$\tilde{O}(n\sqrt{n^3/m})$ holds for the diffusion time with high probability. Hence the upper and lower
bounds ``match'' up to logarithmic factors. We remark that the constant 47 in the exponent has not
been optimized.

The main goal of this section is to prove this theorem. Our proof strategy relies
on calculating the growth rate of the \emph{total} infected agents
evolving over time; such growth rate turns out to be best characterized as the increase/decrease in infected/uninfected agents relative to the size of the corresponding population. More precisely, we show that for a well-chosen
time increment, either the number of infected agents doubles or the
number of uninfected agents reduces by half with high probability.  The choice of time increment is complex,
depending on the analysis of the local interactions in small cubes and the global geometric arrangements of these cubes with respect to the distribution of infected agents.

\lam{
As with the lower bound proof, our technique for proving Theorem~\ref{thm:up3} is different from existing methods. Let us briefly describe them and explain the challenges in extending to higher dimensional cases; further details are left to Appendix~\ref{subsec:existupper}. Roughly, existing methods can be decomposed into two steps (see for example~\cite{PPPU11}): 1) In the first step, consider a small ball of length $r$ that contains the initially infected agent. One can see that for $d=2$, when number of agents in the ball is $\tilde{\Theta}(m(r/n)^2)$, within time increment $r^2$ the number of infections to agents initially in this ball is $\tilde{\Omega}(1)$ w.h.p.. 2) The 2nd step is to prove that for any ball that has $\tilde{\Omega}(1)$ infected agents at time $t$, its surrounding adjacent balls will also have $\tilde{\Omega}(1)$ infected agents by time $t+r^2$. From these two steps, one can recursively estimate the time to spread infection across the whole space $\mathcal V^2$ to be $n/r\times r^2=nr$ w.h.p.. In other words, at time $nr$ all the balls in $\mathcal V^2$ will have $\tilde{\Omega}(1)$ infected agents. Moreover, every agent in $\mathcal V^2$ is infected in the same order of time units, because $\tilde{\Omega}(1)$ is also the total number of agents in any ball under good density condition. Finally, it is then clear that a good choice of $r$ is then $n/\sqrt{m}$, which would give the optimal upper bound.

The critical difference in the analysis for $d>2$ lies primarily in the magnitude of the meeting probability of random walks. In the case of $d=2$, the meeting probability of two random walks at distance $r$ within time $r^2$ is $\tilde{\Theta}(1)$, whereas for $d>2$ the meeting probability is $\Theta(1/r^{d-2})$. For $d=2$, this means that it is easy, i.e. w.h.p., for infection to transmit from a ball with $\tilde{\Omega}(1)$ infected agents to an adjacent uninfected ball, so that the latter also has $\tilde{\Omega}(1)$ infected agents after a time increment of $r^2$. In the case $d>2$, however, $\Omega(r^{d-2})$ infected agents must be present in a ball to transmit virus effectively to its adjacent uninfected ball within $r^2$ time. Consequently, arguing for transmission across adjacent balls becomes problematic (more details are in Appendix~\ref{subsec:existupper}). In light of this, we take an alternate approach to analyze both the local interactions and the global distribution of infected agents. Instead of focusing on transmission from one infected ball to another, we calculate the spreading rate across the whole space. This turns out to be fruitful in obtaining a tight upper bound.

}

We briefly describe the forthcoming analysis.  As with the lower
bound, we start with local analysis. We partition the space
$\mathcal{V}^3$ into disjoint subcubes each of size $\hat \ell_2\times\hat
\ell_2\times\hat \ell_2$. Here $\hat \ell_2$ is just a
 \yajun{logarithmic}
factor larger than $\ell_2$, the size of subcubes used
for the lower bound, so that with overwhelming probability there are \emph{at least} $\hat \ell_2$ agents in a subcube. We show that, within every subcube, over a time
increment of length $\Theta(\hat \ell^2_2)$ the number of
infections is roughly a $\tilde \Omega(1)$-fraction of the
minimum of the number of infected and uninfected agents.  Hence,
at least locally, we have the desired behavior described above.

We then leverage the local analysis to obtain the global result.
However, this is not straightforward. For example, consider the
beginning when the number of infected agents is small. If infected
agents are distributed uniformly throughout the whole space, it would
be easy to show that new infections would roughly grow in proportion
to the number of infected agents.  However, if infected agents are
concentrated into a small number of subcubes, we have to show that
there are enough neighboring subcubes on the boundary of these infected
subcubes that these subcubes become infected suitably rapidly, so that
after the appropriate time increment the number of infected agents doubles. Similar  \zliu{arguments} arise for the case when infected agents are dominant, with the end result  \zliu{being a} halving  \zliu{of} the uninfected population.

We now make the above discussion rigorous. First, let $b = (2n + 1) / \hat \ell_2$, so there are in total $b^3$
subcubes. As in the previous section, we divide the time into small intervals. We reuse the symbol
$\Delta t$ to represent the length of each interval but here we set $\Delta t = 16\hat \ell^2_2$. Our local bound is built within each subcube (and pair of neighboring subcubes) in the time increment $\Delta t$:

\begin{lemma}\label{lem:dense}Let $\mathcal W \subset \mathcal V^3$ be a region that can be covered
  by a ball of radius $2\hat \ell_2$ under the $L_{\infty}$-norm. Let
  $A^f$ and $A^u$ be subsets of infected and uninfected agents in
  $\mathcal W$ at time $t$ such that $|A^f| = m_1$, $|A^u| = m_2$, and
  $\max\{m_1, m_2\} = \hat \ell_2 / \log^{2}n$.  Given any initial placement of the agents of $A^f$ and
  $A^u$, let $M(t)$ be the
  number of agents in $A^u$ that become infected at time $t + \Delta
  t$. We have
$$\Pr\left[M(t) \geq \frac{\tau_0\min\{m_1, m_2\}}{\log^{4}n}\Big|\mathcal F_t\right]
  \geq \tau_0\log^{-6}n.$$
for some constant $\tau_0$, where $\mathcal F_t$ denotes the information of the whole diffusion process up to time $t$.
\end{lemma}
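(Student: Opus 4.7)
The approach will be a second-moment (Paley--Zygmund) argument on the number $M$ of uninfected agents that become newly infected in $[t, t+\Delta t]$. For each pair $(i,j)\in A^f\times A^u$, let $Y_{ij}$ indicate that walks $i$ and $j$ meet during that window, and set $X_j = I(\exists i\colon Y_{ij}=1)$, so $M = \sum_{j\in A^u}X_j$.

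First, I would establish a pairwise meeting lower bound. Since $\mathcal W$ lies in an $L_\infty$-ball of radius $2\hat\ell_2$, every pair in $\mathcal W$ has $L_1$-distance at most $6\hat\ell_2$ at time $t$, and the budget $\Delta t = 16\hat\ell_2^2$ dominates the square of this distance. Coupling the bounded walks with independent unbounded walks and invoking Lemma~\ref{lem:couple} (with Lemma~\ref{lem:twowalkboundary} or a straightforward escape-probability estimate to handle the rare event that the coupling breaks in $\Delta t$ steps) yields $\Pr[Y_{ij}=1\mid\mathcal F_t]\geq c_0/\hat\ell_2$ for some universal constant $c_0>0$.

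Next, I would lower-bound the conditional first moment. Since $m_1\leq\max\{m_1,m_2\}=\hat\ell_2/\log^2 n$, the quantity $c_0m_1/\hat\ell_2\leq 1$, so
\[
\Pr[X_j=1\mid\mathcal F_t]\geq 1-\bigl(1-c_0/\hat\ell_2\bigr)^{m_1}\geq 1-\exp\bigl(-c_0m_1/\hat\ell_2\bigr)\geq c_0m_1/(2\hat\ell_2).
\]
Summing over $j\in A^u$ and using $m_1m_2=\max\{m_1,m_2\}\cdot\min\{m_1,m_2\}=(\hat\ell_2/\log^2n)\min\{m_1,m_2\}$ gives
\[
\E[M\mid\mathcal F_t]\geq \frac{c_0 m_1 m_2}{2\hat\ell_2}\geq \frac{c_0\min\{m_1,m_2\}}{2\log^2 n}.
\]

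For the second moment I would write $\E[M^2\mid\mathcal F_t]=\E[M\mid\mathcal F_t]+\sum_{j\neq j'}\E[X_jX_{j'}\mid\mathcal F_t]$, use $X_jX_{j'}\leq(\sum_iY_{ij})(\sum_{i'}Y_{i'j'})$, and split the double sum into the $i\neq i'$ part (which factorizes into products of pairwise meeting probabilities, by independence of disjoint walks) and the $i=i'$ part (a single walk meeting two distinct targets, bounded by Corollary~\ref{cor:rwcatchall} with $j=2$). Aggregating yields $\E[M^2\mid\mathcal F_t]\leq O(\mathrm{polylog}(n))\cdot\E[M\mid\mathcal F_t]^2$. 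A Paley--Zygmund bound
\[
\Pr\bigl[M\geq \theta\,\E[M\mid\mathcal F_t]\,\bigm|\,\mathcal F_t\bigr]\geq (1-\theta)^2\frac{\E[M\mid\mathcal F_t]^2}{\E[M^2\mid\mathcal F_t]}
\]
with $\theta\asymp 1/\log^2 n$ then makes the threshold $\theta\,\E[M\mid\mathcal F_t]$ at least $\tau_0\min\{m_1,m_2\}/\log^4 n$, while the right-hand side is at least $\tau_0/\log^6 n$ for a sufficiently small $\tau_0>0$.

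The main obstacle is the second-moment step: Corollary~\ref{cor:rwcatchall} only gives useful control when the common walk $i$ starts at $L_1$-distance at least $x$ from both $j$ and $j'$, so close configurations must be handled separately. I expect to resolve this by a dyadic (annular) decomposition of pairs according to their initial inter-agent distances, or by a more delicate conditioning on the first meeting time, ensuring the polylogarithmic losses aggregated over scales do not overwhelm the $\E[M]^2/\E[M^2]$ ratio needed by Paley--Zygmund.
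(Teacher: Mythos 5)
Your overall plan---lower-bound a first moment, upper-bound fluctuations, and deduce a positive-probability conclusion---is in the spirit of the paper's proof, but there are two concrete gaps, one of which is already flagged as the key obstacle in the paper.

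The more serious gap is the boundary handling in your first-moment step. The region $\mathcal W$ is only assumed to fit in an $L_\infty$-ball of radius $2\hat\ell_2$; it may sit in a corner of $\mathcal V^3$. When agents start at $L_\infty$-distance $O(\hat\ell_2)$ from a facet, the event that a walk hits the boundary within $\Delta t = 16\hat\ell_2^2$ steps has probability $\Omega(1)$, not ``rare,'' so you cannot absorb it into an error term. And Lemma~\ref{lem:twowalkboundary} explicitly requires the starting point to be at distance $\geq 40\|\vec x\|_1$ from every facet, which is exactly what fails here. The paper resolves this by inserting an initial ``local shuffling'' phase of $\hat\ell_2^2$ steps: Corollary~\ref{lem:waitboundary} shows that with $\Omega(1)$ probability the pair then lands at mutual $L_1$-distance in $(\hat\ell_2, 9\hat\ell_2)$ and far ($\geq 360\hat\ell_2$) from every facet, after which Lemma~\ref{lem:twowalkboundary} applies cleanly. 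Without this (or an equivalent reflection/mixing argument) your claimed $\Pr[Y_{ij}=1\mid\mathcal F_t]\geq c_0/\hat\ell_2$ is unjustified for an adversarial $\mathcal F_t$.

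The second gap is the second-moment/Paley--Zygmund step, which you yourself call the main obstacle. The problem is that under adversarial placement the pairwise meeting probabilities are not uniformly $\Theta(1/\hat\ell_2)$: a single walk $i$ at distance $O(1)$ from several $j,j'$ has $\E[Y_{ij}Y_{ij'}]$ that is $\Theta(1)$ rather than $\Theta(1/\hat\ell_2^2)$, and Corollary~\ref{cor:rwcatchall} gives no leverage at short range. Your proposed dyadic decomposition might tame this, but it has to contend with aggregating over scales while also controlling the $i=i'$ overlap across arbitrary initial clusterings, and you have not established the polylogarithmic variance ratio you need. The paper takes a genuinely different route that sidesteps variance control entirely: after the shuffle, it bounds the overcounting \emph{deterministically} on a high-probability event $e_t$ (for every $j$, $\sum_{i}Y_{ij}\leq\log^2 n$, and symmetrically for every $i$, by a union bound over Lemma~\ref{lem:rwcatchall}), which yields the deterministic inequality $\sum_j I_j \geq \log^{-2}n\sum_{i,j}Y_{ij}$ and the uniform cap $\sum_j I_j\leq\min\{m_1,m_2\}\log^2 n$ on $e_t$. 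A one-line reverse-Markov/averaging argument on the bounded random variable $\sum_j I_j$ then gives the positive-probability conclusion, avoiding the second moment altogether. This is more robust than Paley--Zygmund precisely because the upper bound is worst-case rather than in expectation.

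So: the approach is plausible in outline and would constitute a different proof if completed, but the boundary step is currently wrong (the event is not rare), and the second moment is left as a sketch at the exact point where the difficulty lies.
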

%

\begin{proof}
The high level idea of our proof is to count the total
  number of times the infected agents meet the uninfected
  ones between time $t$ and $t + \Delta t$.
The probability two agents in $\mathcal W$ can meet each other within
  time $\Delta t$ is approximately $\tilde \Omega(1/\hat \ell_2)$
  (Lemma~\ref{lem:couple}).
The expected number of meetings is thus $\tilde \Omega(1/\hat \ell_2) \times m_1m_2 = \tilde
  \Omega(\min\{m_1, m_2\})$.  The total number of newly infected
  agents is the number of
meetings modulo possible overcounts on
  each originally uninfected agent.
If we can show that the number of meetings
is $\tilde O(1)$ for each uninfected agent, then we can conclude that $\tilde \Omega(\min\{m_1, m_2\})$ more agents become
  infected at time $t + \Delta t$.

Two problems need to be addressed to implement this idea. First, when the agents are close to the boundary, they may behave in a
more complicated way than suggested by Lemma~\ref{lem:couple}.
Second, in the (rare) case that an uninfected agent is surrounded by a
large number of infected agents, it can possibly
meet with $\omega(1)$ infected agents, making it difficult to give an upper bound over the number of overcounts.

To address both problems, we wait $\hat \ell^2_2$ time steps before starting our analysis on infections. This time gap is enough to guarantee that with constant probability, the agents are ``locally shuffled'' so that by time
$t + \hat \ell^2_2$,
\begin{enumerate*}
\item all agents are reasonably far away from the boundaries,
\item the distance between any pair of agents is ``appropriate'' (in our case, the distance is between $\hat \ell_2$ and $9\hat \ell_2$).
\end{enumerate*}

Intuitively, the ``local shuffling" works because central limit theorem implies that the agents' distribution at the end of these steps is approximately multivariate Gaussian.

We now implement this idea.
First, we couple the (sub)process in
$\mathcal W$ with one that has slower diffusion rule.  In the coupled
process, we first wait for $\hat \ell^2_2$ time steps, in which no
agent becomes infected even if it meets an infected agent.
After these $\hat \ell^2_2$ steps, for an arbitrary $\mathrm a_i \in
A^f$ and $\mathrm a_j \in A^u$, let $X_{i, j} = 1$ if both of the
following conditions  {hold},
\begin{itemize*}
\item the $L_1$-distance between $\mathrm a_i$ and $\mathrm a_j$ is between
  $\hat \ell_2$ and ${9}\hat \ell_2$.
\item the $L_1$-distance between $\mathrm a_i$ and any boundary is at least $ \zliu{360}\hat \ell_2$.
\end{itemize*}
By  \zliu{Corollary~\ref{lem:waitboundary}}, $\Pr[X_{i, j} = 1] \geq
\tau$ for some constant $\tau$.
Therefore, $\E[\sum_{\mathrm a_i\in A^f, \mathrm a_j\in A^u}X_{i, j}]
\geq \tau m_1m_2$. On the other hand, $\sum_{i, j} X_{i,j}\leq m_1m_2$.
It follows easily that we have $\Pr[\sum_{i, j}X_{i,j}\geq \frac 1 2\tau m_1m_2] \geq \tau / 2$.

Our slower diffusion rule then allows $\mathrm a_i \in A^f$ to transmit its
virus to $\mathrm a_j \in A^u$ if and only if
\begin{itemize*}
\item $X_{i,j} = 1$,
\item they meet during the time interval $(t+\hell_2^2, t+\Delta t]$,
\item $\mathrm a_i$ and $\mathrm a_j$ have not visited any boundary
  after $t+\hell_2^2$
before they meet. In other words, an agent $\mathrm a_i \in A^f$ ($\mathrm a_j \in A^u$ resp.) loses its ability to transmit (receive resp.) the virus when it hits
a boundary
after the initial waiting stage.
\end{itemize*}

An added rule is that agents in $A^u$ will not have
the
ability to transmit
the
 virus even after they are infected.

 Let $Y_{i, j}$ be
the indicator random variable that is set to $1$ if and only if $\mathrm a_i$ transmits its
virus to $\mathrm a_j$
under the slower diffusion rule.
By Lemma~\ref{lem:twowalkboundary}, we have $\Pr[Y_{i,j} = 1\mid X_{i, j} =
1] = \Omega(1/\hat\ell_2)$. Therefore, we
have
$$\Pr[Y_{i, j} = 1] \geq \Pr[Y_{i, j} = 1\mid X_{i, j} = 1]\Pr[X_{i,j} = 1] = \Omega(1/\hat{\ell}_2).$$
Hence,
$$\E[\sum_{\mathrm a_i \in A^f, \mathrm a_j \in A^u}Y_{i,j}] =
\Omega(m_1m_2/\hat \ell_2) \newxiaorui{\geq} \tau_1m_1m_2/\hat\ell_2$$
for some constant $\tau_1$.
 $\sum_{\mathrm a_i \in A^f, \mathrm a_j \in A^u}Y_{i,j}$ is approximately
the number of newly infected agents except that the same
agent  in $A^u$ may be counted multiple times.
Our next task is thus to give an upper bound on the number of overcounts. Specifically,
  we fix an agent $\mathrm a_j \in A^u$ and argue that
the probability $\sum_{\mathrm a_i \in A^f}Y_{i,j} \geq \log^2n$ is small.

In our slower diffusion model, once an agent in $A^f$ reaches the
boundary, it is not able to transmit the virus further.
We need to bound the probability that there are more than $\log^2n$
agents in ${A^f}$ that transmit the virus to $\mathrm a_j$ {\em before they
hit any boundary}. This probability is at most the probability that
more than $\log^2n$ infected agents performing \emph{unbounded} random walks meet $a_j$,
where each infected agent is at least $\hat \ell_2$ away from $a_j$
initially.

By Lemma~\ref{lem:rwcatchall}, there exists a constant $c_0$
such that for all possible values of  $X_{i,j}$ and sufficiently large $n$:
\begin{eqnarray*}
\Pr[\sum_{\mathrm a_i \in A^f}Y_{i,j} \geq \log^2n \mid X_{1, j}, X_{2, j}, ..., X_{m_1, j}] & \leq & \binom{\sum_{i \leq m_1}X_{i, j}}{\log^2n}\left(\frac{c_0\log^2n}{\hat \ell_2}\right)^{\log^2n} \\
& \leq & \binom{m_1}{\log^2n}\left(\frac{c_0\log^2n}{\hat \ell_2}\right)^{\log^2n} \\
& \leq & \left(\frac{em_1}{\log^2n}\right)^{\log^2n}\left(\frac{c_0\log^2n}{\hat \ell_2}\right)^{\log^2n}  \\
& = & \left(\frac{c_0em_1}{\hat \ell_2}\right)^{\log^2n} \\
& \leq & \exp(-\log^{2}n\log \log n).
\end{eqnarray*}
Therefore,
$$\Pr[\sum_{\mathrm a_i \in A^f}Y_{i,j} \geq \log^2n] =
\E[\Pr[\sum_{\mathrm a_i \in A^f}Y_{i,j} \geq \log^2n \mid  X_{1, j}, ..., X_{m_1, j}] ]
\leq \exp(-\log^{2}n \log \log n).$$
 By a union bound we have
\begin{equation}\label{eqn:sumlb}
\Pr[\exists j: \sum_{\mathrm a_i \in A^f}Y_{i,j} \geq \log^2n  ] \leq m\cdot\exp(-\log^{2}n \log \log n) \leq \exp(-2\log^2n).
\end{equation}
Next, let us fix $\mathrm a_i \in A^f$ and we may argue in a similar way to obtain
$$\Pr[\exists i: \sum_{\mathrm a_j \in A^u}Y_{i,j} \geq \log^2n  ] \leq \exp(-2\log^2n).$$

Define $e_t$ as the event that $\left(\forall i,\, \sum_{a_j\in A^u}Y_{i,j}
\leq \log^2n\right) \land \left(\forall j, \, \sum_{a_i \in A^f} Y_{i,j} \leq
\log^2n\right)$. Therefore, $\pr[e_t] \geq 1-2\exp(-2\log^2n)$. Observe that $e_t$ implies $\sum_{i, j}Y_{i, j} \leq \min\{m_1, m_2\}\log^2n$. We have
\begin{eqnarray*}
\tau_1m_1m_2/\hat \ell_2 & \leq & \E[\sum_{i, j}Y_{i, j}] \\
& = & \E[\sum_{i, j}Y_{i, j}|e_t]\Pr[e_t] + \E[\sum_{i, j}Y_{i,
  j}|\lnot e_t]\Pr[\lnot e_t] \\
& \leq & \E[\sum_{i, j}Y_{i, j}|e_t] + m^2\Pr[\lnot e_t] \\
& \leq & \E[\sum_{i, j}Y_{i, j}|e_t] + 2m^2\exp(-2\log^2n).
\end{eqnarray*}
Therefore,
\begin{eqnarray*}
\E[\sum_{i, j}Y_{i, j}|e_t] &\geq& \tau_1m_1m_2/\hat\ell_2 -2m^2\exp(-2\log^2n) \\
& \geq & \frac{\tau_1 m_1m_2}{2\hat\ell_2}\\
& = &\frac{\tau_1 \max\{m_1, m_2\}\min\{m_1, m_2\}}{2\hat\ell_2} \\
 & \geq & \frac{\tau_1\min\{m_1, m_2\}}{2\log^2n}
\end{eqnarray*}

Next, define indicator variable $I_j=1$  \liusd{if and only if
$\sum_{\mathrm a_i \in A^f}Y_{i,j} > 0$.} The sum $\sum_j I_j$
is the total number of newly infected agents in our weaker process and thus
is a lower bound on $M(t)$. Note that if $e_t$ holds,
$$\sum_{\mathrm a_j \in A^u} I_j \leq \sum_{i, j}Y_{i, j} \leq \min\{m_1, m_2\}\log^2n,$$
and hence
$$E[\sum_{\mathrm a_j \in A^u} I_j|e_t]\leq\min\{m_1, m_2\}\log^2n.$$
On the other hand,
\begin{equation}
\E[\sum_j I_j|e_t] \geq \log^{-2}n\E[\sum_{i, j}Y_{i, j}|e_t] \geq \frac{\tau_1\min\{m_1, m_2\}}{2\log^4n}
\end{equation}
Now define $\tilde{m} = \frac{\tau_1\min\{m_1,m_2\}}{4\log^4n}$. We have
\begin{eqnarray*}
2\tilde{m}
  &\leq & \E[\sum_j I_j |e_t] \\
 & = & \E[\sum_j I_j \Big| e_t, \sum_j I_j \leq \tilde{m} ]\Pr[\sum_j I_j \leq \tilde{m}\Big| e_t] + \E[\sum_j I_j \Big| e_t, \sum_j I_j > \tilde{m} ]\Pr[\sum_j I_j > \tilde{m}\Big| e_t] \\
& \leq & \E[\sum_j I_j \Big| e_t, \sum_j I_j \leq \tilde{m} ] + \min\{m_1, m_2\}\log^2n \Pr[\sum_j I_j > \tilde{m}\Big| e_t] \\
& \leq & \tilde{m} + \min\{m_1, m_2\}\log^2n \Pr[\sum_j I_j > \tilde{m}\Big| e_t].
\end{eqnarray*}

Therefore,
$$\Pr\left[\sum_j I_j > \frac{\tau_1\min\{m_1, m_2\}}{4\log^4n}\Big| e_t\right] \geq  \frac{\tau_1}{4\log^6n}.$$
Finally,
\begin{eqnarray*}
\Pr\left[\sum_j I_j > \frac{\tau_1\min\{m_1, m_2\}}{4\log^4n}\right] &\geq &\Pr\left[\sum_j I_j > \frac{\tau_1\min\{m_1, m_2\}}{4\log^4n}\Big| e_t\right]\Pr[e_t] \\
& \geq & \frac{\tau_1}{4\log^6n}(1-2\exp(-\log^2n)) \geq
\frac{\tau_1}{5\log^6n}
\end{eqnarray*}
By setting $\tau_0 = \tau_1 / 5$, we get our result.

\end{proof}

The next step is to characterize the growth rate at a larger scale. This requires more  \zliu{notation}. We denote the set of $b^3$ subcubes of size $\hat{\ell}_2\times\hat{\ell}_2\times\hat{\ell}_2$ as $\mathfrak C = \{h_{i, j, k}:
i, j, k \in [b]\}$.  For an arbitrary subcube $h_{i, j, k}$, we define
its neighbors as
$N(h_{i, j, k}) = \{h_{i',j',k'}: |i - i'| + |j - j'| + |k-k'| = 1\}$.
In other words, $h_{i', j',k'}$ is a neighbor of $h_{i, j, k}$ if and
only if both subcubes share a facet.  Let $\mathcal H$ be an
arbitrary subset of $\mathfrak C$. We write $N(\mathcal H) =
\bigcup_{h \in \mathcal H}N(h)$.

\begin{definition}[Exterior and interior surface] Let $\mathcal H$ be
  a subset of $\mathfrak C$. The \emph{exterior surface} of $\mathcal
  H$ is $\exsur \mathcal H = N(\mathcal H) - \mathcal H$. Let
  $\overline{\mathcal H}$ be the complement of $\mathcal{H}$. The \emph{interior surface} of $\mathcal H$ is
$\insur \mathcal H = N( \overline{\mathcal H}) -
\overline{\mathcal H}$, i.e., the exterior surface of the
  complement of $\mathcal H$.
\end{definition}


At time step $t= i\Delta t$, let $\mathcal G_t$ be the set of all subcubes that
contain more than $\hat \ell_2 / 2$ infected agents and let $g_t =
|\mathcal G_t|$; let $\mathcal B_t = \overline{\mathcal G_t}$ be the
rest of the subcubes and let $b_t = |\mathcal B_t|$. We say a subcube
in $\mathcal G_t$ an \emph{infected} (\emph{good}) subcube and a subcube in $\mathcal
B_t$ an \emph{uninfected} (\emph{bad}) subcube.

We classify the agents in the process according to the subcubes they
reside in. To facilitate our analysis, we adopt the notational system
$\mfa_t^\cdot$ and $\mfa_t^{\cdot,\cdot}$ to represent the total
number of agents that belong to the type specified in the
 {superscript.}
Specifically, let $\mfa^f_t$ be the set of
infected agents at time $t$; decompose
the set $\mfa^f_t$ as $\mfa^f_t = \mfa^{f, \cG}_t \cup \mfa^{f, \cB}_t$, where $\mfa^{f, \cG}_t$ is the set of infected agents residing in
the subcubes in $\mathcal G_t$ and $\mfa^{f,\cB}_t$ the set of infected
agents in $\mathcal B_t$. Similarly, let
$\mfa^{u}_t$ be the set of all uninfected agents;
decompose the set $\mfa^{u}_t$ as $\mfa^{u}_t = \mfa^{u, \cG}_t \cup
\mfa^{u, \cB}_t$, where $\mfa^{u, \cG}_t$ is the set of uninfected agents
residing in the subcubes in $\mathcal G_t$ and $\mfa^{u, \cB}_t$ the
set of uninfected agents in $\mathcal B_t$. Furthermore, we denote $\Delta \mfa^{\cG}_t$ and $\Delta \mfa^{\cB}_t$ as the set of
agents in $\mathcal G_t$ and $\mathcal B_t$ respectively that are infected
between $t$ and $t + \Delta t$. Hence the total increase in infected agents, or equivalently the total decrease in uninfected agents, between $t$ and $t + \Delta t$ is given by $\Delta \mfa_t = \Delta \mfa^{\cG}_t \cup
\Delta \mfa^{\cB}_t$. Lastly, we let $\widetilde{\Delta\mfa^{\cG}_t}$ be the set of agents in $\mathcal G_t \cup \partial \mathcal G_t$ that are infected
between $t$ and $t + \Delta t$.

Similar to the lower bound analysis, here we also introduce good density conditions that can be easily verified to hold with high probability, and reuse the symbols $D_t$ and $D$ with slightly different
meanings from the last section:

\begin{definition}\label{def:denseup}Let $\{D_t: t \geq 0\}$ be a sequence of binary random
  variables such that $D_t = 1$ if for all
  time steps on or before $t$, the
  number of agents for any subcube in $\mathcal V^3$
  with size $\hat \ell_2 \times \hat \ell_2 \times \hat \ell_2$ is between $\hat \ell_2$ and
  $2\hat \ell_2\log^2n$. Also, let $D = D_{n^{2.5}}$.
\end{definition}

The following lemma shows that $D_t=1$ with high probability, whose proof will be left to Appendix~\ref{sec:missingproofs}:

\begin{lemma}
\label{lem:densitya3d}
For any $t \leq n^{2.5}$, $\Pr[D_t = 0] \leq \exp(-\frac 1{15}\log^2n)$ for sufficiently large $n$.
\end{lemma}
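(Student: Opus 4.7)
The plan is to combine stationarity of the uniform distribution under the random walk with a Chernoff bound inside each subcube at each time step, then take a union bound. First I would observe that the transition kernel (with self-loop corrections at the boundary) is doubly stochastic and therefore preserves the uniform distribution on $\mathcal V^3$. Since the agents start uniformly distributed at $t=0$, for any fixed time $t$ each $S^i_t$ is marginally uniform on $\mathcal V^3$, and the $m$ walks are independent. Hence, for any fixed subcube $C$ of side length $\hat\ell_2$ and any fixed time $t$, the number of agents in $C$ at time $t$, call it $N_C(t)$, is the sum of $m$ independent Bernoulli random variables with common success probability $p=|C|/|\mathcal V^3|=\Theta(\hat\ell_2^3/n^3)$, giving mean
\[
\mu := \E[N_C(t)] \;=\; \Theta\!\left(\frac{m\hat\ell_2^3}{n^3}\right)\;=\;\Theta(\hat\ell_2\log^2 n),
\]
using $\hat\ell_2=\sqrt{n^3/m}\log n$.

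Next I would apply standard multiplicative Chernoff bounds to control both tails. For the upper tail, taking deviation $\delta=1$ we get $\Pr[N_C(t)\ge 2\mu]\le \exp(-\mu/3)$. For the lower tail, we want $N_C(t)\ge\hat\ell_2=\mu/\Theta(\log^2 n)$, which corresponds to $\delta = 1-\Theta(\log^{-2}n)$ and yields $\Pr[N_C(t)\le\hat\ell_2]\le \exp(-\mu/3)$ as well. Both tail bounds are at most $\exp(-c\hat\ell_2\log^2 n)$ for a suitable constant $c>0$; since $m\le n^3$ forces $\hat\ell_2\ge\log n\ge 1$, each bound is at most $\exp(-c\log^2 n)$.

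Finally I would apply a union bound over the at most $b^3=O(n^3/\hat\ell_2^3)=\mathrm{poly}(n)$ subcubes, and over the $t\le n^{2.5}$ time steps, giving
\[
\Pr[D_t=0]\;\le\; n^{2.5}\cdot b^3\cdot 2\exp(-c\hat\ell_2\log^2 n)\;\le\;\exp\!\left(-\tfrac{1}{15}\log^2 n\right)
\]
for sufficiently large $n$, since the polynomial prefactor contributes only $O(\log n)$ in the exponent while the Chernoff term contributes $\Omega(\log^2 n)$.

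The main obstacle is the lightest — verifying that all the multiplicative constants indeed leave room for the $\tfrac{1}{15}$ in the exponent after absorbing the $O(\log n)$ loss from the union bound; this just requires choosing $n$ large. A small technical nuisance is that $\hat\ell_2$ need not divide $2n+1$ exactly, so the partition into subcubes may have boundary cells of slightly different size; this only changes $p$ by a constant factor and does not affect the asymptotics. I would also note that we do not need independence across time: the union bound over time handles the correlation for free, since within each time slice we only need the binomial structure that stationarity provides.
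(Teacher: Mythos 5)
Your proof is correct and follows essentially the same route as the paper: use stationarity of the uniform distribution to get that the occupancy count of each $\hat\ell_2$-subcube at each time is $\mathrm{Binomial}(m,p)$ with mean $\Theta(\hat\ell_2\log^2 n)\ge\log^2 n$, apply a Chernoff bound to concentrate that count in $[\hat\ell_2,\,2\hat\ell_2\log^2 n]$, and finish with a union bound over $\mathrm{poly}(n)$ subcube--time pairs. The only cosmetic difference is that you invoke two one-sided Chernoff bounds where the paper uses a single two-sided bound with $\delta=1/2$; either way the per-event failure probability is $\exp(-\Omega(\log^2 n))$, which comfortably survives the union bound.
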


We now state two bounds on the growth rate of the agent types, one relative to the ``boundary subcubes" $\partial \mathcal G_t$ and one relative to the total agents of each type:

\begin{corollary}\label{cor:dense}For some constant $\tau_0$,
{\small
$$\Pr\left[|\widetilde{\Delta \mfa^{\cG}_t} \cap \Delta \mfa^{ \cB}_t| \geq |\partial\mathcal G_t|\cdot \frac{\tau_0\hat \ell_2}{4\log^{13}n}\Bigg|\mathcal F_t, D_t = 1\right]  \geq \tau_0\log^{-6}n.$$
}
%
Consequently,
{\small
$$\Pr\left[|\widetilde{\Delta \mfa^{ \cG}_t}| \geq |\partial\mathcal G_t|\cdot \frac{\tau_0\hat \ell_2}{4\log^{13}n}\Bigg|\mathcal F_t, D_t = 1\right]   \geq \tau_0\log^{-6}n \mbox{ and } \Pr\left[| \Delta \mfa^{\cB}_t| \geq |\partial\mathcal G_t|\cdot \frac{\tau_0\hat \ell_2}{4\log^{13}n}\Bigg|\mathcal F_t, D_t = 1\right]  \geq \tau_0\log^{-6}n.$$}
\end{corollary}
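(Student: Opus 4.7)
The plan is to apply Lemma~\ref{lem:dense} locally to each boundary subcube $h'\in\partial\mathcal G_t$ and then aggregate via a reverse-Markov inequality on the number of ``successful'' boundary subcubes. This route is essentially forced: the events driving different $h'$'s share the same underlying random walks and are therefore highly correlated, so one cannot simply boost Lemma~\ref{lem:dense}'s $\Omega(1/\log^6 n)$ probability by independence.

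\textbf{Local setup.} For every $h'\in\partial\mathcal G_t$, I will fix one adjacent $h=h(h')\in\mathcal G_t$ (guaranteed by the definition of $\partial\mathcal G_t$). The union $\mathcal W_{h'}=h\cup h'$ is contained in a ball of $L_\infty$-radius $2\hat\ell_2$, satisfying the geometric hypothesis of Lemma~\ref{lem:dense}. On $\{D_t=1\}$, the cube $h$ has at least $\hat\ell_2/2$ infected agents (by $h\in\mathcal G_t$), and $h'$ has at least $\hat\ell_2/2$ uninfected agents (since $h'\in\mathcal B_t$ contains fewer than $\hat\ell_2/2$ infected agents out of at least $\hat\ell_2$ total). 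From these populations I select arbitrary subsets $A^f_{h'}\subseteq h$ and $A^u_{h'}\subseteq h'$ of sizes $m_1=m_2=\hat\ell_2/\log^2 n$.

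\textbf{Per-cube bound and disjoint aggregation.} Let $M_{h'}$ denote the number of agents in $A^u_{h'}$ that are infected by time $t+\Delta t$. Lemma~\ref{lem:dense}, applied to $\mathcal W_{h'}$, gives
$$
\Pr\bigl[M_{h'}\geq \tau_0\hat\ell_2/\log^6 n\,\big|\,\mathcal F_t,D_t=1\bigr]\ \geq\ \tau_0/\log^6 n.
$$
Because the cubes $h'\in\partial\mathcal G_t$ are pairwise disjoint and $A^u_{h'}\subseteq h'\subseteq\mathcal B_t\cap(\mathcal G_t\cup\partial\mathcal G_t)$, every agent counted by some $M_{h'}$ contributes uniquely to $\widetilde{\Delta\mfa^{\cG}_t}\cap\Delta\mfa^{\cB}_t$, and hence $|\widetilde{\Delta\mfa^{\cG}_t}\cap\Delta\mfa^{\cB}_t|\geq\sum_{h'\in\partial\mathcal G_t}M_{h'}$ deterministically.

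\textbf{Reverse Markov and conclusion.} Let $N=\sum_{h'\in\partial\mathcal G_t}\mathbf 1\{M_{h'}\geq \tau_0\hat\ell_2/\log^6 n\}$, a random variable bounded by $|\partial\mathcal G_t|$ with $\E[N\mid\mathcal F_t,D_t=1]\geq|\partial\mathcal G_t|\tau_0/\log^6 n$. Reverse Markov on the bounded $N$ then yields $\Pr[N\geq|\partial\mathcal G_t|\tau_0/(2\log^6 n)\mid\mathcal F_t,D_t=1]\geq\tau_0/(2\log^6 n)$, and on this event
$$
|\widetilde{\Delta\mfa^{\cG}_t}\cap\Delta\mfa^{\cB}_t|\ \geq\ N\cdot\frac{\tau_0\hat\ell_2}{\log^6 n}\ \geq\ \frac{|\partial\mathcal G_t|\tau_0^2\hat\ell_2}{2\log^{12} n}\ \geq\ \frac{|\partial\mathcal G_t|\tau_0\hat\ell_2}{4\log^{13} n}
$$
for sufficiently large $n$; absorbing the constant factor $2$ into $\tau_0$ delivers the stated inequality. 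The two ``consequently'' bounds are immediate because $\widetilde{\Delta\mfa^{\cG}_t}\cap\Delta\mfa^{\cB}_t$ is contained in each of $\widetilde{\Delta\mfa^{\cG}_t}$ and $\Delta\mfa^{\cB}_t$. The main technical point to verify is that the selection of $(A^f_{h'},A^u_{h'})$ is $\mathcal F_t$-measurable, so that Lemma~\ref{lem:dense}'s conditional bound applies pointwise on $\mathcal F_t$ and can be restricted to $\{D_t=1\}$ without loss.
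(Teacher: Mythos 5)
Your proof is correct and takes a genuinely different, lighter route than the paper's. The paper first builds an explicit matching of size at least $|\partial\mathcal G_t|/11$ between $\partial\mathcal G_t$ and $\dot\partial\mathcal G_t$ (via Lemma~\ref{lem:matchingbnd}), then couples to a slower diffusion restricted to matched pairs precisely so that the per-pair infection counts $\rho_j$ become conditionally independent given $\mathcal F_t$; this enables a case split on $|\partial\mathcal G_t|\le\log^9 n$ (use a single pair) versus $|\partial\mathcal G_t|>\log^9 n$ (Chernoff on $\sum_j\rho_j$). You bypass all of this: an arbitrary $\mathcal F_t$-measurable assignment $h'\mapsto h(h')$ suffices, the expectation lower bound on $N$ follows from linearity alone, and reverse Markov converts it to a probability bound without any independence — which is enough precisely because the corollary only asks for a $\tau_0\log^{-6}n$ success probability, the order reverse Markov delivers in the worst case anyway. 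You are also right that overlapping choices of $h(h')$ (several $h'$ sharing an interior neighbor, so the sets $A^f_{h'}$ overlap) are harmless, since disjointness is only needed on the uninfected side $A^u_{h'}\subseteq h'$ for the aggregation $|\widetilde{\Delta\mfa^{\cG}_t}\cap\Delta\mfa^{\cB}_t|\ge\sum_{h'}M_{h'}$ to be a clean sum over distinct agents. The paper's Chernoff route would give a near-$1$ probability when $|\partial\mathcal G_t|$ is large, but that extra strength is never used downstream; your argument strictly simplifies the proof, at the cost of a benign factor of $2$ from reverse Markov (in both threshold and probability) that is absorbed by renaming $\tau_0$ and by the spare $\log n$ in $\log^{13}n$, as you note.
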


\begin{corollary}\label{lem:sparse}  We have
{\small
$$\Pr\left[|\Delta \mfa^{\cG}_t|\geq \frac{\tau^2_0}{4\log^{38}n}|\mfa^{u, \cG}_t| \Big|
\mathcal F_t, D_t = 1\right] \geq {\tau_0} \log^{-6}n \mbox{ and }\Pr\left[|\Delta \mfa^{\cB}_t|\geq \frac{\tau^2_0}{4\log^{38}n}|\mfa^{f,\cB}_t| \Big|
\mathcal F_t, D_t = 1\right] \geq {\tau_0} \log^{-6}n.$$}
\end{corollary}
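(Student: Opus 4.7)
The plan is to apply Lemma~\ref{lem:dense} to each good (resp.\ bad) subcube in $\cG_t$ (resp.\ $\cB_t$) and then aggregate the resulting per-subcube estimates via linearity of expectation together with reverse Markov's inequality. For the first inequality, fix $h\in\cG_t$; since $h$ is good and $D_t=1$, it contains $m_1^h\geq\hat\ell_2/2$ infected and $m_2^h\in[0,2\hat\ell_2\log^2 n]$ uninfected agents. Pick $A^f$ to be any $\hat\ell_2/\log^2 n$ of the infected agents and $A^u$ to be $\min\{m_2^h,\hat\ell_2/\log^2 n\}$ of the uninfected ones; this satisfies the hypothesis $\max\{|A^f|,|A^u|\}=\hat\ell_2/\log^2 n$ of Lemma~\ref{lem:dense}. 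Letting $X_h$ denote the number of previously uninfected agents of $h$ that become infected during $[t,t+\Delta t]$, the lemma combined with the density bound $m_2^h\leq 2\hat\ell_2\log^2 n$ yields, after handling the subcases $m_2^h\leq \hat\ell_2/\log^2 n$ and $m_2^h>\hat\ell_2/\log^2 n$ separately,
\begin{equation*}
\E[X_h\mid\cF_t,\,D_t=1]\;\geq\;\frac{\tau_0^2}{\log^{10}n}\min\bigl\{m_2^h,\,\hat\ell_2/\log^2 n\bigr\}\;\geq\;\frac{\tau_0^2\,m_2^h}{2\log^{14}n}.
\end{equation*}
Summing over $h\in\cG_t$ gives $\E[|\Delta\mfa_t^{\cG}|\mid\cF_t,D_t=1]\geq\tau_0^2|\mfa_t^{u,\cG}|/(2\log^{14}n)$. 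Combining this with the deterministic upper bound $|\Delta\mfa_t^{\cG}|\leq|\mfa_t^{u,\cG}|$ and reverse Markov's inequality (if $0\leq Y\leq M$ and $\E Y\geq\mu$ then $\Pr[Y\geq\mu/2]\geq\mu/(2M)$) then produces the first inequality; the extra logarithmic slack in the stated threshold $\tau_0^2/(4\log^{38}n)$ merely absorbs loose constants along the way.

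The second inequality follows from an essentially symmetric argument on bad subcubes: for $h\in\cB_t$ one has $m_2^h\geq\hat\ell_2/2$ (under $D_t=1$ the total population of $h$ is at least $\hat\ell_2$, while $m_1^h\leq\hat\ell_2/2$ by the definition of bad), so one instantiates Lemma~\ref{lem:dense} with $|A^u|=\hat\ell_2/\log^2 n$ and $|A^f|=\min\{m_1^h,\hat\ell_2/\log^2 n\}$. The analogous two-case analysis yields $\E[X_h\mid\cF_t,D_t=1]\geq\tau_0^2 m_1^h/(2\log^{14}n)$ per bad subcube, and summation gives $\E[|\Delta\mfa_t^{\cB}|\mid\cF_t,D_t=1]\geq\tau_0^2|\mfa_t^{f,\cB}|/(2\log^{14}n)$.

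The main obstacle is completing the reverse-Markov step in the bad-subcube case: the naive deterministic upper bound $|\Delta\mfa_t^{\cB}|\leq|\mfa_t^{u,\cB}|$ can dwarf $|\mfa_t^{f,\cB}|$ and so yields too small a probability to match the claim. To close this gap one must sharpen the upper bound by reusing the over-counting argument that produces~(\ref{eqn:sumlb}) inside the proof of Lemma~\ref{lem:dense}: with probability $1-\exp(-\Omega(\log^2 n))$, each infected agent in a bad subcube successfully transmits the virus to at most $\log^2 n$ previously uninfected agents during the increment $[t,t+\Delta t]$. Iterating this bound over the $O(1)$ infection ``generations'' that fit inside $\Delta t=16\hat\ell_2^2$ (each pairwise meeting consuming time of order $\hat\ell_2^2$) gives $|\Delta\mfa_t^{\cB}|\leq|\mfa_t^{f,\cB}|\cdot\log^{O(1)}n$ with overwhelming probability, and feeding this sharpened bound into reverse Markov delivers the second inequality.
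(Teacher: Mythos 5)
Your per-subcube application of Lemma~\ref{lem:dense} and the resulting expectation bound $\E[|\Delta\mfa_t^{\cG}|\mid\cF_t,D_t=1]\geq \tau_0^2|\mfa_t^{u,\cG}|/(2\log^{14}n)$ match the paper's first steps, but the aggregation via reverse Markov does not deliver the stated probability. Reverse Markov with $0\leq Y\leq M$ and $\E Y\geq\mu$ gives only $\Pr[Y\geq\mu/2]\geq\mu/(2M)$, which here is $\Theta(\log^{-14}n)$ — short of the required $\tau_0\log^{-6}n$ — and this is tight without further structure (take $Y=M$ with probability $\mu/M$ and $Y=0$ otherwise). Lowering the threshold to $\tau_0^2/(4\log^{38}n)$ does not help, since the probability in reverse Markov is governed by $\E Y/M$, not by the threshold. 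The idea you are missing is that the coupled process (infections allowed only \emph{within} each subcube) makes the per-subcube counts $|\Delta\mfa^{\cG}_{t,g}|$ \emph{independent} given $\cF_t$, which lets the paper run a dichotomy: if some single subcube carries a $\log^{-29}n$-fraction of $|\mfa^{u,\cG}_t|$, the per-subcube probability bound $\tau_0\log^{-6}n$ from Lemma~\ref{lem:dense} transfers directly; otherwise $\sum_g|\mfa^{u,\cG}_{t,g}|^2\leq|\mfa^{u,\cG}_t|^2\log^{-29}n$ and Hoeffding's inequality over the independent summands gives the event with probability $1-\exp(-\Theta(\log n))$. Either branch beats $\tau_0\log^{-6}n$.

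Your diagnosis of the obstacle in the bad-subcube case is correct, but the proposed repair is both unjustified and unnecessary. An upper bound of the form $|\Delta\mfa_t^{\cB}|\leq|\mfa_t^{f,\cB}|\log^{O(1)}n$ must hold for the quantity you feed into the tail bound; Equation~(\ref{eqn:sumlb}) controls overcounting only in the \emph{slower coupled} process of Lemma~\ref{lem:dense} (where newly infected agents never transmit), whereas in the original process infection chains over $\Delta t=16\hat\ell_2^2$ steps would require a branching-process analysis akin to Section~\ref{sec:lb3}, not a one-line iteration "over generations." The paper sidesteps this entirely by \emph{defining} the coupled process in each bad subcube $b$ to halt once $|\Delta\mfa^{\cB}_{t,b}|=|\mfa^{f,\cB}_{t,b}|$; the cap $|\Delta\mfa^{\cB}_{t,b}|\leq|\mfa^{f,\cB}_{t,b}|$ then holds by construction, the capped counts remain independent across subcubes, and the same dominant-subcube/Hoeffding dichotomy applies. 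Even granting your high-probability upper bound, reverse Markov would still only yield probability $\Theta(\log^{-(14+O(1))}n)$, so the quantitative gap of the first paragraph persists.
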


The proofs of these two corollaries both rely on using coupled diffusion processes that have slower diffusion rates. These processes only allow infection locally i.e. within each ``pair" of subcubes on the surface of $\mathcal G_t$ in the case of Corollary~\ref{cor:dense} and within each subcube in Corollary~\ref{lem:sparse}, and hence can be tackled by Lemma~\ref{lem:dense}. The surface $\partial \mathcal G_t$ in Corollary~\ref{cor:dense} appears naturally from a matching argument between neighboring infected and uninfected subcubes. Roughly speaking, the bounds in Corollary~\ref{cor:dense} are tighter and hence more useful for the cases where infected/uninfected agents are dense in the infected/uninfected subcubes, while those in Corollary~\ref{lem:sparse} are for cases where the agent types are more uniformly distributed.

\begin{proof}[Proof of Corollary~\ref{cor:dense}] Agents in $\widetilde{\Delta \mfa^{\cG}_t} \cap \Delta \mfa^{\cB}_t$ are those initially in
$\partial \mathcal G_t$ at $t$ and become infected at the time $t + \Delta t$. We focus on how the uninfected agents in
$\partial \mathcal G_t$ become infected.

Let us construct a graph $G = (V, E)$, in which the vertex set $V$ of $G$ consists of subcubes in $\exsur \mathcal G_t\cup \insur \mathcal G_t$ and the edge set is defined as
$$E = \{\{u, v\}: u \in \partial \mathcal G_t, v \in \dot \partial \mathcal G_t, u \in N(v)\}.$$


We may use a greedy algorithm to argue that there  is a matching on $G$ from $\exsur \mathcal
G_t$ to $\insur G_t$ with size at least $|\partial \mathcal G_t| /
11$ (see Lemma~\ref{lem:matchingbnd} for details). Denote the matching as
$$\mathfrak M = \{\{h_1, h'_1\}, \{h_2, h'_2\}, ..., \{h_k, h'_k\}: h_i \in \exsur \mathcal G_t, h'_i = \insur \mathcal G_t\},$$
where $k \geq |\exsur \mathcal G_t| / 11$.
We next define a coupling process with a slower diffusion rule:
an infected agent can transmit virus to an uninfected one if and only if at
time $t$ the infected agent is in $h'_j$ and the uninfected one is in $h_j$ for some $j$.
Let $\rho_j$ be the number of uninfected agents initially in $h_j$
at time $t$ that become
infected by time $t + \Delta t$ under the slower diffusion rule. We have $\sum_{j \leq k}\rho_j$ at most $|\widetilde{\Delta \mfa^{\cG}_t} \cap \Delta \mfa^{\cB}_t|$ in
the original process. We design the coupling in this way because $\rho_j$ {s}
are independent of each other as they are decided by independent walks from disjoint pairs of
subcubes.

Next we apply Lemma~\ref{lem:dense} on each pair of the matching. Fix an arbitrary matched pair $\{h_j, h'_j\}$. Since $h_j \in \exsur \mathcal G_t$, at time $t$ there are at least $\hat \ell_2/2$ uninfected agents in $h_j$; similarly, since $h'_j \in \insur \mathcal G_t$, there are at least $\hat \ell_2 / 2$ infected agents in $h'_j$.
At time $t$ we can find a subset of uninfected agent $A^u$ in $h'_j$
and a subset of infected agents $A^f$ in $h_j$ such that $|A^u| =
|A^f| = \hat \ell_2 / \log^{2}n$. Therefore, by Lemma~\ref{lem:dense}, we have
\begin{equation}\label{eqn:probincrease}
 \Pr[\rho_j \geq \frac{\tau_0\hat
  \ell_2}{\log^{4}n}\mid \mathcal F_t, D_t = 1] \geq \tau_0\log^{-6}n
\end{equation}
for some constant $\tau_0$. From Equation~\ref{eqn:probincrease}, we can see
that
$\E[\rho_j \mid \mathcal F_t, D_t = 1] \geq \tau^2_0\hat\ell_2 \log^{-10}n$. Therefore,
\begin{equation}\label{eqn:explow}
\E[\sum_{j \leq k}\rho_j \mid \mathcal F_t, D_t = 1] \geq
\frac{\tau^2_0}{11}|\partial \mathcal G_t|\hat \ell_2\log^{-10}n.
\end{equation}
Next, we consider two cases.

{\noindent \emph{Case 1.}},  $|\partial \mathcal G_t| \leq \log^{9}n$.
In this case
\begin{eqnarray*}
& & \Pr\left[|\widetilde{\Delta \mfa^{ \cG}_t} \cap \Delta \mfa^{ \cB}_t| \geq
  |\partial\mathcal G_t|\cdot \frac{\tau_0\hat
    \ell_2}{4\log^{13}n}\Big|\mathcal F_t, D_t = 1\right] \\
& \geq & \Pr\left[|\widetilde{\Delta \mfa^{\cG}_t} \cap \Delta \mfa^{\cB}_t| \geq \log^9n\frac{\tau_0\hat \ell_2}{4\log^{13}n}\Big|\mathcal F_t, D_t = 1\right]\\
& \geq & \Pr\left[|\widetilde{\Delta \mfa^{\cG}_t} \cap \Delta \mfa^{\cB}_t| \geq \frac{\tau_0\hat \ell_2}{4\log^{4}n}\Big|\mathcal F_t, D_t = 1\right]\\
& \geq & \Pr[\rho_1 \geq \tau_0 \hat \ell_2\log^{-4}n \mid \mathcal F_t, D_t = 1] \quad \mbox{(only focus on an arbitrary matched pair in the matching)}\\
& \geq & \tau_0\log^{-6}n. \quad \mbox{(Lemma~\ref{lem:dense})}
\end{eqnarray*}
\emph{Case 2.} $|\partial \mathcal G_t| > \log^9n$. Notice that $\rho_1, ..., \rho_k$ are
independent by construction. Also, we have
$$|\partial\mathcal G_t|\cdot \frac{\tau_0\hat \ell_2}{4\log^{13}n}
\leq  \frac{\tau^2_0}{22}|\partial \mathcal G_t|\hat \ell_2\log^{-10}n
\leq \frac 1 2\E\left[|\widetilde{\Delta \mfa^{\cG}_t} \cap \Delta \mfa^{\cB}_t|
  \Big|\mathcal F_t, D_t = 1\right].$$
 By a Chernoff bound (the version we use is Theorem~\ref{thm:chernoff}
 with $\delta=1/2$), we have

\begin{eqnarray*}
& & \Pr\left[|\widetilde{\Delta \mfa^{\cG}_t} \cap \Delta \mfa^{ \cB}_t| \leq |\partial\mathcal G_t|\cdot \frac{\tau_0\hat \ell_2}{4\log^{13}n}\Big|\mathcal F_t, D_t = 1\right] \\
& \leq & 2\exp\left(-\frac{(\frac 1 2)^2\E\left[|\widetilde{\Delta \mfa^{\cG}_t} \cap \Delta \mfa^{\cB}_t| \Big|\mathcal F_t, D_t = 1\right]}3\right) \\
& \leq & 2\exp\left(-\frac{\tau^2_0}{132}|\partial \mathcal
  G_t|\hat \ell_2\log^{-10}n\right) \quad \mbox{ (using Equation~\ref{eqn:explow})}\\
& \leq & 1- \tau_0\log^{-6}n
\end{eqnarray*}
for sufficiently large $n$. Our corollary thus follows.
\end{proof}

%
%
%

\begin{proof}[Proof of Corollary~\ref{lem:sparse}]
  Let us start with proving the first inequality.  We first couple the
  diffusion problem with a slower diffusion process defined as follows. First, all the
  infected agents in $\mathcal B_t$ at time $t$ cannot transmit the
  virus. Second, agents in $\cG_t$ are able to transmit the virus to
  each other if and only if at time $t$ they are in the same $g$ for
  some $g \in \cG_t$.  For an arbitrary $g \in \cG_t$,
  we let $\mfa^{u, \cG}_{t,g}$ be the set of uninfected agents in $g$ at
  time $t$. Accordingly, let $\Delta \mfa^{\cG}_{t,g}$ be the set of
  agents in $\mfa^{u, \cG}_g$ that become infected at $t + \Delta t$
  under the slower coupled process.
By Lemma~\ref{lem:dense}, we have
\begin{equation}\label{eqn:bnds}
\Pr\left[|\Delta \mfa^{\cG}_{t,g}| \geq \frac{\tau_0\min\{\hat\ell_2 / (\log^2n), |\mfa^{u, \cG}_{t,g}|\}}{4\log^4n} \Bigg|\mathcal F_t, D_t = 1\right] \geq \tau_0\log^{-6}n.
\end{equation}
Note that Lemma~\ref{lem:dense} requires that both the number of infected agents
and the number of uninfected agents are at most
$\hat\ell_2/(\log^2n)$. By $\mathcal G_t$'s construction, there are at least $\hat \ell_2/2$ infected agents in each subcube, and we may choose an arbitrary subset of them with size $\hat \ell_2/\log^2n$ to form $A^f$ in Lemma~\ref{lem:dense}. We do not know the exact size of $|\mfa^{u, \cG}_{t, g}|$ but in case $|\mfa^{u, \cG}_{t, g}| > \hat \ell_2/\log^2n$, we let $A^f$ be an arbitrary subset of $\mfa^{u,\cG}_{t,g}$ with size $\hat \ell_2/\log^2n$.

  When $D_t = 1$, we have
$|\mfa^{u, \cG}_{t,g}| \leq 2 \hat \ell_2\log^2n$. Therefore,
$$\min\{\hat \ell_2/\log^2n, |\mfa^{u, \cG}_{t,g}|\} \geq \min\{
\frac{|\mfa^{u, \cG}_{t,g}|}{2\log^4n}, |\mfa^{u, \cG}_{t,g}|\}
\geq |\mfa^{u, \cG}_{t,g}|/(2\log^4n).$$

Equation~\ref{eqn:bnds} can be rewritten as
$$\Pr\left[|\Delta \mfa^{\cG}_{t,g}| \geq \frac{\tau_0|\mfa^{u, \cG}_{t,g}|}{8\log^8n}
  \Bigg|\mathcal F_t, D_t = 1\right] \geq \tau_0 \log^{-6}n.$$
This also provides a lower bound over the expectation
of $|\Delta \mfa^{\cG}_{t,g}|$, i.e.,
$$\E[|\Delta \mfa^{\cG}_{t,g}|\mid \mathcal F_t, D_t = 1] \geq
\frac{\tau^2_0}{8}|\mfa^{u, \cG}_{t,g}|\log^{-14}n.$$
We also have
\begin{equation}\label{eqn:expbnd}
\E[|\Delta \mfa^{\cG}_t| \mid \mathcal F_t, D_t = 1] =
\sum_{g}\E[|\Delta \mfa^{\cG}_{t,g}|\mid \mathcal F_t, D_t = 1] \geq
\sum_g\frac{\tau^2_0}{8}|\mfa^{u, \cG}_{t,g}|\log^{-14}n =
\frac{\tau^2_0}{8}|\mfa^{u, \cG}_{t}|\log^{-14}n.
\end{equation}
Furthermore, by the way we design the coupled process, the random variables $|\Delta \mfa^{\cG}_{t,g}|$ are independent given $\mathcal F_t, D_t = 1$.

We consider two cases.

\emph{Case 1.} There exists $g \in \mathcal G_t$ such that
$|\mfa^{u, \cG}_{t,g}| \geq |\mfa^{u, \cG}_t| / \log^{29}n$. In this case, we have
$$\Pr\left[|\Delta \mfa^{\cG}_t|\geq \frac{\tau_0|\xiaorui{\mfa^{u, \cG}_t}|}{4\log^{38}n}\Bigg|\mathcal F_t, D_t = 1\right] \geq
\Pr\left[|\Delta \mfa^{\cG}_{t,g}| \geq
  \frac{\tau_0|\mfa^{u, \cG}_{t,g}|}{4\log^{9}n}\Bigg| \mathcal F_t, D_t =
  1\right] \geq \tau_0 \log^{-6}n.$$

\emph{Case 2.}
For all $g \in \mathcal G_t$, $|\mfa^{u, \cG}_{t,g}|< |\mfa^{u, \cG}_t|/\log^{29}n$.
Observe, on the other hand, that $\sum_g|
 \mfa^{u, \cG}_{t, g}| = |\mfa^{u, \cG}_t|$.
In this case, we have the summation
$\sum_{g}|\mfa^{u, \cG}_{t,g}|^2 <|\mfa^{u, \cG}_t|^2\log^{-29}n$.
 (and it is maximized
 when every non-zero $| \mfa^{u, \cG}_{t,g}|$ is exactly $|\mfa^{u, \cG}_t|\log^{-29}n$). We therefore have
\begin{equation}\label{eqn:sumbnd}
\sum_{g \in \mathcal G_t}|\mfa^{u, \cG}_{t,g}|^2 \leq |\mfa^{u, \cG}_t|\log^{-29}n
\sum_{g\in \cG_t}|\mfa^{u, \cG}_{t,g}| = |\mfa^{u, \cG}_t|^2\log^{-29}n.
\end{equation}

Next, by Hoeffding's inequality (See, e.g., Theorem~\ref{thm:hoeffding}),
we have
\begin{eqnarray*}
  & &\Pr\left[|\Delta \mfa^{\cG}_t| \leq \frac{\tau_0|\mfa^{u, \cG}_t|}{4\log^{38}n}\Big|\mathcal F_t, D_t = 1\right] \\& \leq&  \Pr\left[|\Delta \mfa^{\cG}_t| \leq \frac{\E[|\Delta \mfa^{ \cG}_t|\mid \mathcal F_t, D_T = 1]}{2}\Big|\mathcal F_t, D_t = 1\right]\\
  & = & \Pr\left[\sum_g|\Delta \mfa^{\cG}_{t,g}| \leq \frac{\E[|\Delta \mfa^{\cG}_t|\mid \mathcal F_t, D_T = 1]}{2}\Big|\mathcal F_t, D_t = 1\right]\\
  & \leq & 2\exp\left(-\frac{2(\frac 1 2\E[|\Delta \mfa^{ \cG}_t|\mid \mathcal
      F_t, D_t = 1])^2}{\sum_{g \in \mathcal G_t}|\mfa^{u, \cG}_{t,g}|^2}\right) \quad
  \mbox{(apply Hoeffding's inequality; we have $|\Delta \mfa^{ \cG}_{t,g}| \leq
    |\mfa^{u, \cG}_{t,g}|$)}\\
  & \leq & 2 \exp\left(-\frac{2\frac{\tau^2_0}{16^2}|\mfa^{u, \cG}_t|^2\log^{-28}n}{|\mfa^{u, \cG}_t|^2\log^{-29}n}\right)\quad \mbox{(by Equation~\ref{eqn:expbnd} and Equation~\ref{eqn:sumbnd})} \\
  & = & \exp(-\Theta(\log n)) \\
  & \leq & 1 - \tau_0\log^{-6}n,
\end{eqnarray*}
for sufficiently large $n$.

Proving the inequality regarding $|\Delta\mfa^{\cB}_t|$ is similar. We  \zliu{provide it} here for completeness, but less patient readers may simply skip this part. We
first couple the diffusion problem with a slower process. First, all
the infected agents in $\cG_t$ at time $t$ cannot transmit
virus. Second, agents in $\mathcal B_t$ are able to transmit virus to
each other if and only if at time $t$ they are in the same $b$ for
some $b \in \mathcal B_t$.  For an arbitrary $b \in \mathcal B_t$, we
let $\mfa^{f,\cB}_{t,b}$ be the set of uninfected agents in $b$ at time
$t$. Accordingly, let $\Delta \mfa^{\cB}_{t,b}$ be the set of agents
in $\mfa^{f,\cB}_{t,b}$ that becomes infected at $t + \Delta t$ under
the coupled process. For technical reasons, we  require the slower
diffusion in the subcube $b$ to halt when $|\Delta \mfa^{\cB}_{t, b}|$ becomes large,
i.e., $|\Delta \mfa^{\cB}_{t,b}| = |\mfa^{f, \cB}_{t,b}|$. This added
 constraint $|\Delta \mfa^{\cB}_{t,b}| = |\mfa^{f, \cB}_{t,b}|$
allows us to apply Hoeffding's inequality in an easier manner.


When $D_t =1$, $\mfa^{f, \cB}_{t,b} \leq 2 \hat\ell_2\log^2n$ and  $\min\{|\mfa^{f, \cB}_{t,b}|,
\hat\ell_2(\log^2n)\} \geq  |\mfa^{f, \cB}_{t,b}|/(2\log^4n)$.
By Lemma~\ref{lem:dense}, we have
$$\Pr\left[|\Delta \mfa^{\cB}_{t,b}| \geq \frac{\tau_0|\mfa^{f, \cB}_{t,b}|}{8\log^8n} \Bigg|\mathcal F_t,
  D_t = 1\right] \geq {\tau_0}\log^{-6}n.$$
Similar to the analysis of $\mfa^{u, \cG}_{t,g}$, this inequality holds because we can always
restrict to a subset of agents if the number of infected/uninfected agents
in the subcube is too large to meet the requirement in Lemma~\ref{lem:dense}.
We also have
\begin{equation}
\label{eqn:EdeltaZ}
\E[|\Delta \mfa^{\cB}_t| \mid \mathcal F_t, D_t = 1] = \sum_{b}\E[|\Delta \mfa^{\cB}_{t,b}|\mid \mathcal F_t, D_t = 1] \geq \frac{\tau^2_0}{8}|\mfa^{f,\cB}_t|\log^{-14}n.
\end{equation}
Furthermore, by the way we design the coupled process, the random
variables $|\Delta \mfa^{\cB}_{t,b}|$ are independent given $\mathcal F_t, D_t = 1$.

We consider two cases.

\emph{Case 1.} There exists an $b \in \cB_t$ such that
$|\mfa^{f,\cB}_{t,b}| \geq |\mfa^{f,\cB}_t| / \log^{29}n$. In this case, we have
$$\Pr\left[|\Delta \mfa^{\cB}_t|\geq \frac{\tau_0|\mfa^{f, \cB}_t|}{4\log^{38}n}\Big|\mathcal F_t, D_t = 1\right] \geq
\Pr\left[|\Delta \mfa^{\cB}_{t,b}| \geq \frac{\tau_0|\mfa^{f, \cB}_{t,b}|}{4\log^{9}n}\Big|\mathcal F_t, D_t = 1\right] \geq
{\tau_0}\log^{-6}n.$$

\emph{Case 2.} For all $b \in \mathcal B_t$, $|\mfa^{f,\cB}_{t,b}|<
|\mfa^{f,\cB}_t|/\log^{29}n$. In this case, we have
\begin{equation}
\label{eqn:squareZ}
\sum_{b \in \mathcal B_t}|\mfa^{f,\cB}_{t,b}|^2 \leq |\mfa^{f,\cB}_t|^2\log^{-29}n.
\end{equation}
Next, by Hoeffding's inequality (again by Theorem~\ref{thm:hoeffding}),


we have
\begin{eqnarray*}
  & &\Pr\left[|\Delta \mfa^{\cB}_t| \leq
    \frac{\tau_0|\mfa^{f, \cB}_t|}{4\log^{38}n}\Big|\mathcal F_t, D_t = 1\right]
  \\& \leq&  \Pr\left[|\Delta \xiaorui{\mfa^{\cB}_t}| \leq \frac{\E[|\Delta
      \mfa^{\cB}_t|\mid \mathcal F_t, D_T = 1]}{2}\Big|\mathcal F_t,
    D_t = 1\right]\\
  & = & \Pr\left[\sum_b|\Delta \mfa^{\cB}_{t,b}| \leq \frac{\E[|\Delta
      \mfa^{\cB}_t|\mid \mathcal F_t, D_T = 1]}{2}\Big|\mathcal F_t, D_t =
    1\right]\\
  & \leq & 2\exp\left(-\frac{2(\frac 1 2\E[|\Delta \mfa^{\cB}_t|\mid \mathcal
      F_t, D_t = 1])^2}{\sum_{b \in \cB_t}|\mfa^{f, \cB}_{t,b}|^2}\right) \quad
  \mbox{(apply Hoeffding's inequality; we have $|\Delta \mfa^{\cB}_{t, g}| \leq
    |\mfa^{f, \cB}_{t, g}|$ by construction. )}\\
  & \leq & 2
  \exp\left(-\frac{2\frac{\tau^4_0}{16^2}|\mfa^{f, \cB}_t|^2\log^{-28}n}{|\mfa^{f, \cB}_t|^2\log^{-29}n}\right)\quad
  \mbox{(by Equation~\ref{eqn:EdeltaZ} and Equation~\ref{eqn:squareZ})}
  \\
  & \leq & 1 - \tau_0\log^{-6}n.
\end{eqnarray*}
\end{proof}

\subsection{Leveraging local analysis}
We now move to the global diffusion upper bound. As discussed in the beginning of this section, the balance between the distributions of each type of subcube and the distributions of actual agents plays a crucial role in our analysis. Fix an arbitrary time $t$, we classify the joint configurations
of the agents into four types:
\begin{itemize}
\item type 1 (namely $\mathcal P_{1, t}$): when $|\mathcal G_t| \leq \frac 1 2 ((2n + 1)/\hat \ell_2)^3$ and $|\mfa^{f, \cG}_t|\geq \frac 1 2 |\mfa^f_t|$.
\item type 2 (namely $\mathcal P_{2, t}$): when $|\mathcal G_t| \leq \frac 1 2 ((2n + 1)/\hat \ell_2)^3$ and $|\mfa^{f, \cG}_t|< \frac 1 2 |\mfa^f_t|$.
\item type 3 (namely $\mathcal P_{3, t}$): when $|\mathcal G_t| > \frac 1 2 ((2n + 1)/\hat \ell_2)^3$ and $|\mfa^{u, \cG}_t|< \frac 1 2 |\mfa^u_t|$.
\item type 4 (namely $\mathcal P_{4, t}$): when $|\mathcal G_t| > \frac 1 2 ((2n + 1)/\hat \ell_2)^3$ and $|\mfa^{u, \cG}_t|\geq \frac 1 2 |\mfa^u_t|$.
\end{itemize}
Recall that $\mathcal F_t$ refers to the information on the global configurations
up to time $t$. We shall abuse notation slightly and say $\mathcal F_t \in \mathcal P_{i, t}$ if the configuration of the agents at time $t$ belongs
to the $i$th type described above.
Notice that $\mathcal F_t$ belongs to exactly one of the sets
$\mathcal P_{1, t}$, $\mathcal P_{2, t}$, $\mathcal P_{3, t}, \mathcal
P_{4, t}$. In brief, scenarios $\mathcal P_{1, t}$ and $\mathcal P_{2, t}$ have a majority of uninfected subcubes, while $\mathcal P_{3, t}$ and $\mathcal P_{4, t}$ have a majority of infected subcubes. From another perspective, $\mathcal P_{1, t}$ and $\mathcal P_{3, t}$ refer to situations when the dominant types (with respect to the status of infection) are dense in their subcube types (infected/uninfected subcubes), while $\mathcal P_{2, t}$ and $\mathcal P_{4, t}$ refer to the more uniform scenarios. The next lemma states that when $\cF_t \in \cP_{1,t} \cup \cP_{2,t}$, the total
number of infected agents $|\mfa^f_t|$ grows in proportion to a monotone function of $|\mfa^f_t|$ within $\Delta t$ steps. On
the other hand, when $\cF_t\in \cP_{3,t}\cup \cP_{4,t}$, the total
number of uninfected agents $|\mfa^u_t|$ is reduced in proportion to a monotone function of
$|\mfa^u_t|$ within $\Delta t$ steps.

\begin{lemma}
\label{lem:4events}
Fix an arbitrary $t$,
define the following events,
{\small
$$
\begin{array}{lclclcl}
e_1(t) &=& \left\{|\Delta \mfa_t| \geq
  0.09\tau_0\left(\frac{|\mfa^f_t|}{4 \hat \ell_2 \log^2
      n}\right)^{2/3}\frac{\hat \ell_2}{\log^{13}n}\right\} & & e_2(t) &=& \left\{|\Delta \mfa_t| \geq
  \frac{\tau^2_0}{8\log^{38}n}|\mfa^f_t|\right\} \\
e_3(t)& = &\left\{|\Delta \mfa_t| \geq 0.015\tau_0\left(\frac{|\mfa^u_t|}{4 \hat \ell_2 \log^2 n}\right)^{2/3}\frac{\hat \ell_2}{\log^{13}n}\right\}
& & e_4(t)& =& \left\{|\Delta \mfa_t| \geq
  \frac{\tau^2_0}{8\log^{38}n}|\mfa^u_t|\right\}.
\end{array}
$$
}
We have
$$\Pr[e_i\mid \mathcal F_t \in \mathcal P_{i, t},  D_t = 1] \geq \tau_0 \log^{-6}n$$
for $i=1,2,3,4$.
\end{lemma}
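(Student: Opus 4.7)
The plan is to dispatch the four cases separately. Cases $\cP_{2,t}$ and $\cP_{4,t}$ use the within-subcube growth mechanism and follow directly from Corollary~\ref{lem:sparse}, while cases $\cP_{1,t}$ and $\cP_{3,t}$ use the cross-boundary mechanism and require Corollary~\ref{cor:dense} combined with a vertex isoperimetric inequality on the subcube grid $[b]^3$ to lower bound $|\partial\cG_t|$. Since the event $\{\cF_t\in\cP_{i,t}\}$ is $\cF_t$-measurable, the conditional probability $\geq\tau_0\log^{-6}n$ guaranteed by those corollaries transfers directly to the statement.

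For case $\cP_{2,t}$, the definition gives $|\mfa^{f,\cB}_t|\geq\tfrac12|\mfa^f_t|$, and the bound on $|\Delta\mfa^{\cB}_t|$ in Corollary~\ref{lem:sparse} yields
\[|\Delta\mfa_t|\geq |\Delta\mfa^{\cB}_t|\geq \frac{\tau_0^2}{4\log^{38}n}|\mfa^{f,\cB}_t|\geq \frac{\tau_0^2}{8\log^{38}n}|\mfa^f_t|,\]
which is $e_2(t)$. Case $\cP_{4,t}$ is symmetric: one uses $|\mfa^{u,\cG}_t|\geq\tfrac12|\mfa^u_t|$ together with the bound on $|\Delta\mfa^{\cG}_t|$ in Corollary~\ref{lem:sparse} to obtain $e_4(t)$.

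For case $\cP_{1,t}$, I first use $D_t=1$ to translate the agent count into a subcube count: every subcube holds at most $2\hat\ell_2\log^2n$ agents, so $|\cG_t|\geq|\mfa^{f,\cG}_t|/(2\hat\ell_2\log^2n)\geq|\mfa^f_t|/(4\hat\ell_2\log^2n)$. Since $|\cG_t|\leq\tfrac12 b^3$, a discrete vertex isoperimetric inequality on $[b]^3$ gives $|\partial\cG_t|\geq C|\cG_t|^{2/3}$ for an absolute constant $C$, and one verifies that $C\geq 0.36$ is attainable via a Loomis--Whitney projection bound. Substituting into the first consequence of Corollary~\ref{cor:dense} yields
\[|\Delta\mfa_t|\geq|\partial\cG_t|\cdot\frac{\tau_0\hat\ell_2}{4\log^{13}n}\geq 0.09\,\tau_0\left(\frac{|\mfa^f_t|}{4\hat\ell_2\log^2n}\right)^{2/3}\frac{\hat\ell_2}{\log^{13}n},\]
which is $e_1(t)$. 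Case $\cP_{3,t}$ is analogous with $\cG_t\leftrightarrow\cB_t$: now $\cB_t$ is the smaller set with $|\cB_t|\geq|\mfa^u_t|/(4\hat\ell_2\log^2n)$, so the isoperimetric inequality applied to $\cB_t$ gives $|\partial\cB_t|\geq C|\cB_t|^{2/3}$. The identification $\partial\cG_t=\insur\cB_t$ together with the bounded-degree edge-count comparison $|\partial\cG_t|\geq|\partial\cB_t|/6$ then gives $|\partial\cG_t|\geq(C/6)|\cB_t|^{2/3}$, and Corollary~\ref{cor:dense} produces $e_3(t)$; the factor-$6$ conversion loss accounts for the gap between the coefficients $0.09$ and $0.015$.

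The main obstacle is the isoperimetric step: one needs a clean vertex isoperimetric inequality on the finite grid $[b]^3$ with a constant large enough to absorb into the stated numerical coefficients. Once $|\partial A|\geq C|A|^{2/3}$ is secured uniformly for $|A|\leq b^3/2$, the remaining work is bookkeeping combining Corollaries~\ref{cor:dense} and~\ref{lem:sparse}, and the probability $\tau_0\log^{-6}n$ is inherited in every case.
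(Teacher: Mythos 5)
Your proof is correct and follows essentially the same route as the paper: dispatch $\cP_{2,t},\cP_{4,t}$ directly via Corollary~\ref{lem:sparse} using the density-majority hypothesis to halve the count, and handle $\cP_{1,t},\cP_{3,t}$ by converting agent counts to subcube counts (via $D_t=1$), invoking the grid isoperimetric bound $|\partial A|\geq 0.36|A|^{2/3}$ (the paper's Theorem~\ref{thm:surface}, rather than Loomis--Whitney, but the constant and role are the same), applying the bounded-degree comparison $|\partial\cG_t|=|\insur\cB_t|\geq|\partial\cB_t|/6$ in the $\cP_{3,t}$ case, and finishing with Corollary~\ref{cor:dense}. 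The constants $0.09$, $0.015$ and the $\tau_0\log^{-6}n$ probability all line up with the paper's derivation.
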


 Intuitively, $e_1$ and $e_2$ connect the number of newly infected agents to the original
number of infected agents. When $e_1$ or $e_2$ are triggered sufficiently many times,
the number of infected agents doubles. Meanwhile, $e_3$ and $e_4$ connect the number of newly infected agents to
the original number of uninfected agents. When $e_3$ or $e_4$ are triggered sufficiently many times,
the number of uninfected agents halves.

\lam{
The key to proving Lemma~\ref{lem:4events}, which will ultimately lead to a bound on the global growth rate of doubling/halving the total number of infected/uninfected agents as depicted in the next proposition, is a geometric relation between the boundary of $\mathcal{G}_t$, i.e. $\partial\mathcal{G}_t$, and $\mathcal{G}_t$ itself. More specifically, an isoperimetric bound on $\mathcal{G}_t$ guarantees that no matter how packed together these good subcubes are, there are still an order $|\mathcal{G}_t|^{2/3}$ of them exposed to the bad subcubes, hence the global infection rate cannot be too slow.
}
\begin{proof}[Proof of Lemma~\ref{lem:4events}]
\emph{Part 1.}  $\cP_{1,t}$, $|\mathcal G_t| \leq \frac 1 2 ((2n + 1)/\hat \ell_2)^3$ and $|\mfa^{f, \cG}_t|\geq \frac 1 2 |\mfa^f_t|$.
Since $D_t = 1$, the number of agents in each subcube is at most
$2\hell_2 \log^2n$. Therefore, $|\mathcal G_t| \geq |
\xiaorui{\mfa^{f, \cG}_t}|/(2\hell_2\log^2n)$.
To apply Corollary~\ref{cor:dense},
%
we need to derive a relationship between the size of
$\mathcal G_t$ and the size of $\partial \mathcal G_t$.
\liusd{This is an isoperimetric problem studied by \cite{BL91} (see Appendix~\ref{sec:geometry}
for details). By Theorem 8 in \cite{BL91} or
Theorem~\ref{thm:surface} in the appendices,}

$$|\partial \mathcal G_t| \geq 0.36 |\mathcal G_t|^{2/3} \geq
0.36\left(\frac{|\mfa^{f, \cG}_t|}{2\hat\ell_2\log^2n}\right)^{2/3}.$$

We have
\begin{eqnarray*}
& & \Pr\left[|\Delta \mfa_t| \geq 0.09\tau_0\left(\frac{|\mfa^f_t|}{4\hat \ell_2\log^2n}\right)^{2/3}\frac{\hat \ell_2}{\log^{13}n}\Bigg|\mathcal F_t \in \mathcal P_{1, t}, D_t = 1\right] \\ & \geq &
\Pr\left[|\widetilde{\Delta \mfa^{\cG}_t}| \geq 0.09\tau_0\left(\frac{|\mfa^{f, \cG}_t|}{2\hat \ell_2\log^2n}\right)^{2/3}\frac{\hat \ell_2}{\log^{13}n}\Bigg|\mathcal F_t \in \mathcal P_{1, t}, D_t = 1\right] \\
& \geq & \Pr\left[|\widetilde{\Delta \mfa^{\cG}_t}| \geq |\partial \mathcal G_t|\frac{\tau_0\hat \ell_2}{4\log^{13}n}\Bigg| \mathcal F_t \in \mathcal P_{1, t}, D_t = 1\right] \\
& \geq & \tau_0\log^{-6}n \quad \mbox{ (by Corollary~\ref{cor:dense})}
\end{eqnarray*}

\emph{Part 2.} $|\mathcal G_t| \leq \frac 1 2 ((2n + 1)/\hat
\ell_2)^3$ and $|\mfa^{f, \cG}_t|< \frac 1 2 |\mfa^f_t|$. Notice that
$|\mfa^{f, \cB}_t| \geq |\mfa^f_t|
/2
$ and $|\Delta \mfa^{\cB}_t| \leq |\Delta \mfa_t|$.
We have
\begin{eqnarray*}
\Pr\left[|\Delta \mfa_t| \geq \frac{\tau^2_0}{8\log^{38}n}|\mfa^f_t|\Big|\mathcal F_t\in \mathcal P_{2, t}, D_t = 1 \right] & \geq &
\Pr\left[|\Delta \mfa^{\cB}_t| \geq \frac{\tau^2_0}{4\log^{38}n}|\mfa^{f, \cB}_t|\Big|\mathcal F_t \in \mathcal P_{2, t}, D_t = 1 \right] \\
& \geq & {\tau_0}\log^{-6}n \quad \mbox{(by
  Corollary~\ref{lem:sparse})}
\end{eqnarray*}

\emph{Part 3.} $|\mathcal G_t| > \frac 1 2 ((2n + 1)/\hat \ell_2)^3$ and $|\mfa^{u, \cG}_t|< \frac 1 2 |\mfa^u_t|$.
This is similar to part 1. We have
$$|\partial \mathcal G_t| = |\dot \partial \mathcal B_t| \geq |\partial \mathcal B_t| / 6 \geq 0.06|\mathcal B_t|^{2/3} \geq 0.06\left(\frac{|\mfa^{u, \cB}_t|}{2\hat \ell_2\log^2n}\right)^{2/3}.$$
The second inequality holds because the exterior surface $\partial \mathcal B_t$ is in the neighborhood of $\dot \partial \mathcal B_t$ and $|N(\partial \mathcal B_t|)| \leq 6|\partial \mathcal B_t|$.
Notice that $|\mfa^{u, \cB}_t| \geq |\mfa^u_t|/2$.
We have
\begin{eqnarray*}
& & \Pr\left[|\Delta \mfa_t| \geq 0.015\tau_0\left(\frac{|\mfa^u_t|}{4\hat \ell_2\log^2n}\right)^{2/3}\frac{\hat \ell_2}{\log^{13}n}\Bigg|\mathcal F_t \in \mathcal P_{3, t}, D_t = 1\right] \\ & \geq &
\Pr\left[|\Delta \mfa^{\cB}_t| \geq 0.015\tau_0\left(\frac{|\mfa^{u, \cB}_t|}{2\hat \ell_2\log^2n}\right)^{2/3}\frac{\hat \ell_2}{\log^{13}n}\Bigg|\mathcal F_t\in \mathcal P_{3, t}, D_t = 1\right] \\
& \geq & \Pr\left[|\Delta \mfa^{\cB}_t| \geq |\partial \mathcal G_t|\frac{\tau_0\hat \ell_2}{4\log^{13}n}\Bigg| \mathcal F_t\in \mathcal P_{3, t}, D_t = 1\right] \\
& \geq & \tau_0\log^{-6}n \quad \mbox{ (by Corollary~\ref{cor:dense})}
\end{eqnarray*}

\emph{Part 4.} $|\mathcal G_t| > \frac 1 2 ((2n + 1)/\hat \ell_2)^3$
and $|\mfa^{u, \cG}_t|\geq \frac 1 2 |\mfa^u_t|$. This is similar to part 2. Notice that
$|\mfa^u_t| \leq 2|\mfa^{u, \cG}_t|$ and $|\Delta \mfa_t| \geq |\Delta \mfa^{\cG}_t|$.
We have
\begin{eqnarray*}
\Pr\left[|\Delta \mfa_t| \geq \frac{\tau^2_0}{8\log^{38}n}|\mfa^u_t|\Big|\mathcal F_t\in \mathcal P_{4, t}, D_t = 1 \right] & \geq &
\Pr\left[|\Delta \mfa^{\cG}_t| \geq \frac{\tau^2_0}{4\log^{38}n}|\mfa^{u, \cG}_t|\Big|\mathcal F_t\in \mathcal P_{4, t}, D_t = 1 \right] \\
& \geq & {\tau_0}\log^{-6}n. \quad \mbox{(by Corollary~\ref{lem:sparse})}
\end{eqnarray*}
\end{proof}

Our major proposition presented next essentially pins down the number of times these events
need to be triggered to double the number of infected agents or halve the number of uninfected
ones.

\begin{proposition}\label{lem:double}Consider the information diffusion
  problem over $\cV^3$ with $m$ agents. For any fixed $t
\leq n^{2.5}-4\sqrt{\frac m n}
  \log^{45}n\Delta t
$, define the following events
{\small
$$\chi_1(t) \equiv \left(|\mfa^f_{t + 4\sqrt{\frac m n}\log^{45}n\Delta t}| \geq 2|\mfa^f_t|\right) \mbox{ and }\chi_2(t) \equiv \left(|\mfa^u_{t + 4\sqrt{\frac m n}\log^{45}n\Delta t}| \leq \frac 1 2|\mfa^u_t|\right).$$
}
We have
$$\Pr[\chi_1(t) \vee \chi_2(t)] \geq 1-\exp(-\log^2n).$$
\end{proposition}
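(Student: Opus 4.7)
My plan is to argue by contradiction. Set $M_0 := |\mfa^f_t|$ and $N_0 := |\mfa^u_t|$, and suppose that neither $\chi_1(t)$ nor $\chi_2(t)$ occurs. Monotonicity of the infection process then forces $M_0 \leq |\mfa^f_s| < 2M_0$ and $N_0/2 < |\mfa^u_s| \leq N_0$ for every $s \in [t, t+K\Delta t]$, where $K := 4\sqrt{m/n}\log^{45}n$. I would first apply Lemma~\ref{lem:densitya3d} together with a union bound over the at most $n^{2.5}$ time steps involved to condition on $D_s = 1$ throughout; the failure of this conditioning contributes an error easily absorbed into the target $\exp(-\log^2 n)$ bound.

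At each step $s = t, t+\Delta t, \ldots, t+(K-1)\Delta t$, the configuration $\mathcal F_s$ lies in exactly one of $\mathcal P_{1,s},\ldots,\mathcal P_{4,s}$. Lemma~\ref{lem:4events} then says that the associated event $e_{i_s}(s)$ fires with conditional probability at least $\tau_0 \log^{-6}n$, and when it fires, $|\Delta \mfa_s|$ is at least one of the four explicit quantities listed there. Using the monotonicity bounds $|\mfa^f_s| \geq M_0$ and $|\mfa^u_s| > N_0/2$, I would define a uniform per-step lower bound $\delta := c\min\{M_0^{2/3}\hat\ell_2^{1/3}/\log^{43/3}n,\ M_0/\log^{38}n,\ (N_0/2)^{2/3}\hat\ell_2^{1/3}/\log^{43/3}n,\ (N_0/2)/\log^{38}n\}$ that is valid in every state whenever the event fires.

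Let $Y_s$ indicate whether $e_{i_s}(s)$ fires. Since $\Pr[Y_s = 1 \mid \mathcal F_s, D_s = 1] \geq \tau_0 \log^{-6}n$ regardless of the adaptive history, I would stochastically dominate $\sum_s Y_s$ below by $\mathrm{Bin}(K, \tau_0 \log^{-6}n)$ (or equivalently apply an Azuma--Hoeffding bound to the centered martingale $Y_s - \E[Y_s \mid \mathcal F_s]$), then invoke a multiplicative Chernoff bound to obtain $\sum_s Y_s \geq 2\tau_0 \sqrt{m/n}\log^{39}n$ with failure probability $\exp(-\Omega(\sqrt{m/n}\log^{33}n)) \leq \exp(-\log^2 n)$, using $m \geq n$. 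The total cumulative increase $|\mfa^f_{t+K\Delta t}| - M_0$ is therefore at least $2\tau_0 \sqrt{m/n}\log^{39}n \cdot \delta$. Plugging in $\hat\ell_2 = \sqrt{n^3/m}\log n$ together with $M_0, N_0 \leq m$, a short calculation gives $\min(M_0, N_0/2)/\delta = O\bigl(\sqrt{m/n}\log^{14}n + \log^{38}n\bigr)$, and both terms are comfortably dominated by $2\tau_0 \sqrt{m/n}\log^{39}n$ for sufficiently large $n$. Hence the cumulative increase exceeds $\min(M_0, N_0/2)$, contradicting the assumption that neither $\chi_1(t)$ nor $\chi_2(t)$ holds.

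The main obstacle I expect is the final algebraic bookkeeping: checking that the logarithmic slack left by the choice $K = 4\sqrt{m/n}\log^{45}n$ dominates the worst of the four bounds $\delta_1,\ldots,\delta_4$ simultaneously, uniformly in $(M_0, N_0)$ subject to $M_0 + N_0 = m$. The $2/3$-power terms $\delta_1, \delta_3$ (which originate from the isoperimetric inequality in Lemma~\ref{lem:4events}) require one to track the interplay between $\hat\ell_2^{1/3}$ and $m^{1/3}$ to confirm the exponent inequality $39 > 43/3$. A secondary technicality is the Chernoff step itself: since the conditional success probability of $Y_s$ depends on the history, one must either explicitly build an i.i.d.\ lower envelope by coupling or invoke Azuma--Hoeffding on the associated centered martingale rather than applying Chernoff to independent Bernoullis directly.
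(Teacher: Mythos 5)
Your argument is correct, and it takes a genuinely different route from the paper's.

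The paper's proof is a \emph{direct} argument built around the ``ratio'' random variable $\varphi_i = I_{1,2}(t_i)\frac{|\Delta\mfa_{t_i}|}{|\mfa^f_{t_i}|} + I_{3,4}(t_i)\frac{|\Delta\mfa_{t_i}|}{|\mfa^u_{t_i}|}$: it shows via Lemma~\ref{lem:4events} that $\Pr[\varphi_i \geq r \mid \mathcal F_{t_i}, D_{t_i}=1] \geq \tau_0\log^{-6}n$ for a single threshold $r$ (so the four cases are unified without reference to $M_0, N_0$), accumulates $\sum_i \varphi_i \geq 2$ by Azuma-Hoeffding (with the auxiliary indicator $I'(i)$ set to $1$ when $D_{t_i}=0$ to handle the density conditioning cleanly), and then uses the monotonicity chain $\frac{|\Delta\mfa_{t_i}|}{|\mfa^f_t|} \geq \frac{|\Delta\mfa_{t_i}|}{|\mfa^f_{t_i}|}$ (resp.\ $\frac{|\Delta\mfa_{t_i}|}{|\mfa^u_{t_\varsigma}|} \geq \frac{|\Delta\mfa_{t_i}|}{|\mfa^u_{t_i}|}$) to telescope the accumulated ratio into a doubling or halving. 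Your proof is a \emph{contradiction} argument that uniformizes the four per-step increments into a single absolute lower bound $\delta$ (valid under $\neg\chi_1\wedge\neg\chi_2$, since that event forces $M_0 \le |\mfa^f_s| < 2M_0$, $N_0/2 < |\mfa^u_s| \le N_0$ for all $s$ in the window), counts firings, and compares $(\text{firings})\cdot\delta$ against $\min(M_0, N_0/2)$. I checked your key algebraic claim $\min(M_0,N_0/2)/\delta = O(\sqrt{m/n}\log^{14}n + \log^{38}n)$ (using $\hat\ell_2^{1/3} = n^{1/2}m^{-1/6}(\log n)^{1/3}$ and $M_0, N_0 \le m$) and it is correct; the dominance by $2\tau_0\sqrt{m/n}\log^{39}n$ holds since $m\geq n$. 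The two approaches buy different things: the paper's ratio trick is self-normalizing, so there is no minimization over four state-dependent bounds and no need to introduce $M_0, N_0$ into the per-step analysis, which makes the martingale bookkeeping slightly cleaner; your uniform-$\delta$ argument is arguably more transparent about the final ``progress versus deficit'' comparison, at the cost of the explicit min over the four cases and the pointwise implication that must be tracked on the event $\neg\chi_1\wedge\neg\chi_2$. Your flagged technicalities are real but handled correctly in spirit: Azuma-Hoeffding on the centered martingale (rather than a naive Chernoff) is exactly what the paper does, and the $D_s=0$ bookkeeping should be folded into the indicator definition (as with the paper's $I'(i)$) rather than handled by a blanket ``conditioning''. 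One minor remark shared with the paper: the error probability inherited from Lemma~\ref{lem:densitya3d} is $\exp(-\frac{1}{15}\log^2 n)$, which does not literally sit below the stated $\exp(-\log^2 n)$; this constant-factor slack in the exponent is present in the paper's own proof as well.
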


\lam{Note that this bound suggests that for each time increment $4\sqrt{\frac m
  n}\log^{45}n\Delta t$, either the number of infected agents doubles
or the number of uninfected agents is reduced by half
with high probability.
Therefore, within
time at most
$2\log n\cdot\left(4\sqrt{\frac m n}\log^{45}n\Delta t\right) = \newxiaorui{128n\hat \ell_2\log^{47}n}$
all the agents get infected with probability at least $1 - 2\log n\exp(-\log^2n)$. This proves Theorem~\ref{thm:up3}.

 To summarize our approach, Corollaries~\ref{cor:dense} and \ref{lem:sparse} first translate the local infection rate of Lemma~\ref{lem:dense} into a rate based on the subcube types (i.e. good and bad subcubes). Then Lemma~\ref{lem:4events} further aggregates the growth rate to depend only on the infected and uninfected agents, by looking at the geometrical arrangement of the subcubes. Nevertheless, the bound from Lemma~\ref{lem:4events} is still too crude, but by making a long enough sequence of trials i.e. $4\sqrt{\frac m n}\log^{45}n$ times, at least one of the four scenarios defined in Lemma~\ref{lem:4events} occurs for a significant number of times, despite the $\Omega(\log^{-6}n)$ probability of occurrence for each individual step for any of the four scenarios. This leads to the probabilistic bound for $\chi_1(t) \vee \chi_2(t)$.
}
\begin{proof}[Proof of Proposition~\ref{lem:double}]
First, notice that Lemma~\ref{lem:4events} states that regardless of the diffusion process'
history, one of $e_1$, $e_2$, $e_3$, and $e_4$ is guaranteed to occur with $\tilde \Omega(1)$
probability. On the other hand, it is not difficult to see that
when \emph{any} of the events $e_1$, $e_2$, $e_3$, $e_4$ occurs $\tilde \Omega(n/\hat \ell_2)$ times,
then either $|\mfa^f_t|$ doubles or $|\mfa^u_t|$ reduces by a half.
Although we do not know exactly which event happens at a specific time $t$, we argue
that so long as we wait long enough, the collection of events $\{e_1, ..., e_4\}$ occurs
$4\tilde \Omega(n/\hat \ell_2)$ times and by Pigeonhole principle, at least one of $e_1$, $e_2$, ..., $e_4$
will be triggered  $\tilde \Omega(n/\hat \ell_2)$ times, concluding the proposition. The above argument can be made rigorous via a Chernoff bound on the \emph{total} number of occurrence of all four events.



%
We now implement the idea. Let us define $\varsigma = 4\sqrt{\frac m n} \log^{45}n$. Let $t_i = t +
(i-1)\Delta t$ for $i\in [\varsigma]$. Note that $t_i$ depends on both $i$ and $t$, but we suppress the dependence on $t$ for succinctness.
This also applies to all other defined quantities in this proof. Recall from Lemma \ref{lem:dchild}
that $\mathcal F_{t_i}$ encodes all the available information
up to time $t_i$. For each $i \in [\varsigma]$, define the following pairs of indicator functions
$$
I_{1, 2}(t_i) =
\left\{
\begin{array}{ll}
1 & \mbox{ if $\cF_{t_i} \in \cP_{t_i, 1}\cup \cP_{t_i, 2}$} \\
0 & \mbox{ otherwise}
\end{array}
\right. \quad \mbox{ and } \quad
I_{3, 4}(t_i) =
\left\{
\begin{array}{ll}
1 & \mbox{ if $\cF_{t_i} \in \cP_{t_i, 3}\cup \cP_{t_i, 4}$} \\
0 & \mbox{ otherwise}
\end{array}
\right.
$$
Notice that for arbitrary $t_i$, $I_{1,2}(t_i) + I_{3,4}(t_i) = 1$.
Next define
$$\varphi_i =
I_{1,2}(t_i)\cdot \frac{|\Delta \mfa_{t_i}|}{|\mfa^f_{t_i}|} +
I_{3,4}(t_i)\cdot \frac{|\Delta \mfa_{t_i}|}{|\mfa^u_{t_i}|},$$

We first show a lower bound for $\sum_{i \leq \varsigma} \varphi_i$. Our strategy is to invoke Lemma~\ref{lem:4events} and apply a Chernoff bound. Special care needs to be taken when $D = 0$.

Let

$$r = \min\left\{
0.015\tau_0\left(\frac{\hell_2}{4m\log^2n}\right)^{\frac 1 3}\frac{1}{\log^{16}n},
\frac{\tau_0^2}{8\log^{38}n}\right\}.$$
By Lemma~\ref{lem:4events}, we can see regardless of whether $\mathcal F_{t_i}$ belongs to $\mathcal P_{1, t_i}, ...,$ or $\mathcal P_{4, t_i}$,
\begin{equation}\label{eqn:deltalow}
\pr[ \varphi_i \geq r
\mid \mathcal F_{t_i},
D_{t_i} = 1
]
\geq \tau_0\log^{-6}n.
\end{equation}
where $\tau_0$ is the constant specified in Lemma~\ref{lem:4events}. Here, we verify the case $\mathcal F_{t_i} \in \mathcal P_{1, t_i}$
for Equation~\ref{eqn:deltalow}. The computation for the other
three cases can be carried out similarly.
\begin{eqnarray*}
& & \Pr[\varphi_i \geq r \mid \mathcal F_{t_i} \in \mathcal P_{1, t_i}, \mathcal D_{t_i} = 1] \\
& \geq &  \Pr\left[\frac{|\Delta \mfa_t|}{|\mfa^f_t|} \geq \frac{0.015}{\log^{16}n}\tau_0 \left(\frac{\hat \ell_2}{4m\log^2n}\right)^{\frac 1 3}\Bigg| \mathcal F_{t_i} \in \mathcal P_{1, t_i}, D_{t_i} = 1\right] \\
& \geq & \Pr\left[|\Delta \mfa_t| \geq \frac{0.09}{\log^{16}n}\tau_0 |\mfa^f_t| \left(\frac{\hat \ell_2}{4m\log^2n}\right)^{\frac 1 3}\Bigg| \mathcal F_{t_i} \in \mathcal P_{1, t_i}, D_{t_i} = 1\right] \\
& \geq & \Pr\left[|\Delta \mfa_t| \geq \frac{0.09}{\log^{16}n}\tau_0
\left(\frac{|\mfa^f_t|}{4\hat \ell_2\log^2 n}\right)^{2/3}{\underbrace{|\mfa^f_t|}_{\leq m}}^{1/3}(4\hat \ell_2\log^2 n)^{2/3}\left(\frac{\hat \ell_2}{4m\log^2n}\right)^{\frac 1 3} \Bigg| \mathcal F_{t_i} \in \mathcal P_{1, t_i}, D_{t_i} = 1\right] \\
& \geq & \Pr\left[|\Delta \mfa_t| \geq \frac{0.09}{\log^{3}n}\tau_0
\left(\frac{|\mfa^f_t|}{4\hat \ell_2\log^2 n}\right)^{2/3}\frac{\hat \ell_2}{\log^{13}n} m^{1/3}(4\log^2 n)^{2/3}\left(\frac{1}{4m\log^2n}\right)^{\frac 1 3} \Bigg| \mathcal F_{t_i} \in \mathcal P_{1, t_i}, D_{t_i} = 1\right] \\
& \geq & \Pr\left[|\Delta \mfa_t| \geq 0.09\tau_0
\left(\frac{|\mfa^f_t|}{4\hat \ell_2\log^2 n}\right)^{2/3}\frac{\hat \ell_2}{\log^{13}n} \Bigg| \mathcal F_{t_i} \in \mathcal P_{1, t_i}, D_{t_i} = 1\right] \\
& \geq & \tau_0 \log^{-6}n \mbox{ (Lemma~\ref{lem:4events})}
\end{eqnarray*}

Next, let us define a family of indicator random variables $\{I(i): i \leq \varsigma\}$ so that $I(i)$ is $\mathcal F_{t_i}$-measurable and
$$I(i) =
\left\{
\begin{array}{ll}
1 & \mbox{ if $\varphi_i \geq r$}\\
0 & \mbox{ otherwise}.
\end{array}
\right.$$
Notice that $\sum_{i \leq \varsigma}\varphi_i \geq  r\left(\sum_{i \leq \varsigma}I(i)\right)$.
By Equation~\ref{eqn:deltalow}, we have
$$\Pr[I(i) = 1\mid \mathcal F_{t_i}, D_{t_i} = 1] \geq \tau_0\log^{-6}n.$$

Next, let us
introduce another family of r.v. $\{I'(i): i \leq \varsigma\}$ to incorporate the good density variable as follows:
$$
I'(i) =
\left\{
\begin{array}{ll}
I(i) & \mbox{ if $D_{t_i}=1$}\\
1 & \mbox{ otherwise}
\end{array}
\right.
$$
Since $I'(i)\geq I(i)$, we also have
$$ \zliu{\E}[I'(i) \mid \mathcal F_{t_i}, D_{t_i} = 1] \geq \tau_0\log^{-6}n.$$
On the other hand, by construction
$$\E[I'(i) \mid \mathcal F_{t_i},D_{t_i}=0]=1\geq\log^{-7}n$$
This concludes that
$$\E[I'(i)\mid \mathcal F_{t_i}]\geq\log^{-7}n$$
%
which implies
$$\E[\sum_{i \leq \varsigma}I'(i)]
 \geq
\varsigma \log^{-7}n.
$$


We construct a sequence of random variables $\{\xi_i\}$ such that
$\xi_0 = 0$ and $\xi_{i + 1} = \xi_{i} + (I'(i + 1) - \E[I'(i + 1)
\mid \mathcal F_{t_i}])$. We can verify that $\xi_i$ is a martingale
with respect to $\{\mathcal F_{t_i}\}$ and $|\xi_{i} - \xi_{i -
  1}|\leq 2$ for all $i$. By  {Azuma-Hoeffding}'s inequality
(see Theorem~\ref{thm:azuma}
),
\begin{eqnarray*}
& & \Pr\left[|\xi_{\varsigma}| \geq \frac 1 2 \E[\sum_{i \leq \varsigma}I'(i)]\right] \\
& \leq & 2\exp\left(-\frac{\frac 1 4 \E^2[\sum_{i \leq \varsigma}I'(i)]}{2\sum_{i \leq \varsigma}4}\right) \\
& \leq &
2\exp(-\frac 1{32}\varsigma\log^{-14}n)
\leq \exp(-\log^{30}n).
\end{eqnarray*}
This implies
$$\Pr[\sum_{i \leq \varsigma}I'(i)\leq \frac{1}{
4
}\varsigma\tau_0\log^{-6}n]\leq \exp(-\log^{30}n).$$

Next, notice that
$$\Pr[\sum_{i \leq \varsigma}I'(i)  \neq \sum_{i \leq \varsigma}I(i)]\leq\Pr[D_{t+(\varsigma-1)\Delta t}=0]\leq\Pr[D=0]\leq\exp(-\frac{1}{15}\log^2n)$$
where the last inequality follows from Lemma~\ref{lem:densitya3d}.

We conclude that
$$\Pr[\sum_{i \leq \varsigma}I(i)\leq \frac{1}{
4
}\varsigma\tau_0\log^{-6}n]\leq \exp(-\log^{30}n)+\exp(-\frac 1 {15}\log^2n) \leq 2\exp(-\frac 1 {15}\log^2n).$$

Finally, we show when $\left(\sum_{i \leq \varsigma}I(i)> \frac{1}{
4
}\varsigma\tau_0\log^{-6}n\right)$ occurs, either $\chi_1(t)$ or $\chi_2(t)$ is true. First,
we have a lower bound $\sum_{i \leq \varsigma}\varphi_i \geq r \cdot
\frac{\tau_0\varsigma}{
4
\log^6n} \geq 2$. Next, we show this lower bound results in a minimum guarantee on $|\Delta \mfa_t|$.
Specifically, we have
$$\left(\sum_{i \leq \varsigma}I_{1,2}(t_i)\frac{|\Delta \mfa_{t_i}|}{|\mfa^f_{t_i}|}\right) + \left(\sum_{i \leq \varsigma}I_{3,4}(t_i)\frac{|\Delta \mfa_{t_i}|}{|\mfa^u_{t_i}|}\right) \geq 2,$$
which implies either
$$\sum_{i \leq \varsigma}I_{1,2}(t_i)\frac{|\Delta \mfa_{t_i}|}{|\mfa^f_{t_i}|} \geq 1 \mbox{ (Case 1)}$$
or
$$\sum_{i \leq \varsigma}I_{3,4}(t_i)
\frac{|\Delta \mfa_{t_i}|}{|\mfa^u_{t_i}|}
\geq 1 \mbox{ (Case 2)}$$

{\noindent \emph{Case 1.}} Observe that
$$|\mfa^f_{t + 4\sqrt{\frac m n}\log^{45}\Delta t} - \mfa^f_{t}| = \sum_{i \leq\varsigma}|\Delta \mfa_{t_i}|$$
because $\Delta \mfa^f_{t_i}$ are all disjoint for different $i$.
We have
\begin{eqnarray*}
|\mfa^f_{t + 4\sqrt{\frac m n}\log^{45}\Delta t} - \mfa^f_{t}|
 & \geq & \sum_{i \leq\varsigma}|\Delta \mfa_{t_i}|\\
& \geq & \sum_{i \leq\varsigma}I_{1,2}(t_i)|\Delta \mfa_{t_i}| \\
& = & |\mfa^f_t|\sum_{i \leq\varsigma}I_{1,2}(t_i)\frac{|\Delta \mfa_{t_i}|}{|\mfa^f_t|} \\
& \geq & |\mfa^f_t|\left(\sum_{i \leq\varsigma}I_{1,2}(t_i)\frac{|\Delta \mfa_{t_i}|}{|\mfa^f_{t_i}|}\right)  \quad \mbox{($|\mfa^f_t|$ is non decreasing w.r.t. $t$)} \\
& \geq & |\mfa^f_t|.
\end{eqnarray*}
In this case, the event $\chi_1(t)$ occurs.

{\noindent \emph{Case 2.}} When $\mfa^u_{t_\varsigma} = \emptyset$, nothing needs to be proved. Let us focus on the situation where
$\mfa^u_{t_\varsigma} \neq \emptyset$
\begin{eqnarray*}
|\mfa^u_{t + 4\sqrt{\frac m n}\log^{45}\Delta t} - \mfa^u_{t}| & \geq & \sum_{i \leq\varsigma}|\Delta \mfa_{t_i}|\\
& \geq & \sum_{i \leq\varsigma}I_{3,4}(t_i)|\Delta \mfa_{t_i}| \\
& = & |\mfa^u_{t_{\varsigma}}|\sum_{i \leq\varsigma}I_{3,4}(t_i)\frac{|\Delta \mfa_{t_i}|}{|\mfa^u_{t_{\varsigma}}|} \\
& \geq & |\mfa^u_{t_{\varsigma}}|\left(\sum_{i \leq\varsigma}I_{3,4}(t_i)\frac{|\Delta \mfa_{t_i}|}{|\mfa^u_{t_i}|}\right)  \quad \mbox{($|\mfa^u_t|$ is non increasing w.r.t. $t$)} \\
& \geq & |\mfa^u_{t_{\varsigma}}|.
\end{eqnarray*}
Therefore,  the event $\chi_2(t)$ occurs in this case.

\end{proof}

\section{The case when the number of agents is sparse}
This section focuses on the case where $m = o(n)$:

\begin{proposition}\label{prop:mlessn}Let $\mathrm a_1, \mathrm a_2, ..., \mathrm a_m$ be
  placed uniformly at random
on $\cV^3$, where
 \yajunfinal{$m< n\log^{-2}n$.}
Let $\mathrm a_1$
be the agent that holds a virus at $t = 0$, and $T$ be the diffusion
time. We have for any constant
$c >0$,
$$\Pr[T < \frac{n^3} m \log^{-c}n] \leq \log^{-c}n$$
and
$$\Pr[T > \frac{2n^3} m\log^{15}n] \leq \exp(-(\log^2n)/2). $$
\end{proposition}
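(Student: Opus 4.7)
My plan is to prove the lower and upper tail bounds separately, with both parts exploiting the fact that the uniform distribution on $\mathcal V^3$ is stationary for the single-agent random walk (the transition matrix is doubly stochastic, since each boundary vertex has self-loops that exactly compensate for its missing edges). Consequently, each agent is marginally uniform on $\mathcal V^3$ at every time $t\geq 0$, and since the walks of different agents are mutually independent, for any $i\neq j$ the positions $S^i_t(A_i)$ and $S^j_t(A_j)$ are i.i.d.\ uniform; in particular, $\Pr[\|S^i_t-S^j_t\|_1\leq 1]\leq 7/(2n+1)^3=O(1/n^3)$ for every $t$.

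For the lower bound, the first infection cannot occur until $\mathrm a_1$ meets some other agent, because $\mathrm a_1$ is the sole initially infected agent. A union bound over the $m-1$ other agents and the $T_0=(n^3/m)\log^{-c}n$ time steps then gives
$$\Pr\bigl[\mathrm a_1\text{ meets any other agent during }[0,T_0]\bigr]\;\leq\;T_0(m-1)\cdot O(1/n^3)\;=\;O(\log^{-c}n),$$
which immediately yields $\Pr[T<T_0]\leq\log^{-c}n$ after absorbing the implicit constant into $c$.

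For the upper bound, I would track how quickly the infected population doubles via a mixing-time argument. The random walk on $\mathcal V^3$ has TV-mixing time $\tau_{\mathrm{mix}}=O(n^2\log n)$, so partitioning the time axis into epochs of length $\tau_{\mathrm{mix}}$ lets me couple the process to an idealized one in which all agent positions are re-sampled i.i.d.\ uniform at the start of each epoch (the accumulated TV error over the epochs used in the analysis is $n^{-\Omega(1)}$). Within one epoch of length $\tau_{\mathrm{mix}}\geq n^2$, two agents with i.i.d.\ uniform starting positions lie at $L_1$-distance $\Theta(n)$ with high probability, and by Lemma~\ref{lem:couple} combined with the boundary-aware Lemma~\ref{lem:twowalkboundary} each such pair meets during the epoch with probability $\Omega(1/n)$. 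Given $k$ currently infected and $m-k$ uninfected agents, the expected number of infected-uninfected pairs that meet during an epoch is therefore $\Omega(k(m-k)/n)$; combining this with Lemma~\ref{lem:rwcatchall} to control overcounts from any single agent meeting many partners, and with a Chernoff-type concentration across the pairs, yields $\Omega(k(m-k)/n)$ new infections per epoch up to polylog slack with high probability. Iterating this doubling bound across the $O(\log m)$ phases and multiplying by $\tau_{\mathrm{mix}}$ gives a total diffusion time of $O(n^3\log^{O(1)}n/m)$, comfortably within the claimed $(2n^3/m)\log^{15}n$, and a union bound over the $O(\log m)$ phases delivers the required $\exp(-(\log^2 n)/2)$ failure probability.

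The main obstacle I expect is controlling the interaction between the mixing-time coupling and the boundary behavior: walks that land close to the boundary can fail the asymptotic estimates of Lemma~\ref{lem:couple}, and these failures can accumulate across many epochs. Handling this will require conditioning on a good event stipulating that most agents remain in the interior $\mathfrak V(r)$ for an appropriately chosen $r\ll n$ and that no agent spends more than a small fraction of each epoch near the boundary, in the spirit of the good-behavior framework of Section~\ref{sec:lb3} and the waiting-and-shuffling step of Lemma~\ref{lem:dense}. The generous $\log^{15}n$ slack in the target bound is precisely what lets us absorb the accumulated polylog losses from the mixing approximation, the boundary conditioning, and the pair-wise concentration argument.
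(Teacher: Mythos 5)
Your lower-bound argument is correct and essentially identical to the paper's: stationarity of the uniform distribution gives $\Pr[\|S^1_t-S^j_t\|_1\le1]\le 7/(2n+1)^3$ for each $t$ and $j$, and a union bound over $j$ and $t\le(n^3/m)\log^{-c}n$ finishes the claim.

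For the upper bound, the strategy you sketch (mixing-time epochs of length $\Theta(n^2\log n)$, pairwise meeting probability $\Omega(1/n)$ for well-separated agents away from the boundary via Lemma~\ref{lem:couple} and Lemma~\ref{lem:twowalkboundary}, overcount control via Lemma~\ref{lem:rwcatchall}, and a doubling/halving iteration) is indeed the one the paper uses in Lemma~\ref{lem:mlessn} and Corollary~\ref{cor:mlessndouble}. However, there is an arithmetic gap in your time accounting. You correctly estimate that one epoch of length $\tau_{\mathrm{mix}}=\Theta(n^2\log n)$ produces only $\Omega(k(m-k)/n)$ new infections, but since $m<n\log^{-2}n$ this quantity is $\ll k$, so a single epoch does \emph{not} double the infected population; you need $\Theta(n/m)$ consecutive epochs before $k$ can double (or $m-k$ can halve). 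Your stated computation ``$O(\log m)$ phases $\times\,\tau_{\mathrm{mix}}$'' therefore evaluates to $O(n^2\log^2 n)$, which for $m$ well below $n/\log^2 n$ is strictly smaller than the lower bound $\tilde\Omega(n^3/m)$ you just proved, a contradiction. The missing factor of $n/m$ is precisely what the paper builds into its choice $\Delta t=cn^3(\log n)/m$ in Lemma~\ref{lem:mlessn}, which is $\Theta(n/m)$ rounds of length $\Theta(n^2\log n)$ each; only after aggregating over these $n/m$ rounds does the new-infection count reach $\tilde\Omega(\min\{m_1,m_2\})$, enabling a doubling step. The correct accounting is thus $O(\log m)\cdot\Theta(n/m)\cdot\tau_{\mathrm{mix}}=\tilde O(n^3/m)$, which matches your final answer but not the calculation you gave to reach it. Fixing this requires either restating the per-phase increment as occurring over $\Theta(n/m)$ epochs rather than one, or equivalently adopting the paper's larger $\Delta t$ and then proving the analogue of Corollary~\ref{cor:mlessndouble} directly.
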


Note that our analysis in Section~\ref{sec:lb3} and Section~\ref{sec:ub3} cannot be applied directly to prove Proposition~\ref{prop:mlessn}
because we required the side of each subcube to be of length
$\ell_2 = \sqrt{\frac{n^3}{m}}$, which is larger than
$(2n + 1)$ when $m = o(n)$. The diffusion time for this case turns out to depend on $m$ and $n$
in a way different from the case where $n\log^2n < m < n^3$. Nevertheless, some of the arguments can still be borrowed from Lemma~\ref{lem:dense}, together with the use of mixing time of a random walk in $\mathcal V^3$. Because of the similarity of our analysis with previous sections, we only sketch our proof and highlight the new main technicalities.


We first show the lower bound of the diffusion time:

\begin{lemma}Let $\mathrm a_1, \mathrm a_2, \ldots, \mathrm a_m$ be
  placed uniformly at random
on $\cV^3$, where $m < 2n+1$. Let $\mathrm a_1$
be the agent that holds a virus at $t = 0$. Let $T$ be the diffusion
time. We have, for any constant
$c >0$,
$$\Pr[T < \frac{n^3} m \log^{-c}n] \leq \log^{-c}n$$
\end{lemma}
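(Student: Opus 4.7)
The plan is to show that with probability at least $1-\log^{-c}n$, the initially infected agent $\ma_1$ never meets any other agent during the window $[0, T_0]$, where $T_0 = \frac{n^3}{m}\log^{-c}n$. Under this event no infection spreads at all, so $T > T_0$, yielding the claim.

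The key observation I would invoke is stationarity of the uniform distribution. Since the random walk on $\cV^3$ (with boundary self-loops) has a symmetric transition kernel in which every allowed step has probability $1/6$, the uniform distribution on $\cV^3$ is stationary. Because the $m$ walks are independent and each starts from uniform, at every fixed time $t\geq 0$ the joint law of $(S^1_t, S^i_t)$ is a product of two independent uniforms on $\cV^3$. In particular, for any $i\geq 2$ and any fixed $t$,
\[
\Pr[\|S^1_t - S^i_t\|_1\leq 1]\;\leq\;\frac{7}{(2n+1)^3},
\]
since there are at most $7$ lattice points within $L_1$-distance $1$ of any given position of $\ma_1$.

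From here the argument is a one-line first-moment bound. Letting $X_{i,t}=\mathbf 1\{\|S^1_t-S^i_t\|_1\leq 1\}$ for $i\in\{2,\ldots,m\}$ and $t\in\{0,1,\ldots,\lfloor T_0\rfloor\}$, linearity of expectation gives
\[
\E\!\left[\sum_{i=2}^{m}\sum_{t=0}^{\lfloor T_0\rfloor} X_{i,t}\right]\;\leq\;\frac{7(m-1)(T_0+1)}{(2n+1)^3}\;\leq\;\frac{7mT_0}{(2n+1)^3}+\frac{7m}{(2n+1)^3},
\]
and under the hypothesis $m<2n+1$ both terms are $O(\log^{-c}n)$, so the right-hand side is at most $\log^{-c}n$ for all sufficiently large $n$. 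Markov's inequality now yields $\Pr[\exists (i,t):X_{i,t}=1]\leq\log^{-c}n$, and on the complementary event $\ma_1$ is the only infected agent on $[0,T_0]$, forcing $T>T_0$. The proof is essentially immediate once stationarity is invoked; the only minor point that needs care is absorbing the additive boundary and discrete-time corrections, which are negligible under $m<2n+1$. Notably, this approach sidesteps any need for mixing-time arguments, in contrast to the upper-bound direction of Proposition~\ref{prop:mlessn}.
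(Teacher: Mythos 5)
Your proposal is correct and follows essentially the same route as the paper: it invokes stationarity of the uniform initial distribution to get the per-step meeting probability bound $\Pr[\|S^1_t-S^i_t\|_1\leq 1]\leq 7/(2n+1)^3$, then sums over the $m-1$ other agents and the $\lfloor T_0\rfloor+1$ time steps to conclude that $\mathrm a_1$ meets no one before $T_0$ with probability at least $1-\log^{-c}n$. Phrasing the union bound as a first-moment/Markov argument and explicitly tracking the $+1$ discretization term are only cosmetic differences from the paper's direct union bound.
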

\begin{proof} Let these $m$ random walks be $S^1$, $S^2$, ...,
  $S^m$. Since each random
walk is already at stationary distribution at $t = 0$, they are all distributed uniformly at
any specific time. Therefore, for any fixed $t$ and fixed $j > 1$, $\Pr[\|S^1_t - S^j_t\|_1\leq1] \leq 7/(2n+1)^3$.
By a union bound,
$$\Pr[\exists t \leq \frac{n^3}{m}\log^{-c}n, i>1: \|S^1_t
- S^i_t\|_1\leq1 ] \leq \frac{n^3}{m\log^cn}\cdot m\cdot 7(2n+1)^{-3} < \log^{-c}n.$$
Therefore, with probability at least $1- \log^{-c}n$, $S^1$ will not meet any
other agent before $t = \frac{n^3}{m}\log^{-c}n$, which also implies
that the diffusion process has not been completed.
\end{proof}

Next we move to the upper bound:

\begin{lemma}\label{thm:mlessnup}Let $\mathrm a_1, \mathrm a_2, ..., \mathrm a_m$ be placed uniformly
at random on $\cV^3$, where $m < \frac{n}{\log^2n}$. Let $\mathrm a_1$
be the agent that holds
a virus at $t = 0$. Let $T$ be the diffusion time. We have
$$\Pr[T > \frac{2n^3} m\log^{15}n] \leq \exp(-(\log^2n)/2). $$
\end{lemma}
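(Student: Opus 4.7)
The plan is to replicate the doubling/halving scheme of Proposition~\ref{lem:double}, but applied at the \emph{global} scale of $\cV^3$ rather than within subcubes of side $\hat\ell_2$. This generalization is necessary because when $m<n/\log^2 n$ the length scale $\ell_2=\sqrt{n^3/m}$ exceeds $n$, so the subcube partition used in Section~\ref{sec:ub3} is no longer meaningful. The two key ingredients will be (i) a global analogue of Lemma~\ref{lem:dense} asserting that in time $\Theta(n^2)$ the expected number of infected--uninfected meetings is $\Omega(m_1 m_2/n)$, and (ii) the $\tilde\Theta(n^2)$ mixing time of the simple random walk on $\cV^3$, which we use to guarantee that between successive time chunks, and independently of the history being conditioned on, each agent's marginal position is within $1/\mathrm{poly}(n)$ of the uniform distribution.

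First, I would prove a variant of Lemma~\ref{lem:dense} with $\hat\ell_2=n$ and $\cW=\cV^3$, relaxing the equality $|A^f|=|A^u|=\hat\ell_2/\log^2 n$ to allow arbitrary $m_1=|A^f|$ and $m_2=|A^u|\leq m$. The proof should mirror Lemma~\ref{lem:dense} step by step: an initial wait of $n^2$ steps locally shuffles each pair so that, with constant probability, the two walkers end up at mutual distance $\Theta(n)$ and at distance $\Theta(n)$ from the boundary; Lemma~\ref{lem:twowalkboundary} then supplies a per-pair meeting probability $\Omega(1/n)$ in the remaining time; and Lemma~\ref{lem:rwcatchall} controls the overcounts. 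The resulting global estimate should be: within an inner chunk of length $\Delta t_0=\Theta(n^2)$, the number of new infections is $\Omega(m_1 m_2/(n\log^{c_1}n))$ with probability at least $\Omega(\log^{-c_2}n)$, for some constants $c_1,c_2$.

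Next, I would group these inner chunks into outer chunks of length $\Delta t_1=\Theta(n^3\log^{c_3}n/m)$ for a sufficiently large constant $c_3$, following Proposition~\ref{lem:double}. At the start of each inner chunk the mixing-time bound resets the conditioned marginals to within $1/\mathrm{poly}(n)$ of uniform. A Chernoff/Hoeffding argument over the $\Theta(n\log^{c_3}n/m)$ trials inside an outer chunk will then show that, with probability at least $1-\exp(-\log^2 n)$, either $|\mfa^f_t|$ doubles or $|\mfa^u_t|$ halves during the outer chunk. Since doubling of $|\mfa^f_t|$ and halving of $|\mfa^u_t|$ can each occur at most $\log_2 m\leq\log n$ times, a union bound over the resulting $2\log_2 m$ outer chunks will give completion of the diffusion by time $O(n^3\log^{c_3+1}n/m)$ with probability at least $1-2\log n\cdot\exp(-\log^2 n)\geq 1-\exp(-(\log^2 n)/2)$, which matches the claimed bound as long as $c_3+1\leq 15$.

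The main obstacle lies in the first step: the hypothesis of Lemma~\ref{lem:twowalkboundary} requires each walker to be at distance $\geq 40\|\vec x\|_1=\Theta(n)$ from the boundary, a condition that only holds for a positive constant fraction of uniformly placed pairs. The expected per-pair meeting probability $\Omega(1/n)$ still survives under this restriction, but one has to verify that the overcount bound from Lemma~\ref{lem:rwcatchall} is not spoiled by the sparse-$m$ regime in which a single uninfected agent could be the simultaneous target of many infected walkers; fortunately Lemma~\ref{lem:rwcatchall} bounds the probability of more than $\log^2 n$ simultaneous meetings by $(\mathrm{poly}(m)/n)^{\Theta(\log^2 n)}$, which remains negligible as long as $m=o(n)$, the regime we are in.
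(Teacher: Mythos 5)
Your overall blueprint is the same as the paper's: use the $\tilde\Theta(n^2)$ mixing time of $\cV^3$ to reset positions, use Lemma~\ref{lem:twowalkboundary} for a per-pair meeting probability $\Omega(1/n)$ once the pair is away from the boundary, control overcounts via Lemma~\ref{lem:rwcatchall}/Corollary~\ref{cor:rwcatchall}, and then run a doubling/halving scheme in the style of Proposition~\ref{lem:double}. However, the pivotal intermediate claim you propose is false as stated, and the gap is not merely cosmetic.

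You assert that within a single inner chunk of length $\Theta(n^2)$ the number of new infections is $\Omega(m_1m_2/(n\log^{c_1}n))$ with probability at least $\Omega(\log^{-c_2}n)$. When $m_1m_2/n\ll 1$ (e.g.\ $m_1=1$ and any $m_2\le m<n/\log^2 n$, so $m_1m_2/n<1/\log^2 n$), the expected number of infected--uninfected meetings in a $\Theta(n^2)$ window is $\Theta(m_1m_2/n)\ll 1$, and so the probability of even one new infection is $O(m_1m_2/n)$, which can be far smaller than $\log^{-c_2}n$. A first-moment (Markov) argument over a $\Theta(n^2)$ window only gives success probability $\Theta(\max\{m_1,m_2\}/(n\cdot\mathrm{polylog}\,n))$, which is $\Theta(m/(n\cdot\mathrm{polylog}))$ and tends to $0$ with $n$ uniformly in $m<n/\log^2 n$. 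Your subsequent Chernoff/Hoeffding step is calibrated for a $\mathrm{polylog}^{-1}$ per-trial success probability, so it does not close. The paper sidesteps this by establishing the local estimate (Lemma~\ref{lem:mlessn}) over the longer window $\Delta t=\Theta((n^3/m)\log n)$, over which the expected number of meetings is $\Omega(m_1m_2/m)\ge \Omega(\min\{m_1,m_2\})\ge\Omega(1)$; this is exactly what makes the Markov-type argument yield the \emph{uniform} success probability $\Omega(\log^{-5}n)$, which then gets boosted by repeating the interval $\log^{13}n$ times (Corollary~\ref{cor:mlessndouble}). One can salvage your finer chunking, but only by replacing the ``probability $\Omega(\log^{-c_2}n)$'' claim with the correct (and configuration-dependent) one and then concentrating via a variance/Bernstein- or second-moment-style bound rather than plain Hoeffding with the deterministic increment bound $\min\{m_1,m_2\}$ (which is much too loose when $m$ is small); you would essentially be reinventing the paper's two-stage scheme in a more cumbersome form. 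A second, smaller point: you set the inner-chunk length at $\Theta(n^2)$, whereas the paper's rounds are $\Theta(n^2\log n)$ precisely to obtain $1/(16n^3)$-closeness to uniformity (Lemma~\ref{lem:mix}); with only $\Theta(n^2)$ steps one only gets $O(1)$-closeness, which suffices for the meeting-probability lower bound but makes the conditional independence argument across chunks (the dependent-Chernoff step in the paper's proof of Lemma~\ref{lem:mlessn}) noticeably more delicate.
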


The following is a key lemma for the upper bound analysis.
The lemma reuses arguments that appeared in Lemma~\ref{lem:dense}. However, as the agents are
 sparser in this case, new diffusion rules for the coupling process and the corresponding probabilistic bounds are needed.
%
%
%

\begin{lemma}
\label{lem:mlessn}
Consider the diffusion process in which $m <
  \frac{n}{\log^2n}$. Fix a time $t$, and let $A^f$ and $A^u$ be the
    set of infected and uninfected agents at time $t$ with
  $|A^f| = m_1$ and $|A^u| = m_2$. Let $c$ be a sufficiently large constant and $\Delta t = cn^3(\log n)/m$.
 Let $M(t)$ be the number of newly infected agents from time $t$ to
 $t + \Delta t $.
 Assume the agents are arbitrarily (in an adversarial manner)
 distributed at time $t$. We have
 $$\Pr\left[M(t) \geq \frac{\min\{m_1, m_2\}}{\log^5n}\right]
 \geq  \frac 1 2 \log^{-5}n.$$
 \end{lemma}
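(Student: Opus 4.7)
The plan is to adapt the proof of Lemma~\ref{lem:dense}, replacing its local shuffling argument with the global mixing of random walks on $\mathcal V^3$. Since $m<n/\log^2n$, the window $\Delta t=cn^3(\log n)/m$ comfortably exceeds the $\Theta(n^2\log n)$ mixing time of a simple random walk on $\mathcal V^3$, giving room to first re-randomize the agents' positions before tracking infections.

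I will couple the original process with a slower diffusion in which no infection takes place during an initial waiting period of length $T_0=n^2\log^2n$, long enough that by time $t+T_0$ every agent's position is within $n^{-\omega(1)}$ total variation of uniform on $\mathcal V^3$. The remaining interval is partitioned into $R=\Theta(n/(m\log n))$ rounds of length $\Theta(n^2\log n)$, with effective re-mixing between rounds. For each pair $(\mathrm a_i,\mathrm a_j)\in A^f\times A^u$ and each round, define $X_{i,j}=1$ if at the start of the round both $\mathrm a_i$ and $\mathrm a_j$ lie at $L_\infty$-distance at least $n/8$ from every boundary and their mutual $L_\infty$-distance is in $[n/8,\,3n/8]$; approximate uniformity then gives $\Pr[X_{i,j}=1]\geq\tau$ for a universal constant $\tau>0$. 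Conditioned on $X_{i,j}=1$, Lemma~\ref{lem:twowalkboundary} yields an in-round meeting probability of $\Omega(1/n)$. Approximate independence across rounds then produces a per-pair meeting probability of $\Omega(\min\{1,(\log n)/m\})$, and using $m_1m_2\geq m_{\min}\cdot m/2$ with $m_{\min}=\min\{m_1,m_2\}$, the expected number of infected/uninfected meeting events is $\Omega(m_{\min})$ regardless of the magnitude of $m$.

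To convert meetings into distinct newly infected agents, I invoke Lemma~\ref{lem:rwcatchall} exactly as in Lemma~\ref{lem:dense}: except on an event of probability $\exp(-\Omega(\log^2n\log\log n))$, no uninfected agent is met by more than $\log^2n$ infected ones over $\Delta t$, and symmetrically for infected ones. Hence $M(t)\geq(\text{total meetings})/\log^2n$, giving $\E[M(t)]\geq\Omega(m_{\min}/\log^{O(1)}n)$. Since $M(t)\leq m_{\min}$ pointwise, the elementary reverse-Markov estimate $\E[M(t)]\leq\alpha m_{\min}+m_{\min}\Pr[M(t)\geq\alpha m_{\min}]$ applied with $\alpha=\log^{-5}n$ yields $\Pr[M(t)\geq m_{\min}/\log^5n]\geq(1/2)\log^{-5}n$ for sufficiently large $n$.

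The main obstacle is controlling the cumulative mixing error over the $R$ rounds and all $m^2$ pairs while preserving effective independence between rounds. Choosing $T_0=n^2\log^2n$ forces the per-round total variation error to be super-polynomially small, so that a union bound over the $n^{O(1)}$ relevant events still leaves ample slack. Boundary interference, which would otherwise invalidate the direct application of Lemma~\ref{lem:twowalkboundary}, is sidestepped by restricting attention to interior configurations through the indicator $X_{i,j}$---precisely the hypothesis that Lemma~\ref{lem:twowalkboundary} demands.
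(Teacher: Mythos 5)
Your overall strategy---couple with a slower process that waits for mixing, partition $\Delta t$ into rounds of length $\Theta(n^2\log n)$ with re-mixing between rounds, apply Lemma~\ref{lem:twowalkboundary} per round conditioned on good geometry, sum meetings, bound overcounts, and finish with a reverse-Markov step---matches the paper's proof essentially step for step. The expectation accounting also works (modulo a couple of off-by-a-$\log$ factors and constants that do not matter given the slack in $\log^5 n$).

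There is, however, a genuine gap in the overcount step. You write that you ``invoke Lemma~\ref{lem:rwcatchall} exactly as in Lemma~\ref{lem:dense}'' to conclude that, except with probability $\exp(-\Omega(\log^2 n\log\log n))$, no uninfected agent is met by more than $\log^2 n$ infected agents over the entire window $\Delta t$. That argument is not available here. In Lemma~\ref{lem:dense} the whole waiting-plus-observation window is $O(\hat\ell_2^2)$, short enough to couple the bounded walks with unbounded ones and to apply Corollary~\ref{cor:rwcatchall} (which is stated for a time horizon comparable to the square of the initial separation) a single time. In the present lemma, $\Delta t = cn^3(\log n)/m \gg n^2$, the walks hit the boundary and re-mix many times, and the unbounded coupling breaks down outside a single round. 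A one-shot application of Lemma~\ref{lem:rwcatchall} over $\Delta t$ is therefore invalid, and even a round-by-round union bound only gives $\sum_{i,k}Y^k_{i,j}\le R\log^2 n$, which is far too weak (the number of rounds $R$ can be as large as $\Theta(n/m)$).

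What is missing is the second level of the paper's two-level overcount control: for each fixed $a_j\in A^u$, one must show that the \emph{number of rounds} in which $a_j$ meets at least one infected agent is $\tilde O(1)$ with high probability, using the fact that in each round, after re-mixing, $\Pr[\sum_i Y^k_{i,j}\ge 1\,|\,\text{history}] \le O(m_1/n)$ and summing over $R=\Theta(n/m)$ rounds gives an $O(1)$ mean; a Chernoff bound for dependent Poisson trials (Theorem~\ref{thm:dependentchernoff}) then delivers the desired $\tilde O(1)$ bound. Combining this with the per-round bound $\sum_i Y^k_{i,j}\le\log^2 n$ (which \emph{is} obtainable from Corollary~\ref{cor:rwcatchall} within a single round, precisely because the coupled walks have not yet hit the boundary) yields $\sum_{i,k}Y^k_{i,j}=\tilde O(1)$ uniformly over $j$, which is what your reverse-Markov step needs. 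You should insert this second step explicitly; without it, the overcount claim is unsupported.

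As a smaller point: you need the boundary-distance threshold in $X_{i,j}$ to be compatible with the hypothesis of Lemma~\ref{lem:twowalkboundary}, which requires the boundary distance to be at least $40\|\vec x\|_1$. With an allowed mutual distance up to $3n/8$ this is impossible; the paper's choice of mutual distance $\Theta(n/500)$ and boundary distance $\Theta(n/20)$ is designed precisely to satisfy that hypothesis. This is fixable by shrinking your constants, but as written it would invalidate the application of Lemma~\ref{lem:twowalkboundary}.
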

\begin{proof}
Similar to the proof of Lemma~\ref{lem:dense}, we first count
the number of
times the infected agents meet the uninfected agents. We then show that this number
is close to $M(t)$ by demonstrating that the number of overcounts
is moderate, which yields the desired result.
The
device we use to count the  {number of meetings}, however, is different
 from the one we used for Lemma~\ref{lem:dense}. In
 Lemma~\ref{lem:dense}, we couple each of the walks
 in $\mathcal V^3$ with their unbounded counterparts; since we only
 focus on a short time frame, the bounded walks largely coincide with
 the unbounded ones. Here, the right time frame to analyze is longer and the walks in $\mathcal V^3$ are more likely to hit the boundary. It becomes less helpful to relate these walks with the unbounded ones. Our analysis, instead, utilizes the mixing time property of $\mathcal V^3$.

Specifically, we cut $\Delta t$ into disjoint time intervals, each of
which is of size $c n^2 \log n$ steps for some constant $c$
to be determined later. We refer the $k$-th time interval as the $k$-th
round. The total number of rounds in $\Delta t$ steps is thus $n / m$.

We couple the diffusion process with a slower one. First, only agents
in $A^f$ are allowed to transmit the virus. An agent in $A^u$  will not be able to infect others even if it becomes
infected. This rule holds throughout the $\Delta t$ time increment.

In each round, we also impose more specific constraints on the diffusion rule as follows. At the beginning of the $k$-th round (for any $k$), we first wait for
$c_0n^2\log n$ steps so that the distribution of each agent is
$1/(16n^3)$-close to uniform distribution (see Definition~\ref{def:tv} and Lemma~\ref{lem:mix} for details; $c_0$ is an appropriate constant that exists as a result of Lemma~\ref{lem:mix}).
Within these time steps, no agent becomes infected even if it meets a previously infected agent. After these steps, for an arbitrary $\mathrm a_i \in A^f$ and $\mathrm a_j \in A^u$, let $X^k_{i, j} = 1$ if both of the following conditions hold:
\begin{itemize*}
\item the $L_1$-distance between $\mathrm a_i$ and $\mathrm a_j$ is between $n/450$ and $n/500$.
\item the $L_1$-distance between $\mathrm a_i$ and any boundary is at least $n / 20$.
\end{itemize*}

Since $c_0n^2\log n$ is already the mixing time for random walks on $\mathcal V^d$, it is straightforward to see that with $\Omega(1)$ probability $X^k_{i, j} = 1$, for any $k$.

After $c_0n^2\log n$ steps at $k$th round, our slower diffusion rule
allows $\mathrm a_i \in A^f$ to transmit its virus to $\mathrm a_j \in A^u$ at the $k$th round only if
\begin{itemize*}
\item $X^k_{i, j} = 1$.
\item $\ma_i$ meets $\ma_j$ after the waiting stage and before the
  round ends.
\item $\mathrm a_i$ and $\mathrm a_j$ have not visited any boundary after the waiting stage before they meet. In other words, an agent $\mathrm a_i \in A^f$ ($\mathrm a_j \in A^u$ resp.) loses its ability to transmit (receive resp.) the virus when it hits the boundary.
\end{itemize*}
Let $Y^k_{i,j}$ be an indicator random variable that sets to $1$ if and only if $\mathrm a_i\in A^f$ transmits its virus to $\mathrm a_j\in A^u$ under the slower diffusion rule at the $k$th round, \emph{pretending that $\mathrm{a}_j$ is uninfected at the beginning of the $k$-th round even if it gets infected in the previous rounds}. Hence $Y^k_{i,j}$, for a specific $i$ and $j$, can be 1 for more than one $k$. This apparently unnatural definition is used for the ease of counting in the sequel.

By Lemma~\ref{lem:twowalkboundary},
$$\Pr[Y^k_{i, j} = 1] \geq \Pr[Y^k_{i,j} = 1 \mid X^k_{i, j} = 1]\Pr[X^k_{i,j} = 1] = \Omega(1/n).$$

Therefore, we have
\begin{equation}
\label{eqn:mlessn:expectedY}
\E\left[\sum_{i, j, k}Y^k_{i,j}\right] =
\Omega\left(\frac{m_1m_2}{m}\right) \geq \frac{\tau_1m_1m_2}{m},
\end{equation}
for some constant $\tau_1$.
 \zliu{

We briefly lay out our subsequent analysis. We want to show two properties:
\begin{enumerate}
\item $\Pr[\sum_{i, j, k}Y^k_{i, j} =\Omega(\min\{m_1, m_2\})] = \tilde \Omega(1)$.
\item For all $j$, $\sum_{i, k}Y^k_{i, j} = \tilde O(1)$ with high probability.
\end{enumerate}
We claim that these two properties together concludes our result. Roughly speaking, when $\left(\sum_{i, j, k}Y^k_{i, j} =\Omega(\min\{m_1, m_2\})\right)$ and
$\left(\forall j: \sum_{i, k}Y^k_{i, j} = \tilde O(1)\right)$ occur, each $\mathrm a_j \in A^u$
meets at most $\tilde O(1)$ agents in $A^f$ while the total number of meetings between
infected and uninfected agents is $\min\{m_1, m_2\}$. Consequently,
the total number of uninfected agents that ever meet an infected agent is
$\tilde \Omega(\min\{m_1, m_2\})$, hence our conclusion.

To prove the first property, we need to show with high probability, for any $j$, we have $\sum_{i, k}Y^k_{i, j} = \tilde O(1)$. Similarly, we also need to show with high probability, for any $i$, $\sum_{j, k}Y^k_{i, j} = \tilde O(1)$. Combining both of these we have
$\sum_{i, j, k}Y^k_{i, j} = \tilde O(\min\{m_1, m_2\})$ with high probability. Together with
Equation~\ref{eqn:mlessn:expectedY}, some rearrangement of terms and Chernoff bounds, we can conclude that
$\Pr[\sum_{i, j, k}Y^k_{i, j} =\tilde \Omega(\min\{m_1, m_2\})] = \tilde \Omega(1)$.

We now carry out this scheme. We proceed to show that
\begin{equation}\label{eqn:smallsum}
\Pr[\forall j: \sum_{i, k}Y^k_{i, j} = \tilde O(1)] \geq 1 - \exp(-\Omega(\log^2n)).
\end{equation}
and note that showing $\sum_{j, k}Y^k_{i, j} = \tilde O(1)$ can be done similarly.
We prove Equation~\ref{eqn:smallsum} via the following two steps:

\begin{enumerate*}
\item first, we show that with high probability, $\sum_i Y^k_{i,j} = \tilde O(1)$ for any fixed $k$ and $j$.
\item second, we show that with high probability, the number of $k$'s such that $\sum_{i}Y^k_{i, j} > 0$ is $\tilde O(1)$ for all $j$.
\end{enumerate*}
Intuitively, the first step ensures that there will not be too many
meetings
associated with $\mathrm a_j$ for any single round. The second step
specifies an upper bound on the number of rounds in which $\mathrm
a_j$
meets
 at least one infected agent. When both event occurs, the total
 number of
meetings for $\ma_j$
is $\tilde O(1)$.

Let us start with the first step. Fix a specific $k$ and $\mathrm a_j \in A^u$, by Corollary~\ref{cor:rwcatchall}, we have
\begin{eqnarray*}
\Pr\left[\sum_{\mathrm a_i \in A^f}Y^k_{i,j}\geq\log^2n\Bigg|X^k_{i,j}\right] &
\leq & \binom{\sum_{\mathrm a_i \in
    A^f}X^k_{i,j}}{\log^2n}\left(\frac{c_1\log^2n}{n}\right)^{\log^2n}
\\
& \leq & \binom{m_1}{\log^2n}\left(\frac{c_1\log^2n}{n}\right)^{\log^2n}\\
& \leq & \exp(-\log^2n \log \log n).
\end{eqnarray*}

By a union bound, we can also conclude that
\begin{equation}\label{eqn:sumy}
\Pr\left[\exists k\leq\frac{n}{m}: \sum_{\mathrm a_i\in A^f}Y^k_{i,j} \geq \log^2n\right] \leq \exp(-\frac 1 2 \log^2n \log \log n).
\end{equation}

Next, let us move to the second step. Let us define a family of indicator random variables $I(j, k)$, which sets to $1$ if and only if $\sum_{\mathrm a_i \in A^f}Y^k_{i,j} \geq 1$. When $j$ and $k$ are fixed, we can compute the probability $\Pr[I(j, k) = 1]$:
$$\Pr[I(j, k) = 1] = \E[I(j, k)] \leq \E\left[\sum_{\mathrm
    a_i\in A^f}Y^k_{i,j}\right] \leq \frac{\tau_1m_1}{n} .$$
The probability holds regardless of the history of the process up to
{the time the $k$th round starts} because $c_0n^2\log n$ time steps
are used at $k$th round to
shuffle the agents so that they are
distributed sufficiently uniform
after these steps. We may apply a special case of Chernoff bound
(see, e.g., Theorem~\ref{thm:dependentchernoff}) to show that $\Pr[\sum_{k} I(j,k)
> \log^2 n]  < \exp(-\log^3n)$.

Therefore, we have
\begin{equation}\label{eqn:sumi}
\Pr\left[\exists \mathrm a_j\in A^u: \sum_{k \leq \frac n m}I(j, k) > \log^2n\right] \leq \exp(-\Theta(\log^2n)).
\end{equation}

For a specific $\mathrm a_j \in A^u$, when both $\left( \sum_k I(j, k) < \log^2 n\right)$
and $\left( k: \sum_i Y^k_{i,j} \leq \log^2n\right)$, we know that $\sum_{i \in A^f, k}Y^k_{i, j} \leq\log^4n$.
Hence Equation~\ref{eqn:sumy} and \ref{eqn:sumi} imply $\Pr[\sum_{i \in A^f, k}Y^k_{i, j} > \log^4n] \leq \exp(-\Theta(\log^2n))$ and therefore
\begin{equation}\label{eqn:sumu}
\Pr\left[\exists \mathrm a_j \in A^u: \sum_{\mathrm{a}_i \in A^f, k}Y^k_{i, j} > \log^4n\right] \leq \exp(-\Theta(\log^2n))
\end{equation}
Similarly, we can show
\begin{equation}\label{eqn:sumf}
\Pr\left[\exists \mathrm a_i \in A^f: \sum_{\mathrm{a}_j \in A^u, k}Y^k_{i, j} > \log^4n\right] \leq \exp(-\Theta(\log^2n))
\end{equation}
Equation~\ref{eqn:sumu} and \ref{eqn:sumf} yield
\begin{equation}\label{eqn:ysum}
\Pr\left[\sum_{i,j,k} Y^k_{i,j} < \min\{m_1,m_2\} \log^4n\right] \geq
1-\exp(-\Theta(\log^2n)).
\end{equation}
This gives the first property in the discussion following Equation~\ref{eqn:mlessn:expectedY}. Moreover, Equation~\ref{eqn:sumu} gives the second property.

Now, by using similar argument in the proof of Lemma~\ref{lem:dense}, Equation~\ref{eqn:mlessn:expectedY} and \ref{eqn:ysum} together give
\begin{equation}\label{eqn:ysum estimate}
\Pr\left[\sum_{i, j, k}Y^k_{i,j} \geq \frac{\tau_1\min\{m_1, m_2\}}{4}\right] \geq \Pr\left[\sum_{i, j, k}Y^k_{i,j} \geq \frac{\tau_1 m_1m_2}{2m}\right]\geq \log^{-5}n
\end{equation}
When $\left(\sum_{i, j, k}Y^k_{i,j} \geq \frac{\tau_1\min\{m_1, m_2\}}{4}\right)$ and $\left(\forall\mathrm{a}_j\in A^u:\sum_{\mathrm{a}_i\in A^f,k}Y_{i,j}^k\leq\log^4n\right)$, the total number of
infected agents is at least $\frac{\tau_1\min\{m_1, m_2\}}{4\log^4n}$. Hence, by setting $c = 2c_0$, and using Equation~\ref{eqn:sumu} and \ref{eqn:ysum estimate}, our lemma follows.
}
\end{proof}

From this we can mimic the argument that appeared in Proposition~\ref{lem:double} to reach the conclusion below:
\begin{corollary}
\label{cor:mlessndouble}
Consider the diffusion process in which $m <
  \frac{n}{\log^2n}$. Fix a specific time $t$, and let $A^f$ and $A^u$ be the set of infected and uninfected agents at $t$ such that
  $|A^f| = m_1$ and $|A^u| = m_2$.
 Let $M(t)$ be the number of new infected agents between time $t$ and time $t + \frac{n^3}{m}\log^{14}n$.
 Assume the agents are arbitrarily (in an adversarial manner)
 distributed  {at time $t$}, we have
 $$\Pr\left[M(t) \geq \min\left\{m_1, \frac{m_2} 2\right\}\right]
 \geq 1-\exp(-\log^{2}n).$$
\end{corollary}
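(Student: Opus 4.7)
The plan is to mimic the chunking/martingale argument of Proposition~\ref{lem:double}, with the local estimate of Lemma~\ref{lem:mlessn} replacing Lemma~\ref{lem:4events}. First, partition the time window $[t,t+(n^3/m)\log^{14}n]$ into $N:=\log^{13}n/c$ consecutive subintervals of length $\Delta t = cn^3(\log n)/m$, and write $t_0=t<t_1<\cdots<t_N$ for their endpoints. Let $m_1(s),m_2(s)$ denote the numbers of infected and uninfected agents at time $s$, $M_i$ the number of new infections during $[t_i,t_{i+1}]$, and $X_i$ the indicator of the event $\{M_i\geq \min\{m_1(t_i),m_2(t_i)\}/\log^5 n\}$. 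Lemma~\ref{lem:mlessn}, applied conditionally on $\cF_{t_i}$, gives $\Pr[X_i=1\mid \cF_{t_i}]\geq \frac{1}{2}\log^{-5}n$ regardless of the (adversarial) configuration at $t_i$.

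Second, apply the Azuma-Hoeffding inequality (Theorem~\ref{thm:azuma}) to the martingale $\sum_{j\leq i}(X_j-\E[X_j\mid \cF_{t_j}])$, whose increments are bounded by $1$ in absolute value. Since the conditional expectations sum to at least $N\cdot\frac{1}{2}\log^{-5}n=\log^8 n/(2c)$, a short computation yields $\sum_{i<N}X_i \geq \log^8 n/(4c)$ with probability at least $1-\exp(-\log^3 n/(32c))\geq 1-\exp(-\log^2 n)$ for sufficiently large $n$.

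Third, deduce the desired bound on $M(t)=\sum_{i<N}M_i$ by contradiction. Suppose $M(t)<\min\{m_1,m_2/2\}$. Since $m_1(s)$ is non-decreasing and $m_2(s)$ non-increasing in $s$, we would then have $m_1(s)<2m_1$ and $m_2(s)>m_2/2$ throughout the window, which together force $\min\{m_1(s),m_2(s)\}\geq \min\{m_1,m_2/2\}$ at every $t_i$. Each successful round thus contributes at least $\min\{m_1,m_2/2\}/\log^5 n$ new infections, so the high-probability bound on $\sum X_i$ would give $M(t)\geq (\log^3 n/(4c))\min\{m_1,m_2/2\}$, exceeding $\min\{m_1,m_2/2\}$ for large $n$ --- a contradiction. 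Hence the event $\{M(t)\geq \min\{m_1,m_2/2\}\}$ holds with probability at least $1-\exp(-\log^2 n)$.

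No step poses a serious obstacle; the key simplification relative to Proposition~\ref{lem:double} is that the unified bound of Lemma~\ref{lem:mlessn} replaces both the four-case split of Lemma~\ref{lem:4events} and the isoperimetric machinery it required, so no geometric argument about exterior/interior surfaces of infected subcubes is needed here. The only point that merits explicit verification is the monotonicity-based stability claim $\min\{m_1(s),m_2(s)\}\geq \min\{m_1,m_2/2\}$ under the contradiction hypothesis, which follows immediately from the fact that new infections only move agents from $A^u$ to $A^f$, and from the assumed upper bound on the total number of such moves.
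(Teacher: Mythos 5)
Your proof is correct and implements exactly what the paper hints at (``mimic the argument that appeared in Proposition~\ref{lem:double}''): chunk the window $\left[t,\,t+\frac{n^3}{m}\log^{14}n\right]$ into $\tilde\Theta(1)$ rounds of length $\Delta t$, invoke Lemma~\ref{lem:mlessn} conditionally at each round start, and run the Azuma--Hoeffding argument on the indicators of successful rounds. Your monotonicity-based contradiction at the end is a tidy replacement for the $\varphi_i$ ratio variables and the $I_{1,2}/I_{3,4}$ case split used in Proposition~\ref{lem:double}; as you observe, the unified bound of Lemma~\ref{lem:mlessn} (together with the absence of a good-density conditioning variable $D_t$ in that lemma) makes that extra machinery, including the isoperimetric surface bound, unnecessary in this sparse regime.
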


Similar to Lemma~\ref{lem:double}, Corollary~\ref{cor:mlessndouble} estimates the growth rate of infection as either doubling the number of infected agents or halving the uninfected ones within a certain time interval. One can then show that this implies Lemma~\ref{thm:mlessnup}. The argument is analogous to Section~\ref{sec:ub3} and hence is skipped here.

\bibliographystyle{plain}  
\bibliography{diffusion}
\appendix
\section{Probability Review}\label{sec:aprob}
This section reviews some probabilistic building blocks that are needed in our analysis.

\subsection{Concentration bounds}
\begin{theorem}[Chernoff bounds]\label{thm:chernoff}Let $X_1, ..., X_n$ be independent  \zliu{Poisson} trials with $\Pr[X_i] = p_i$. Let $X = \sum_{i \leq n}X_i$ and $\mu = \E[X]$. Then the following Chernoff bounds hold:
\begin{itemize}
\item  For $0 < \delta < 1$,
$$\Pr[|X - \mu| \geq \delta  {\mu}
] \leq 2\exp(-\mu\delta^2/3).$$
\item For $R \geq 6 \mu$,
$$\Pr[X \geq R] \leq 2^{-R}.$$
\end{itemize}
\end{theorem}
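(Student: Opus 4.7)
The plan is to use the standard moment-generating-function (MGF) approach via Markov's inequality. For the upper tail I would introduce a parameter $t>0$ and write
\[
\Pr[X \geq (1+\delta)\mu] = \Pr[e^{tX} \geq e^{t(1+\delta)\mu}] \leq \frac{\E[e^{tX}]}{e^{t(1+\delta)\mu}}.
\]
By independence of the $X_i$, $\E[e^{tX}] = \prod_i \E[e^{tX_i}]$, and since each $X_i$ is a Poisson trial with mean $p_i$, one has $\E[e^{tX_i}] = 1+p_i(e^t-1) \leq \exp(p_i(e^t-1))$. Multiplying yields the clean MGF bound $\E[e^{tX}] \leq \exp(\mu(e^t-1))$, which is the standard workhorse.

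To obtain the exponent $\delta^2/3$ in the first inequality, I would optimize by choosing $t = \ln(1+\delta)$, producing
\[
\Pr[X \geq (1+\delta)\mu] \leq \left(\frac{e^\delta}{(1+\delta)^{1+\delta}}\right)^\mu.
\]
The one analytic step that needs care is then the scalar inequality $(1+\delta)\ln(1+\delta)-\delta \geq \delta^2/3$ for $0<\delta<1$; this is a short calculus exercise, verifiable by noting both sides and their first derivatives vanish at $\delta=0$ and then comparing second derivatives. This yields $\Pr[X\geq(1+\delta)\mu] \leq \exp(-\mu\delta^2/3)$. The symmetric lower tail $\Pr[X\leq(1-\delta)\mu]\leq \exp(-\mu\delta^2/2)$ follows identically using $t<0$ and the analogue inequality, and a union bound over the two tails (absorbing a factor of $2$) gives the stated two-sided estimate.

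For the second bound, rather than linearizing around small $\delta$, I would directly choose $t = \ln(R/\mu) > 0$ in the MGF estimate, which yields
\[
\Pr[X \geq R] \leq \frac{\exp(\mu(e^t-1))}{e^{tR}} \leq \left(\frac{e\mu}{R}\right)^R
\]
after dropping the $-\mu$ term in the numerator's exponent. Since $R \geq 6\mu > 2e\mu$, the quantity $e\mu/R$ is at most $1/2$, and so $\Pr[X\geq R]\leq 2^{-R}$ follows immediately. I do not expect any serious obstacle here, as these are classical estimates; the only mildly delicate step is the scalar inequality $(1+\delta)\ln(1+\delta)-\delta \geq \delta^2/3$, which is routine.
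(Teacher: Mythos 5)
The paper states Theorem~\ref{thm:chernoff} without proof, treating it as a standard textbook fact (it appears in the Probability Review appendix as a cited building block). There is therefore no in-paper argument to compare against. On its own merits, your proposal is correct and is precisely the classical MGF/Markov derivation: you obtain $\E[e^{tX}]\leq\exp(\mu(e^t-1))$, optimize with $t=\ln(1+\delta)$ for the upper tail, verify the elementary scalar inequality $(1+\delta)\ln(1+\delta)-\delta\geq\delta^2/3$ on $(0,1)$ (the second-derivative comparison you sketch does go through, since $f'(\delta)=\ln(1+\delta)-2\delta/3$ stays nonnegative on $(0,1)$), handle the lower tail with the stronger $\delta^2/2$ exponent, and combine by a union bound to absorb the factor of $2$. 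For the second bound, choosing $t=\ln(R/\mu)$ gives $(e\mu/R)^R$, and $R\geq 6\mu>2e\mu$ makes the base at most $1/2$, yielding $2^{-R}$. No gaps.
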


\begin{theorem}[Chernoff bounds for dependent variables]
\label{thm:dependentchernoff}Let $X_1, ..., X_n$ be possibly dependent Poisson trials with
$\Pr[X_i =1 \mid  \zliu{X_1, ..., X_{i - 1} }] \geq p$. Let $X = \sum_{i
  \leq n}X_i$ and $\mu = np$. Then the following Chernoff bound holds:
\begin{itemize}
\item For $0 < \delta < 1$,
$$\Pr[X \leq (1-\delta)\mu] \leq \exp(-\mu\delta^2/2).$$
\end{itemize}
On the other hand, if $\Pr[X_i =1 \mid   \zliu{X_1, ..., X_{i - 1} }] \leq
p$, the following bound holds:
\begin{itemize}
\item For any $\delta >0$,
$$\Pr[X > (1+\delta)\mu] \leq \exp(-\mu\delta^2/4).$$
\end{itemize}
\end{theorem}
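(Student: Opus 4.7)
The plan is to reduce both tail bounds to the independent case (Theorem~\ref{thm:chernoff}) via a coupling argument. Specifically, I will construct on an auxiliary probability space a sequence of i.i.d.\ Bernoulli($p$) random variables $Y_1, \ldots, Y_n$ together with $X_1, \ldots, X_n$ having the prescribed joint distribution, such that $Y_i \leq X_i$ almost surely for every $i$ (lower-tail version) or $Y_i \geq X_i$ almost surely (upper-tail version). Once such a coupling exists, summing gives $Y := \sum_i Y_i \le X$ (resp.\ $X \le Y$), so probabilities of the relevant one-sided tail events only get larger when we pass from $X$ to $Y$, and the desired bound then follows from the standard one-sided Chernoff bounds for sums of independent Bernoulli trials.

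For the lower tail, introduce auxiliary uniform random variables $U_1, \ldots, U_n$ on $[0,1]$, mutually independent. Let $q_i(x_1,\ldots,x_{i-1})$ denote the conditional probability $\Pr[X_i = 1 \mid X_1 = x_1, \ldots, X_{i-1} = x_{i-1}]$ from the original joint law; by hypothesis $q_i \geq p$ pointwise. Inductively set $X_i = \mathbf{1}[U_i \leq q_i(X_1,\ldots,X_{i-1})]$ and $Y_i = \mathbf{1}[U_i \leq p]$. Since $U_i$ is independent of $X_1,\ldots,X_{i-1}$, the constructed $(X_1,\ldots,X_n)$ has the correct joint distribution, while the $Y_i$'s are i.i.d.\ Bernoulli($p$) and the pointwise inequality $q_i \geq p$ forces $Y_i \leq X_i$. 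For the upper tail the construction is identical except that the hypothesis $q_i \leq p$ now yields $X_i \leq Y_i$.

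With the coupling in hand, the lower-tail claim reduces to the classical one-sided bound
\[
\Pr\!\left[\sum_{i=1}^n Y_i \leq (1-\delta)\mu\right] \leq \exp(-\mu\delta^2/2),
\]
which follows from applying Markov's inequality to the moment generating function $\E[e^{-sY}] = (pe^{-s} + 1-p)^n \leq \exp(np(e^{-s}-1))$ and optimizing $s = -\ln(1-\delta) > 0$. The upper-tail claim reduces to $\Pr[\sum_i Y_i \geq (1+\delta)\mu] \leq (e^\delta/(1+\delta)^{1+\delta})^\mu$, obtained by the symmetric MGF computation with $s = \ln(1+\delta) > 0$, and then the elementary inequality $(1+\delta)\ln(1+\delta) - \delta \geq \delta^2/4$ (valid in the relevant regime) gives the stated $\exp(-\mu\delta^2/4)$ form.

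The proof is essentially mechanical once the coupling is set up, so there is no real obstacle; the only subtlety worth flagging is verifying that the inductive construction really reproduces the original joint law of $(X_1, \ldots, X_n)$, which it does because at each stage $U_i$ is independent of the past and the thresholding function matches the prescribed conditional probabilities. Everything else is standard MGF manipulation.
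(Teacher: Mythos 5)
The paper states this theorem in its ``Probability Review'' appendix without proof, so there is no argument of the paper's to compare against; I will evaluate your proposal on its own merits.

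Your coupling construction is a standard and perfectly valid device: drawing i.i.d.\ uniforms $U_i$, setting $X_i=\mathbf{1}[U_i\le q_i(X_1,\ldots,X_{i-1})]$ and $Y_i=\mathbf{1}[U_i\le p]$, does reproduce the prescribed joint law of $(X_1,\ldots,X_n)$, makes the $Y_i$ i.i.d.\ Bernoulli$(p)$, and yields the desired pointwise domination. From there the lower-tail bound is correct: $X\ge Y$ gives $\Pr[X\le(1-\delta)\mu]\le\Pr[Y\le(1-\delta)\mu]$, and the classical MGF optimization with $s=-\ln(1-\delta)$ plus $(1-\delta)\ln(1-\delta)\ge -\delta+\delta^2/2$ delivers $\exp(-\mu\delta^2/2)$ for $0<\delta<1$. (An equivalent and slightly more direct route, avoiding the auxiliary space, is to bound $\E[e^{sX}]$ by iterated conditioning: $\E[e^{sX_n}\mid X_1,\ldots,X_{n-1}]=1+q_n(e^s-1)\le 1+p(e^s-1)$ when $s<0$ and $q_n\ge p$, giving the independent-Bernoulli MGF bound directly.)

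The upper-tail part, however, has a genuine gap. You invoke $(1+\delta)\ln(1+\delta)-\delta\ge\delta^2/4$ and parenthetically remark it holds ``in the relevant regime,'' but the theorem you are asked to prove says ``for any $\delta>0$,'' and the inequality is simply false for $\delta$ beyond roughly $4$: at $\delta=5$ the left side is $6\ln 6-5\approx 5.75$ while $\delta^2/4=6.25$. Consequently your argument does not establish the claimed bound for all $\delta>0$. In fact the bound $\exp(-\mu\delta^2/4)$ for unrestricted $\delta>0$ is itself incorrect as a statement about Bernoulli sums (take $X_i$ i.i.d.\ Bernoulli$(1/n)$ so $\mu=1$, $X$ essentially Poisson$(1)$; at $\delta=10$ one has $\Pr[X>11]\approx e^{-1}/12!\approx 8\times 10^{-10}$, while $\exp(-25)\approx 1.4\times 10^{-11}$). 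So the gap is not merely in your proof but in the statement you are trying to prove: the $\delta^2$ form of the multiplicative upper tail only holds for bounded $\delta$ (e.g.\ a standard version is $\exp(-\mu\delta^2/3)$ for $0<\delta\le 1$), and for large $\delta$ one must fall back on the un-simplified form $\bigl(e^\delta/(1+\delta)^{1+\delta}\bigr)^\mu$ or the $2^{-R}$-type bound. Your proof should either restrict the range of $\delta$ explicitly, or derive the bound in the sharper form that is actually true and then specialize.
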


\begin{theorem}[Hoeffding's inequality] \label{thm:hoeffding}Let $X_1, X_2, ..., X_n$ be
independent random variables such that $a_i \leq X_i \leq b_i$.
Let $S = \sum_{i \leq n}X_i$. Then
$$\Pr\left(|S - \E[S]| \geq t\right) \leq 2 \exp\left(-\frac{2t^2}{\sum_{i \leq n}(b_i - a_i)^2}\right).$$
\end{theorem}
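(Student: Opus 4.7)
The plan is to use the standard Chernoff-style exponential moment method. Fix $t>0$ and any $s>0$. By Markov's inequality applied to $e^{s(S-\E[S])}$,
\[
\Pr\bigl[S-\E[S]\geq t\bigr] \leq e^{-st}\,\E\bigl[e^{s(S-\E[S])}\bigr] = e^{-st}\prod_{i=1}^n \E\bigl[e^{s(X_i-\E[X_i])}\bigr],
\]
where the equality uses independence of the $X_i$'s. So the task reduces to bounding the moment generating function of each centered variable $Y_i := X_i-\E[X_i]$, which satisfies $\E[Y_i]=0$ and $Y_i\in[a_i-\E[X_i],\,b_i-\E[X_i]]$, an interval of length $b_i-a_i$.

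The heart of the argument is Hoeffding's lemma: for a mean-zero random variable $Y$ with $Y\in[\alpha,\beta]$, one has $\E[e^{sY}]\leq \exp\bigl(s^2(\beta-\alpha)^2/8\bigr)$. I would prove this by exploiting convexity of $y\mapsto e^{sy}$: for $y\in[\alpha,\beta]$,
\[
e^{sy}\leq \frac{\beta-y}{\beta-\alpha}e^{s\alpha}+\frac{y-\alpha}{\beta-\alpha}e^{s\beta}.
\]
Taking expectation and using $\E[Y]=0$ gives $\E[e^{sY}]\leq \frac{\beta}{\beta-\alpha}e^{s\alpha}-\frac{\alpha}{\beta-\alpha}e^{s\beta}$. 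Writing the right-hand side as $e^{\varphi(s)}$ with $\varphi(s)=\log\bigl(\frac{\beta}{\beta-\alpha}e^{s\alpha}-\frac{\alpha}{\beta-\alpha}e^{s\beta}\bigr)$, one checks $\varphi(0)=0$, $\varphi'(0)=0$, and $\varphi''(s)\leq (\beta-\alpha)^2/4$ uniformly in $s$ (the latter by recognizing $\varphi''(s)$ as the variance of a Bernoulli-type distribution supported in $[\alpha,\beta]$, which is at most $(\beta-\alpha)^2/4$). A second-order Taylor expansion then yields $\varphi(s)\leq s^2(\beta-\alpha)^2/8$.

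Combining, $\E[e^{s(S-\E[S])}]\leq \exp\!\bigl(\tfrac{s^2}{8}\sum_i(b_i-a_i)^2\bigr)$, so
\[
\Pr[S-\E[S]\geq t]\leq \exp\!\Bigl(-st+\tfrac{s^2}{8}\sum_i(b_i-a_i)^2\Bigr).
\]
Optimizing over $s$, the minimum is attained at $s=4t/\sum_i(b_i-a_i)^2$, giving the upper-tail bound $\exp\bigl(-2t^2/\sum_i(b_i-a_i)^2\bigr)$. Applying the identical argument to $-S$ (the hypotheses are symmetric under negation, with intervals $[-b_i,-a_i]$ of the same length) and then a union bound produces the factor of $2$ and completes the proof.

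The only delicate step is the bound $\varphi''(s)\leq(\beta-\alpha)^2/4$ in Hoeffding's lemma; everything else is routine. I would handle this by writing $p(s):=\beta e^{s\alpha}/((\beta-\alpha)e^{-sY_*})\cdot(\text{normalization})$ so that $\varphi''(s)$ equals the variance of a random variable taking values $\alpha,\beta$, and then applying Popoviciu's inequality (or directly, $\mathrm{Var}(Z)\leq((\beta-\alpha)/2)^2$ when $Z\in[\alpha,\beta]$).
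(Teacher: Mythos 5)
Your proof is the standard Chernoff-method derivation of Hoeffding's inequality via Hoeffding's lemma, and it is correct: exponential Markov, factorization by independence, the convexity bound on each centered variable followed by the second-order Taylor estimate $\varphi(s)\leq s^2(\beta-\alpha)^2/8$, optimization at $s=4t/\sum_i(b_i-a_i)^2$, and symmetry plus a union bound for the two-sided statement. The paper itself does not prove Theorem~\ref{thm:hoeffding} — it appears in the appendix's probability review as a cited classical result — so there is nothing to compare against; your argument is exactly what a self-contained proof would look like. One small remark: the notation in your final paragraph describing the tilted two-point distribution is garbled, but the underlying idea (that $\varphi''(s)$ is the variance of an exponentially tilted distribution supported on $\{\alpha,\beta\}$, hence at most $(\beta-\alpha)^2/4$ by Popoviciu) is sound and is indeed the cleanest way to close that step.
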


\begin{theorem}[Azuma-Hoeffding inequality]\label{thm:azuma} Let $X_1$, ...$X_n$ be a martingale such that
$$|X_{k} - X_{k - 1}| \leq c_k.$$
Then, for all $t \geq 0$ and any $\lambda > 0$,
$$\Pr[|X_t - X_0| \geq \lambda] \leq 2\exp(-\lambda^2/(2\sum_{i = 1}^nc^2_i)).$$
\end{theorem}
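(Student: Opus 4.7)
The plan is to prove this via the standard exponential moment / Chernoff method applied to the martingale difference sequence $D_k = X_k - X_{k-1}$. Note that the martingale property gives $\E[D_k \mid \mathcal{F}_{k-1}] = 0$, and by hypothesis $|D_k| \leq c_k$. First I would reduce to bounding one tail, say $\Pr[X_t - X_0 \geq \lambda]$, since the other tail follows by applying the same argument to the martingale $-X_k$, and a union bound produces the factor of $2$ in the stated inequality.

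For the one-sided bound I would use the Chernoff trick: for any $s > 0$,
\begin{equation*}
\Pr[X_t - X_0 \geq \lambda] \leq e^{-s\lambda}\, \E\!\left[e^{s(X_t - X_0)}\right] = e^{-s\lambda}\, \E\!\left[e^{s \sum_{k=1}^{t} D_k}\right].
\end{equation*}
Then I would peel off the last term via the tower property: conditioning on $\mathcal{F}_{t-1}$ and pulling out the $\mathcal{F}_{t-1}$-measurable factor $\exp\!\bigl(s\sum_{k<t} D_k\bigr)$, one is left to bound $\E[e^{sD_t} \mid \mathcal{F}_{t-1}]$. Iterating this step gives
\begin{equation*}
\E\!\left[e^{s(X_t - X_0)}\right] \leq \prod_{k=1}^{t} \sup \E[e^{sD_k} \mid \mathcal{F}_{k-1}].
\end{equation*}

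The key technical lemma, which I expect to be the real content of the proof, is Hoeffding's lemma: if $Y$ is a random variable with $\E[Y]=0$ and $|Y| \leq c$, then $\E[e^{sY}] \leq e^{s^2 c^2/2}$. The proof is a convexity argument: since $e^{sy}$ is convex in $y$, for $y \in [-c,c]$ we have $e^{sy} \leq \tfrac{c-y}{2c} e^{-sc} + \tfrac{c+y}{2c} e^{sc}$; taking expectations and using $\E[Y]=0$ yields $\E[e^{sY}] \leq \cosh(sc)$, and a Taylor comparison shows $\cosh(sc) \leq e^{s^2 c^2/2}$. Applied conditionally to $D_k$ with bound $c_k$, this gives $\E[e^{sD_k} \mid \mathcal{F}_{k-1}] \leq e^{s^2 c_k^2 / 2}$.

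Combining the pieces, $\Pr[X_t - X_0 \geq \lambda] \leq \exp\!\bigl(-s\lambda + \tfrac{s^2}{2}\sum_{k=1}^{t} c_k^2\bigr)$, and optimizing over $s$ by choosing $s = \lambda / \sum_{k=1}^{t} c_k^2$ yields $\exp\!\bigl(-\lambda^2/(2\sum_{k=1}^{t} c_k^2)\bigr)$. Since $\sum_{k=1}^{t} c_k^2 \leq \sum_{i=1}^{n} c_i^2$, this is bounded by the target expression; doubling for the two-sided bound completes the proof. The only delicate point is justifying that the conditional moment generating function bound really depends only on the a.s.\ bound $c_k$ and not on any distributional assumption about $D_k$, which is exactly what Hoeffding's lemma provides; the rest is routine algebra.
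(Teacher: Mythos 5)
Your proof is correct, and it is the standard textbook argument for Azuma--Hoeffding: reduce to a one-sided bound, apply the Chernoff/exponential-moment method, peel off martingale differences via the tower property, invoke Hoeffding's lemma (convexity plus Taylor comparison of $\cosh$) to bound each conditional moment generating function, and optimize the free parameter $s$. The paper does not supply a proof of this statement at all; it is listed in the probability-review appendix (Theorem~\ref{thm:azuma}) as a known black-box result, so there is no paper proof to compare against. One small point worth noting: the theorem statement as written in the paper is slightly loose in that the martingale is indexed up to $n$, the probability involves $X_t$, and the denominator sums $c_i^2$ from $1$ to $n$; you handled this correctly by deriving the sharp bound with $\sum_{k=1}^t c_k^2$ and then observing that the weaker stated bound follows since $\sum_{k=1}^t c_k^2 \leq \sum_{i=1}^n c_i^2$.
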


\subsection{Random walks in bounded and unbounded spaces}
In this subsection we state some peripheral results for bounded and unbounded random walks that will be useful in handling the meeting time and position of multiple random walks in the next section.

\label{subsec:bounded}
\begin{theorem}\label{thm:discretediffusion} \emph{(First Passage Time,
    Chapter 3 of \cite{Feller50})} Let $\{\mathbf S_t: t \in \mathbf
  N\}$ be a one dimensional random walk from the origin. The probability $\varphi_{r, t}$ that
  the first passage through $r$ occurs at time $t$ is given by
$$\varphi_{r, t} = \frac{r}{t}\binom{t}{\frac{t + r}{2}}2^{-t} \approx
\sqrt{\frac{2}{\pi}}\frac{r}{\sqrt{t^3}}e^{-r^2/(2t)}.$$
Therefore, there exists constant $C$, such that for $r,t\geq
C$, we have $\varphi_{r,t}\in ( {\frac 12}\frac{r}{\sqrt{t^3}}e^{-r^2/(2t)},
\frac{r}{\sqrt{t^3}}e^{-r^2/(2t)})$.
\end{theorem}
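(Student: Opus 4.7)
The plan is to prove this classical first-passage identity in two stages: first, derive the exact combinatorial formula $\varphi_{r,t}=\frac{r}{t}\binom{t}{(t+r)/2}2^{-t}$ via a ballot-type argument, and then apply Stirling's formula with explicit two-sided error terms to extract the Gaussian asymptotic with sharp enough multiplicative control to land inside the stated bracket.

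First I would establish the exact identity. Among all $2^t$ length-$t$ walks, exactly $\binom{t}{(t+r)/2}$ terminate at height $r$ (assuming $t,r$ have matching parity), since such walks use $(t+r)/2$ up-steps and $(t-r)/2$ down-steps. I would then invoke the cycle lemma: among walks from $0$ to $r$ of length $t$, the fraction that strictly stay below $r$ until the final step is $r/t$. Equivalently, one conditions on an up-step at time $t$ and applies the standard reflection-principle argument to paths ending at $r-1$ of length $t-1$. Multiplying the count by $2^{-t}$ yields the claimed exact formula.

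Second I would apply Stirling with explicit error $n!=\sqrt{2\pi n}(n/e)^n e^{\theta_n}$, $\theta_n\in(\tfrac{1}{12n+1},\tfrac{1}{12n})$, to $\binom{t}{k}$ with $k=(t+r)/2$. The expression factors as $\sqrt{\tfrac{2}{\pi t}}\cdot(1-r^2/t^2)^{-1/2}$ times $\exp\!\bigl(-\tfrac{t-r}{2}\log(1-r/t)-\tfrac{t+r}{2}\log(1+r/t)\bigr)$ times the Stirling residual $e^{\theta_t-\theta_k-\theta_{t-k}}$. Taylor-expanding the two logarithms to second order reveals that the exponent equals $-r^2/(2t)+O(r^4/t^3)$. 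Multiplying by the prefactor $r/t$ reproduces the claimed asymptotic $\sqrt{2/\pi}\cdot r/t^{3/2}\cdot e^{-r^2/(2t)}$.

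Third I would assemble the multiplicative errors to confirm the two-sided bound. The Stirling residual is $1+O(1/t)$, the $(1-r^2/t^2)^{-1/2}$ factor is $1+O(r^2/t^2)$, and the Taylor remainder contributes $\exp(O(r^4/t^3))$. In the Gaussian regime (for concreteness $r=O(\sqrt{t}\log t)$, which is what the applications in this paper use), all three factors sit within $1+o(1)$ once $r,t\geq C$ for a sufficiently large constant $C$. Since $\sqrt{2/\pi}\approx 0.798$ lies strictly between $1/2$ and $1$, the finished estimate falls inside the open interval $(\tfrac12 f(r,t),f(r,t))$ with $f(r,t)=r/\sqrt{t^3}\cdot e^{-r^2/(2t)}$, establishing both inequalities.

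The main obstacle is the large-deviation regime $r=\Theta(t)$, where $r^4/t^3$ is no longer small and the Gaussian tail is really a large-deviation tail; there the prefactor $\sqrt{2/\pi}$ must be replaced by an entropy-based rate constant, so the stated two-sided bracket fails in general. The clean resolution is to restrict the statement to the Gaussian regime where the three error terms above are genuinely $o(1)$; every appeal to this theorem in the paper falls in that regime, so nothing further is required. If one insisted on uniform control up to $r\leq t$, one would instead keep the exact binomial factor in the estimate and bound it directly by a Chernoff/entropy inequality, giving a weaker asymptotic but a globally valid two-sided bound.
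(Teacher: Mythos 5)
Your approach is sound and genuinely different from the paper's: the paper disposes of this theorem by citing Feller, whereas you derive both the exact ballot identity and the Gaussian asymptotic from scratch via the reflection principle and quantitative Stirling. Your diagnosis of the theorem statement is also correct: as literally written, the lower half of the bracket is false once $r=\Theta(t)$ (already at $r=t$ one has $\varphi_{t,t}=2^{-t}$, which is far below $\tfrac{1}{2}t^{-1/2}e^{-t/2}$ for large $t$), because for $\lambda=r/t$ bounded away from $0$ the true exponential rate is the binary entropy $H(\lambda)=\tfrac{1+\lambda}{2}\ln(1+\lambda)+\tfrac{1-\lambda}{2}\ln(1-\lambda)$, which strictly exceeds $\lambda^2/2$.

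One inaccuracy worth flagging, though: your claim that every appeal to this theorem in the paper falls in the Gaussian regime $r=O(\sqrt{t}\cdot\mathrm{polylog})$ is not right. In the proof of Lemma~\ref{lem:basicmove} the theorem is applied with $t$ ranging down to $r^2/\log^4 n$, so $r/\sqrt{t}$ is at least $\log^2 n$ there and $r^4/t^3$ can be as large as $\log^8 n$; that is well outside the window in which your Stirling argument certifies the two-sided bracket. What saves the paper is an asymmetry you did not make explicit: only the upper half of the bracket is needed in Lemma~\ref{lem:basicmove}, and the upper bound $\varphi_{r,t}\le r\,t^{-3/2}e^{-r^2/(2t)}$ does hold uniformly over all $C\le r\le t$, precisely because $H(\lambda)\ge\lambda^2/2$, so the extra decay $e^{-t(H(\lambda)-\lambda^2/2)}$ swamps both the $(1-\lambda^2)^{-1/2}$ prefactor blow-up near $\lambda=1$ and the gap between $\sqrt{2/\pi}$ and $1$. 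The lower half of the bracket is only used in Lemma~\ref{lem:boundedfirst}, which sits at $t=\Theta(r^2)$, i.e.\ $r^4/t^3=O(1/r^2)\to 0$, so there your Gaussian-regime analysis does apply. The right fix for the theorem statement is therefore asymmetric: the upper bound for all $C\le r\le t$, and the lower bound under a side-condition such as $r^4/t^3\le\epsilon_0$; with that amendment your proof establishes everything the paper actually uses.
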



\begin{lemma}
\label{lem:basicmove}
  Let $S$ be a bounded random walk in $[-n,n]$ starting from
  position $P$.
   \zliu{For any other position $Q \in [-n, n]$ with $|P - Q| \geq \log^2n$,}
  the
  probability that $S$ visits $Q$ within $|P-Q|^2/\log^{4}n$
 time steps is at most $\exp(-\log^3n)$ when $n$ is sufficiently large.
\end{lemma}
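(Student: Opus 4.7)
The plan is to prove this by a standard concentration argument for random walks with bounded increments. First, observe that each step of $S$ changes the position by at most $1$, so if $|P-Q| > |P-Q|^2/\log^4 n$---equivalently $|P-Q| < \log^4 n$---the walk cannot reach $Q$ within the allotted time at all, and the probability is trivially $0$. I will therefore assume $r := |P-Q| \geq \log^4 n$ and set $t := r^2/\log^4 n \geq r$, and will bound $\Pr[\max_{s \leq t} |S_s - P| \geq r]$, which dominates the probability of visiting $Q$.

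To handle the boundary self-loops, couple $S$ with an unbounded symmetric lazy random walk $\tilde S$ on $\mathbf{Z}$ starting at $P$ and sharing the same sequence of intended $\{-1,0,+1\}$ increments (so that $\tilde S$ ignores the clipping at $\pm n$); thus $\tilde S_s - P$ is a martingale with increments bounded by $1$ in absolute value. A short case analysis shows that $\{S \text{ visits } Q \text{ by time } t\}$ implies $\{\max_{s \leq t} |\tilde S_s - P| \geq r\}$: if $S$ never touches a boundary, then $S_s = \tilde S_s$ throughout, so reaching $Q$ gives $|\tilde S_s - P| = r$ directly; if $S$ first touches, say, the right boundary $n$ at some time $\tau \leq t$, then $\tilde S_\tau = n$ (the walks still agree up to $\tau$), and since $Q \in [-n, n]$ with $|Q - P| = r$, one verifies that the unbounded walk must then span a range of at least $r$ from $P$ by time $t$ (in the subcase $Q = P - r$, the unbounded walk's subsequent trajectory, together with the accumulated right-boundary corrections, forces $\tilde S$ to dip at least as far below $P$ as $S$ does).

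Finally, since $\tilde S_s - P$ is a martingale with increments bounded by $1$, I apply Doob's maximal inequality to the exponential submartingale $e^{\lambda(\tilde S_s - P)}$ with $\lambda > 0$, combined with Hoeffding's MGF bound $\E[e^{\lambda(\tilde S_t - P)}] \leq e^{\lambda^2 t/2}$, to obtain $\Pr[\max_{s \leq t}(\tilde S_s - P) \geq r] \leq \exp(\lambda^2 t / 2 - \lambda r)$. Optimizing at $\lambda = r/t$ yields the bound $\exp(-r^2/(2t)) = \exp(-\log^4 n / 2)$; the symmetric bound for the downward direction plus a union bound gives $\Pr[\max_{s \leq t} |\tilde S_s - P| \geq r] \leq 2 \exp(-\log^4 n / 2) \leq \exp(-\log^3 n)$ for sufficiently large $n$. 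The main technical obstacle is the coupling case analysis, which must handle the possibility that $S$ interacts with both boundaries before reaching $Q$; each such bounce forces $\tilde S$ to traverse a distance of at least $2n \geq r$, so the desired implication still holds.
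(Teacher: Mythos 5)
Your overall strategy --- couple $S$ with an unbounded walk $\tilde S$ and apply an exponential maximal inequality --- parallels the paper's, which also couples with an unbounded walk and then invokes Feller's first-passage asymptotics (Theorem~\ref{thm:discretediffusion}); both routes produce comparable exponents, and your observation that the probability is identically zero when $|P-Q|<\log^4 n$ is a clean way to restrict to the regime where the time budget is at least $|P-Q|$. However, the coupling case analysis contains a genuine error. In the case where $S$ touches the right boundary $n$ before visiting $Q=P-r$, you assert that the accumulated corrections ``force $\tilde S$ to dip at least as far below $P$ as $S$ does.'' This is backwards: a self-loop at $n$ suppresses a rightward step of $S$ that $\tilde S$ still takes, so $\tilde S_s\geq S_s$ for all $s$ and $\tilde S$ dips \emph{strictly less} than $S$. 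Concretely, with $n=100$, $P=99$, $Q=50$, $r=49$ and the trajectory $S=99,100,100,99,98,\dots,50$, one has $\tilde S=99,100,101,100,99,\dots,51$, so $\max_{s}|\tilde S_s-P|=48<r$; choosing $P$ near $n$ and balancing the number of self-loops drives this down to roughly $r/2$, so the claimed implication $\max_s|\tilde S_s-P|\geq r$ is simply false.

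The statement you actually want is $\max_s|\tilde S_s-P|\geq r/2$, and the correct mechanism is a dichotomy on the offset $O_{s^*}:=\tilde S_{s^*}-S_{s^*}\geq 0$ at the first time $s^*$ that $S$ reaches $Q$. If $O_{s^*}\leq r/2$ then $\tilde S_{s^*}=Q+O_{s^*}\leq P-r/2$. If $O_{s^*}>r/2$, note that each unit increase of the offset happens while $S$ is pinned at $n$ and $\tilde S$ steps to the strictly new record $n+j$, whence $\max_s\tilde S_s\geq n+O_{s^*}>P+r/2$. Either way $\max_s|\tilde S_s-P|\geq r/2$, and substituting $r/2$ into your Doob/Hoeffding estimate gives $2\exp(-r^2/(8t))=2\exp(-\log^4 n/8)\leq\exp(-\log^3 n)$, so the final bound survives. (The two-boundary case you raise is vacuous: since $Q\in[-n,n]$, the bounded walk would pass through $Q$ while crossing from one boundary to the other.) The paper avoids this offset bookkeeping altogether by restricting to the window from the \emph{last} boundary visit $t_0$ to the first visit of $Q$: on that window $S$ touches no boundary, so it moves in lockstep with the unbounded walk, and $S$ traverses the full distance $\geq r$ from the boundary to $Q$; hence the unbounded walk visits $\geq r$ distinct lattice points over $[0,s^*]$, and since it started at $P$ its range contains $P$, so it reaches $P-r/2$ or $P+r/2$. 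You should either adopt this cleaner lockstep argument or replace your incorrect ``dips as far'' claim with the offset dichotomy above.
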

\begin{proof}
 \zliu{
Let us couple $S$ with an unbounded random walk $S'$ that also starts at $P$ in the natural way, i.e. $S$ and $S'$ share the same random tosses to drive their moves.

First, we claim that at the first time $S$ visits $Q$,
the number of distinct lattice points $S'$ visits is at least $|P - Q|$. This claim can be seen through analyzing the following two cases.

\noindent{\emph{Case 1.}} The walk $S$ never visits a boundary before its first visit to $Q$. In this case, $S'$ coincides with $S$, which implies $S'$ also visits all the lattice points between $P$ and $Q$. The claim therefore follows.

\noindent{\emph{Case 2.}} The walk $S$ visits a boundary before it fist visits $Q$. In this case, the boundary that $S$ visits and the point $Q$ lie on different sides of $P$. In other words, the distance between this boundary and $Q$ is at least $|P - Q|$. Now let us only consider the time interval between the last time $S$ visits the boundary (namely, $t_0$) and the first time $S$ visits $Q$. The trajectory of $S'$ within this time interval is identical to
the trajectory of $S$ (up to an offset produced between time $0$ and $t_0$). Therefore, from $t_0$ to the first time $S$ visits $Q$, the coupled walk $S'$ visits at least $|P - Q|$ distinct lattice points.

An immediate consequence of our claim is that a necessary condition for $S$ to visit $Q$ is that $S'$ has to visit either $P - \frac{|P - Q|}2$ or
$P + \frac{|P - Q|}2$. By Theorem~\ref{thm:discretediffusion}, the probability $S'$ ever visits either of these points within time $|P - Q|^2/\log^4n$ is at most $\exp(-\log^3n)$ when $n$ is sufficiently large, which completes our proof.}
\end{proof}
 \zliu{
The next lemma concerns the first passage time for a random walk over bounded space.

\begin{lemma}\label{lem:boundedfirst} Let $S$ be a random walk on $\mathcal V^1 = \{-n, ..., n\}$ that starts at $A$.
Let $B$ be a point on $\mathcal V^1$ such that $|B - A| = r$. Let $T$ be the first time $S$
visits $B$. Fix an arbitrary constant $c$, we have:
$$\Pr[T \leq cr^2] = \Omega(1).$$
\end{lemma}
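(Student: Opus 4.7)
My plan is to reduce to the unbounded-walk case by a monotone coupling. Without loss of generality assume $A<B$, so $B=A+r$; the opposite direction is symmetric. Let $R_1,R_2,\ldots$ be i.i.d.\ fair $\pm 1$ coins. Let $S'$ be the unbounded walk on $\mathbf Z$ started at $A$ with $S'_{t+1}=S'_t+R_t$, and evolve $S$ on $\cV^1$ using the \emph{same} coins $R_t$: take the step $R_t$ whenever the result stays in $\{-n,\ldots,n\}$, and self-loop at an endpoint otherwise. I will first verify the domination $S_t\geq S'_t$ for every $t$ up to the hitting time $T=\inf\{s:S_s=B\}$. The induction is short. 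Before time $T$ we have $S_t<B\leq n$, so $S$ never touches the upper boundary and only the lower boundary at $-n$ can cause desynchronization. When $R_t=+1$, or when $R_t=-1$ with $S_t>-n$, the two walks make the same move and the inequality is preserved; when $R_t=-1$ with $S_t=-n$, the inductive hypothesis forces $S'_t\leq S_t=-n$, so $S'_{t+1}=S'_t-1\leq -n-1$ while $S_{t+1}=-n$, and $S_{t+1}\geq S'_{t+1}$ continues to hold (in fact strictly).

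Letting $T'=\inf\{s:S'_s=B\}$, the domination yields $T\leq T'$: if instead $T>T'$, then $S_s<B$ for all $s\leq T'$ and the claim would give $S_{T'}\geq S'_{T'}=B$, a contradiction. It therefore suffices to bound $\pr[T'\leq cr^2]$ from below by a positive constant depending only on $c$. Invoking Theorem~\ref{thm:discretediffusion}, for $r,t\geq C$ one has $\varphi_{r,t}\geq \tfrac12\, r\, t^{-3/2}e^{-r^2/(2t)}$; comparing the sum with an integral and substituting $u=r^2/(2t)$ turns $\sum_{r\leq t\leq cr^2}\varphi_{r,t}$ into a lower bound of the form
\[
\int_{1/(2c)}^{r/2}\frac{1}{\sqrt{\pi}}\,u^{-1/2}e^{-u}\,du,
\]
which converges as $r\to\infty$ to a positive constant depending on $c$. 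Hence $\pr[T'\leq cr^2]=\Omega(1)$, and combining with $T\leq T'$ finishes the argument. For the finitely many small values of $r$ where the integral estimate does not yet apply, one may simply observe that $S'$ (and hence $S$, by the coupling) takes $r$ consecutive $+1$ steps with probability $2^{-r}$, which is itself $\Omega(1)$ whenever $r$ is bounded.

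The principal subtlety is where the coupling is made one-sided. It is essential to stop the comparison at $T$, because before $T$ the bounded walk cannot reach the boundary on the $B$-side, leaving only the far boundary at $-n$ as a source of desynchronization, and there the desynchronization is monotone in the right direction. A symmetric coupling aiming for $S_t\leq S'_t$ would fail as soon as $S$ hit the upper boundary $+n$, so this one-sided monotonicity, together with the stopping rule at $T$, is what makes the reduction to the unbounded case go through cleanly.
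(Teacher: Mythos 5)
Your proof is correct and uses the same strategy as the paper: a monotone coupling of the bounded walk $S$ with an unbounded walk $S'$ driven by the same coins, establishing the domination $S_t\geq S'_t$ so that $T\leq T'$, and then invoking Theorem~\ref{thm:discretediffusion} for $T'$. The only cosmetic difference is where you stop the comparison --- you stop at $T$ and note $S$ cannot touch the upper boundary before then, whereas the paper stops at $T'$ and notes $S'$ stays below $B\leq n$ until then --- but these are two ways of drawing the same conclusion, and your treatment of the integral lower bound and the small-$r$ tail is, if anything, more explicit than the paper's.
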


\begin{proof}Without loss of generality, let us assume $-n \leq A \leq B \leq n$.
We couple $S$ with an unbounded random walk $S'$ that also starts at $A$ in the natural way, i.e.
having $S$ and $S'$ share the same random tosses to drive their moves. Let $T'$
be the first time $S'$ visits $B$. We first show that $T' \geq T$. Note that before $T'$, $S'$ is always to the left of $B$, and hence $n$. It is then easy to see that $S$ is always overlapping or to the right of $S'$ before $T'$. Hence $S'$ hitting $B$ at $T'$ implies that $S$ has already hit it at a time before or at $T'$.

%
%
Finally, by Theorem~\ref{thm:discretediffusion}, we have
$$\Pr[T \leq cr^2] \geq \Pr[T' \leq cr^2] = \Omega(1).$$
\end{proof}

\begin{corollary}\label{cor:boundedregion}Let $S$ be a random walk on $\mathcal V^1$ that starts at $A$, and $B$ be a point on $\mathcal V^1$ such that $|B - A| = r$.
Let $c_1$ and $c_2$ be two arbitrary constants and let $t = c_1r^2$. We have
$$\Pr[|S_t - B| \leq c_2 r] = \Omega(1).$$
\end{corollary}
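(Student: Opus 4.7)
The plan is to combine the first-passage result of Lemma~\ref{lem:boundedfirst} with a local concentration estimate derived from Theorem~\ref{thm:discretediffusion}. Note that a naive Chebyshev bound on $\mathrm{Var}(S_t)$ fails when $c_2<\sqrt{c_1}$, so we must work on the local-central-limit scale rather than the standard-deviation scale.

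First, apply Lemma~\ref{lem:boundedfirst} with the constant $c_1/2$ in place of $c$: the first-passage time $T_B := \inf\{u : S_u = B\}$ satisfies $\Pr[T_B\leq (c_1/2)r^2]\geq\kappa_1$ for some constant $\kappa_1>0$. On this event, the strong Markov property ensures that the shifted process $\tilde S_u := S_{T_B+u}$ is a random walk on $\mathcal V^1$ started at $B$, and it remains to establish
$$\Pr\bigl[\,|\tilde S_s - B| \leq c_2 r\,\bigr] \geq \kappa_2$$
for some constant $\kappa_2>0$, uniformly in the residual time $s = t - T_B \in [(c_1/2)r^2,\,c_1 r^2]$.

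For this uniform lower bound, I would couple $\tilde S$ with an unbounded walk $S'$ started at $B$ by sharing increments. Summing the local density estimate from Theorem~\ref{thm:discretediffusion} (which gives $\Theta(1/\sqrt{s})$ mass per lattice point of the correct parity inside a Gaussian window of width $\sqrt{s}$) over the $\Theta(c_2 r)$ lattice points within distance $c_2 r$ of $B$ yields $\Pr[|S'_s - B|\leq c_2 r]\geq\kappa_2'$ for a positive constant depending only on $c_1$ and $c_2$, uniformly over $s\leq c_1 r^2$. If $B$ lies at distance at least $Cr$ from both endpoints of $\mathcal V^1$, then choosing $C=C(c_1)$ sufficiently large guarantees (again via Theorem~\ref{thm:discretediffusion}, bounding the first-passage time to the nearest boundary) that $\tilde S$ remains in the interior throughout its first $s$ steps with constant probability; on this event $\tilde S_s = S'_s$ and the bound transfers.

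The main obstacle is the boundary regime, when $B$ is within $O(r)$ of an endpoint of $\mathcal V^1$. Here the coupling with $S'$ breaks down, but we can still obtain the bound by invoking the image-method representation of the bounded walk's transition kernel: the self-loop boundary rule reflects mass back into the interior, so each reflected image of $B$ contributes a non-negative density to points near $B$, and hence $\tilde S_s$ assigns at least as much probability to $[B-c_2 r, B+c_2 r]\cap\mathcal V^1$ as $S'_s$ does to the corresponding unbounded interval around $B$. This yields the same $\kappa_2 = \Omega(1)$, and combining with the first step produces $\Pr[|S_t-B|\leq c_2 r]\geq\kappa_1\kappa_2 = \Omega(1)$, as required.
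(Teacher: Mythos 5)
Your proof follows the same two-step route as the paper: first reach $B$ with constant probability using Lemma~\ref{lem:boundedfirst}, then argue that the walk restarted at $B$ remains within $c_2 r$ of $B$ over the remaining time with constant probability, and multiply. The paper's treatment of the second step is briefer --- it simply rescales and invokes ``Gaussian approximation on $\tilde S_\tau/\sqrt\tau$'' without remarking that $\tilde S$ is still a bounded walk --- so your explicit handling of the boundary is, if anything, more careful than the paper's. Two small points to tighten. First, you cite Theorem~\ref{thm:discretediffusion} for the per-point occupation density $\Theta(1/\sqrt s)$ of $S'_s$; that theorem gives the \emph{first-passage} probability, whereas the occupation density is the one-dimensional local CLT (i.e., Theorem~\ref{thm:standardrw} with $d=1$). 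Second, the method-of-images representation of the transition kernel does hold for the self-loop boundary rule used here, but only with the reflection hyperplanes placed at the half-integers $\pm(n+\tfrac12)$; you should say so, since reflecting about $\pm n$ fails. Once that is fixed, every $y\in\mathcal V^1$ is the trivial image of itself and all image contributions are nonnegative, giving $\Pr[\tilde S_s = y]\ge\Pr[S'_s = y]$ for each $y\in\mathcal V^1$, from which your domination over $[B-c_2r,B+c_2r]\cap\mathcal V^1$ follows. With these two justifications filled in, the argument is complete.
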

\begin{proof}Let $T$ be the first time $S$ visits $B$. By Bayes' rule we have
$$\Pr[|S_t - B| \leq c_2 r] \geq \Pr[|S_t - B| \leq c_2 r \mid T < t]\Pr[T < t] = \Pr[|S_t - S_T| \leq c_2 r \mid T < t]\Pr[T < t]$$
By Lemma~\ref{lem:boundedfirst}, $\Pr[T \leq t] = \Omega(1)$. Next we claim $\Pr[|S_t - S_T| \leq c_2 r \mid T < t] = \Omega(1)$. This can be seen by showing $\Pr[|S_t-S_T|\leq c_2 r|T]=\Omega(1)$ uniformly over $T\in[1,t)$. For this, note that $\Pr[|S_t-S_T|\leq c_2 r|T]=\Pr[|\tilde{S}_\tau|\leq c_2r]$ where $\tau=t-T$ and $\tilde{S}$ is a random walk starting at 0. We then write $\Pr[|\tilde{S}_\tau|\leq c_2r]=\Pr[|\tilde{S}_\tau/\sqrt{\tau}|\leq c_2r/\sqrt{\tau}]\geq\Pr[|\tilde{S}_\tau/\sqrt{\tau}|\leq c_2/\sqrt{c_1}]=\Omega(1)$ by Gaussian approximation on $\tilde{S}_\tau/\sqrt{\tau}$. Therefore, $\Pr[|S_t - B| \leq c_2 r] = \Omega(1)$.
\end{proof}
}

For a $d$-dimensional {\em unbounded} random walk starting from the
origin, let \xiaoruiview{$p_d(t,\vec x)$} be the probability that the walk visits
position \xiaoruiview{$\vec x$} at time $t$. Let $q_d(t,x)$ be the probability that the
random walk visits \xiaoruiview{$\vec{x}$} {\em within} time $t$. \zliu{
When $d = 3$, we will silently drop the subscripts and write the functions
as $p(\cdot, \cdot)$ and $q(\cdot, \cdot)$.}

\begin{theorem} \label{thm:standardrw}\cite{Hughes95} The function
  \xiaoruiview{$p_d(t, \vec x)$} has the following analytic form, when $t-\xiaoruiview{\|\vec x\|_1}$ is even:
$$\xiaoruiview{p_d(t, \vec x)} = \frac{2}{t^{d/2}}\left(\frac{d}{2\pi}\right)^{d/2}\exp\left\{\frac{-d \|\xiaoruiview{\vec x}\|^2_2}{2t}\right\} + e_t(\xiaoruiview{\vec x}),$$
where $|e_t(\xiaoruiview{\vec x})| \leq \min\left\{O(t^{-(d + 2)/2}),
  O(\|\xiaoruiview{\vec x}\|_2^{-2}t^{-d/2})\right\}$. $p_d(t,\xiaoruiview{\vec x}) = 0$ when $t-\xiaoruiview{\|\vec x\|_1}$ is odd.
\end{theorem}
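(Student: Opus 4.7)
The plan is to use Fourier inversion. Since the single-step characteristic function of the symmetric simple random walk on $\mathbf{Z}^d$ is $\phi(\vec\theta)=\frac{1}{d}\sum_{j=1}^d\cos\theta_j$, the $t$-step transition probability admits the representation
\[
p_d(t,\vec x)=\frac{1}{(2\pi)^d}\int_{[-\pi,\pi]^d}\phi(\vec\theta)^{\,t}\,e^{-i\vec\theta\cdot\vec x}\,d\vec\theta .
\]
The parity statement follows immediately: the change of variables $\vec\theta\mapsto\vec\theta+\vec\pi$ sends $\phi$ to $-\phi$ and $e^{-i\vec\theta\cdot\vec x}$ to $(-1)^{\|\vec x\|_1}e^{-i\vec\theta\cdot\vec x}$, so the integral picks up a factor $(-1)^{t+\|\vec x\|_1}$, which forces $p_d(t,\vec x)=0$ whenever $t-\|\vec x\|_1$ is odd, and doubles the effective integration domain (this is the source of the leading factor of $2$ in the claimed formula) when it is even.

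Next I would split the integral into a ``near the origin'' piece $R_1=\{\|\vec\theta\|_\infty\le t^{-1/2}\log t\}$, an intermediate piece $R_2$, and a piece $R_3$ near the corners $\vec\theta\equiv\vec\pi$. On $R_1$ I Taylor expand
\[
\log\phi(\vec\theta)=-\frac{\|\vec\theta\|_2^{\,2}}{2d}+O\!\left(\|\vec\theta\|_2^{\,4}\right),
\]
so $\phi(\vec\theta)^t=\exp(-t\|\vec\theta\|_2^{\,2}/(2d))\bigl(1+O(t\|\vec\theta\|_2^{\,4})\bigr)$. Substituting $\vec u=\sqrt{t}\,\vec\theta$ and extending the region of integration to all of $\mathbf{R}^d$ (the tails decaying super-polynomially), the Gaussian integral yields the claimed main term
\[
\frac{2}{t^{d/2}}\!\left(\frac{d}{2\pi}\right)^{d/2}\!\exp\!\left(-\frac{d\|\vec x\|_2^{\,2}}{2t}\right),
\]
with the extra factor $2$ coming from the corner piece $R_3$ after the parity reduction. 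On $R_2$ and the remainder of $R_3$, $|\phi(\vec\theta)|\le 1-c$ for some $c>0$, so those contributions are exponentially small in $t$ and may be absorbed into $e_t(\vec x)$. Collecting the next-order Taylor term and the Gaussian tail estimates gives the uniform bound $|e_t(\vec x)|=O(t^{-(d+2)/2})$.

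For the alternative bound $|e_t(\vec x)|=O(\|\vec x\|_2^{-2}t^{-d/2})$ I would use integration by parts in the Fourier integral: writing $e^{-i\vec\theta\cdot\vec x}=\frac{1}{-x_k^2}\partial_{\theta_k}^2 e^{-i\vec\theta\cdot\vec x}$ for the coordinate $k$ maximizing $|x_k|$ (noting $\|\vec x\|_\infty\ge\|\vec x\|_2/\sqrt d$) and integrating by parts twice, one moves two $\theta_k$ derivatives onto $\phi(\vec\theta)^t$. Each derivative produces a factor bounded by $O(\sqrt{t})$ after the rescaling $\vec u=\sqrt t\vec\theta$, yielding an overall gain of $1/x_k^2=\Theta(1/\|\vec x\|_2^{\,2})$ against the baseline $t^{-d/2}$ estimate. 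The same integration-by-parts trick applied to the error integrand (rather than the full integrand) then gives the desired error bound, and the two bounds combine to the claimed minimum.

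The main obstacle is tracking the error terms carefully enough that the two separate bounds in $\min\{O(t^{-(d+2)/2}),O(\|\vec x\|_2^{-2}t^{-d/2})\}$ both hold with the same remainder. In particular one must verify that the Taylor remainder in $R_1$, the exponentially small contributions from $R_2$ and $R_3$, and the integration-by-parts correction are all simultaneously dominated by the stated expressions. This is a purely technical but somewhat delicate bookkeeping exercise, and is exactly the content of the local central limit theorem presented in Hughes~\cite{Hughes95}, whose calculation we would follow verbatim.
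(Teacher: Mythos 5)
The paper does not prove this statement; it is a standard local central limit theorem for the simple random walk, cited directly from Hughes~\cite{Hughes95}. Your Fourier-analytic outline --- inversion of $\phi(\vec\theta)^t$ with $\phi(\vec\theta)=\frac{1}{d}\sum_{j=1}^d\cos\theta_j$, the $\vec\theta\mapsto\vec\theta+\vec\pi$ symmetry accounting for the parity claim and the leading factor of $2$, a Taylor expansion of $\log\phi$ on a shrinking neighborhood of the origin, and exponentially small contributions from the complementary region --- is exactly the textbook argument underlying that reference, so as a sketch it is sound.

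One arithmetic slip is worth flagging. In the paragraph justifying $|e_t(\vec x)|=O(\|\vec x\|_2^{-2}t^{-d/2})$ you assert that two $\theta_k$-derivatives moved onto $\phi(\vec\theta)^t$ cost $O(\sqrt t)$ each, ``yielding an overall gain of $1/x_k^2$ against the baseline $t^{-d/2}$.'' That gain does not follow from those numbers: two factors of $\sqrt t$ together with the prefactor $1/x_k^2$ give a multiplier $t/x_k^2$, so integration by parts on the \emph{full} integrand only yields $|p_d(t,\vec x)|=O(t^{1-d/2}/x_k^2)$, weaker than the stated error bound by a factor of $t$. The correct derivation --- which you do point to in the very next sentence, when you say the integration by parts should be applied to the error integrand --- rests on a cancellation: the leading contributions to $\partial_{\theta_k}^2\phi^t$ and to $\partial_{\theta_k}^2\exp(-t\|\vec\theta\|_2^2/(2d))$ agree, so the $L^1$ norm of $\partial_{\theta_k}^2\bigl(\phi^t-e^{-t\|\vec\theta\|_2^2/(2d)}\bigr)$ on the near-origin region is $O(t^{-d/2})$ rather than the $O(t^{1-d/2})$ obtained for either function alone, and dividing by $x_k^2$ then gives the claimed $O(t^{-d/2}/\|\vec x\|_2^2)$. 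Making that cancellation explicit is part of the ``delicate bookkeeping'' you rightly defer to Hughes, but the intermediate sentence as written suggests an incorrect shortcut and should be corrected.
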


%
%

\begin{theorem}\label{thm:lowervisit}\cite{AMPR01} The function
  $q_d(t, \xiaoruiview{\vec x})$ satisfies the following asymptotic relations:
\begin{itemize*}
\item If $d = 2$,  $\xiaoruiview{\vec x  \neq \vec 0}$, and $t \geq \|\xiaoruiview{\vec x\|_2^2}$, then we have
$$q_2(t, \xiaoruiview{\vec x}) = \Omega\left(\frac{1}{\log\|\xiaoruiview{\vec x}\|_2}\right).$$
\item If $d \geq 3$, $\xiaoruiview{\vec x \neq \vec 0}$, and $t \geq \|\xiaoruiview{\vec x\|_2^2}$, then we have
$$q_d(t, \xiaoruiview{\vec x}) = \Omega\left(\frac{1}{\|\xiaoruiview{\vec x}\|_2^{d-2}}\right).$$
\end{itemize*}
\end{theorem}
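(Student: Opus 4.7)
The plan is to reduce the visit probability $q_d(t, \vec x)$ to a ratio of expected occupation times, using a first‑passage / strong Markov decomposition. Let $N_t(\vec x) = \sum_{s=0}^t \mathbf{1}[S_s = \vec x]$ be the total number of visits to $\vec x$ up to time $t$, so that
\[
\E[N_t(\vec x)] = \sum_{s=0}^{t} p_d(s, \vec x).
\]
Conditioning on the first passage time $\sigma_{\vec x} \leq t$ to $\vec x$ and applying the strong Markov property at $\sigma_{\vec x}$, the expected number of subsequent visits to $\vec x$ within $[\sigma_{\vec x}, t]$ is at most the expected number of visits to the starting site of a fresh walk, namely $\E[N_t(\vec 0)]$. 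This yields the key inequality
\[
q_d(t, \vec x) \;=\; \Pr[\sigma_{\vec x} \leq t] \;\geq\; \frac{\E[N_t(\vec x)]}{\E[N_t(\vec 0)]}.
\]

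Next, I would plug in the sharp local limit estimate from Theorem~\ref{thm:standardrw} to evaluate both sides. Since $q_d(\cdot,\vec x)$ is non-decreasing in $t$, it suffices to prove the bound at $t = \|\vec x\|_2^2$. Theorem~\ref{thm:standardrw} gives $p_d(s,\vec x) = \Theta(s^{-d/2})\exp(-\Theta(\|\vec x\|_2^2/s))$ up to a lower-order error term and a parity constraint. For the denominator, $\E[N_t(\vec 0)] = \sum_{s=0}^{t}\Theta(s^{-d/2})$, which is $\Theta(\log \|\vec x\|_2)$ when $d=2$ and $\Theta(1)$ when $d\geq 3$. For the numerator, I would restrict the sum to the window $s \in [\tfrac12\|\vec x\|_2^2,\, \|\vec x\|_2^2]$, where the Gaussian factor is bounded below by a positive constant, so each valid-parity term contributes $\Theta(\|\vec x\|_2^{-d})$ and there are $\Theta(\|\vec x\|_2^2)$ such terms. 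Hence $\E[N_t(\vec x)] = \Omega(\|\vec x\|_2^{\,2-d})$ when $d\geq 3$ and $\Omega(1)$ when $d=2$. Dividing the two estimates produces exactly the stated bounds $\Omega(1/\log \|\vec x\|_2)$ and $\Omega(1/\|\vec x\|_2^{d-2})$.

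The main technical nuisances, rather than a genuine obstacle, will be (i) the parity condition that forces $p_d(s,\vec x)=0$ when $s - \|\vec x\|_1$ is odd, which costs only a constant factor of $1/2$ once the summation window is chosen to have length $\Theta(\|\vec x\|_2^2)$; and (ii) absorbing the error term $e_s(\vec x) = O(\min\{s^{-(d+2)/2},\|\vec x\|_2^{-2}s^{-d/2}\})$ of Theorem~\ref{thm:standardrw} into the leading Gaussian, which is straightforward on the window $s \geq \tfrac12\|\vec x\|_2^2$ once $\|\vec x\|_2$ is a sufficiently large constant (the regime covered by the $\Omega$-notation in the statement). Modulo these bookkeeping steps, the proof is essentially the two-line ratio argument above combined with the local CLT estimate already quoted in Theorem~\ref{thm:standardrw}.
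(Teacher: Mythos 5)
The paper does not prove this theorem; it is quoted from \cite{AMPR01} without argument, so there is no in-paper proof to compare against. Your proof is correct and is the standard one: the inequality $q_d(t,\vec x)\geq \E[N_t(\vec x)]/\E[N_t(\vec 0)]$ via the strong Markov property at the first passage time, followed by the local CLT estimate of Theorem~\ref{thm:standardrw} to bound the numerator (restricting to $s\in[\tfrac12\|\vec x\|_2^2,\|\vec x\|_2^2]$, where the Gaussian factor is $\Theta(1)$) and the denominator (a Green's-function sum that is $\Theta(\log\|\vec x\|_2)$ for $d=2$ and $\Theta(1)$ for $d\geq3$). The parity and error-term issues you flag are indeed only constant-factor bookkeeping once $\|\vec x\|_2$ is large, and the monotonicity of $q_d$ in $t$ correctly lets you fix $t=\|\vec x\|_2^2$ so the $d=2$ denominator does not grow with $t$. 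This fills in a cited prerequisite rather than re-deriving an in-paper argument.
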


When $d\geq 3$, it is not difficult to see that the above asymptotic result is tight by using Markov inequality:
\begin{corollary}\label{lem:tightvisit} When $d \geq 3$, the function
  $q_d(t, \xiaoruiview{\vec x})$ satisfies the following asymptotic relation  \zliu{for} $t>\|\xiaoruiview{\vec x}\|_2^2$
$$q_d(t, \xiaoruiview{\vec x}) = \Theta\left(\frac{1}{\|\xiaoruiview{\vec x}\|_2^{d-2}}\right).$$
\end{corollary}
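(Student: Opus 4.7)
The lower bound is already established by Theorem~\ref{thm:lowervisit}, so the plan is to prove the matching upper bound $q_d(t, \vec x) = O(\|\vec x\|_2^{2-d})$ for $d \geq 3$. The key observation is that since $q_d$ is monotone in $t$, it suffices to bound the probability that the walk \emph{ever} visits $\vec x$; the constraint $t > \|\vec x\|_2^2$ plays no role in the upper bound direction. This reduces the problem to a standard Green's function / Markov inequality argument in the transient regime.

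First I would define the total occupation time $V(\vec y) = \sum_{s=0}^{\infty} \mathbf{1}\{S_s = \vec y\}$ and compute its expectation via Theorem~\ref{thm:standardrw}. Writing $\E[V(\vec y)] = \sum_{s \geq 0} p_d(s, \vec y)$, the Gaussian-type estimate from Theorem~\ref{thm:standardrw} shows that terms with $s < \|\vec y\|_2^2$ contribute an exponentially decaying factor $e^{-\Omega(\|\vec y\|_2^2/s)}$ and are negligible, while terms with $s \geq \|\vec y\|_2^2$ contribute $\Theta(s^{-d/2})$. Summing (or integrating) gives $\E[V(\vec y)] = \Theta(\|\vec y\|_2^{2-d})$ for $\vec y \neq \vec 0$, and $\E[V(\vec 0)] = \Theta(1)$ since the series $\sum_s s^{-d/2}$ converges for $d \geq 3$ (this is precisely transience).

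Next I would apply the strong Markov property at the first hitting time $\tau_{\vec x}$ of $\vec x$: conditional on $\tau_{\vec x} < \infty$, the post-$\tau_{\vec x}$ process is a fresh random walk from $\vec x$, so the conditional expected number of visits to $\vec x$ equals $\E[V(\vec 0)]$. Hence
\[
\E[V(\vec x)] = \Pr[\tau_{\vec x} < \infty]\cdot \E[V(\vec 0)],
\]
which yields $\Pr[\tau_{\vec x} < \infty] = \E[V(\vec x)]/\E[V(\vec 0)] = \Theta(\|\vec x\|_2^{2-d})$. Since $q_d(t, \vec x) \leq \Pr[\tau_{\vec x} < \infty]$, the desired upper bound follows, and combined with Theorem~\ref{thm:lowervisit} it gives the $\Theta$ bound.

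The only mildly delicate step is justifying $\E[V(\vec y)] = \Theta(\|\vec y\|_2^{2-d})$ cleanly: the error term $e_t(\vec x)$ in Theorem~\ref{thm:standardrw} must be shown not to dominate, and one must remember to account for parity (only half the times $s$ have $p_d(s,\vec y) > 0$). These are routine book-keeping items that can be handled by splitting the sum at $s = \|\vec y\|_2^2$ and bounding the small-$s$ regime by the exponential factor and the large-$s$ regime by the main Gaussian term, treating the error $e_t(\vec x)$ via its second estimate $O(\|\vec x\|_2^{-2} t^{-d/2})$. I do not expect any genuine obstacle beyond these standard estimates.
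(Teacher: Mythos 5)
Your proof is correct, and the core estimate---computing $\E[V(\vec x)] = \sum_{s\geq 0} p_d(s,\vec x) = \Theta(\|\vec x\|_2^{2-d})$ from the local CLT in Theorem~\ref{thm:standardrw} and then dropping the time constraint since the upper bound is monotone in $t$---is exactly what the paper intends (the paper gives no separate proof, only the remark that the upper bound follows ``by Markov inequality''). The one place you diverge is the final step: the paper's remark points at the one-line bound $q_d(t,\vec x)\le\Pr[V(\vec x)\ge 1]\le\E[V(\vec x)]$ (plain Markov), whereas you invoke the strong Markov renewal identity $\E[V(\vec x)]=\Pr[\tau_{\vec x}<\infty]\,\E[V(\vec 0)]$ and divide out the $\Theta(1)$ constant $\E[V(\vec 0)]$. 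Both are standard and give the same conclusion; yours is a hair more machinery but yields a clean two-sided asymptotic for the ever-hit probability in one shot, whereas the paper's route is more minimal for the pure upper bound. Your caveat about bookkeeping is the right concern: the split at $s\asymp\|\vec x\|_2^2$, the use of the $O(\|\vec x\|_2^{-2}s^{-d/2})$ form of the error for small $s$, the parity restriction, and the fact that $p_d(s,\vec x)=0$ for $s<\|\vec x\|_1$ (which kills the seemingly dangerous very-small-$s$ terms) all need to be invoked, and together they make the error sum $o(\|\vec x\|_2^{2-d})$ at least for $d=3$, the case the paper actually works out.
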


 \zliu{
Next we show for any random walk that could start near the boundary, waiting for a short
period allows the walk to both stay away from the boundary and be sufficiently close to where it starts.}

 \zliu{
\begin{lemma} \label{lem:ball}Consider a random walk $S$ over the $d$-dimensional space $\mathcal V^d$ that starts at $\vec x$, where $\vec x = (x_1, ..., x_d)$ is an arbitrary point in the space. Let $\vec c = (c_1, ..., c_d)$ be a point in $\mathcal V^d$ such that $\|\vec c - \vec x\| = \Theta(r)$. Also let $t = r^2$. We have
\begin{equation}
\Pr[S_{t} \in \mathbb B(\vec c, r)] = \Omega(1).
\end{equation}
\end{lemma}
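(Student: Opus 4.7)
The plan is to decompose the $d$-dimensional walk into its $d$ coordinate projections and then reduce to the $1$D estimate in Corollary~\ref{cor:boundedregion}. Each step of the walk picks a direction uniformly from $\{\pm e_1,\dots,\pm e_d\}$, self-looping when the chosen step would leave the grid. Let $N_i$ be the number of the first $t=r^2$ steps that select coordinate $i$, so that $(N_1,\dots,N_d)$ is multinomial with parameters $(t;1/d,\dots,1/d)$. The key structural observation is that, conditional on $(N_1,\dots,N_d)$, the $d$ coordinate processes are mutually independent, each being precisely a bounded $1$D random walk on $\{-n,\dots,n\}$ (with self-loops at the boundary, exactly as in Corollary~\ref{cor:boundedregion}) of length $N_i$ starting at $x_i$.

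A multiplicative Chernoff bound (Theorem~\ref{thm:chernoff}) together with a union bound over $i\in[d]$ then shows that with overwhelming probability each $N_i$ lies in $[r^2/(2d),3r^2/(2d)]$; call this event $\mathcal{E}$. On $\mathcal{E}$, the next task is to prove a coordinatewise estimate: for every $i$,
\[
\Pr\bigl[\,|S^i_{N_i}-c_i|\le r\,\bigm|\,N_i\bigr]\ \ge\ \alpha
\]
for some absolute constant $\alpha>0$. Writing $r_i=|c_i-x_i|$, equivalence of norms on $\mathbb{R}^d$ gives $r_i=O(r)$. If $r_i\ge r/(4d)$, Corollary~\ref{cor:boundedregion} applies directly with $B=c_i$ and constants $c_1=N_i/r_i^2$, $c_2=r/r_i$, both of which lie in a fixed bounded range and therefore yield $\Omega(1)$ probability. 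If instead $r_i<r/(4d)$, the corollary cannot be invoked with $c_i$ as the target (the value of $c_1$ would blow up and the $\Omega(1)$ constant would degenerate), so one picks an intermediate target $B_i$ at distance $r_0=r/(4d)$ from $x_i$, applies the corollary with $c_1=N_i/r_0^2=\Theta(d^2)$ and $c_2=1$ to obtain $|S^i_{N_i}-B_i|\le r_0$ with constant probability, and then concludes by the triangle inequality that $|S^i_{N_i}-c_i|\le 2r_0+r_i<r$.

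Combining the two steps via the conditional independence of the coordinate walks,
\[
\Pr\bigl[\,\vec S_t\in\mathbb{B}(\vec c,r)\,\bigm|\,\mathcal{E}\,\bigr]\ \ge\ \alpha^d\ =\ \Omega(1),
\]
and unconditioning on $\mathcal{E}$ gives the lemma. The main obstacle is precisely the coordinatewise step when $|c_i-x_i|$ is atypically small: Corollary~\ref{cor:boundedregion} requires the target to be at a fixed multiple of $\sqrt{t}$ from the starting point in order to keep the ratio $c_2/\sqrt{c_1}$ bounded away from $0$, so routing through the intermediate point $B_i$ at distance $\Theta(r)$ from $x_i$ and absorbing the $O(r)$ slack via the triangle inequality is what makes the reduction go through.
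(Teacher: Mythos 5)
Your proposal is correct and follows essentially the same route as the paper's proof: condition on the multinomial split of the $t=r^2$ steps among the $d$ axes, control each count $N_i$ by a Chernoff bound, exploit the conditional independence of the coordinate walks, and apply the one-dimensional estimate (Corollary~\ref{cor:boundedregion}) to each coordinate before multiplying the $\Omega(1)$ factors. Your extra case analysis for coordinates with $|c_i-x_i|\ll r$, routed through an intermediate target at distance $\Theta(r)$, is a legitimate patch of a step the paper applies without comment, but it does not change the overall argument.
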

\begin{proof}Recall that at each step, the random walk $S$ uniformly selects a neighboring point to move to. We may
also interpret a move of $S$ as if it first randomly selects an axis to moves along and next decides which one of the two directions
to take when the axis is fixed. Let $T_i$ be the number of the walk's move that are along the $i$-th axis within $t$ steps. Define the event
$e$ as:
$$e = \left\{\forall i: \frac 1 2 \cdot \frac t d \leq T_i \leq \frac 5 4 \cdot \frac t d \right\}.$$
By Chernoff bounds, we have for any specific $i \in [d]$,
$$\Pr\left[\frac 1 2 \cdot \frac t d \leq T_i \leq \frac 5 4 \cdot \frac t d\right] \geq 1 - \exp(-\Omega(t)) \geq 1 - \frac 1 {4d}$$
for sufficiently large $t$. Therefore,
$$\Pr[e] \geq 1 - d\cdot\frac 1 {4d} \geq \frac 3 4.$$

Let $(S_t)_i$ be the $i$-th coordinate of the point $S_t$. We next
 compute $\Pr[S_{t} \in \mathbb B(\vec c, r)\mid e]$:
\begin{eqnarray*}
& & \Pr[S_{t} \in \mathbb B(\vec c, r)\mid e] \\
& = & \E\left[ \Pr[S_{t} \in \mathbb B(\vec c, r)\mid T_1, ..., T_d, e] \mid e\right] \\
& = & \E\left[\Pr\left[\bigwedge_{i \in [d]}(S_t)_i \in [c_i - r, c_i + r] \Big| T_1, ..., T_d, e
\right]\Big| e\right] \mbox{(By the definition of $\mathbb B(\vec c, r)$)} \\
& = & \E\left[\prod_{i \in [d]}\Pr\left[(S_t)_i \in [c_i - r, c_i + r]\Big| T_i, e\right]\Big| e\right] \\
\end{eqnarray*}
The last equality holds because the moves along the $i$-th axis are independent of the moves along other
axes when $T_i$ is known.
Next, using Corollary~\ref{cor:boundedregion}, we have
$$\Pr\left[(S_t)_i \in [c_i - r, c_i + r]\Big| T_i, e\right]= \Omega(1).$$
Therefore,
\begin{eqnarray*}
\Pr[S_{t} \in \mathbb B(\vec c, r)\mid e] & = & \E\left[\prod_{i \in [d]}\Pr\left[(S_t)_i \in [c_i - r, c_i + r]\Big| T_i, e\right]\Big| e\right] \\
& = &\E\left[\prod_{i \in [d]}\Omega(1)\Big| e\right] = \Omega(1).
\end{eqnarray*}
Finally, we have
$$\Pr[S_{t} \in \mathbb B(\vec c, r)] \geq \Pr[S_{t} \in \mathbb B(\vec c, r)\mid e]\cdot \Pr[e] = \Omega(1).$$
\end{proof}

\begin{corollary}
\label{lem:waitboundary}
Let $r$ be sufficiently large and $r \leq \frac{n}{2(2\beta+ \zliu{6})}$,
where $\beta$ is an arbitrary constant between $1$ and $80d$. Let $A=\vec x$
and $B$ be two points in $\cV^d$ such that $\|A-B\|_1 \leq r$.
Consider two bounded random walks $S^1$ and $S^2$  in $\cV^d$ that start with $A$ and $B$
respectively. Then, with $\Omega(1)$ probability, at time $t = r^2$,
\begin{itemize}
\item $S^1_t$ is at least $\beta\cdot r$ away from any of the
  boundaries,
\item $\|S^1_t-A\|_\infty \leq (\beta+2)r$, and
\item  $\|S_t^1 - S_t^2\|_\infty \in (r,3r)$.
\end{itemize}
\end{corollary}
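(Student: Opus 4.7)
The plan is to introduce two auxiliary target points $\vec c_1, \vec c_2 \in \mathcal V^d$ and reduce the corollary to two independent applications of Lemma~\ref{lem:ball}. Specifically, on the event
\[
\mathcal E \;=\; \{S^1_t \in \mathbb B(\vec c_1, r/8)\} \,\cap\, \{S^2_t \in \mathbb B(\vec c_2, r/8)\},
\]
all three conclusions will follow from the triangle inequality once the centers are placed correctly. Although Lemma~\ref{lem:ball} is stated with ball radius $r$, its proof only invokes Corollary~\ref{cor:boundedregion}, which tolerates any constant multiple of $r$ as the target radius; hence the same $\Omega(1)$ lower bound applies with radius $r/8$. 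Since $S^1$ and $S^2$ are independent, $\Pr[\mathcal E]=\Pr[S^1_t\in\mathbb B(\vec c_1,r/8)]\cdot\Pr[S^2_t\in\mathbb B(\vec c_2,r/8)] = \Omega(1)$.

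I would construct $\vec c_1$ coordinate-wise, setting $c_{1,i} = A_i + r$ whenever this value lies inside the slab $[-n+(\beta+2)r,\, n-(\beta+2)r]$, and $c_{1,i} = A_i - r$ otherwise. The assumption $r \le n/(2(2\beta+6))$ makes this slab nonempty with width at least $2r$, so at least one of the two candidates always works. This yields $\|\vec c_1-A\|_\infty = r = \Theta(r)$ (so Lemma~\ref{lem:ball} applies to $S^1$), and forces $\vec c_1$ to sit at $L_\infty$-distance $\ge (\beta+2)r$ from every boundary. Consequently, on $\mathcal E$, $S^1_t$ lies at distance $\ge (\beta+2)r - r/8 > \beta r$ from every boundary (first conclusion) and within $r + r/8 < (\beta+2)r$ of $A$ (second conclusion).

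For $\vec c_2$, pick any coordinate $i_0$ and let $s_0\in\{\pm 1\}$ be the sign chosen at $i_0$ when forming $\vec c_1$ (so $c_{1,i_0}-A_{i_0}=s_0 r$); set $\vec c_2 = \vec c_1 + 2r s_0 \vec e_{i_0}$. Then $\|\vec c_1-\vec c_2\|_\infty = 2r$, so the triangle inequality on $\mathcal E$ gives $\|S^1_t - S^2_t\|_\infty \in [2r-r/4,\,2r+r/4]\subset(r,3r)$ (third conclusion). The sign alignment in coordinate $i_0$ prevents cancellation with $(A-B)_{i_0}$ when measured against $B$, giving $\|\vec c_2-B\|_\infty \in [2r,4r] = \Theta(r)$ via $\|A-B\|_\infty \le \|A-B\|_1 \le r$, so Lemma~\ref{lem:ball} also applies to $S^2$. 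Finally, $\vec c_2$ sits at distance $\ge (\beta+2)r - 2r = \beta r > r/8$ from every boundary, so $\vec c_2\in\mathcal V^d$ and $\mathbb B(\vec c_2,r/8)\subset\mathcal V^d$.

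The main obstacle is purely geometric bookkeeping near the boundary; in particular, the sign-alignment trick is essential, since without it the offset $2r s_0 \vec e_{i_0}$ could cancel the coordinate of $\vec c_1 - A$ and make $\|\vec c_2 - B\|_\infty$ arbitrarily small, breaking the hypothesis of Lemma~\ref{lem:ball}. The slack of order $r$ guaranteed by $r\le n/(2(2\beta+6))$ in each coordinate makes the construction routine once this trick is in place.
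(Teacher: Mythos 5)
Your overall strategy differs from the paper's in a clean and appealing way: you construct two \emph{deterministic} target points $\vec c_1,\vec c_2$ and use independence of $S^1,S^2$ to get $\Pr[\mathcal E]=\Omega(1)$, whereas the paper chooses $\vec c$ for $S^1$ and then picks the target $\vec d(S^1_t)$ for $S^2$ \emph{conditionally} on the realized value of $S^1_t$, invoking Lemma~\ref{lem:ball} once unconditionally and once conditionally. Your version avoids the conditioning entirely, which is a genuine simplification, and your observation that Lemma~\ref{lem:ball} holds with ball radius $r/8$ because Corollary~\ref{cor:boundedregion} admits an arbitrary constant $c_2$ is correct.

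However, there is a real gap in the construction of $\vec c_1$. You offset each coordinate of $A$ by exactly $\pm r$, but the slab you want $\vec c_1$ to land in has depth $(\beta+2)r\geq 3r$ from each wall. The assertion ``the slab is nonempty with width at least $2r$, so at least one of the two candidates always works'' is false: having width $\geq 2r$ guarantees only that if $A_i$ is \emph{already inside} the slab then $A_i\pm r$ hits it, but says nothing when $A_i$ lies in the boundary layer. Concretely, if $A_i=n$, then $A_i+r=n+r\notin\cV^d$ and $A_i-r=n-r$, which lies in the slab only if $n-r\leq n-(\beta+2)r$, i.e.\ $(\beta+1)r\leq 0$, impossible since $\beta\geq 1$. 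So with your rule, $\vec c_1$ is not at $L_\infty$-distance $\geq(\beta+2)r$ from the boundary (and may even fall outside $\cV^d$), which invalidates both the first conclusion and the applicability of Lemma~\ref{lem:ball}. The remedy is exactly what the paper does: use an offset on the order of the slab depth, e.g.\ $|c_{1,i}-A_i|=(\beta+1)r$, so that when $A_i$ is within the boundary layer the inward offset reaches the interior; the hypothesis $r\leq n/(2(2\beta+6))$ then makes this always possible (since $n>2(\beta+1)r$). With that larger offset, your remaining computations still close: $\|\vec c_1-A\|_\infty=(\beta+1)r$, so on $\mathcal E$ you get $\|S^1_t-A\|_\infty\leq(\beta+1)r+r/8<(\beta+2)r$; $\|\vec c_1-\vec c_2\|_\infty=2r$ still gives $\|S^1_t-S^2_t\|_\infty\in[7r/4,9r/4]\subset(r,3r)$; and $\|\vec c_2-B\|_\infty=\Theta(r)$ since $\beta$ is a constant. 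The sign-alignment idea for $\vec c_2$ is fine as is.
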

\begin{proof}Let us first find an arbitrary $\vec c = (c_1, ... c_d)$ such that
\begin{itemize}
\item  \yajun{For all $i \in [d]$: $|c_i - x_i| = (\beta + 1)r$, i.e., $\|\vec c - \vec x\| = O(r)$.}
\item For all $i \in [d]$: $-n + (\beta + 1)r \leq c_i \leq n - (\beta + 1)r$, i.e., $\vec c$ is sufficiently away from the boundary.
\end{itemize}
We set up $\beta$ in a way that such $\vec c$ always exists. By Lemma~\ref{lem:ball}, we have $\Pr[S^1_t \in \mathbb B(\vec c, r)] = \Omega(1)$.
Next, in case $S^1_t \in \mathbb B(\vec c, r)$, let $\vec d(S^1_t)$ be an arbitrary point such that
\begin{itemize}
\item $|d_i(S^1_t) - (S^1_t)_i | = 2r$
\item the distance between $\vec d(S^1_t)$ and any boundary is at least $\beta r$.
\item $ \yajun{(\beta+1)r\leq}\|\vec d(S^1_t) - B\|_{\infty} \leq (\beta + 5)r$.
\end{itemize}
Again by the way we designed $\beta$, such $\vec d(S^1_t)$ always exists so long as $S^1_t \in \mathbb B(\vec c, r)$.
Using Lemma~\ref{lem:ball} again, we have
$$\Pr[S^2_t \in \mathbb B(\vec d(S^1_t), r) \mid S^1_t \in \mathbb B(\vec c, r)] = \Omega(1).$$
Therefore, we have
$$\Pr\left[\left(S^2_t \in \mathbb B(\vec d(S^1_t), r)\right) \wedge \left(S^1_t \in \mathbb B(\vec c, r)\right)\right] = \Omega(1).$$
Finally, observe that when $\left(S^2_t \in \mathbb B(\vec d(S^1_t), r)\right) \wedge \left(S^1_t \in \mathbb B(\vec c, r)\right)$,
the three conditions specified in the Corollary are all met. This completes our proof.
\end{proof}
}
\subsection{Mixing time in graphs}
\begin{definition}[Statistical distance]\label{def:tv}Let $X$ and $Y$ be two
probability distributions over the same support
 {$\mathbf P$.}
The \emph{statistical distance} between $X$ and $Y$ is
$$\Delta(X, Y) = \max_{T\subseteq
 {\mathbf P}
}|\Pr[X \in T]
- \Pr[Y \in T]|.$$
We also say that the distribution $X$ is $\epsilon$-close to $Y$ if $\Delta(X, Y) = \epsilon$.
\end{definition}

\begin{lemma}[Mixing time for $\mathcal V^3$]\label{lem:mix} Consider a random
 walk that  {starts} at point $A$ for an arbitrary $A \in \mathcal V^3$.
Let $\pi_t(A)$ be the distribution of the walk at time $t$, and $\pi$ be the uniform distribution on the nodes in $\mathcal V^3$.
Let $\epsilon > 0$. When $t = \Theta(n^2\log(1/\epsilon))$, we have
$$\Delta(\pi_t(A), \pi) \leq \epsilon.$$
\end{lemma}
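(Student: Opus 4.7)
The plan is to bound the mixing time via a coupling argument that reduces the three-dimensional analysis to three essentially one-dimensional hitting-time estimates. A single step of the walk on $\mathcal V^3$ can be equivalently described as: pick a coordinate $i\in\{1,2,3\}$ uniformly, then perturb the $i$-th coordinate by $\pm 1$ (a proposed move past a boundary becomes a self-loop). I would use this description to construct a Markovian coupling of two copies $S,S'$, one started at $A$ and the other from the stationary distribution $\pi$, and then invoke the standard inequality $\Delta(\pi_t(A),\pi)\leq\Pr[S_t\neq S'_t]$.

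First I would design the coupling. At every step, both walks choose the \emph{same} axis $i$. If their $i$-th coordinates already agree, they take an identical $\pm 1$ step (staying together at the boundary if needed); if they disagree, they take \emph{opposite} $\pm 1$ steps, with a proposed move past the boundary becoming a self-loop for that walk alone. Once the $i$-th coordinates coincide they remain equal forever, since subsequent axis-$i$ updates move the two walks in lockstep. Hence it suffices to bound, for each axis separately, the time until the $i$-th coordinates first meet.

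For a fixed axis, the coordinate gap evolves only on steps when that axis is chosen, and is dominated by a symmetric one-dimensional random walk on $\{-2n,\ldots,2n\}$ absorbed at $0$ (with boundary reflections that can only accelerate absorption). By the one-dimensional first-passage estimates already established in this paper (Theorem~\ref{thm:discretediffusion} together with the reasoning of Lemma~\ref{lem:boundedfirst}), there exist constants $c_1,c_2>0$ such that after $kc_1 n^2$ axis-$i$ updates the probability that the gap is still nonzero is at most $e^{-c_2 k}$. A Chernoff bound guarantees that during $t=Cn^2\log(1/\epsilon)$ global steps each axis receives at least $t/6$ updates except with probability $e^{-\Omega(t)}$, so choosing $C$ sufficiently large and union-bounding over the three axes and the Chernoff failure event yields $\Pr[S_t\neq S'_t]\leq\epsilon$. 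The matching lower bound $t=\Omega(n^2\log(1/\epsilon))$ is classical (a single coordinate, viewed as a lazy 1D walk, already requires this much time to forget its starting position to within $\epsilon$), and together these give $\Delta(\pi_t(A),\pi)\leq\epsilon$ precisely when $t=\Theta(n^2\log(1/\epsilon))$.

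The main obstacle I anticipate is really one of bookkeeping rather than of substance: ensuring that the boundary self-loops in the 1D coupling do not disrupt the first-passage tail bound. The fix is to dominate the gap process by a reflected random walk on $\{0,\ldots,2n\}$ absorbed at $0$, whose hitting time has $O(n^2)$ expectation with exponential tails; some care is needed when both walks sit near the same boundary and the ``opposite-step'' rule degenerates, but in that regime one walk is already pinned and the comparison to the reflected process still goes through.
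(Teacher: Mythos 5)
Your broad plan --- a same-axis coupling plus one-dimensional first-passage estimates --- is sound, and it is a genuinely different route from the paper's, which only points to the conductance of $\mathcal V^1$ plus tensorization and never writes out a proof; a direct coupling also cleanly avoids the extra $\log n$ that the Cheeger/$\pi_{\min}$ route would introduce. But the specific coupling you chose has a parity obstruction that breaks the claimed domination.

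Under the opposite-step rule, whenever axis $i$ is chosen and neither proposed move is a boundary self-loop, the gap $D=S^1_i-S^2_i$ changes by $\pm 2$. So the parity of $D$ is conserved away from the faces, and if $D$ is initially odd the two coordinates can never agree until a boundary self-loop (a $\pm1$ move of $D$) flips the parity. In particular $D$ is \emph{not} dominated by a $\pm1$ random walk on $\{-2n,\dots,2n\}$ absorbed at $0$: for odd $D$, absorption at $0$ is impossible without a boundary visit, and ``boundary reflections can only accelerate absorption'' is exactly backwards --- they are the only mechanism that makes absorption possible for that parity class. This is structural, not bookkeeping: the paper's walk is periodic in the interior and only aperiodic through the boundary self-loops, so any correct coupling argument must route through a face. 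A clean fix is the monotone coupling: move both copies in the \emph{same} direction on the chosen axis. Then (taking $S^1_i\geq S^2_i$) the gap is non-increasing --- constant in the interior, dropping by exactly $1$ whenever the pinned copy self-loops at a face --- and the two coordinates coalesce no later than the first time $S^2_i$ reaches $+n$. That is a reflecting one-dimensional first passage across the whole interval, so Theorem~\ref{thm:discretediffusion} and Lemma~\ref{lem:boundedfirst} together with a restart argument give the $O(n^2)$ bound with exponential tails, and your Chernoff/union-bound wrap-up then goes through unchanged.
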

Although the mixing time of high dimensional torus were analyzed,
we are not aware of any literature that pins down the exact
  mixing time
for $\mathcal V^3$.
It is, however, straightforward to derive the mixing time
in asymptotic form
via computing the conductance of $\mathcal V^1$ (the one-dimensional
grid) and using
results on mixing times regarding tensoring graphs (e.g.,
Chapter 5 in \cite{Vadhan10} and \cite{Sinclair92}).


\newcommand{\vx}{{\vec x}}
\newcommand{\vy}{{\vec y}}
\section{Multiple random walks in bounded and unbounded spaces}\label{sec:mrw}

\begin{proof}[Proof of Lemma~\ref{lem:couple}]
Let us consider the following two processes
   $\mathbb P_1$ and $\mathbb P_2$ in the same probability space (we slightly abuse the terminology ``process" to mean the expression of the random tosses that drive all random walks of interest).
\begin{enumerate}
\item The process $\mathbb P_1$: consider the random walk $S(A)$. We
  are interested in the event that $S_t(A)$ visits $B$ within time
  $2t$, which occurs with probability $q(2t, \vec x)$. \liusd{Notice that $S(A)$ 
  is unable to visit $B$ at odd steps.}
\item The process $\mathbb P_2$: consider the random walks $S^1(A)$
  and $S^2(B)$.  We are interested in the event that the two walks
   {collide} by the time $t$, which occurs with probability $Q(t, \vec x)$.
\end{enumerate}

We couple the two random processes as follows. We first construct the single random walk in $\mathbb P_1$ from the two walks in $\mathbb P_2$. Note that one time step in $\mathbb P_2$ involves simultaneous moves of the walks $S^1(A)$ and $S^2(B)$. Corresponding to this step, the single walk in $\mathbb P_1$ will be set to move first in the same direction as $S^1(A)$, and then in the reverse direction from
$S^2(B)$. This way the moves at time $t > 0$ in $\mathbb P_2$ are translated into the moves
at time $2t - 1$ and $2t$ in $\mathbb P_1$. The construction can naturally
be reversed to map a walk in $\mathbb P_1$ to two walks in $\mathbb P_2$. This coupling ensures
the  {$L_1$} distance between $S^1$ and $S^2$ at time $t$ in $\mathbb P_2$
is the same as the distance between $S$ and $B$ at time $2t$ in $\mathbb P_1$.
Note that collision in $\mathbb P_2$ can only occur at even steps, and hence the hitting event in $\mathbb P_1$ is well-defined. Therefore $S^1$ and $S^2$ 
 {collide} at or before $t$
if and only if $S$ visits $B$ at or before $2t$.

Using the bound given in Lemma~\ref{lem:tightvisit}, we
have for $t \geq \|\vec x\|_2^2$,
$$Q(t, \vec x) = \Theta\left(\frac 1 {\|\vec x\|_2}\right)$$
\end{proof}

\begin{proof}[Proof of Lemma~\ref{lem:rwcatchall}]
Let  $\Psi^{t_1, ..., t_j}_{C_1, ..., C_j}$ be the
event that
$S^i$ and $S^{j + 1}$ collide at $C_i$ at time step $t_i$ (not necessarily for the first time) for all $i\in [j]$.
Our goal is to bound the following quantity
\begin{eqnarray*}
\Pr\left[\exists t_1, ..., t_j, C_1, ..., C_j: \Psi^{t_1, ..., t_j}_{C_1, ..., C_j} = 1\right] & \leq & j!\Pr\left[\exists t_1 \leq  ...\leq t_j, C_1, ..., C_j: \Psi^{t_1, ..., t_j}_{C_1, ..., C_j} = 1\right].\\
& \leq & j! \sum_{t_1 \leq t_2 \leq ... \leq t_j}\sum_{C_1,..., C_j}\Pr[\Psi^{t_1, ..., t_j}_{C_1, ..., C_j} = 1].
\end{eqnarray*}

We cut the time interval into $j$ frames $[0, t_1]$, $[t_1, t_2]$, ..., $[t_{j - 1} , t_j]$,
so that the random walks with different frames are independent.
Define $D_{i - 1}$ be the position of $S^i$ at time $t_{i - 1}$.
For notational convenience, we let $D_0 = A_1$, $C_0 = B$, and $t_0 = 0$.

The event $\Psi^{t_1, ..., t_j}_{C_1, ..., C_j} = 1$ implies that in the $i$-th time interval $[t_{i - 1}, t_i]$ we have
\begin{enumerate*}
\item $S^i$ moves from $A_i$ at time $0$ to $D_{i - 1}$ at time $t_{i - 1}$.
\item at time $t_{i - 1}$, the walk $S^{j + 1}$ is at $C_{i - 1}$.
\item at time $t_i$, the walk $S^{j + 1}$ and the walk $S^i$ are both at $C_i$.
\end{enumerate*}
 By standard results regarding high dimensional random walks (e.g. see Theorem~\ref{thm:standardrw}),
 the probability that the first event happens is at most
 \begin{equation}\label{eqn:mp2}
 \frac{3}{t^{1.5}_{i - 1}}\left(\frac{d}{2\pi}\right)^{d/2}\exp\left\{\frac{-3\|A_i-D_{i-1}\|^2_2}{ {2}t_{i - 1}}\right\}
\end{equation}
  and the probability that both the second and the third events happen is at most
\begin{equation}\label{eqn:mp1}
\frac{3}{(t_i - t_{i - 1})^3}\left(\frac{d}{2\pi}\right)^d\exp\left\{\frac{-3(\|D_{i - 1}-C_i\|^2_2 + \|C_{i - 1}-C_i\|^2_2)}{2(t_i - t_{i - 1})}\right\}.
\end{equation}
The error term in Theorem~\ref{thm:standardrw} is swallowed by
the larger leading constants 3 in
 {Equations \ref{eqn:mp2} and \ref{eqn:mp1}.}

As $S^i$'s walk before time $t_{i - 1}$ is independent to the walks of $S^i$ and $S^{j + 1}$  between $t_{i - 1}$ and $t_i$,
the probability that the three subevents above happen can be bounded by
taking the product of
 {Equation~\ref{eqn:mp2} and \ref{eqn:mp1} above.}

Let
\begin{equation}\label{eqn:fbnd}
f_i = \frac{3}{(t_i - t_{i - 1})^3}\left(\frac{d}{2\pi}\right)^d
\exp\left\{\frac{-3[\|D_{i - 1}-C_i\|^2_2 + \|C_{i -
      1}-C_i\|^2_2]}{2(t_i - t_{i - 1})}\right\} \mbox{ for } 1 \leq i
\leq j,
\end{equation}
\begin{equation}g_i = \frac 3
  {t^{1.5}_{i - 1}}\left(\frac{d}{2\pi}\right)^{d/2}\exp\left\{\frac{-3\|A_{i
        } -D_{i - 1}\|^2_2}{2 t_{i - 1}}\right\} \mbox{ for } 2 \leq i
\leq j,
\end{equation}
We also let $g_1 = 1$ and $g = \prod_{i \leq j}g_i$.
We have
 $$\Pr[\exists t_1 \leq ... \leq t_j, C_1, ..., C_j: \Psi^{t_1, ..., t_j}_{C_1, ..., C_j} = 1 ]
 \leq  \sum_{t_1, ..., t_j} \sum_{C_1, ..., C_j}\sum_{D_1, ...,
 {D_{j-1}}
}(f_1g_1)(f_2g_2)...(f_j g_j)$$


We now carefully bound this sum.  Observe that in Equation~\ref{eqn:fbnd},
when $t_i - t_{i - 1}$ is fixed and $\|D_{i - 1}-C_i\|^2_2 + \|C_{i -
 1}-C_i\|^2_2$ is sufficiently large, the quantity $f_i$ asymptotically
 becomes $$\exp(-\Theta(\max\{\|D_{i - 1}-C_i\|^2_2, \|C_{i - 1}-C_i\|^2_2\})).$$
This motivates us to group the triples $\{C_{i - 1}, C_i, D_{i - 1}\}$ together,
where the triples are covered by balls with approximately the same size under the $L_{\infty}$ norm . Specifically, we
let $\mathbb D_r$ be the set of triples $(A, B, C)$ where $A, B, C \in \mathbf Z^3$ and $\max\{\|A-B\|_1, \|A-C\|_1, \|B-C\|_1\}\leq r$.
Also, we say $\{A, B, C\} \in \partial \mathbb D_r$ if $\{A, B, C\} \in \mathbb D_r - \mathbb D_{r - 1}$. Notice
by telescoping, we have $\mathbb D_r = \bigcup_{i \leq r}\partial \mathbb D_i$.
We may thus group the variables $C_i$ and $D_i$ by parameterizing the
 {radii}
of the balls,
\begin{eqnarray*}
& & \Pr[\exists t_1 \leq ... \leq t_j, C_1, ..., C_j: \Psi^{t_1, ..., t_j}_{C_1, ..., C_j} = 1] \\
& \leq &  \sum_{t_1, ..., t_j} \sum_{C_1, ..., C_j}\sum_{D_1, ..., D_{j-1}}(f_1g_1)(f_2g_2)...(f_j g_j) \\
& = &  {\sum_{C_1\in \cV^3}}\sum_{r_1 \geq 0} \sum_{r_2 \geq 0}\ldots \sum_{r_{j - 1} \geq 0}
 \sum_{\substack{\{C_1, C_2, D_1\} \\\in \partial  \mathbb
     D_{r_1}}} \sum_{\substack{C_3, D_2: \\ \{C_2, C_3, D_2\} \\\in \partial
     \mathbb D_{r_2}}}\cdots
\sum_{\substack{C_j, D_{j - 1}: \\ \{C_{j - 1}, C_j, D_{j-1}\} \\\in \partial
    \mathbb D_{r_{j - 1}}}} \sum_{t_1 < \ldots < t_j}f_1\cdot  f_2
\cdot\ldots \cdot f_j  \cdot g
\end{eqnarray*}
First observe that by the triangle inequality $\|A-C_1\|_1 + \|B-C_1\|_1 \geq \|A-B\|_1 = x$, and for any vector $\vec v \in \mathbf R^3$,
\begin{equation}\label{eqn:l1l2}
\frac 1{\sqrt 3}\|\vec v\|_1 \leq \|\vec v\|_2 \leq \|\vec v\|_1.
\end{equation}
We have
\begin{equation}\label{eqn:a1bc1}
\|D_0-C_1\|^2_2 + \|C_0-C_1\|^2_2 = \|A-C_1\|^2_2 + \|B-C_1\|^2_2 \geq
\frac 1 3(\|A-C_1\|^2_1 + \|B-C_1\|^2_1) \geq \frac{x^2}6.
\end{equation}
Next, by the triangle inequality again,
 $\|D_i-C_{i + 1}\|_1 + \|C_i-C_{i + 1}\|_1\geq \|D_i-C_i\|_1$. Meanwhile, we have
$$\max\{\|D_i-C_{i + 1}\|_1, \|C_i-C_{i + 1}\|_1, \|D_{i}-C_i\|_1\} = r_{i}.$$
Together with the relationship between the $L_1$ and $L_2$ norms in Equation~\ref{eqn:l1l2},
we obtain
$$
\|D_i-C_{i + 1}\|^2_2 + \|C_i-C_{i + 1}\|^2_2  \geq r^2_i / 6 \quad \mbox{ for } 1 \leq i < j.
$$
Next, for $i\ge 2$ we define
$$\hat f_i = \frac 1 {(t_i - t_{i - 1})^3}\exp\left\{\frac{-r^2_{i - 1}}{4(t_i - t_{i
      -1})}\right\}, $$
and define
$$\hat g = \frac 1 {(t_1\ldots t_{j - 1})^{1.5}}.$$


It is clear that $f_i \leq \hat f_i$ for all $i \geq 2$. For notational convenience, we let $\hat f_1 = f_1$.


Our goal is now to bound the term
\begin{eqnarray*}
\eta
&\equiv &\sum_{r_1, \ldots, r_{j-1}}\sum_{\substack{\mbox{\tiny all
      triples}\\\{C_i, C_{i + 1}, D_i\}}}\sum_{t_1 \leq \ldots \leq
  t_j}\left(\prod_{i \leq j}\hat f_i\right) \cdot \hat g  \\
& =& \underbrace{\left(\frac d{2\pi}\right)^d}_{\mbox{from } \hat g} \cdot \sum_{r_1, ..., r_{j - 1}}
\sum_{\substack{\mbox{all } i < j: \\ \{C_i, C_{i + 1}, D_i\}}}\sum_{t_1}\hat f_1 \underbrace{\frac{1}{t^{1.5}_1}}_{\mbox{from } \hat g}
\sum_{t_2}\hat f_2 \underbrace{\frac{1}{t^{1.5}_2}}_{\mbox{from } \hat g}...
\sum_{t_{j - 1}}\hat f_{j - 1} \underbrace{\frac{1}{t^{1.5}_{j-1}}}_{\mbox{from } \hat g}\sum_{t_j}\hat f_j.
\end{eqnarray*}
Next, let us rearrange the indices and decompose
 the quantity into different parts (in terms of $\Upsilon_i$ defined below) and
 express $\eta$  as
{\small
\begin{equation}
\underbrace{\sum_{t_1 \geq 0} \sum_{C_1 \in  {\cV^3}}
\frac{\hat f_1}{t^{1.5}_1}\underbrace{\left(
\sum_{\substack{r_1 \geq 0 \\ t_2 \geq t_1}}
\sum_{\substack{C_2, D_1 \\ \{C_1, C_2, D_1\}
 \\ \in \partial \mathbb D_{r_1}}}\frac{\hat f_2}
 {t^{1.5}_2}\left(\underbrace{\sum_{\substack{r_2
 \geq 0\\t_3 \geq t_2}}\sum_{\substack{C_3, D_2\\
  \{C_2, C_3, D_2\}\\ \in \partial \mathbb D_{r_2}
   }}\frac{\hat f_3}{t^{1.5}_3}\left(...\underbrace{
\left(\sum_{\substack{r_{j - 2} \geq 0\\ t_{j - 1}
\geq t_{j - 2}}}\sum_{\substack{C_{j - 1}, D_{j -2}:\\\{C_{j-1},
  C_{j-2},\\ D_{j-2}\}  \in \\ \partial \mathbb
 D_{r_{j - 2}}}}\frac{\hat f_{j - 1}}{t^{1.5}_{j - 1}}
 \underbrace{\left(\sum_{\substack{r_{j - 1}\geq 0\\t_j
 \geq t_{j - 1}}}\sum_{\substack{C_{j}, D_{j - 1}:\\ \{C_{j},
 C_{j - 1}, \\D_{j - 1}\} \in \\ \partial \mathbb
 D_{r_{j - 1}}}}\hat f_{j}\right)}_{\Upsilon_{j}}\right)
 }_{\Upsilon_{j - 1}}\right)}_{\Upsilon_3}\right)\right)}_{\Upsilon_2}}_{
 \Upsilon_1}
\end{equation}
}
Let us briefly interpret the meaning of $\Upsilon_i$: this term describes an upper bound for the following two groups of events:
\begin{itemize}
\item the collisions between $S^{j + 1}$ and $S^i$, $S^{i + 1}$, ..., and $S^{j}$ at time $t_i$, $t_{i + 1}$, ..., $t_j$ respectively.
\item the fact that at time $t_{i'}$ the walk $S^{i'+1}$ is at $D_{i'}$ for all $i \leq i' \leq j$ (i.e., $S^{i'+1}_{t_{i'}} = D_{i'}$).
\end{itemize}
which is conditioned on knowing the values for
$$\Im = \{t_1, ..., t_{i - 1}, r_1, ..., r_{i - 1}, C_1, ..., C_{i - 1}, D_1, ..., D_{i - 2}\}.$$
When $C_{i - 1}$ is known, this information imposes a constraint over
the way to enumerate $C_i$ and $D_{i - 1}$ because we require $\{C_{i - 1}, C_i, D_{i - 1}\} \in \partial \mathbb D_{r_{i - 1}}$ for a specific $r_{i - 1}$. Therefore, the computation of $\Upsilon_i$ depends on the value of $C_{i - 1}$. A second constraint imposed from knowing $\Im$ is that we need $t_{i - 1} \leq t_{i} \leq ... \leq t_{j}$. $\Upsilon_i$ does not depend on other values in $\Im$.
In what follows, we write $\Upsilon_i$ as a function of $C_{i - 1}$ and $t_{i - 1}$.



Specifically, let us define the function $\Upsilon_i$ in a forward recursive manner (the summations of $r_i$ and $t_i$ are over integers):
\begin{equation}
\Upsilon_i = \left\{
\begin{array}{ll}
\sum_{r_{j - 1} \geq 0} \sum_{\substack{C_j, D_{j - 1}:\\ \{C_{j - 1}, C_j, D_{j - 1}\} \\ \in \partial \mathbb D_{r_{j - 1}}}}\sum_{ t_{j - 1} \leq t_j \leq x^2}\hat f_j & \mbox{ if $i = j$ (base case)} \\
\sum_{r_{i - 1} \geq 0}\sum_{\substack{C_i, D_{i - 1}:\\\{C_{i - 1}, C_i, D_{i - 1}\} \\ \in \partial \mathbb D_{r_{i - 1}}}}\sum_{t_i \geq t_{i - 1}}\left( \hat f_i \frac 1{t^{1.5}_i}\Upsilon_{i + 1}\right) & \mbox{ if $1 < i < j$}\\
\sum_{t_1, C_1} \frac{\hat f_1}{t^{1.5}_1}\Upsilon_2 & \mbox{if $i = 1$.}
\end{array}
\right.
\end{equation}
%

The variable $\Upsilon_1$
is the quantity we desire to bound.
Let $\Delta t_i = t_i - t_{i - 1}$ for all $i$ (and we shall let $t_0 = 0$).
Let us start with bounding
 $$\Upsilon_j
=\sum_{r_{j -
     1}}\sum_{C_j, D_{j - 1}}\sum_{\Delta t_j > 0}\frac{3}{\Delta
   t^3_j} \exp\left(\frac{-r^2_{j - 1}}{4\Delta t_j}\right)$$

We shall first find the total number of $\{C_j, D_{j - 1}\}$ pairs so that $\{C_j, C_{j - 1}, D_{j - 1}\} \in \partial \mathbb D_{r_{j - 1}}$.
Notice that when $r_{j - 1}$ and $C_{j - 1}$ are fixed, at least one of $\|C_{j - 1}-C_j\|_1$, $\|C_{j - 1}-D_{j - 1}\|_1$, and
$\|C_j- D_{j - 1}\|_1$ is exactly $r_{j - 1}$. When $\|C_{j - 1}-D_{j - 1}\|_1 = r_{j - 1}$, the number of possible $D_{j - 1}$ is $4r_{j - 1}(r_{j - 1} - 1) \leq 4r^2_{j - 1}$. An upper bound on the number of possible $C_{j}$ is $4r^3_{j - 1}$. Therefore, when $\|C_{j - 1}-D_{j - 1}\|_1 = r_{j - 1}$, the number of $\{C_j,D_{j - 1}\}$ pairs is at most $16r^5_{j - 1}$. We may similarly analyze the other two cases to find that
the total number of $\{C_j, D_{j - 1}\}$ pairs is at most $48r^5_{j -
  1}$. Thus, we have

\begin{eqnarray*}
 \Upsilon_j
& = & \sum_{r_{j - 1}}\sum_{C_{j}, D_{j - 1}} \sum_{\Delta t^3_j}\frac{3}{\Delta t^3_j}\exp\left(\frac{-r^2_{j - 1}}{4 \Delta t_j}\right) \\
& = & \sum_{\Delta t_j} \frac{1}{\Delta t^3_j}\left(\sum_{r_{j - 1}}3 \times 48 {r}^5_{j - 1}\exp\left(\frac{-r^2_{j - 1}}{4 \Delta t_j}\right)\right) \\
& \leq & \sum_{\Delta t_j} \frac{1}{\Delta t^3_j}\left(2\cdot
  \int_0^{\infty}144r^5_{j - 1}\exp\left(\frac{-r^2_{j - 1}}{4\Delta
      t_j}\right)\mathrm dr_{j-1}\right)\\
& = & \sum_{\Delta t_j}\frac{1}{\Delta t^3_j}18432\Delta t^3_j \\
& = & 18432x^2 \leq \zeta_0x^2,
\end{eqnarray*}
where $\zeta_0 =  \zliu{18432}$. The last equality holds because we are
considering a time frame of length $x^2$ and therefore $\Delta t_j
\leq x^2$.
%
Let us explain the derivation in greater detail because similar techniques will be used again in the rest of the analysis.
Define $h(x) = x^5 \exp\left(-\frac{x^2}{4\Delta t_j}\right)$. The function $h(x)$ is a unimodal function with a unique global maximal value.
Let $x_0 = \mathrm{arg} \inf_{x \geq 0}h(x)$. Then we have
\begin{eqnarray*}
\sum_{x \in \mathbf N}h(x)
& \leq & \sum_{x = 1}^{\lfloor x_0\rfloor}h(x) + \sum_{x = \lceil x_0\rceil }^{+\infty} h(x)  \\
& \leq & \int_{1}^{x_0 + 1}h(x) \mathrm dx + \int_{\lfloor
  x_0\rfloor}^{\infty}h(x)\mathrm dx \\
& \leq & 2\int_0^{\infty}h(x)\mathrm dx.
\end{eqnarray*}

While this bound is quite rough,
%
it suffices
 for our purpose; the same approach is used to bound the summation of unimodal functions
 elsewhere. The third equality holds because
  of the following fact,


\begin{equation}\label{eqn:integrate}
\int_{0}^{\infty}x^5 \exp\left(-\frac{x^2}{4 \ell }\right)\mathrm dx = 64\ell ^3
\end{equation}
for any $\ell$. (This can be verified through standard software packages such as Mathematica).
%
%

We can prove the following hypothesis for $\Upsilon_i$:
$$\mbox{for all }  {1 \leq }\ell \leq j - 2: \Upsilon_{j - \ell}(C_{j - \ell -
  1}, t_{j-\ell-1}) \leq x^2\zeta_0^{\ell + 1}
\cdot 4^{\ell} \cdot \frac{1}{\ell!}\cdot t^{-\ell / 2}_{j - \ell - 1}.$$
We shall show this by induction (with the base case, in which $\ell = 0$, being proven above).

\begin{eqnarray*}
& & \Upsilon_{j - \ell - 1}(C_{j - \ell - 2}, t_{j-\ell-2}) \\
& \leq & x^2\zeta_0^{\ell + 1} \cdot 4^{\ell - 1}\cdot (\ell!)^{-1}
 {\sum_{r_{j-\ell-2}}
} \sum_{C_{j - \ell - 1}, D_{j - \ell - 1}}\sum_{\Delta t_{j - \ell - 1}}\frac{3}{\Delta t^3_{j}}\exp\left(\frac{-r^2_{j - \ell - 2}}{4\Delta t_{j - \ell - 1}}\right)\frac{1}{t^{1.5 + \ell / 2}_{j - 1}}
\\
& \leq & x^2\zeta_0^{\ell + 1} \cdot 4^{\ell -
  1}(\ell!)^{-1}\sum_{\Delta t_{j - \ell - 1}}\frac{1}{\Delta t^3_{j -
    \ell - 1}\cdot t^{1.5 + \ell / 2}_{j -
 {\ell-}
1}} \left(2 \int_0^{\infty}144r^5_{j - \ell - 2}
\exp\left(\frac{-r^2_{j - \ell - 2}}{4 \Delta t_{j - \ell -
      1}}\right)\mathrm dr_{j - \ell - 2}\right) \\
& = & x^2\zeta_0^{\ell + 2}4^{\ell}(\ell!)^{-1} \sum_{\Delta
  t_{j - \ell - 1}}\frac{1}{t^{1.5 + \ell / 2}_{j - \ell - 1}} \\
& \leq & x^2\zeta_0^{\ell + 2}4^{\ell + 1} ((\ell + 1)!)^{-1}\frac{1}{t^{(\ell + 1)/2}_{j - \ell -   {2}}}.
\end{eqnarray*}
The last inequality holds because


$$\sum_{\Delta t_{j - \ell - 1}}\frac{1}{t^{1.5 + \ell / 2}_{j -\ell -
    1}}  \leq 2 \int_{t_{j - \ell -  {2}}}^{\infty}\frac{1}{t^{1.5 +
    \ell / 2}}\mathrm dt
\leq  4(\ell + 1)^{-1}\frac{1}{t^{(\ell + 1)/2}_{j -\ell -  {2}}}.$$
This completes the induction. Finally, we have
\begin{eqnarray*}
 \Upsilon_1  & = & \sum_{C_1}\sum_{t_1}f_1 \frac{1}{t^{1.5}_1}\Upsilon_2(C_1,t_1) \\
& \leq & x^2 \zeta_0^{j {-1}}4^{j -  {2}}((j -  {2})!)^{-1}\sum_{C_1, t_1}\frac 1{t^{j/2+3.5}_1}\cdot \exp\left(\frac{-3\|B-C_1\|^2_{ {2}} + \|A_1-C_1\|^2_{ {2}}}{2t_1}\right).
\end{eqnarray*}



Next, let $\|B-C_1\|_1 = r_0$.  {By Equation~\ref{eqn:l1l2},
$\|B-C_1\|_2^2 \geq r_0^2/3$. By Equation~\ref{eqn:a1bc1}, we
have $\|B-C_1\|^2_2 + \|A_1-C_1\|^2_2 \geq x^2/6$.} Therefore, we have
$\|B-C_1\|^2_2 + \|A_1-C_1\|^2_2  \geq
\frac 1 6 (r^2_0 + x^2/2)$. We have
\begin{eqnarray*}
& & \sum_{C_1, t_1}\frac{1}{t^{j/2 + 3.5}_1}\exp\left(\frac{-3(\|B-C_1\|^2 + \|A_1-C_1\|^2)}{2t_i}\right) \\
& \leq & \sum_{t_1}\frac{1}{t^{j/2 + 3.5}_1}\sum_{\|B-C_1\|_1 = r_0}\sum_{C_1}\exp\left(\frac{-3r^2_0}{12t_1}\right)\cdot \exp\left(\frac{-3x^2}{24t_1}\right) \\
& \leq & \sum_{t_1}\frac{1}{t^{j / 2 +
    3.5}_1}\left(2\int_{0}^{\infty}4r^2_0\exp\left(\frac{-r^2_0}{4t_1}
    \right) \mathrm dr_0\right)\cdot \exp\left(\frac{-3x^2}{24t_1}\right)\\
& \leq & \sum_{t_1}\frac{1}{t_1^{j / 2 + 3.5}}16\sqrt{\pi}t_1^{1.5} \exp\left(\frac{-x^2}{8t_1}\right) \\
& \leq & 30 \sum_{t_1}\frac{1}{t_1^{j / 2 + 2}} \exp\left(\frac{-x^2}{8t_1}\right) \\
& \leq & 60\frac{\Gamma(j/2 + 1)}{(x^2/8)^{j/2 + 1}}.
\end{eqnarray*}


The third inequality holds because
$$\int_{0}^{\infty}r^2_0\exp\left(-\frac{r^2_0}{4t_1}\right)\mathrm dr_0 = 2\sqrt{\pi}t_1^{1.5}$$


The last inequality holds because
$$\int_{0}^{\infty}y^{-c}\exp(\frac{-x^2}{8y}) {\mathrm dy} = \frac{8^{c-1}}{x^{2(c-1)}}\Gamma(c - 1)$$
for any constant $c$ and real number $x$.

We thus conclude that
$$\Upsilon_1  \leq  \frac{x^2\zeta^{j {-1}}_04^{j- {2}}}
{(j- {2})!}\cdot  {60}\cdot\frac{\Gamma(\frac j 2 + 1)}{x^{2(j/2+1)}}8^{j/2+1}
\leq  \frac{ {30}(8\sqrt 2)^j\zeta_0^{j- {1}}\Gamma(\frac j 2 + 1)}{(j- {2})!x^j}.
$$
When the permutation is considered, we have
\begin{eqnarray*}
& & \Pr[\exists t_1, ..., t_j, C_1, ..., C_j \Psi^{t_1, ...,
  t_j}_{C_1, ..., C_j} = 1] \\
& \leq & j!\frac{ {30}(8\sqrt
  2)^j\zeta_0^{j- {1}}\Gamma(\frac j 2 + 1)}{(j- {2})!x^j}\\
& \leq & \frac{ {30}j(j-1)(8\sqrt
  2)^j\zeta_0^{j- {1}}\Gamma(\frac j 2 + 1)}{x^j}\\
& \leq & \frac{(8\sqrt 2\zeta_0)^jj^j}{x^j}\\
& \leq & \left(\frac{8\sqrt 2\zeta_0j}{x}\right)^j
\end{eqnarray*}
By setting $\zeta = 8 \sqrt 2 \zeta_0 < 210000$, our lemma follows.
\end{proof}

\begin{proof}[Proof of Corollary~\ref{cor:rwcatchall}]
Notice first that if $S^{j + 1}(B)$ and $S^i(A_i)$ meet at a time step $t_0$, then there exists a point $A'_i$ with $\|A'_i-A_i\|_1 = 1$ such that the walk $S^{i'}(A'_i)$ that mimics the moves of $S^i$ at each step collides with $S^{j + 1}$ at time $t_0$.

Therefore, a necessary condition for $S^{j + 1}$ to meet the rest of
agents is  {that there exist} $S^{1'}(A'_1)$, $S^{2'}(A'_2)$, ..., $S^{j'}_{A'_j}$ such that
\begin{itemize*}
\item $\|A'_i-A_i\|_1 = 1$ for all $i \leq j$.
\item $S^{i'}$ mimics the moves of $S^i$ at all steps for all $i \leq j$.
\item $S^{j + 1}$ \emph{collides} with all of $S^{1'}$, ..., $S^{j'}$ before time $t$.
\end{itemize*}
 {For any} $A'_1$, ..., $A'_j$, the collision probability is at most $\left(\frac{\zeta j}{x}\right)^j$ by Lemma~\ref{lem:rwcatchall}.
The total number of possible $j$-tuples $A'_1$, ..., $A'_j$ is $7^j$. By using a union bound, the probability there exists a $j$-tuple such that all $j$ walks collide with $S^{j + 1}$ is at most $7^j \left(\frac{\zeta j}{x}\right)^j$. The corollary follows.
\end{proof}

 \zliu{We next move to prove Lemma~\ref{lem:twowalkboundary}}. Since we need
to frequently compare bounded random walks with their unbounded
counterparts, we use $\mathbf S$ to represent unbounded walks and $S$ to represent
bounded walks in the rest of this section.

 \zliu{Our analysis consists of two steps.
We first tackle a simpler problem, in which we need to understand the probability for
a random walk starting from a point near the boundary to visit another point in $\mathcal V^3$
within a short time frame. We then utilize results from this scenario to
 prove Lemma~\ref{lem:twowalkboundary}.}

\begin{lemma}\label{lem:bndonewalk}Let $\mathcal V^3 = \{-n, ..., n\}^3$. Let $A$ and $B$
be two points in $\mathcal V^3$ such that $A-B = \vec x$
and the distance under $L_{\infty}$ norm between
$A$ and any boundary is at least $20\|\vec x\|_1$. Consider a random walk $S(A)$ that
starts at $A$. Let $e^1_t$ be the event that $S(A)$ is at
 $B$ at time $t$. Let $e^2_t$ be the event that $S(A)$ hits a boundary
at or before $t$. When $t =\Theta( \|\vec x\|_1^2)$, we have
$\Pr[e^1_t \wedge \neg e^2_t] \geq c_0 p(t, \vec x)$ for some constant $c_0$.
\end{lemma}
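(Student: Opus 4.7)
My plan is to couple the bounded walk $S(A)$ with an unbounded walk $\mathbf{S}(A)$ starting at the same point in the natural way (shared random tosses, with $\mathbf{S}$ ignoring the box entirely). Under this coupling the event $e_t^1 \wedge \neg e_t^2$ is identical to the event that $\mathbf{S}_t = B$ and $\mathbf{S}$ has not exited $\mathcal{V}^3$ by time $t$. Therefore
\[
\Pr[e_t^1 \wedge \neg e_t^2] \;=\; p(t,\vec x) \;-\; \Pr\bigl[\mathbf{S}_t = B \text{ and } \mathbf{S} \text{ exits } \mathcal{V}^3 \text{ by time } t\bigr],
\]
and it suffices to show that the subtracted term is at most $(1-c_0)\,p(t,\vec x)$ for some constant $c_0>0$.

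The key step is to bound the subtracted probability via the reflection principle applied to each of the six faces of $\mathcal{V}^3$. Fix a face $f$ (say the hyperplane $y_i = L$ where $L$ is the coordinate of that face), let $B^{(f)}$ denote the reflection of $B$ across $f$, and let $\tau_f$ be the first hitting time of $f$ by $\mathbf{S}$. Since $B$ lies on the interior side of $f$, any path from $A$ to $B^{(f)}$ must cross $f$; reflecting the trajectory after $\tau_f$ and using the strong Markov property together with the reflection symmetry of the simple random walk yields
\[
\Pr[\mathbf{S}_t = B,\ \tau_f \leq t] \;=\; \Pr[\mathbf{S}_t = B^{(f)}] \;=\; p(t,\, A - B^{(f)}).
\]
A union bound over the six faces then gives
\[
\Pr[\mathbf{S}_t=B,\ \mathbf{S} \text{ exits by } t] \;\leq\; \sum_{f} p(t,\, A - B^{(f)}).
\]

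Finally I would plug in the Gaussian estimate of Theorem~\ref{thm:standardrw}. Since $A$ is at $L_\infty$-distance at least $20\|\vec x\|_1$ from every face, a short calculation shows $\|A - B^{(f)}\|_1 \geq 39\|\vec x\|_1$ for each $f$, hence $\|A - B^{(f)}\|_2^2 \geq \tfrac{1}{3}(39)^2 \|\vec x\|_1^2$. With $t = \Theta(\|\vec x\|_1^2)$, Theorem~\ref{thm:standardrw} gives $p(t, A - B^{(f)}) \leq C\, t^{-3/2} \exp(-c_1 \|A-B^{(f)}\|_2^2/t) \leq C\, t^{-3/2} e^{-c_2}$ for an arbitrarily large constant $c_2$ (determined by the constant in $t=\Theta(\|\vec x\|_1^2)$ and the safety factor $20$). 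On the other hand, $p(t, \vec x) \geq c_3 t^{-3/2}$ since $\|\vec x\|_2^2 / t = O(1)$, with matching parity inherited by $B^{(f)}$ since reflection shifts the $L_1$-distance by an even integer. Choosing the safety factor large enough (as it already is, at $20$), the sum over the six faces is at most $\tfrac{1}{2} p(t,\vec x)$, yielding $\Pr[e_t^1 \wedge \neg e_t^2] \geq \tfrac{1}{2} p(t,\vec x)$, which proves the lemma with $c_0 = 1/2$. The main subtlety is the clean use of the strong Markov/reflection argument in the 3D lattice (which works because a hitting time of a face constrains only one coordinate, and the walk restricted to paths crossing that face is invariant under reflection in that coordinate); everything else reduces to comparing Gaussian exponentials.
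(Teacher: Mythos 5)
Your proposal is correct and provides a valid proof, but it takes a different route from the paper's at the key step of controlling the ``exit-and-return'' probability $\Pr[\mathbf S_t = B,\ \mathbf S\text{ hits }\partial\mathcal V^3\text{ by }t]$. The paper applies the strong Markov property at the boundary hitting time: conditioned on hitting the boundary at some time $s\le t$, the remaining walk must reach $B$ from a point at $L_1$-distance at least $19\|\vec x\|_1$, giving the crude bound $\max_{\|\vec y\|_1\ge 19\|\vec x\|_1}p(t,\vec y)$. You instead invoke the reflection principle face-by-face, obtaining the exact identity $\Pr[\mathbf S_t=B,\ \tau_f\le t]=p(t,A-B^{(f)})$ and then a union bound over the six faces. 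Both approaches then reduce to the same Gaussian comparison via Theorem~\ref{thm:standardrw} and the safety factor $20$. Your reflection argument is arguably cleaner (it replaces a max over an uncontrolled set of hyperplane points and intermediate times by a single explicit transition probability per face, and correctly handles the parity bookkeeping), at the cost of an extra union bound; the paper's Markov argument is shorter but leaves the $\sup$ over the hitting time implicit. One small overstatement: you describe $c_2$ as ``arbitrarily large,'' but it is a fixed constant determined by the hidden constant in $t=\Theta(\|\vec x\|_1^2)$ together with the factor $20$; both your proof and the paper's tacitly rely on that constant not being too large (e.g.\ $t\le \|\vec x\|_1^2$, as in the application in Lemma~\ref{lem:twowalkboundary}), and with that restriction the exponential ratio is indeed small enough to yield $c_0=1/2$.
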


\begin{proof} First, let us couple the random walk $S(A)$ with
a standard unbounded random
walk $\mathbf S(A)$ in the natural way. Let $\hat e^1_t$ be the event
that $\mathbf S(A)$ is at $B$ at time $t$ and let $\hat e^2_t$ be the
event that $\mathbf S(A)$ ever visits
a boundary at or before time $t$.
When $\neg e^2_t$ occurs, $\mathbf S(A)$
and $\mathrm S(A)$ coincide and $\Pr[e^1_t \wedge \neg e^2_t] = \Pr[\hat e^1_t \wedge \neg \hat e^2_t]$.

On the other hand, we have
$$p(t, \vec x) = \Pr[\hat e^1_t \wedge \hat e^2_t] + \Pr[\hat e^1_t \wedge \neg \hat e^2_t].$$
Notice that in the event $\hat e^1_t \wedge \hat e^2_t $,
$\mathbf S(A)$ has to travel from the boundary to $B$ within a time interval shorter than $t$.
The distance between the boundary and $B$ is at least $19\|\vec x\|_1$.
 Together with the analytic form of $p(\cdot, \cdot)$ in Lemma~\ref{thm:standardrw}, we have $\Pr[\hat e^1_t \wedge \hat e^2_t = 1] \leq \max_{\|\vec y\|_1 \geq 19\|\vec x\|_1}p(t, \vec{y})$. Therefore,
 $\Pr[\hat e^1_t \wedge \neg \hat e^2_t] \geq p(t, \vec x) - \max_{\|\vec y\|_1 \geq 19\|\vec x\|_1} p(t, \vec{y})$.
 Finally, we have
 $$\Pr[e^1_t \wedge \neg e^2_t] = \Pr[\hat e^1_t \wedge \neg \hat e^2_t] \geq p(t, \vec x) - \max_{\|\vec y\|_1 \geq 19\|\vec x\|_1} p(t, \vec{y}) \geq \frac 1 2 p(t, \vec x).$$
We may use the analytic form of the function $p(\cdot, \cdot)$ (Lemma~\ref{thm:standardrw})
for $t =\Theta( \|\vec x\|_1^2)$ to verify the last inequality.
\end{proof}

\begin{proof}[Proof of Lemma~\ref{lem:twowalkboundary}]
Let $X$ be the number of collisions between
$S^1$ and $S^2$ that are before time  
 {$t$} and before either of them visits a boundary.
Also let $\hat e(S_t)$ be the event that the random walk
 $S$ ever visits a boundary at or before time $t$.
We have
\begin{eqnarray*}
\E[X] & = & \sum_{t\leq \|\vec x\|_1^2}\Pr[(S^1_t = S^2_t) \wedge (\neg \hat e(S^1_t) \wedge \neg \hat e(S^2_t)] \\
& = & \sum_{t \leq \|\vec x\|_1^2}\sum_{C \in \mathcal V}\Pr[(S^1_t = C) \wedge \neg \hat e(S^1_t)]
\Pr[(S^2_t = C) \wedge \neg \hat e(S^2_t)] \quad \mbox{(two walks are independent)} \\
& \geq & \sum_{t \leq \|\vec x\|_1^2}\sum_{C: \|C - A\|_1 \leq \|\vec x\|_1}\Pr[(S^1_t = C) \wedge
\neg \hat e(S^1_t)]\Pr[(S^2_t = C) \wedge \neg \hat e(S^2_t)] \quad \mbox{(only focus on a subset of $\mathcal V^3$)} \\
\end{eqnarray*}
Since $\|C-A\|_1 \leq \|\vec x\|_1$ and $\|A-B\|_1 \leq \|\vec x\|_1$, we have $\|C-B\|_1 \leq 2\|\vec x\|_1$.

By Lemma~\ref{lem:bndonewalk},
$$\Pr[(S^1_t = C) \wedge
\neg \hat e(S^1_t)] \geq \frac 1 2 p(t, A-C)  \quad \mbox{ and } \Pr[(S^2_t = C) \wedge \neg \hat e(S^2_t)]\geq \frac 1 2 p(t, B-C) $$
We now have
\begin{eqnarray*}
& & \sum_{t \leq \|\vec x\|_1^2}\sum_{C: \|C - A\|_1 \leq \|\vec x\|_1}\Pr[(S^1_t = C) \wedge
\neg \hat e(S^1_t)]\Pr[(S^2_t = C) \wedge \neg \hat e(S^2_t)] \\
& \geq & \sum_{1 \leq t \leq \|\vec x\|_1^2}\sum_{C: \|C - A\|_1 \leq \|\vec x\|_1}\frac 1 2p(t, A-C)\frac 1 2p(t, B-C) \quad \mbox{(by Lemma~\ref{lem:bndonewalk})} \\
& = & \Omega\left(\sum_{1 \leq t \leq \|\vec x\|_1^2}\|\vec x\|_1^3\min_{C:  \|C - A\|_1 \leq \|\vec x\|_1}\left\{p(t, C - A)p(t, B - C)\right\}\right) \\
& = & \Omega(1/\|\vec x\|_1).
\end{eqnarray*}
The last equality can be shown by using the analytic form of $p(\cdot, \cdot)$ again (Lemma~\ref{thm:standardrw})
and the fact that $\|A - C\|_2$ and $\|B - C\|_2$ are in $O(\|\vec x\|_2)$.

Next, let us compute $\E[X|X \geq 1]$, i.e., the expected number of collisions when they collide at least once.
Upon the first time $S^1$ and $S^2$ collide (before either of them visit the boundary), we couple $S^1$ and $S^2$
with two unbounded random walks $\mathbf S^1$ and $\mathbf S^2$ in the natural way respectively.
The expected number of collisions between $\mathbf S^1$ and $\mathbf
S^2$ for
 {$t$} steps
(when they start at the same point)
is an upper bound on $\E[X|X \geq 1]$. On the other hand, we may couple $\mathbf S^1$ and $\mathbf S^2$ with a single random walk $\mathbf S$
in the way described in Lemma~\ref{lem:bndonewalk} so that the expected number of collisions between $\mathbf S^1$ and $\mathbf S^2$
is the expected number of times $\mathbf S$ returns to the point where it starts at.

Finally, the expected number of return for an unbounded random walk is a constant (which can be derived from
$\sum_{t \geq 0}p(t, \vec{0})$, where $p(\cdot, \cdot)$'s analytic
form is in Theorem~\ref{thm:standardrw}
).
 Therefore,
$\E[X\mid X \geq 1] = O(1)$. Now since $\E[X] = \E[X \mid X \geq 1]\Pr[X \geq 1]$. Therefore, $\Pr[\tilde e_{\|\vec x\|_1^2}] =
\Pr[X \geq 1] = \Omega(1/\|\vec x\|_1)$.

\end{proof}

\section{Missing proofs for upper and lower bound analysis}\label{sec:missingproofs}
\begin{proof}[Proof of Lemma~\ref{lem:3dgoodevent}]
We first show the good density property holds with high probability.
For any specific time $t\leq n^{2.5}$, all the agents are uniformly
distributed due to stationarity. For an arbitrary $P \in \mathcal{V}^3$, and $i \leq (\ell_2 / \ell_1)\log^{-3}n$, define
$Y(t, P, i)$ as the number of agents that are in $\partial
\mathcal B_i(P)$ at time $t$.
Notice that $\E[Y(t, P, i)] = |\partial \mathcal B_i(P
)|m/(2n+1)^3$ and $m_i(P) \geq 6\E[Y(t, P, i)]$. By Chernoff bounds (e.g., the second part of Theorem~\ref{thm:chernoff}),
$$\Pr[Y(t, P, i) \geq m_i]\leq 2^{-m_i}\leq \exp(-0.65m_i) \leq
\exp\left(0.65 \frac{\ell^3_1\log^{-3}n}{n^3}m\log^5n\right)\leq
\exp(-0.65\log^2n)$$
for sufficiently large $n$.
Next, by a union bound,
$$\Pr[D_t = 0] \leq \sum_{t,P,i}\Pr[Y(t,P,i) \geq m_i] \leq \left(n^{2.5}(2n+1)^3(\log^{-3}n)\ell_2/\ell_1\right)\exp(-0.65\log^2n) \leq \exp(-\frac 1 2\log^2n).$$
Therefore, we have $\pr[D_t =1] \geq 1-\exp(-\frac 1 2\log^2n)$.

To show the diffusion process has the small islands property with high probability,
we mimic the proof of Lemma 6 in \cite{PPPU11}. Let $B_k$
  be the event that there exists an island with parameter $\gamma=\ell_1\log^{-1}n$ that has at least $k$ agents. The
  quantity $\Pr[B_k]$ is upper bounded by the probability that
  $G_t(\gamma)$ contains a tree of $k$ vertices of $A$ as a
  subgraph. Since $k^{k - 2}$ is the number of unrooted labeled trees on
  $k$ nodes, and $\gamma^3 / n^3$ is an upper bound to the probability
  that a given agent lie within distance $\gamma$ from another given
  agent, we have that
$$\Pr[B_k] \leq \binom{m}{k}k^{k -
  2}\left(\frac{\gamma^3}{n^3}\right)^{k - 1} \leq
\left(\frac{em}{k}\right)^k
\cdot k^{k - 2}\left(\frac{1}{m \log^3n}\right)^{k - 1} = \frac{e^km}{
  k^2} \cdot (\log n)^{-3(k -1)}.
$$
By setting $k = 3\log n + 1$, we have $\Pr[B_k] \leq \exp\{-7 \log n
\cdot \log \log n\}$. Finally, we apply a union bound across all
agents and all time steps. Hence $\Pr[E_t=1 ] > 1-n^{2.5}
m
\exp(-7 \log n
\log \log n)$.

Finally, consider the short travel distance property.  For any fixed
$i \in [m]$ and $t_1 \leq t_2 \leq n^{2.5}$ such that $t_2 - t_1 \leq
\ell^2_2\log^{-12}n$, we have $\Pr[\|S^i_{t_1} - S^i_{t_2}\|_{\infty}
\geq \ell_2\log^{-4}n] \leq \exp(-\log^2n)$ by
Lemma~\ref{lem:basicmove}. There is a factor of 3 lost when we
translate the metric from $L_{\infty}$-norm to $L_1$-norm.  The total
number of possible $i$, $t_1$, and $t_2$ are $mn^5$.  Next we may
apply a union bound across all these possible $i$, $t_1$, and $t_2$
triples. We have $\pr[L_t=0]\leq
mn^5\exp(-\log^2n)$.  The lemma follows by combing the three results
together with one more union bound.
\end{proof}

\begin{proof}[Proof of Lemma~\ref{lem:densitya3d}]
Fix a time $t$ and let $\tilde{m}$ be the number of agents in an arbitrary subcube of size $\hat \ell_2 \times \hat \ell_2 \times \hat \ell_2$. We have
$\E[\tilde{m}] \geq
\hat \ell_2\log^2n/27 \geq \log^2n$. Therefore, by Chernoff bounds (Theorem~\ref{thm:chernoff}),
$\Pr[\tilde{m} \in [\frac 1 2 \E[\tilde{m}], \frac 3 2 \E[\tilde{m}]] \geq 1 - 2\exp(-\log^2n / 12)$.
Now the total number of
possible
subcubes is at most $(2n+1)^3$ and the total number of time
steps is $n^{2.5}$. By a union bound, we have
$$\Pr[D = 0] \leq (2n+1)^{3}\cdot n^{2.5}\cdot 2\exp(-\frac 1{12}\log^2n) \leq \exp(-\frac 1 {15}\log^2n)$$
for sufficiently large $n$.
\end{proof} 
\newcommand{\nnewxiaorui}[1]{{{#1}}}

\section{Near optimal bounds for the isoperimetric problem for closed
  hypercubes}
\label{sec:geometry}
This section studies an isoperimetric problem we need for our upper bound analysis. In what follows, we let
$\bC^d = \{1, 2, ..., b\}^d$, where $b$ is an arbitrary integer.

Our first lemma builds up a matching between the subcubes in the interior surface and
those in the exterior surface. This result allows us to focus on one type of surface for the purpose of understanding the completion time for the diffusion process.

\begin{lemma}\label{lem:matchingbnd}
Let $\cG$ be an arbitrary subset of $\bC^d$. Define $\dot \partial \cG$ and $\partial \cG$ as the interior and exterior surfaces of $\cG$ (i.e. the set of points in $\cG$ that neighbor with $\cG^c$ and the set of points in $\cG^c$ that neighbor with $\cG$ resp.; $\vec u$ and $\vec v$ are neighbors if $\|\vec u- \vec v\|_1=1$). Define a bipartite graph with nodes denoting $\dot \partial \cG$ and $\partial \cG$, in which an edge $(\vec u, \vec v), \vec u\in\dot \partial \cG,\vec v\in\partial \cG$ exists whenever $\vec u$ and $\vec v$ are neighbors. Then there exists a matching $M$ in this graph with $|M| \geq |\partial \cG|/(4\nnewxiaorui{d}-1)$.
\end{lemma}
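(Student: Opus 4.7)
The plan is to build the matching $M$ by a straightforward greedy procedure: starting from $M=\emptyset$, repeatedly add to $M$ any edge between an unmatched vertex $\vec u\in\dot\partial\cG$ and an unmatched vertex $\vec v\in\partial\cG$, removing $\{\vec u,\vec v\}$ from further consideration, and stop when no such edge remains. Denote $k=|M|$ and let $U\subseteq\partial\cG$ be the set of exterior-surface vertices left unmatched at termination.

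Next I would verify two structural facts about the bipartite graph. First, every $\vec v\in\partial\cG$ has at least one bipartite-graph neighbor: by definition $\vec v\in\cG^c$ has some grid-neighbor $\vec u\in\cG$, and such $\vec u$ automatically lies in $\dot\partial\cG$ since it is a point of $\cG$ adjacent to the point $\vec v\in\cG^c$. Second, every $\vec u\in\dot\partial\cG$ has at most $2d$ grid-neighbors in $\bC^d$, so at most $2d$ of them can lie in $\partial\cG$.

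Combining these two facts with the stopping condition yields the bound. By the first fact and the fact that the greedy process was exhausted, every $\vec v\in U$ must have all of its bipartite-graph neighbors already saturated by $M$ (otherwise the procedure would not have stopped). By the second fact, each of the $k$ saturated vertices of $\dot\partial\cG$ can "account for" at most $2d$ vertices of $U$. Hence $|U|\leq 2dk$, so
\[
|\partial\cG| \;=\; k+|U| \;\leq\; (2d+1)k,
\]
which gives $|M|=k\geq |\partial\cG|/(2d+1)\geq |\partial\cG|/(4d-1)$, as claimed.

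There is no real obstacle in this argument; it is a standard greedy/Hall-type matching bound. The only step requiring a moment's care is the structural fact that every exterior-surface vertex is guaranteed a neighbor in the interior surface (rather than merely in $\cG$), which is immediate from the definitions but essential: it ensures the bipartite graph has no isolated vertices on the $\partial\cG$ side, so the greedy procedure indeed covers the whole exterior surface up to the $(2d+1)$ factor loss from the degree bound. In fact, the argument yields the slightly stronger $|M|\geq|\partial\cG|/(2d+1)$, which comfortably implies the stated $|\partial\cG|/(4d-1)$ bound.
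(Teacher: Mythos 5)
Your proof is correct and uses essentially the same greedy-matching idea as the paper, which likewise builds $M$ iteratively and charges the loss to the bounded degrees (at most $2d$ on each side). Your accounting is slightly tighter—analyzing the terminal state and covering unmatched exterior vertices yields $|M|\geq|\partial\cG|/(2d+1)$, whereas the paper counts at most $4d-1$ edges destroyed per matched pair to get $|\partial\cG|/(4d-1)$—but the structural facts and overall strategy coincide.
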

\begin{proof}
We prove this statement by explicitly constructing the matching
$M$. First notice that the degree of each node is in the range
$[1,2\nnewxiaorui{d}]$. We build $M$ iteratively. Each time, we pick an edge $(\vec u, \vec v)\in E$
and place the edge into $M$. We then remove nodes $\vec u, \vec v$ from $L$ and
$R$ respectively as well as all edges incident to them. Since the
degrees of $\vec u, \vec v$ are bounded by $2\nnewxiaorui{d}$, we will remove at most $4\nnewxiaorui{d}-1$
edges from $E$. We continue this process until no edge is
left. Clearly, the edges we place into $M$ form a matching. Because
there are at least $|\partial \cG|$ number of edges by the lower bound
of degrees, we conclude that $|M| \geq \frac{|\partial \cG|}{4\nnewxiaorui{d}-1}$.
\end{proof}

\begin{theorem} \label{thm:surface}
Let $\cG$ be an arbitrary subset of $\nnewxiaorui{\bC^d}$. There exists a pair of
constants $\nnewxiaorui{\alpha(d)}>1/2$ and $\nnewxiaorui{\beta(d)}>0$, such that:
\begin{eqnarray*}
\mbox{if } |\cG| \leq \nnewxiaorui{\alpha(d)}\cdot |\nnewxiaorui{\bZ^d}| =
\nnewxiaorui{\alpha(d)}\cdot \nnewxiaorui{b^d}, & & \mbox{then }| \partial \cG| \geq \nnewxiaorui{\beta(d)} {|\cG|}^{\nnewxiaorui{(d-1)/d}}
\end{eqnarray*}
Specifically, $\beta(3) \geq 0.36$.
\end{theorem}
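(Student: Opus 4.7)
The plan is to adapt the compression-and-induction approach of Bollob\'as and Leader, which is the standard route to vertex isoperimetric inequalities on the grid $\bC^d=\{1,\dots,b\}^d$. I will first reduce to the case where $\cG$ is a \emph{downset}, i.e., $\vec u\in\cG$ and $\vec w\preceq\vec u$ coordinate-wise implies $\vec w\in\cG$. For this, I fix an axis $i\in[d]$ and, for every line $\ell$ parallel to axis $i$, replace $\cG\cap\ell$ by its initial segment of the same cardinality. A short counting argument shows this operation preserves $|\cG|$ and does not increase $|\partial\cG|$: the line's internal contribution to the boundary is minimized, and the cross-line contribution can only stay the same or decrease by a straightforward pairing argument between adjacent lines. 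Iterating over all axes and passing to a fixed point (which exists since each non-trivial compression strictly decreases $\sum_{\vec v\in\cG}\|\vec v\|_1$) yields a downset $\cG^{\ast}$ with $|\cG^{\ast}|=|\cG|$ and $|\partial\cG^{\ast}|\leq|\partial\cG|$.

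Next, I will induct on $d$. The base case $d=1$ is immediate, since any non-empty proper subset of $\{1,\dots,b\}$ has at least one exterior neighbor, giving $\beta(1)=1$. For the inductive step on a downset $\cG\subseteq\bC^d$, I slice perpendicular to the $d$-th coordinate to obtain a nested family $\cG_1\supseteq\cG_2\supseteq\dots\supseteq\cG_b$ of downsets in $\bC^{d-1}$. The exterior boundary $\partial\cG$ then splits into a horizontal part $\sum_k|\partial_{d-1}\cG_k|$ and a vertical part $\sum_k|\cG_{k-1}\setminus\cG_k|$. Applying the induction hypothesis to each slice gives $\sum_k|\partial_{d-1}\cG_k|\geq\beta(d-1)\sum_k|\cG_k|^{(d-2)/(d-1)}$; combining this with the telescoping vertical contribution via H\"older's inequality (or power-mean) in the sum over $k$ produces the desired bound of the form $\beta(d)|\cG|^{(d-1)/d}$, at the cost of shrinking the constant.

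The main obstacle will be pinning down the sharp constant $\beta(3)\geq 0.36$ rather than an arbitrary positive one, since the extremal compressed shape transitions between a corner cube (when $|\cG|$ is small) and a slab (when $|\cG|$ approaches $\alpha(d)b^d$). The constraint $\alpha(d)>1/2$ is essential: it excludes the degenerate case $\cG=\bC^d$, whose exterior boundary is empty, and more generally controls the regime where the complement becomes too small to support a $|\cG|^{(d-1)/d}$-sized boundary. A direct extremal analysis shows that in the allowed volume range for $d=3$, the worst compressed configuration is a slab $\{x_1\leq k\}$ with $k$ close to $b/2$, yielding $|\partial\cG|/|\cG|^{2/3}\approx 2^{2/3}\approx 1.587$, while corner cubes give exactly $3$; both are comfortably above the claimed $0.36$. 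Alternatively, one can invoke Theorem~8 of \cite{BL91} directly, verify that the allowed volume regime falls within its hypothesis, and simply read off the constants.
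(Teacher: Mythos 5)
Your route is genuinely different from the paper's. The paper deliberately \emph{avoids} compression: it proves the inequality directly for arbitrary $\cG$ via a stripe argument. Fixing an axis, it lets $V(i)$ be the $i$-th stripe of $\cG$, sets $i^*=\arg\max_i|V(i)|$ and $i_*=\arg\min_i|V(i)|$, and splits on whether $|V(i^*)|$ is below or above a threshold $T=\Theta(|\cG|^{d/(d+1)})$. When $|V(i^*)|<T$ it applies the $(d{-}1)$-dimensional induction hypothesis slice-by-slice together with concavity of $x\mapsto x^{(d-1)/d}$; when $|V(i^*)|\geq T$ it invokes a separate lemma $|\partial\cG|\geq|V(i^*)|-|V(i_*)|$ (a line-by-line escape argument) plus the averaging bound $|V(i_*)|\leq|\cG|/b$. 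The recursion $\alpha(d{+}1)=\alpha(d)/2+1/4$ ensures each slice is in-range for the induction hypothesis. This is explicitly pitched as ``another asymptotically optimal proof based on a recursive argument'' as an alternative to Bollob\'as--Leader's compression machinery, which is what you propose. Your approach is heavier but standard; the paper's is more self-contained.

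Two points in your sketch need repair. First, the ``split'' of $\partial\cG$ into a horizontal part $\sum_k|\partial_{d-1}\cG_k|$ and a vertical part $\sum_k|\cG_{k-1}\setminus\cG_k|$ is not a disjoint decomposition: a point $(x,k)$ can simultaneously satisfy $x\in\partial_{d-1}\cG_k$ and $x\in\cG_{k-1}\setminus\cG_k$. So you cannot add the two contributions; you must either take the max, or argue more carefully, which changes the constants. Second, applying the $(d{-}1)$-dimensional induction hypothesis to each slice $\cG_k$ requires $|\cG_k|\leq\alpha(d{-}1)b^{d-1}$, but for a downset with $|\cG|$ near $\alpha(d)b^d$ the bottom slices can fill the entire cross-section, giving $\partial_{d-1}\cG_k=\emptyset$ with no horizontal contribution. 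This is precisely the regime the paper's case split (``$|V(i^*)|\geq T$'') is designed to handle, and your sketch does not address it. Your extremal analysis claiming the worst compressed shape is a slab or corner cube is also not justified — the Bollob\'as--Leader extremal sets in the grid are initial segments of the simplicial order, which are neither — though the fallback of citing Theorem~8 of \cite{BL91} directly is legitimate and is in fact what the paper itself does at the point of application.
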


\liusd{The isoperimetric problem over $\bC^d$ was studied in \cite{BL91}, in which the optimal structure of $\cG$ that minimizes $|\partial \cG|$ is presented. Here, we provide another asymptotically optimal proof based on a recursive argument. This proof could be of independent interest.}

To begin, let us prove the special case $d = 2$. The analysis
for this case demonstrates important ideas that are needed for showing the case for general $d$.

\begin{lemma}\label{lem:2dgeom}
Let $\cG$ be an arbitrary subset of $\bC^2$. If $|\cG| \leq \frac{2}{3} b^2$, we have
$$ |\partial \cG| \geq \frac{2}{5} |\cG|^{1/2}.$$
\end{lemma}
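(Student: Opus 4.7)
The plan is to project $\cG$ onto each of the two coordinate axes and classify each column and row of $\bC^2$ as empty, partial, or completely full. Concretely, I would set $T_i = \{j : (i,j)\in\cG\}$, $A = \{i : T_i \neq \emptyset\}$, $A_0 = \{i : T_i = [b]\}$, and $A_1 = A\setminus A_0$, and define $G_j, B, B_0, B_1$ analogously for rows. Two elementary observations then carry the whole argument. First, each partial column $i \in A_1$ contains two vertically adjacent cells, one in $\cG$ and one outside, and therefore contributes at least one cell of $\partial \cG$ lying in column $i$; since distinct columns contribute cells with distinct first coordinates, this gives $|\partial \cG| \geq |A_1|$, and symmetrically $|\partial\cG| \geq |B_1|$. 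Second, each fully filled column consumes $b$ cells of $\cG$, so $|A_0| \leq |\cG|/b$, and likewise $|B_0| \leq |\cG|/b$.

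I would then split on whether any full row or column exists. In the easy case $A_0 = B_0 = \emptyset$, every nonempty row and column is partial, so $A_1 = A$ and $B_1 = B$. Combining with $|\cG| \leq |A|\,|B|$, we get $|\partial \cG| \geq \max(|A|, |B|) \geq \sqrt{|A|\,|B|} \geq \sqrt{|\cG|}$, which is already well above $\tfrac{2}{5}\sqrt{|\cG|}$. Otherwise, by symmetry we may suppose $A_0 \neq \emptyset$: picking any full column $i_0$, every row $j$ contains the cell $(i_0, j)\in\cG$, so $B = [b]$. Hence $|B_1| = b - |B_0| \geq b - |\cG|/b$, and the first observation gives $|\partial \cG| \geq b - |\cG|/b$.

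The final step, and the only place where the numerical constants matter, is the one-variable inequality $b - |\cG|/b \geq \tfrac{2}{5}\sqrt{|\cG|}$ under the hypothesis $|\cG| \leq \tfrac{2}{3} b^2$. Writing $s = |\cG|/b^2 \in [0, 2/3]$, this reduces to $1 - s \geq \tfrac{2}{5}\sqrt{s}$; the left side is decreasing in $s$ and the right side is increasing, so it suffices to check the worst case $s = 2/3$, where squaring yields $\tfrac{1}{9} \geq \tfrac{8}{75}$, i.e., $25 \geq 24$. This is tight but goes through, which is exactly why the hypothesis threshold $2/3$ and the boundary constant $2/5$ are matched. I do not foresee any real obstacle; the one subtlety is recognizing in Case 2 that a full column in one axis forces every row to be nonempty, which is precisely what lets the argument switch axes and count partial rows.
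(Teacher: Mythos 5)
Your proof is correct, and it takes a genuinely different route from the paper's, though both rest on the same elementary observation that any partially filled column (or row) contributes at least one cell to $\partial\cG$ with that column's $x$-coordinate (resp.\ row's $y$-coordinate). The paper's case split is on whether the \emph{largest} column $V(i^*)$ falls below the threshold $\sqrt{3V/2}$: in the small case it counts columns (at least $V/|V(i^*)|$ nonempty ones, each partial since $\sqrt{3V/2}\le b$), and in the large case it lower-bounds $|\partial\cG|$ by $|V(i^*)|-|V(i_*)|$ using horizontal crossings, invoking the averaging bound $|V(i_*)|\le V/b$. Your split is on whether any \emph{full} column or row exists. Your Case~1 is cleaner and actually stronger: projecting onto both axes and using $\cG\subseteq A\times B$ gives $|\partial\cG|\ge\max(|A|,|B|)\ge\sqrt{|\cG|}$, which is the natural isoperimetric lower bound and beats the paper's $\sqrt{2V/3}$ in the analogous case. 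Your Case~2 turns on the nice observation that a single full column forces every row to be nonempty, so $B=[b]$ and $|\partial\cG|\ge|B_1|\ge b-|\cG|/b$; this replaces the paper's max-minus-min argument with a direct count. The arithmetic at the end ($25\ge 24$ after squaring at $s=2/3$) is the only tight spot and it checks out. One tiny remark: you should note explicitly that if $A_0\ne\emptyset$ and $B_0\ne\emptyset$ simultaneously, either branch works, and that the $\cG=\emptyset$ case is trivial; but these are cosmetic.
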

\begin{proof}
Let $V=|\cG|$ and $X(i)$ be the collection of lattice points in $\bC^2$ whose $x$ coordinates are $i$. Also we refer $V(i):=X(i) \cap \cG$ as the $i$th \emph{stripe} of $\cG$. Define
$$i^* = \arg
\max_i |V(i)| \quad  \mbox{ and } \quad i_* = \arg\min_i |V(i)|.$$

We next analyze two possible cases regarding the size of $V(i^*)$.

{\noindent \emph{Case 1.}} $0 <|V(i^*)| < \sqrt{\frac{3V}{2}}$.
 Since $\sqrt{\frac{3V}{2}} \leq b$, for each $i$ such that $V(i) \neq \emptyset$, we have
$$0 < |V(i)| \leq |V(i^*)|< b.$$
 \zliu{On the other hand, when $0 < |V(i)| < b$, there is at least one element of $X(i)$ that is also in $\partial \cG$.
Since the
 cardinality  of $\cG$ is $V$,  the number of non-empty
stripes in $\cG$ is at least $\frac{V}{|V(i^*)|}$. Hence
 we have}
$$|\partial \cG| \geq \frac{V}{|V(i^*)|} \geq
\sqrt{\frac{2V}{3}} > \frac{2}{5}\sqrt V$$

{\noindent \emph{Case 2.}} $|V(i^*)| \geq \sqrt{\frac{3V}{2}}$.
 \zliu{By an averaging argument, $|V(i_*)| \leq V/b$. Using the fact that $V \leq \frac 2 3b^2$, we have $|V(i_*)| \leq \sqrt{\frac 2 3 V}$.

Next we show that $\partial \cG \geq |V(i^*)|-|V(i_*)|$. Consider an arbitrary $j$ such that $(i^*, j) \in V(i^*)$ and
$(i_*, j) \notin V(i_*)$. Since $(i^*, j) \in \cG$ and $(i_*,j)\notin \cG$, there exists
 a lattice point on the ``line segment''
 $\{(i, j): i \in \{i^*, ...,  i_*\}\}$ that is in $\partial \cG$.

Finally, we have
$$\partial \cG \geq |V(i^*)|-|V(i_*)| \geq \left(\sqrt{\frac 3 2} - \sqrt{\frac 2 3 }\right) \sqrt V \geq \frac 2 5 \sqrt V.$$
}
\end{proof}
 \zliu{
We use induction to prove Theorem~\ref{thm:surface}.
Our idea of proving general $d$ is similar to the case $d = 2$.
First, we let $X(i)$  be the collection of lattice points in $\bC^d$ whose first coordinates are $i$ and $V(i) = X(i)\cap
\cG$. Next, we also define $i^* = \arg
\max_i |V(i)|$ and $i_* = \arg\min_i |V(i)|$. Then, we mimic the analysis for the case $d = 2$ and discuss two
possible cases: when $|V(i^*)|$ is small and when $|V(i^*)|$ is large. When $|V(i^*)|$, we need to invoke results
on lower dimension cases. When $|V(i^*)|$ is large, we shall show that $|V(i^*)| - |V(i_*)|$ is a
lower bound on the size of $\partial G$, which is sufficient for proving the theorem. }

Let us proceed with the following lemma, which is the main vehicle for analyzing the case $|V(i^*)|$
is large.

\begin{lemma}
\label{lem:boundary-lowerbound}
 \zliu{
Let $\cG$ be an arbitrary subset of $\bC^d$. We have
$$|\partial \cG| \geq |V(i^*)| - |V(i_*)|.$$
}
\end{lemma}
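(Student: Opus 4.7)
The plan is to generalize the argument used in Case 2 of the proof of Lemma~\ref{lem:2dgeom}. The key idea is to project $V(i^*)$ and $V(i_*)$ onto the hyperplane of the last $d-1$ coordinates, compare them there, and then use one-dimensional paths along the first coordinate to extract boundary points.

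First, I would define the projection $\pi : \bC^d \to \bC^{d-1}$ that drops the first coordinate, and set $W^* = \pi(V(i^*))$ and $W_* = \pi(V(i_*))$. Since $\pi$ is a bijection when restricted to any slice $X(i)$, we have $|W^*| = |V(i^*)|$ and $|W_*| = |V(i_*)|$. A direct set-difference inequality then gives
\[
|W^* \setminus W_*| \;\geq\; |W^*| - |W_*| \;=\; |V(i^*)| - |V(i_*)|.
\]

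Next I would construct an injection from $W^*\setminus W_*$ into $\partial \cG$. Fix any $w=(j_2,\ldots,j_d) \in W^* \setminus W_*$, and consider the axis-parallel line $L_w = \{(i,j_2,\ldots,j_d): 1 \leq i \leq b\}$. By definition of $W^*$ and $W_*$, the endpoint $(i^*,w)$ lies in $\cG$ while $(i_*,w)$ does not. Walking along $L_w$ from $(i^*,w)$ toward $(i_*,w)$, the first step that leaves $\cG$ lands on some point $p(w)\in \cG^c$ whose predecessor on the line is in $\cG$; since these two points are neighbors (differing by one in the first coordinate), $p(w)\in \partial \cG$. Moreover, $p(w)$ has last $d-1$ coordinates equal to $w$, so the assignment $w\mapsto p(w)$ is injective. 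Therefore $|\partial \cG|\geq |W^*\setminus W_*|\geq |V(i^*)|-|V(i_*)|$, as claimed.

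The argument is essentially combinatorial and does not present a serious obstacle; the only subtlety worth mentioning explicitly is the need to project \emph{before} taking set differences, since $V(i^*)$ and $V(i_*)$ live in disjoint slices and comparing them directly would be meaningless. Once projected, the rest follows from the discrete intermediate value observation already employed in the two-dimensional case, and the construction extends verbatim to any dimension $d$.
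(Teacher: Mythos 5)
Your proof is correct and follows essentially the same route as the paper's: your set $W^*\setminus W_*$ is precisely the paper's set $\Delta$ of $(d-1)$-tuples, and in both cases the boundary point is extracted by a discrete intermediate-value argument along the axis-parallel line in the first coordinate. The only cosmetic difference is that you phrase the bookkeeping via a projection map and explicitly note the injectivity of $w\mapsto p(w)$, whereas the paper leaves that injectivity implicit (it is immediate because $p(w)$ retains the last $d-1$ coordinates $w$).
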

 \zliu{
\begin{proof}First, define the set $\Delta$ as
$$\Delta = \left\{(i_2, i_3,..., i_d) \in \mathfrak C^{d - 1}\Big| \left((i^*, i_2, i_3,..., i_d) \in V(i^*)\right) \wedge\left((i_*, i_2, i_3,..., i_d) \notin V(i_*)\right)\right\}$$
Notice that by the definitions of
$V(i^*)$ and $V(i_*)$, we have $|\Delta|\geq |V(i^*)| - |V(i_*)|$.
Next, we show that for any $(i_2, ..., i_d) \in \Delta$, there exists an $i_1$ such that $(i_1, ..., i_d) \in \partial \mathcal G$, which immediately implies the lemma.

Fix a $(d - 1)$-tuple $(i_2, ..., i_d) \in \Delta$. Observe that
$(i^*,i_2, ..., i_d) \in V(i^*) \subseteq \mathcal G$ and
$(i_*, i_2, ...., i_d) \notin V(i_*)$ and thus
$(i_*, i_2, ...., i_d) \notin \mathcal G$. Let us walk from the point $(i^*,i_2, ..., i_d)$ to the point $(i_*,i_2, ..., i_d)$. Because we start with an interior point of $\mathcal G$ and end at a point outside $\mathcal G$, we leave the polytope $\mathcal G$ at least once. Hence, there exists an $i_1$ such that $(i_1, ..., i_d) \in \partial G$.
\end{proof}
}
%

Now we are ready to prove the main theorem.
\begin{proof}[Proof of Theorem~\ref{thm:surface}] We prove by induction on $d$. Specifically, we show that for any $d$ and any $\cG(d) \subseteq \bC^d$, there exists a pair of constants (that depends only on $d$) $\alpha(d) \geq 1/2$ and $\beta(d) > 0$ such that
\begin{eqnarray*}
\mbox{if } |\cG| \leq \alpha(d) |\bC^d|, & & \mbox{then } |\partial \cG(d)| \geq \beta(d)|\cG|^{d - 1}.
\end{eqnarray*}
The base case was considered in Lemma~\ref{lem:2dgeom}. Now let us assume the theorem holds
up to the $d$-dimensional space. We now prove the $d + 1$ dimensional case.

Our $\alpha(d + 1)$ and $\beta(d + 1)$ are set up in the following way:
\begin{equation}
\begin{array}{lll}
\alpha(d + 1) & = & \alpha(d)/2 + 1/4\\
\beta(d + 1) & = &  \min \left\{
\frac{\alpha(d)}{(\alpha(d+1))^{\frac{d}{d+1}}}-
(\alpha(d+1))^{\frac{1}{d+1}}, \frac{\beta(d)
  (\alpha(d+1))^{\frac{1}{d+1}}}{
  (\alpha(d))^{\frac{1}{d}}}\right\}
\end{array}
\end{equation}
 \zliu{Let $T = \left(\frac{\alpha(d)}{\left(\alpha(d + 1)\right)^{\frac d{d + 1}}}\right)V^{\frac d{d + 1}}$ and consider the following two cases.}

{\noindent \emph{Case 1.}}   \zliu{$|V(i^*)| < T$. Our $\alpha(d + 1)$ is set up in a way that  \zliu{when $V \leq \alpha(d + 1)b^{d + 1}$, $T < \alpha(d)b^d$.} Next, we invoke the result for $d$ dimensional case on each $V(i)$, $i \in [b]$.
Notice that a lattice on the exterior surface of $V(i)$ in the space $\mathfrak C^{d}$ is also on the exterior surface of $\cG$. Let us call the size of the exterior surface of $V(i)$ as $|\partial V(i)|$. By induction hypothesis, we have $|\partial V(i)|\geq \beta(d)|V(i)|^{\frac{d - 1}{d}}$. Note also $\sum_{i \leq b}|V(i)| = V$.

Next, define $f(x) = x^{\frac{d - 1} d}$, which is a concave function. We have
\begin{eqnarray*}
|\partial \mathcal G|& \geq & \sum_{i \leq b}|\partial V(i)| \\
& \geq & \sum_{i \leq b}\beta(d)f(|V(i)|) \quad \mbox{(induction hypothesis)} \\
& \geq & \sum_{i \leq b}\frac{\beta(d)|V(i)|}{T}f(T) \quad \mbox{($|V(i^*)| < T$ and using the concave properties of $f(\cdot)$)} \\
& = & \frac{\beta(d)V}{T}f(T) \\
& = & \frac{\beta(d)(\alpha(d + 1))^{\frac 1 {d + 1}}}{(\alpha(d))^{\frac 1 d}}V^{\frac{d}{d + 1}} \quad \mbox{(using the definition of $T$)} \\
& \geq & \beta(d + 1)V^{\frac d{d + 1}} \quad\mbox{(by the construction of $\beta(d)$)}
\end{eqnarray*}}

{\noindent \emph{Case 2.}} When $|V(i^*)| \geq T$. By Lemma~\ref{lem:boundary-lowerbound}, $|\partial \cG| \geq |V(i^*)| - |V(i_*)|$. Also by an averaging argument we have $|V(i_*)| \leq V/b$. The theorem then follows.
\end{proof}

\section{Existing techniques}\label{sec:existing}
This section briefly reviews existing lower bound and upper bound analysis techniques and explains the difficulties in  generalizing them to the three dimensional case.

\subsection{Lower bound}\label{subsec:existlower}
Two existing approaches that can potentially be adopted to our lower bound analysis are:
\begin{enumerate}
\item Geometrically understand the growth rate of the smallest ball that covers all the infected agents (hereafter, \emph{the smallest covering ball}). An \emph{upper bound} on the ball's growth rate translates into a lower bound on the completion time for diffusion. Examples of this approach include \cite{AMP02, KS05}.
\item Analyze the interaction of the agents locally to conclude that the influence of infection is constrained to a small region around the initially infected agent, over a small time increment. A union bound or recursive argument is then applied to give a global result. This approach is exemplified by \cite{PPPU11}.
\end{enumerate}

Let us start with the first approach. Alves et~al. and Kesten et~al. \cite{AMP02,KS05} assume the density of the agents is a constant; recall that the density of the agents is the ratio between the total number of agents and the volume of the space.  Their model has infinite space, and hence there is no size parameter $n$. With this assumption, they obtain that the radius of the smallest covering ball grows linearly in time almost surely. Translating to our setting, an $o(1)$ density of agents would lead to a growth rate that is also linear in time $t$ but scales in some way with the density. Directly applying a linear growth rate would still give a valid lower bound of order $\Omega(n)$ on the diffusion time, but this is substantially worse than the bound we need. One potential way to improve their argument is to analyze the scaling of the growth rate with respect to the density. While this approach may well be feasible, it is by no means immediate. For example, the analysis of \cite{AMP02,KS05} appears to depend on the fact that two nearby agents have constant probability to meet within a small number of steps, which leads to the conclusion that uninfected agents near the smallest covering ball are quickly infected. This requires crucially that the density of agents is constant, and relaxing this assumption to $o(1)$ density appears non-trivial.

We have chosen instead to follow the technique developed by Pettarin et~al.\cite{PPPU11}, extending it via our diffusion tree argument.  We now argue that this extension appears necessary. Recall the island graph at time $t$ defined in Definition~\ref{def:island}. Pettarin et~al.'s approach can be summarized by the following three steps:

\begin{enumerate}
\item At any time step, the island graph $G_t(\gamma)$
 is constructed, where $\gamma$ is an appropriately selected parameter.
\item Specify $\delta t$ such that within
$\delta t$ time increment, w.h.p. a piece of virus is unable to travel from one island
to another.
\item Argue that the information has to travel across
$n/\gamma$ islands sequentially to complete the diffusion so that a lower bound
$\frac{n}{\gamma}\cdot \delta t$  is established. The parameter $n/\gamma$ is asymptotically optimal because the space $\mathcal V^3$ cannot pack more than $n/\gamma$ islands along any directions (including those that are not parallel to the axes).
\end{enumerate}

Now let us discuss the internal constraints over the parameters under this framework that prevents us from optimizing
the lower bound for the 3-dimensional case.

At step 1, we need to decide $\gamma$. When $\gamma$ is set to be larger than $n\cdot m^{-1/3}$ i.e. the critical percolation point \cite{PPPU11}, $G_t(\gamma)$ becomes connected w.h.p. and the subsequent arguments break down. Therefore, $\gamma \leq n\cdot m^{-1/3}$.

At step 2, for illustration let us only focus on two islands $\isd_1$ and $\isd_2$, and let $\mathrm a_1 \in \isd_1$ and $\mathrm a_2 \in \isd_2$ be two arbitrary agents each from the two islands.
We now need to decide on the value of $\delta t$. We are facing two options:
\begin{enumerate}
\item If $\delta t$ is set to be smaller than $\gamma^2$, then w.h.p. $\mathrm a_1$ and $\mathrm a_2$ do not meet in time $\delta t$ \cite{PPPU11}.
\item If $\delta t$ is larger than $\gamma^2$, then with probability $\Theta(1/\gamma)$, $\mathrm a_1$ and $\mathrm a_2$ will meet in time $\delta t$ (Lemma~\ref{lem:couple}).
\end{enumerate}

We consider both options to examine the quality of lower bounds we can get, using step 3 above. For the first option, the lower bound we get is $n \gamma \leq n^2\cdot m^{-1/3}$, which is suboptimal. For instance when $m = n^{1.5}$, the lower bound is $n^{1.5}$ as opposed to $\tilde \Omega(n^{1.75})$. For the second option, regardless of the choice of $\delta t$, the lower bound always \emph{fails} to hold with probability $\Omega(1/\gamma) = \Omega(m^{1/3}/n)$ and so step 2 cannot be satisfied with high probability.

Our analysis corresponds to setting $\delta t$ large, but doing a more careful analysis on the local infected region by considering a branching process that represents a historical trace of the infection. Our island diffusion rule is correspondingly modified from the rule of \cite{PPPU11} to control the growth rate of this branching process.

\subsection{Upper bound}\label{subsec:existupper}
We also explain why existing upper bound techniques such as those from \cite{CPS09,PPPU11} do not appear to
generalize immediately to the three dimensional case.  The analyses in \cite{CPS09, PPPU11}, which are based on percolation, follow a proof strategy that contains two steps:
\begin{enumerate}
\item Let $\mathrm a_1$ be the initially infected agent. Identify a ball $\mathcal B$ (under $L_{\infty}$ norm) of radius $r$ that covers $\mathrm a_1$'s initial position so that after $t_1$ time steps, where $t_1$ is a parameter to be decided, a constant portion of the agents in $\mathcal B$
become infected (i.e. fraction of infected agents to total number of agents in $\mathcal B$ is $\tilde{\Theta}(1)$). Moreover, these infected agents are well clustered i.e. at distance $\tilde{O}(r)$ from the ball $\mathcal B$.
\item Show that if a ball $\mathcal B'$ has a constant portion of infected agents at time $t$,
then at $t + t_2$, all adjacent balls with the same
radius will also have a constant portion of infected agents. Here, $t_2$ is a parameter to be decided. Moreover, these newly infected agents are well clustered i.e. at distance $\tilde{O}(r)$ from the balls.
\end{enumerate}

One usually also needs a good density condition i.e. agent density in any $r$-ball is $\Theta(m(r/n)^d)$. By repeatedly applying the second step, one can establish an upper bound on the time that all balls in $\mathcal V^3$ have constant portion of infected agents. Once this happens, usually it becomes straightforward to find the diffusion time. The asymptotic
upper bound will be $\frac{n}{r}\cdot t_2 + t_1$.

Let us explain this in more detail for the case $d=2$. Assume good density condition. First, we need to set $t_2 =\tilde{\Theta}(r^2)$ so that the newly infected agents at step 2
are well clustered. This ensures that the infected agents do not scatter uncontrollably outside a distance from the ball and jeopardize our next recursion. We now sketch a bound on $r$. Consider step 2. Suppose the number of infected agents in $\mathcal B'$ at $t$ is $m(r/n)^2\times\tilde{\Theta}(1)$.
By our choice $t_2 =\tilde{\Theta}(r^2)$, each infected agent in $\mathcal B'$ has probability $\tilde{\Theta}(1)$ to
meet each agent in the adjacent ball (by using Lemma 1 in \cite{PPPU11}). Therefore, the expected number of infections in the adjacent ball is given by
$$\underbrace{m (r/n)^2\times\tilde{\Theta}(1)}_{\mbox{\# of infected agents in $\mathcal B'$}}\times \underbrace{m (r/n)^2}_{\substack{\mbox{\# of uninfected agents} \\\mbox{in an adjacent ball}}}\times\underbrace{\tilde{\Theta}(1)}_{\mbox{infection prob.}}$$ which, by the requirement of step 2, should be equal to $m (r/n)^2\times\tilde{\Theta}(1)$. This gives $r = \Theta(\sqrt{n^2/m})$. Note that this also leads to the condition that the number of infected agents in $\mathcal B'$ at $t$ and the adjacent ball at $t+t_2$ are both $\tilde{\Theta}(1)$.

Now set $t_1=\tilde{\Theta}(r^2)$ and so the number of infected agents in $\mathcal B$ at time $t_1$ is $m(r/n)^2\times\tilde{\Theta}(1)=\tilde{\Theta}(1)$. Note that both steps 1 and 2 are now satisfied. By recursively applying the second step, we can see that by time $\frac{n}{r}\cdot t_2 + t_1=\tilde{\Theta}(n^2/\sqrt{m})$ all the balls in $\mathcal V^2$ will have $m(r/n)^2\times\tilde{\Theta}(1)$ infected agents. Hence in the same order of time period $\tilde{\Theta}(n^2/\sqrt{m})$, all the agents in $\mathcal V^2$ will be infected. This time period gives the optimal upper bound of the diffusion time for $d=2$.

We now argue that this strategy does not work for $d=3$. Let us attempt to mimic the above argument step by step. Again set $t_2=\tilde{\Theta}(r^2)$ so that the infected agents are well clustered. Next, note that in contrast to the two-dimensional case, Lemma~\ref{lem:couple} states that the meeting probability of two random walks in $\mathcal V^3$ with initial distance $r$ apart within time $\Theta(r^2)$ is $\Theta(1/r)$. Hence, in light of step 2, we require
$$\underbrace{m(r/n)^3\times\tilde{\Theta}(1)}_{\mbox{\# of infected agents in $\mathcal B'$ }}\times \underbrace{m(r/n)^3}_{\substack{\mbox{\# of uninfected agents }\\ \mbox{in an adjacent ball}}}\times\underbrace{\Theta(1/r)}_{\mbox{infection prob.}}=\underbrace{m(r/n)^3\times\tilde{\Theta}(1)}_{\substack{\mbox{desired \# of}\\ \mbox{infections}}}$$ which gives $r=\tilde{\Theta}(\sqrt{n^3/m})$. Note that the number of infected agents in $\mathcal B'$ at $t$ and that of the adjacent balls at $t+t_2$ in step 2 are now both $m(r/n)^3\times\tilde{\Theta}(1)=\tilde{\Theta}(\sqrt{n^3/m})=\tilde{\Theta}(r)$.


We now try to set an appropriate value for $t_1$. First, note that step 1 requires the number of infected agents in $\mathcal B$ at time $t_1$ being $\tilde{\Theta}(r)$. Then the question is to find the approximate time for one initially infected agent to infect $\tilde{\Theta}(r)$ agents that are from $\mathcal B$. Moreover, we need that these infected agents do not travel at distance outside $\tilde{\Omega}(r)$ in the same time period.

To give a bound for this $t_1$, let us look into the method of \cite{PPPU11}. Note that in the case of $d=2$, the number of agents in $\mathcal B$ at any time is $\tilde{\Theta}(1)$. In this case, \cite{PPPU11} suggests chopping the time $t_1$ into intervals each of length $\tilde{\Theta}(r^2)$. During each of these intervals, one only focuses on a pair of agents from $\mathcal B$ and see if they meet each other; this method aims to reduce the analysis of correlation among multiple agents' meetings, a complicated quantity, to a sequence of independent problems that involve only the meeting of two random walks. Since there are only $\tilde{\Theta}(1)$ such pair combinations, and that each such meeting probability is $\tilde{\Theta}(1)$, a $t_1=\tilde{\Theta}(r^2)$ is enough to guarantee that the number of infected agents is $\tilde{\Theta}(1)$. Also these infected agents are well clustered at $\mathcal B$. Thus the argument works well for $d=2$.

However, such an argument breaks down for $d=3$ because now we are required to have $\tilde{\Theta}(r)$ infected agents at $t_1$, and the meeting probability between any two agents is $\tilde{\Theta}(1/r)$. As a result the following tradeoffs cannot be balanced: 1) $t_1$ is set to be $\tilde{\Theta}(r^2)$ so that the infected agents are well clustered, but the number of infected agents at $t_1$ will only be $\tilde{\Theta}(1)$; 2) $t_1$ is set to be $\tilde{\omega}(r^2)$, but then the infected agents are not well clustered and may not constitute $\tilde{\Theta}(r)$ of infected agents within $\mathcal B$ at $t_1$. The first tradeoff appears if one uses the chopping argument of \cite{PPPU11}: divide $t_1$ into intervals of length $\tilde{\Theta}(r^2)$. For each interval, observe the number of meetings between any infected and uninfected agents. This gives an expected total number of infections at $t_1$ as $r\cdot \tilde \Theta(1/r) = \tilde \Theta(1)$, which is less than the required number of $\tilde \Theta(r)$. Secondly, setting $t_1=\tilde \omega(r^2)$ boosts up the number of infected agents, but also increases the chance that an infected agent escapes from the vicinity of $\mathcal B$. An accurate analysis of these two effects seems highly non-trivial and does not follow from the existing results of \cite{PPPU11}.

Finally, we mention the work of Clementi et~al. \cite{CPS09} to deal with issues similar to above. At each step, conditioned on the positions of the infected agents, the infection event of each uninfected agent becomes independent of each other. The change in the infected population over time can then be analyzed. However, such analysis is possible in \cite{CPS09} because the agents in their model can jump at a distance $\Theta({\sqrt{n}})$ at each step. This leads to much less serial dependence for each agent and consequently requires less effort in keeping track of each agent's position. These phenomena, unfortunately, do not apply to our settings.

\section{An example of the diffusion tree}
\begin{figure}[htb!]
\centering
\includegraphics[scale=0.5, angle=270]{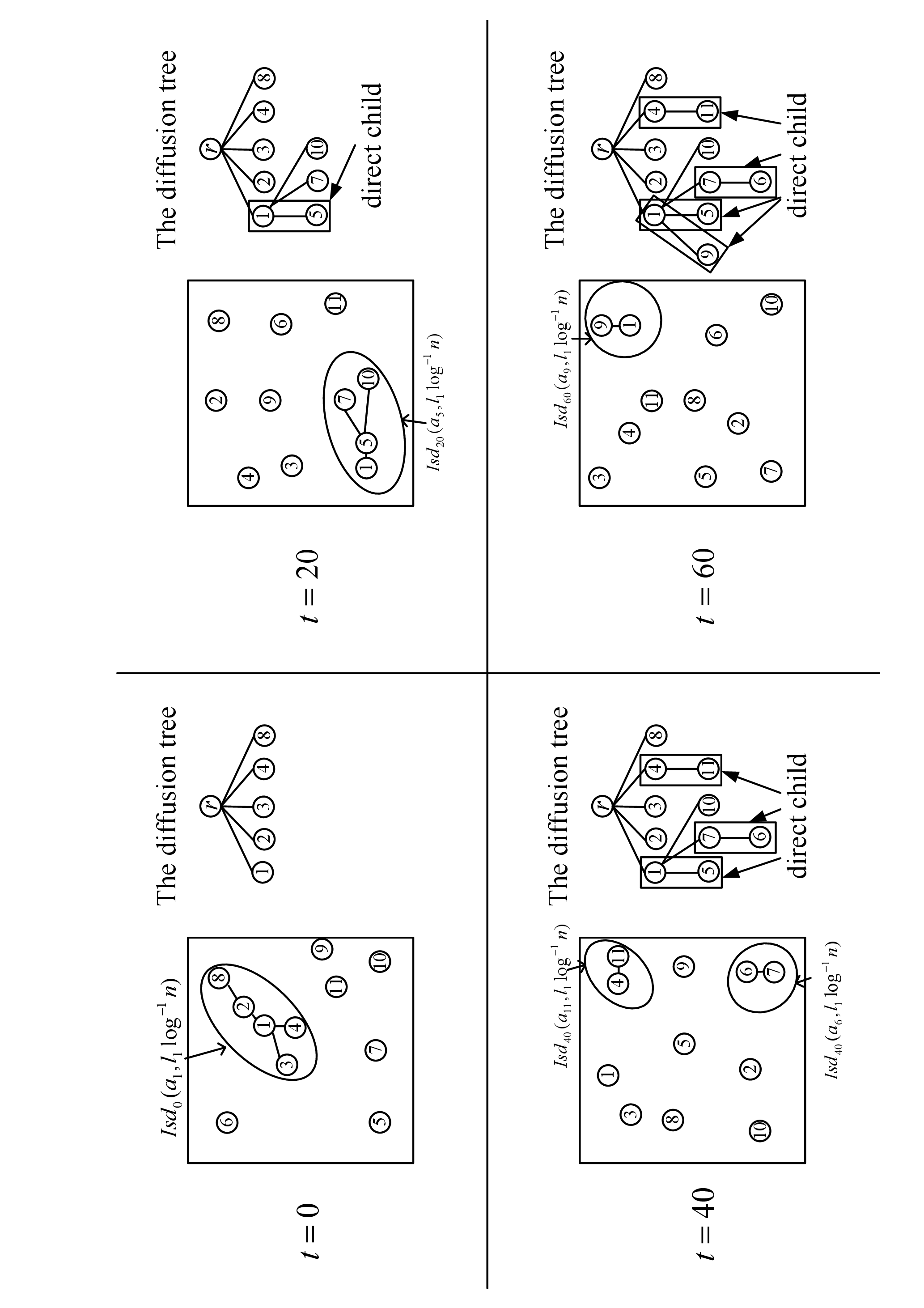}
\caption{
\label{fig:branching}
An example of the diffusion process and its corresponding diffusion tree at $t = 0, 20, 40, 60$. Assume no collisions happen
beyond these 4 time steps.
 \\ \\
$\quad$ \underline{At $t = 0$}, the agent $\mathrm a_1$ is initially infected. Since $\isd_0(\mathrm a_1,\ell_1\log^{-1}n) = \{\mathrm a_1, \mathrm a_2, \mathrm a_3, \mathrm a_4, \mathrm a_8\}$, the agents $\mathrm a_2, \mathrm a_3$, $\mathrm a_4$, and $\mathrm a_8$ are all considered infected at $t = 0$. Also, $\rt$ does not have a direct child.\\ \\
$\quad$ \underline{At $t = 20$}, the agent $\mathrm a_1$ meets $\mathrm a_5$. Since $\isd_{20}(\mathrm a_1, \ell_1\log^{-1}n) = \{\mathrm a_1, \mathrm a_5, \mathrm a_7, \mathrm a_{10}\}$, $\mathrm a_{7}$ and $\mathrm a_{10}$ are also infected. At this time step, $\dchild(\mathrm a_1) = \{\mathrm a_5\}$ and $\child(\mathrm a_1) = \{\mathrm a_5, \mathrm a_{7}, \mathrm a_{10}\}$.\\ \\
$\quad$ \underline{At $t = 40$}, $\mathrm a_{4}$ meets $\mathrm a_{11}$ and $\mathrm a_6$ meets $\mathrm a_7$; $\isd_{40}(\mathrm a_4, \ell_1\log^{-1}n) = \{\mathrm a_4, \mathrm a_{11}\}$ and
 $\isd_{40}(\mathrm a_7, \ell_1\log^{-1}n) = \{\mathrm a_6, \mathrm a_7\}$.
  At this time step, $\child_4(\mathrm a_4) = \dchild(\mathrm a_4) = \{\mathrm a_{11}\}$ and $\child(\mathrm a_7) = \dchild(\mathrm a_7) = \{\mathrm a_6\}$. Notice that $\mathrm a_{11} \in \mathbb F_{1}$ and $\mathrm a_{6} \in \mathbb F_3$. These two generations grow simultaneously at $t = 40$.\\ \\
$\quad$ \underline{At $t = 60$}, $\mathrm a_1$ meets $\mathrm a_9$. $\isd_{60}(\mathrm a_9) = \{\mathrm a_1, \mathrm a_9\}$. We have $\mathrm a_9 \in \child(\mathrm a_1)$ and $\mathrm a_9 \in \dchild(\mathrm a_1)$. Also, notice that $\mathrm a_1$ now contains two \zliu{direct} children.
 }
\end{figure}

\end{document}